%% file: paper_Dmodule_1_.tex
\documentclass{amsart}
\input{header.tex}

\title[Bernstein--Sato Theory for $\cD$-modules in Positive Characteristic]{Bernstein--Sato Theory for $\cD$-modules in Positive Characteristic}

\author[D.~Takeuchi]{Daichi Takeuchi}
\address{
Department of Mathematics,
Institute of Science Tokyo,
2-12-1 Ookayama, Meguro-ku, Tokyo, 152-8551, Japan
}
\email{
daichi.takeuchi4@gmail.com
}

\begin{document}
%%%%%%%%%%%%%%%%%%%%%%%%%%%%%%%%%%%%
%%%%%%%%%%%%%%%%%%%%%%%%%%%%%%%%%%%%

\begin{abstract}
In this article, we develop a positive characteristic analogue of the Bernstein--Sato theory for holonomic $\cD$-modules in the complex setting. We work with $\cD$-modules $\sM$ on a Noetherian regular $F$-finite $\F_p$-scheme $X$, and define their Bernstein--Sato roots as $p$-adic integers. When $\sM=\cO_X$, this recovers Bitoun's definition. 

When  $\sM$ arises from a locally finitely generated unit $F^e$-module and $X$ is of finite type over an $F$-finite field, we show that the roots are finite and rational, generalizing Bitoun's result. In the course of the proof, we also develop a related theory for Cartier modules.

%We also establish a Kashiwara--Malgrange-type theorem describing the relationship between the Bernstein--Sato roots and the monodromy eigenvalues of the tame vanishing cycles of the perverse \'etale $\F_p$-sheaf  corresponding to $\sM$ via the Riemann--Hilbert correspondence. 
\end{abstract}

\subjclass[2020]{14F10, 13A35}
\keywords{Bernstein--Sato theory, $\cD$-modules in positive characteristic, Nearby cycles}
 
\maketitle

\tableofcontents

%%%%%%%%%%%%%%%%%%%%%%%%%%%%%%%%%%%%%%%%%%%%%%%%%%%%%%%%%%%
\section{Introduction}\label{sec:intro}
%%%%%%%%%%%%%%%%%%%%%%%%%%%%%%%%%%%%%%%%%%%%%%%%%%%%%%%%%%%

Let $X$ be a smooth algebraic variety over the complex numbers $\C$, and let $f\colon X\to \A^1_{\C}$ be a $\C$-morphism. The \emph{Bernstein--Sato polynomial} of $f$, introduced independently by Bernstein and Sato--Shintani, is defined as follows. A key observation is that there exists, locally on $X$, a functional equation of the form 
\[
P\cdot f^{s+1}=b(s)f^s, 
\]
where $b(s)$ is a nonzero polynomial in $\C[s]$ and $P\in\cD_{X/\C}[s]$. Here $\cD_{X/\C}$ denotes the sheaf of algebraic differential operators on $X$. 
The Bernstein--Sato polynomial, denoted by $b_f(s)$, is the unique monic polynomial of minimal degree satisfying such an equation.

The notion of the Bernstein--Sato polynomial extends to holonomic $\cD$-modules $\sM$ on $X$. In \cite{Kas78} (in the analytic setting), Kashiwara proves that, for any local section $u\in \sM$, there exists a nonzero polynomial $b(s)\in \C[s]$ and an operator $P\in \cD_{X/\C}[s]$ such that 
\[
P\cdot (u\otimes f^{s+1})=u\otimes b(s)f^s, 
\]
where both sides are viewed as sections of $\sM\otimes_{\cO_X}\cO_X[f^{-1},s]\cdot f^s$, and the left-hand side is given by the natural action of $P$ on this module. 
The \emph{Bernstein--Sato polynomial of $f$ with respect to $u$} is defined as the monic polynomial of minimal degree satisfying such a functional equation. The case $(\sM,u)=(\cO_X,1)$, where $\cO_X$ is endowed with the natural $\cD_{X/\C}$-module structure, recovers the polynomial $b_f(s)$. 

%In this article, we refer to the roots of Bernstein--Sato polynomials as \emph{Bernstein--Sato roots}. Bernstein--Sato roots are fundamental invariants in the $\cD$-module theory, and related to several invariants arising in algebraic geometry and topology. For instance, in~\cite{ELSV04}, the authors show that the jumping numbers of $f$ appear as Bernstein--Sato roots of $\cO_X$. Another example is provided by the Kashiwara--Malgrange theorem. Assume that $\sM$ is regular holonomic, and every root of $b_u(s)$ for every $u\in\sM$ is rational; in the case $\sM=\cO_X$, this is a theorem of Kashiwara. Under this assumption, the Bernstein--Sato roots determine the eigenvalues of the monodromy acting on the topological nearby and vanishing cycles of ${\rm DR}(\sM)$, where ${\rm DR}$ denotes the de Rham functor in the Riemann--Hilbert correspondence. Furthermore, using the notion of $V$-filtration, which is constr
%For a detailed account of the latter, we refer to~\cite{MM04, CDM25}. 

%In many cases, it is known that Bernstein--Sato roots are rational; in the case $\sM=\cO_X$, this is a theorem of Kashiwara. Assume that this holds for all $u\in\sM$. Then one can construct the \emph{V-filtration} on the $\cD$-module $\gamma_+\sM$, where $\gamma\colon X\to X\times_{\C}\A^1_{\C}$ denotes the graph embedding of $f$, and $\gamma_+$ denotes the $\cD$-module pushforward. 

%The $V$-filtration is a fundamental invariant that connects 

The roots of Bernstein--Sato polynomials, which we refer to as the \emph{Bernstein--Sato roots}, are fundamental invariants connecting various objects in algebraic geometry and topology. We review some examples in the case where $\sM=\cO_X$ and $u = 1$. In~\cite[THEOREM~2.1]{ELSV04}, the authors establish a relationship between the Bernstein--Sato roots and the jumping exponents of the multiplier ideals of $f$. Another example is provided by the Kashiwara--Malgrange theorem, which shows that the Bernstein--Sato roots determine the eigenvalues of the monodromy acting on the topological nearby cycles complex associated with $f$. Moreover, this theorem extends to regular holonomic $\cD_{X/\C}$-modules with rational Bernstein--Sato roots. More precisely, the rationality ensures the existence of the $V$-filtration, which encodes, on the $\cD$-module side, the information corresponding to the monodromy action via the Riemann–Hilbert correspondence; see~\cite{MM04, CDM25} for a detailed account. In this extension, the rationality of the Bernstein–Sato roots plays a crucial role.

\medskip

An attempt to develop a positive characteristic analogue of the above picture was initiated by Musta\c{t}\u{a} \cite{Mus09} 
in the case $\sM=\cO_X$. 
Let $X$ be a Noetherian regular $F$-finite\footnote{Recall that an $\F_p$-scheme is said to be $F$-finite if the Frobenius endomorphism $X\to X$ is finite. } $\F_p$-scheme, and fix 
an element $f\in \Gamma(X,\cO_X)$. 
A starting point of his work is the well-known observation that, in the complex case, there is a canonical isomorphism 
\begin{equation}\label{equation: gamma+ cong fs}
    \gamma_+\cO_X[f^{-1}]\cong \cO_X[f^{-1},s]\cdot f^s, 
\end{equation}
 where $\gamma$ denotes the graph embedding $X\to X\times_{\C}\A^1_{\C}$ associated with $f$, and $\gamma_+$ is the $\cD$-module pushforward. Since the left-hand side is defined purely algebraically, 
 this provides an algebraic approach to defining the Bernstein--Sato polynomials. Guided by this observation, Musta\c{t}\u{a} considers the corresponding $\cD_{\A^1_X}$-module $B_f\coloneqq \gamma_+\cO_X$ in positive characteristic, and defines Bernstein--Sato polynomials $(b_f^{(m)}(s))_{m\ge0}$ as a family indexed by $m\in\Z_{\ge0}$. His definition reflects the fact that, for an $F$-finite $\F_p$-scheme $X$, the ring $\cD_X$ admits a canonical filtration 
 \[
 \cD_X=\bigcup_{m\ge0}\cD_X^{(m)}, 
 \]
where $\cD_X^{(m)}$ denotes the ring of differential operators of level $m$, that is, the ring of $\cO_X^{p^m}$-linear endomorphisms of $\cO_X$. Roughly speaking, the polynomial $b_f^{(m)}(s)$  is defined in terms of the action of level $m$ operators on $B_f$. 

 Since Musta\c{t}\u{a} considers $\cD_X^{(m)}$-module structures separately for each $m$, 
it is natural to ask whether there exists an analogue formulated directly at the level of $\cD_X$, without fixing $m$. Building on Mustaţă's work, Bitoun \cite{Bit18} formulates such a version and defines Bernstein--Sato roots of $f$ in the positive characteristic setting. 
A key insight of Bitoun is that the role of $\C[s]$ in the complex setting is played by $\sC\coloneqq C(\Z_p,\F_p)$, the ring of locally constant functions $\Z_p\to \F_p$, where $\Z_p$ is equipped with the $p$-adic topology. In fact, in~\cite{JNBQG}, the authors show that 
\begin{equation*}\label{equation: Bf fs}  B_f[f^{-1}]\cong(\cO_X[f^{-1}]\otimes_{\F_p}\sC)\cdot f^s, 
\end{equation*}
where $f^s$ is a formal symbol, as an analogue of \eqref{equation: gamma+ cong fs}. 
Using the fact that the topological space $\operatorname{Spec}(\sC)$ can be identified with $\Z_p$ via evaluation, Bitoun defines Bernstein--Sato roots as $p$-adic integers. Moreover, he proves several important properties of these roots: their finiteness and rationality~\cite[Corollary~2.3.6]{Bit18}, which is an analogue of Kashiwara's theorem, and their relationship with Frobenius jumping exponents~\cite[Theorem~2.4.1]{Bit18}, which is an analogue of~\cite[THEOREM~2.1]{ELSV04}.

%Note, however, that there is no satisfactory notion of multiplicity in this setting, since every ideal of $\sC$ is radical. 

Although the approaches of Musta\c{t}\u{a} and Bitoun are closely related, their precise relationship is subtle, partly because the natural morphism from the $\cD_X^{(m)}$-module considered by Musta\c{t}\u{a} to the corresponding $\cD_X$-module in Bitoun's framework is not injective. 

\medskip

Several authors have attempted to generalize the above results beyond the case  $\sM=\cO_X$. In~\cite{Sta12}, Stadnik considers locally finitely generated unit $F^e$-modules (lfgu $F^e$-modules for short) for some $e\in\Z_{\ge1}$. In~\cite{BS16}, Blickle and St\"abler treat Cartier modules satisfying certain conditions. They define Bernstein--Sato polynomials in their setting, in a manner similar to that of Musta\c{t}\u{a}. 

%Their relationship is explained in \cite[\S6]{BS16}. Note that both approaches essentially follow that of 

In this paper, we follow the approach of Bitoun and work with $\cD_X$-modules rather than $\cD_X^{(m)}$-modules. One advantage of this approach is that it allows us to state and prove an analogue of the Kashiwara--Malgrange theorem. 
More precisely, when the $\cD_X$-module under consideration arises from an lfgu $F^e$-module, 
 we can interpret the results in terms of perverse \'etale $\F_{p^e}$-sheaves via the Riemann--Hilbert correspondence for lfgu $F^e$-modules. A precise statement and further discussion will appear in forthcoming joint work with H.~Kato and E.~Quinlan-Gallego.

We now describe the main results of this paper in detail. We generalize Bitoun's definition of Bernstein--Sato roots to pairs $(\sM,M)$, where $\sM$ is a $\cD_X$-module and $M$ is an $\cO_X$-submodule of $\sM$. By definition, these roots are $p$-adic integers, and we denote their set by $\BSR(M,f)$. 
In this formulation, the role of a section $u\in \sM$ is played by the submodule $M$, via the identification $M=\cO_X\cdot u$. The special case $(\sM,M)=(\cO_X,\cO_X)$ recovers the original definition. 

We now explain the definition of $\BSR(M,f)$. For each $\alpha\in\Z_p$, we define a certain $\cD_X$-module denoted by $N_f(M)_\alpha$ in Section~\ref{section: Nearby Cycles via the Graph Embedding}. When $\alpha\notin\Z$, this module can be described in terms of the $\cD_X$-module \[\sL_{f,\alpha}\coloneqq\cO_X[f^{-1}]\cdot f^\alpha\]
associated with $f^\alpha$, as follows. Consider the tensor product $\sM\otimes_{\cO_X}\sL_{f,\alpha}$, which naturally carries a $\cD_X$-module structure. Then 
\[
N_f(M)_{\alpha}\cong \cD_X(M\cdot f^\alpha), 
\]
where $M\cdot f^\alpha$ denotes the $\cO_X$-submodule of $\sM\otimes_{\cO_X}\sL_{f,\alpha}$ generated by $x\otimes f^\alpha$ with $x\in M$. 
See \S\ref{subsubsection: Non-Integral Case} for details. 

Using the construction $N_f(-)_\alpha$, we define 
\[
\BSR(M,f)\coloneqq\{\alpha\in\Z_p\mid N_{f}(fM)_\alpha \subsetneq N_{f}(M)_\alpha\}. 
\]
In general, $\BSR(M,f)$ need not be finite or rational.  Our first main theorem provides a sufficient condition for $\BSR(M,f)$ to satisfy these properties. Recall that a unit $F^e$-module naturally 
carries a $\cD_X$-module structure (see Construction~\ref{construction: Dmodule str} for details). 
\begin{thm}[{Theorem~\ref{thm: finiteness and rationality of BSR}}]\label{thm: introduction finiteness and rationality of BSR}
Let $(\sM,M)$ be a pair as above. 
 Suppose that $\sM$ arises from an lfgu $F^e$-module  for some $e\in\Z_{>0}$ (in the sense recalled above), and that $M$ is a coherent $\cO_X$-submodule of $\sM$. Then the following hold: 
 \begin{enumerate}
     \item Assume that $(\sM,M)$ satisfies Assumption~\ref{assumption: BSR is bound}. 
 Then the set $\BSR(M,f)$ is finite. 
If, in addition, $M$ is a root of $\sM$, then $\BSR(M,f)\subset\Z_{(p)}=\Z_p\cap \Q$. Moreover, if $M$ is the minimal root, then  
\[
\BSR(M,f)\subset \Z_{(p)}\cap[-1,0]. 
\]
\item The pair $(\sM,M)$ satisfies Assumption~\ref{assumption: BSR is bound} if $X$ admits an open covering $X=\bigcup_iU_i$, where each $U_i$ is of finite type over an $F$-finite field. 
 \end{enumerate}
\end{thm}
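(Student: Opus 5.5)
The plan is to reduce everything to a statement about a single Frobenius-twisted module and then use the $F^e$-structure to show that the set of roots is invariant under an expanding map on $\Z_p$.

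\emph{Step 1 (Frobenius equivariance).} Let $\beta\colon F^{e*}\sM\xrightarrow{\sim}\sM$ be the unit structure. I will compare $N_f(M)_\alpha$ with $N_f(F^{e*}M)_{\alpha'}$, where $\alpha=p^e\alpha'+i$ with $0\le i<p^e$. Concretely, under $\beta$ the generator $x\otimes f^{\alpha}$ should correspond to $F^{e*}(x\otimes f^{\alpha'})$ twisted by $f^{i}$. This reflects the identity $f^{\alpha}=(f^{\alpha'})^{p^e}f^{i}$ in $\sL_{f,\alpha}$ and the analogous identity in the graph-embedding model of \S\ref{section: Nearby Cycles via the Graph Embedding}. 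Since $F^{e*}$ is an equivalence on $\cD_X$-modules (Frobenius descent), this gives a relation of the form
\[\alpha\in\BSR(M,f)\iff \alpha'\in\BSR(M_i,f)\]
for suitable coherent submodules $M_i$ built from $\beta(F^{e*}M)$ and $f^iM$. If $M$ is a root, meaning $M\subset\beta(F^{e*}M)$, then $\beta(F^{e*}M)$ is again a root. Iterating, the roots of $M$ are controlled by the roots of the finitely many modules in the chain $M\subset \beta(F^{e*}M)\subset\cdots$ up to the stage where the submodules generated stabilize. This is exactly the role I expect Assumption~\ref{assumption: BSR is bound} to play: it says that only finitely many coherent submodules (or a bounded family up to twisting by powers of $f$) occur.

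\emph{Step 2 (finiteness and rationality).} Write $\phi_i(\alpha)=(\alpha-i)/p^e$. By Step 1, the union $S$ of the root sets over the finite family of submodules satisfies $\phi_i(S\cap(i+p^e\Z_p))\subset S$. To get finiteness, I will first show that each $\BSR$ is a closed subset of $\Z_p$ given by finitely many congruence conditions at every level. This should follow from the fact that $N_f(-)_\alpha$ depends only on $\alpha$ mod $p^m$ at level-$m$ operators, together with coherence of $M$. Combining this with the contracting maps $\phi_i$ shows that every $\alpha\in S$ has an eventually periodic $p^e$-adic expansion drawn from a finite automaton. Hence $S$ is finite and consists of elements of $\Z_{(p)}$. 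For merely finite $\BSR$ without a root, I would reduce to the root case by replacing $M$ with $M\subset \beta(F^{e*}M)f^{-N}$, arguing as above.

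\emph{Step 3 (the bound $[-1,0]$ for the minimal root, and part (2)).} For the minimal root $M_0$ one has $M_0=\beta(\mathcal C(\ldots))$, i.e.\ it is stable under the Cartier-type operation dual to $\beta$. The analogue of the test-module argument then shows $f M_0\cdot f^{\alpha}$ generates the same $\cD_X$-module as $M_0 f^{\alpha}$ unless $\alpha$ lies in $[-1,0]$, mimicking Bitoun's proof via $F$-jumping numbers of Cartier modules. Concretely, the roots will be identified with $-c$ for jumping numbers $c\in(0,1]$ of the associated Cartier module $\tau(M_0,f^{t})$. For part (2), I would verify Assumption~\ref{assumption: BSR is bound} locally on finite-type $U_i$ using Blickle's finiteness results for Cartier modules: over an $F$-finite field, a Cartier crystal has finitely many submodules of the form $\tau(M,f^t)$ (discreteness and rationality of $F$-jumping numbers), which gives the needed bounded family.

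The main obstacle I expect is Step 1. It requires making the Frobenius twist compatible with the definition of $N_f(M)_\alpha$ in the integral case $\alpha\in\Z$, where the description via $\sL_{f,\alpha}$ is unavailable and one must work with the graph embedding directly. I would try first to handle the case $\alpha\notin\Z$ explicitly and then obtain the integral case by a density/continuity argument in $\alpha$.
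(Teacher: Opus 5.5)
Your proposal has genuine gaps, starting with how you read Assumption~\ref{assumption: BSR is bound}.

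\textbf{Finiteness.} The assumption does not say that only finitely many submodules occur. It is the uniform bound $\#(\BSR^{(em)}(M,f)\cap[0,q^m))\le L$. Together with the injection of $\BSR(M,f)$ into $\varprojlim_m(\BSR^{(m)}(M,f)\cap[0,p^m))$ from Proposition~\ref{prop: infinite BSR vs finite}(2), restricted to the levels $em$, it gives $\#\BSR(M,f)\le L$ at once; that is the paper's finiteness proof. Your substitute in Step~2 fails: a closed subset of $\Z_p$ stable under the maps $\phi_i$ need not be finite ($\Z_p$ itself is one). Without finiteness, your ``finite automaton'' periodicity, and hence rationality, has no basis.

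\textbf{Direction of the iteration.} Step~1 iterates the wrong way. The chain $M\subset\beta(F^{e*}M)\subset\cdots$ never stabilizes, since its union is $\sM$, which is usually not coherent. Moreover, $F^{e*}M$ is not the module that governs $\alpha'=(\alpha-i)/q$. The right tool is the $q$-Cartier operator $C$ attached to $\beta$, after locally trivializing $\mathcal{C}_X^{(1)}$.
\begin{itemize}
\item Lemma~\ref{lem: C theta eta}(2) gives $\cD_X^{(em)}f^nM=F_q^{m*}C^m(f^nM)$. Hence $\BSR^{(em)}(M,f)=\nu_f(q^m;M)$ and $\BSR(M,f)=\nu_f(q^\infty;M)$ (Theorem~\ref{thm: nu invariants vs BSR Dmodule}).
\item This is a finite-level statement that treats integral and non-integral $\alpha$ uniformly, so your ``main obstacle'' never arises. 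A density argument in $\alpha$ could not decide whether a given integer is a root anyway.
\item The identity $C^{m+1}(f^{qn+i}M)=C^m(f^nC(f^iM))$ and the sandwich $C(M)\supseteq C(f^iM)\supseteq C(f^{i+1}M)\supseteq fC(M)$ show that $T(\alpha)=(\alpha-i)/q$ maps $\BSR(M,f)$ onto $\BSR(C(M),f)$ (Lemma~\ref{Talpha}).
\item For a root, $C(M)\subseteq M$, so the \emph{descending} chain $C^k(M)$ stabilizes (Proposition~\ref{F-pure submodule}). With $C^i(M)=C^{i+j}(M)$, the map $T^j$ sends the finite set $\BSR(C^i(M),f)$ into itself, which forces eventually periodic expansions and hence rationality.
\end{itemize}

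\textbf{The bound $[-1,0]$.} Step~3 is not yet an argument, and the identification you announce is wrong as stated. The value $0$ can be a Bernstein--Sato root here; it corresponds to $0\in\FJN^+$ (Theorem~\ref{BSRFJN}(2)). Also, Theorem~\ref{BSRFJN}(1) only describes roots lying in $[-1,0)$, so it presupposes the containment you want. The paper's proof is elementary:
\begin{itemize}
\item $F$-purity $C(M)=M$ makes $T$ a surjection of the finite set $\BSR(M,f)$ onto itself.
\item So for every $n$ some $q^n\alpha+m_n$ with $0\le m_n<q^n$ is again a root.
\item By pigeonhole, $q^{n_1}\alpha+m_{n_1}=q^{n_k}\alpha+m_{n_k}$ along a sequence $n_k\to\infty$.
\item Hence $-\frac{q^{n_k}-1}{q^{n_k}-q^{n_1}}\le\alpha\le\frac{q^{n_1}-1}{q^{n_k}-q^{n_1}}$, and letting $k\to\infty$ gives $\alpha\in[-1,0]$.
\end{itemize}

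\textbf{Part~(2).} The assumption must be checked for an arbitrary coherent $M$. The modules $C^m(f^aM)$ are test modules only when $M$ is $F$-pure, and even then in the paper's sense rather than Blickle's, so discreteness of jumping numbers does not cover it. What is needed is Blickle's gauge estimate itself (Theorem~\ref{thm: generated in gauge d}). Choose a root $N\supseteq M$ with a gauge. For $0\le a<q^m$, the modules $C^m(f^aM)$ are generated in gauge $\le K_1$, with $K_1$ independent of $m$ and $a$. So they are determined by a descending chain of subspaces of the fixed finite-dimensional space $N_{\le K_1}$, which gives $L=\dim_k N_{\le K_1}+1$ (Proposition~\ref{consBl}).
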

Let us comment on this theorem. In contrast to Bitoun's theorem  \cite[Theorem~2.4.1]{Bit18} for $\sM=\cO_X$, the value $0$ can occur as a Bernstein--Sato root. This reflects the possibility that $\sM$ may have a Jordan--H\"older  constituent supported on the zero locus $Z\coloneqq\{f=0\}$. More precisely, one can show that 
$0\notin \BSR(M,f)$ if and only if there exists no nonzero morphism from $\sM$ to an lfgu $F^e$-module supported on $Z$. This follows from 
Lemma~\ref{lem: NffiM=} and Proposition~\ref{prop: description of DXM}.

We briefly explain the proof of Theorem~\ref{thm: introduction finiteness and rationality of BSR}. Let $\sM$ be an lfgu $F^e$-module on $X$, and let $M\subset \sM$ be a coherent $\cO_X$-submodule. Set $q\coloneqq p^e$. As is well known, the unit $F^e$-module structure induces an additive morphism $C\colon \sM\to\sM$ 
satisfying 
\[C(a^{q}x)=aC(x)\] for all local sections $a\in\cO_X$ and $x\in\sM$ \footnote{More precisely, to encode such $C$, one needs to twist $\sM$ by an invertible sheaf. See \S\ref{subsection: Unit Frobenius Modules and Cartier Modules} or \Cref{section: Global Interpretation of Cartier Operators} for details.}. We refer to such pairs $(\sM,C)$ as $q$-Cartier modules. The conditions on $M$ in the statement of the theorem can be interpreted in terms of $q$-Cartier modules: if $M$ is a root of $\sM$, then it is a $q$-Cartier submodule, that is, $C(M)\subset M$. If $M$ is the minimal root, then it is $F$-pure, in the sense that $C(M)=M$ (Lemma~\ref{lem: minimal root is Fpure}). 
 
 Using these interpretations, we reduce the proof of Theorem~\ref{thm: introduction finiteness and rationality of BSR} to the corresponding statements for $q$-Cartier modules. Section~\ref{section: Theory for Cartier Modules} is devoted to developing  the theory of $q$-Cartier modules. Among other things, for $q$-Cartier modules, we introduce the notion of \emph{$\nu$-invariants} of finite level in Definition~\ref{defn: nu} and of infinite level in Definition~\ref{defn: nu infty}. The $\nu$-invariants of infinite level correspond to Bernstein--Sato roots,\footnote{The use of $\nu$-invariants to study Bernstein--Sato roots is inspired by~\cite{MTW05} and~\cite[\S6.5]{Qui21}. } and the finiteness and rationality of $\BSR(M,f)$ 
 are thereby reduced to the corresponding properties of these $\nu$-invariants. In Proposition~\ref{BSRfin} and Theorem~\ref{BSRrat}, we prove that the $\nu$-invariants satisfy these properties under Assumption~\ref{assumption: nu invariant is bound}, which corresponds to Assumption~\ref{assumption: BSR is bound}. 
 Statement~(2) of Theorem~\ref{thm: introduction finiteness and rationality of BSR} is proved in Proposition~\ref{consBl} using a trick of Blickle~\cite[Section~4]{Bli13}.  

\medskip

Although this notion is not used elsewhere in this paper, we also study the test modules $\tau(M,f^t)$ of an $F$-pure $q$-Cartier module $M$, and establish a relationship between the jumping exponents of $\tau(M,f^t)$ and the $\nu$-invariants in Theorem~\ref{BSRFJN}. Since the $\nu$-invariants correspond to the Bernstein--Sato roots, this result can be viewed as a generalization of~\cite[Theorem~2.4.1]{Bit18}. 

We caution that our definition of $\tau(M,f^t)$ differs from that of Blickle~\cite{Bli13}, namely  $\tau(M,\cC,\mathfrak{a}^t)$, where  $\cC=\bigoplus_{n\ge0}C^n\cdot\cO_X$ and $\mathfrak{a}=f\cO_X$. 
Blickle's test module is defined as the minimal submodule satisfying certain conditions at the generic points of $\operatorname{Supp}(M)$, whereas ours is defined so as to agree with $M$ on the open subset $\{f\neq0\}$. Our definition is well suited to the context of Bernstein--Sato theory. We note that, by Proposition~\ref{prop: intrinsic description of test module}, which  gives an intrinsic characterization of our $\tau(M,f^t)$, our test module coincides with that of Blickle when $M$ is $F$-regular and $f$ is a nonzero divisor on $M$. This is precisely the setting considered in~\cite{BS16}.

%\medskip

%In collaboration with Hiroki Kato and Eamon Quinlan-Gallego, we plan to study the Bernstein--Sato roots in the case $\sM=\cO_X$ in detail via computations of the tame vanishing cycles $R\phi_f^{\rm t}(\F_p)$.  

\medskip

Let us mention some possible directions for future work. 
In this paper, we do not consider the $V$-filtration in our setting. In~\cite{Sta14, Sta16}, the authors study positive characteristic analogues of the $V$-filtration. It would be interesting to compare their constructions with ours. 

In \cite{JNBQG}, the authors study Bernstein--Sato roots in the case where $X$ is not regular. It would be interesting to investigate the relationship between their results and ours. 

\medskip

Finally, we describe the organization of this paper. In Section~\ref{section: Theory for Cartier Modules}, we study Cartier modules. We introduce the notion of $\nu$-invariants and prove their finiteness and rationality under Assumption~\ref{assumption: nu invariant is bound}. 

After reviewing the basics of differential operators in positive characteristic in Section~\ref{section: Preliminaries on Differential Operators}, 
we define the Bernstein--Sato roots for $(\sM,M)$ in Section~\ref{section: Nearby Cycles via the Graph Embedding} via the graph construction. 

 In Section~\ref{section: The Case of Unit Frobenius Modules}, after reviewing the basics of lfgu $F^e$-modules, 
 we specialize to this setting and combine the results of Sections~\ref{section: Theory for Cartier Modules} and~\ref{section: Nearby Cycles via the Graph Embedding} to deduce various consequences. 
 
% Finally, in Section~\ref{section: A Kashiwara--Malgrange Theorem for Etale Fp-sheaves}, we prove a version of the Kashiwara--Malgrange theorem. Since we work with general Noetherian $F$-finite schemes, we cannot directly apply the results of \cite{EK04book}, where the authors restrict to schemes of finite type over a field. A large part of Section~\ref{section: A Kashiwara--Malgrange Theorem for Etale Fp-sheaves} is devoted to recalling the Riemann--Hilbert correspondence in positive characteristic, following~\cite{BL19,BBLSZ25}, and to reproving the necessary results originally established in the finite type case in~\cite{EK04book}. 

\begin{acknowledgments*}
This work was inspired by discussions with Hiroki Kato and Eamon Quinlan-Gallego on the relationship between Bernstein--Sato roots and \'etale $\F_p$-cohomology. The author is grateful to them for many stimulating discussions and helpful comments, and for introducing the author to this fascinating area. 

This work was supported by JSPS KAKENHI Grant Number 25KJ0122. 
\end{acknowledgments*}

\begin{convention*}
Throughout this paper, we fix a prime number $p$. Let $X$ be an $\F_p$-scheme, and let $q$ be a positive power of $p$. We denote by 
\[
F_{q}\colon X\to X
\]
the $q$-th Frobenius endomorphism, defined as the morphism of ringed spaces that induces the identity on the underlying topological space and the $q$-th power map $a\mapsto a^q$ on the structure sheaf. When $q=p$, we simply write $F\coloneqq F_q$. 
When it is necessary to specify the scheme $X$, we write $F_{X,q}$ instead of $F_q$. 

We use analogous notation for rings. If $R$ is a commutative $\F_p$-algebra, we denote by $F_q$ the $q$-th Frobenius endomorphism of $R$. When it is necessary to specify the ring $R$, we write $F_{R,q}$. 

 We say that an $\F_p$-scheme or an $\F_p$-algebra is \emph{$F$-finite} if $F_p$ is finite. 
\end{convention*}

\section{Theory for Cartier Modules}\label{section: Theory for Cartier Modules}
Throughout this section, let $X$ be a Noetherian $F$-finite scheme over $\F_p$. We fix a quasi-coherent ideal sheaf $\mathcal{I}\subset \mathcal{O}_X$ that is locally generated by a single element. 
Fix a positive power $q$ of $p$. 

\subsection{Cartier Modules}
\emph{A $q$-Cartier module on $X$} is a pair $(M,C)$ consisting of an $\cO_X$-module $M$ together with an $\cO_X$-linear morphism  
\[
C\colon F_{q\ast} M\to M. 
\]
Since $F_q$ acts as the identity on the underlying topological space, the morphism $C$ can equivalently be regarded as an additive endomorphism $C\colon M\to M$ satisfying  
\[
C(a^qx)=a\, C(x)
\]
for all local sections $a\in\cO_X$ and $x\in M$. Such an endomorphism is called a \emph{$q$-Cartier operator} on the $\cO_X$-module $M$.

Following \cite{Bli13}, we say that a   $q$-Cartier module $M$ is  \emph{coherent} if $M$ is coherent as an $\cO_X$-module. We say that a $q$-Cartier module $(M,C)$ is \emph{nilpotent} if $C^m(M)=0$ for some $m\geq1$, and \emph{$F$-pure} if it is coherent and satisfies $M=C(M)$. 

The following result is a key technical ingredient for what follows. 
\begin{prop}[{\cite[Proposition 2.14]{BB11}}]\label{F-pure submodule}
Let $M$ be a coherent $q$-Cartier module on $X$. Then the descending sequence 
\[
M\supset C(M)\supset C^2(M)\supset\cdots
\]
stabilizes. In other words, there exists an integer $m>0$ such that $C^m(M)=C^{m+1}(M)$. 
\end{prop}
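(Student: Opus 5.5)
We aim to prove the Blickle--B\"ockle stabilization result (Proposition~\ref{F-pure submodule}). The plan is to reduce to an affine, local situation and then use a length argument on the quotients $C^n(M)/C^{n+1}(M)$.

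\emph{Reduction to the affine case.} Since $X$ is Noetherian, it is quasi-compact. The images $C^n(M)$ are quasi-coherent subsheaves, and their formation commutes with restriction to opens. Hence it suffices to prove stabilization on each member of a finite affine cover and then take the maximum of the resulting integers $m$. So we may assume $X=\operatorname{Spec} R$ with $R$ Noetherian and $F$-finite, and $M$ a finitely generated $R$-module. Note that each $C^n(M)$ is a submodule of the finitely generated module $M$, hence finitely generated.

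\emph{Step 1: the supports stabilize.} Consider the descending chain of closed sets $Z_n=\operatorname{Supp}(C^n(M)/C^{n+1}(M))$. First, $C$ induces a surjection $F_{q*}(C^n M/C^{n+1}M)\to C^{n+1}M/C^{n+2}M$, which gives $Z_{n+1}\subset Z_n$. By Noetherianity the chain $Z_n$ stabilizes to some $Z$. We then argue by Noetherian induction on $Z$: if $Z=\emptyset$ we are done. Otherwise, pick a generic point $\eta$ of $Z$ and localize at $\eta$.

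\emph{Step 2: the local case, which is the main obstacle.} We now have $(R,\mathfrak m)$ local. The quotients $Q_n=C^nM/C^{n+1}M$ have finite length for $n\gg0$, since $\eta$ is a minimal point of their support. The key estimate is that for a finite-length module $N$ over an $F$-finite local ring,
\[
\ell_R(F_{q*}N)=[k^{1/q}:k]^{-1}\cdot\ell(N)\cdot(\text{correction})\le \ell(N)\cdot[k:k^q],
\]
or more precisely $\ell_R(F_{q*}N)=\ell(N)\,[k:k^q]$. This raw bound does not decrease, so the argument needs something sharper. Here I would use Gabber's/Blickle's trick. Replace $M$ by $C^{n_0}M$, so that all quotients are $\mathfrak m$-torsion. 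Then observe that the $\mathfrak m$-torsion of $M/C^nM$ injects into a bounded module: the submodules $C^n(M)+\mathfrak m^{k}M$ stabilize because $C(\mathfrak m^{[q]}N)\subset\mathfrak m C(N)$. Concretely, we have $C(\mathfrak m^{q\cdot t}M)\subset \mathfrak m^{t}C(M)$. Choosing $t$ such that $\mathfrak m^tQ_n=0$ uniformly, we show that $C^{n}M\supset \mathfrak m^{N}C^{n_0}M$ for a fixed $N$. Then the chain lives inside the finite-length module $C^{n_0}M/\mathfrak m^NC^{n_0}M$, and so it stabilizes.

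Establishing this uniform containment is where I expect the difficulty: one must show that the descending images cannot shrink indefinitely in the $\mathfrak m$-adic direction. The tool for this is the Artin--Rees lemma together with the contraction property $C(\mathfrak m^{[q]}\cdot)\subset\mathfrak m C(\cdot)$. Having settled the local case, the chain stabilizes in a neighborhood of $\eta$, so $\eta\notin Z$, which contradicts the choice of $\eta$. This completes the Noetherian induction.
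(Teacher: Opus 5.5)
You should know that the paper gives no proof of its own here; it only quotes Blickle--B\"ockle. On its own merits, your global reduction is sound. The closed sets $Z_n=\operatorname{Supp}(C^nM/C^{n+1}M)$ decrease and stabilize at some $Z$. If $Z\neq\emptyset$, localizing at a generic point $\eta$ of $Z$ gives a local ring $(R,\mathfrak m)$ and $N\coloneqq C^{n_0}(M)_\eta$ with $\mathfrak m^tN\subseteq C(N)$ for some $t\ge1$. Stabilization of $C^e(N)$ then contradicts $\eta\in Z_e$ for $e\ge n_0$, so a direct contradiction suffices and no Noetherian induction is needed.

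\textbf{The gap.} The whole content of the proposition is this local step, and there you only assert the uniform containment $C^n(M)\supseteq\mathfrak m^{c}C^{n_0}(M)$ for a fixed $c$, explicitly leaving it unproved. The tools you name do not supply it.
\begin{itemize}
\item The inclusion $C(\mathfrak m^{qt}M)\subset\mathfrak m^tC(M)$ is false in general. Take $R=k[x,y]_{(x,y)}$ with $k$ perfect and $C$ the standard generator of $\operatorname{Hom}_R(F_{q\ast}R,R)$. Then $C(x^{q-1}y^{q-1})=1$ and $x^{q-1}y^{q-1}\in\mathfrak m^q$, so $1\in C(\mathfrak m^qR)$, while $1\notin\mathfrak m=\mathfrak m\,C(R)$. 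The correct identity is $C((\mathfrak m^t)^{[q]}M)=\mathfrak m^tC(M)$.
\item More importantly, an upper bound on $C$ of deep $\mathfrak m$-adic pieces points the wrong way. To trap the chain in a finite-length quotient you need a lower bound on the images.
\item A uniform $t$ with $\mathfrak m^tQ_n=0$ for all $n$ only gives $\mathfrak m^{tn}N\subseteq C^n(N)$, which is not uniform in $n$.
\item Stabilization of $C^n(M)+\mathfrak m^kM$ for fixed $k$ is automatic and says nothing about $C^n(M)$.
\item Artin--Rees plays no role.
\end{itemize}

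\textbf{The missing idea is elementary.} For any submodule $L$ and $s\ge0$ one has $\mathfrak m^sC(L)\subseteq C(\mathfrak m^{qs}L)$, because $aC(y)=C(a^qy)$. Now suppose $\mathfrak m^{2t}N\subseteq C^e(N)$. Using $q\lceil 2t/q\rceil\ge 2t$ and $\lceil 2t/q\rceil\le t$, we get
\[
C^{e+1}(N)\supseteq C(\mathfrak m^{2t}N)\supseteq \mathfrak m^{\lceil 2t/q\rceil}C(N)\supseteq \mathfrak m^{\lceil 2t/q\rceil+t}N\supseteq \mathfrak m^{2t}N.
\]
By induction, $\mathfrak m^{2t}N\subseteq C^e(N)$ for all $e\ge0$. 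So $(C^e(N))_e$ is a descending chain in the finite-length module $N/\mathfrak m^{2t}N$, and it stabilizes. With this inserted, and the garbled length-formula digression deleted, your argument is a complete proof.
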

\begin{defn}
    Let $M$ be a coherent $q$-Cartier module on $X$. We define the $q$-Cartier submodule $\underline{M}$ by  
\begin{equation}\label{Mbar}
\underline{M}=\bigcap_{m\geq1}C^m(M).
\end{equation} 
\end{defn}

By Proposition~\ref{F-pure submodule}, $\underline{M}$ coincides with $C^m(M)$ for all sufficiently large $m$. The $\cO_X$-module 
$\underline{M}$ is therefore coherent, since it is the image of the $\cO_X$-linear morphism $C^m\colon (F_q^m)_\ast M\to M$ between coherent $\cO_X$-modules. 

\begin{lem}\label{lem: underline M}
    Let $M$ be a coherent $q$-Cartier module on $X$. The $\cO_X$-submodule $\underline{M}$ satisfies the following properties: 
\begin{enumerate}
    \item $\underline{M}=C(\underline{M})$. 
    \item $C^m(M)\subset\underline{M}$ for all sufficiently large $m$. 
\end{enumerate}
Moreover,  $\underline{M}$ is uniquely characterized by these properties. 
\end{lem}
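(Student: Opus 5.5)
By Proposition~\ref{F-pure submodule}, I fix an integer $m_0>0$ with $C^{m_0}(M)=C^{m_0+1}(M)$. An immediate induction then gives $C^{m}(M)=C^{m_0}(M)$ for every $m\ge m_0$: if $C^{m}(M)=C^{m_0}(M)$, apply $C$ to get $C^{m+1}(M)=C^{m_0+1}(M)=C^{m_0}(M)$. Since the chain $C^m(M)$ is descending, it follows that
\[
\underline{M}=\bigcap_{m\ge1}C^m(M)=C^{m_0}(M)=C^m(M)\quad\text{for all } m\ge m_0 .
\]
Both properties follow at once. For (1), $C(\underline{M})=C(C^{m_0}(M))=C^{m_0+1}(M)=\underline{M}$. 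For (2), $C^m(M)=\underline{M}$ for all $m\ge m_0$, so in particular $C^m(M)\subset\underline{M}$.

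For uniqueness, let $N\subset M$ be an $\cO_X$-submodule (automatically a subset of $M$) with $C(N)=N$, and suppose $C^{m_1}(M)\subset N$ for all $m_1$ sufficiently large. I show $N=\underline{M}$ by proving both inclusions.

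First, from $C(N)=N$ I get $N=C^m(N)\subset C^m(M)$ for every $m\ge1$, hence $N\subset\bigcap_m C^m(M)=\underline{M}$.

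Second, choosing $m$ larger than both $m_0$ and the threshold in (2) for $N$, I get $\underline{M}=C^m(M)\subset N$.

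Together these give $N=\underline{M}$. There is no real obstacle. The only point needing care is that the stabilization supplied by Proposition~\ref{F-pure submodule} at a single index $m_0$ propagates to all larger indices, and this is the induction carried out at the start.
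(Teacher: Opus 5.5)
Your proof is correct and follows the same route as the paper: use the stabilization from Proposition~\ref{F-pure submodule} to get $\underline{M}=C^m(M)$ for all $m\gg0$, read off (1) and (2), and prove uniqueness via $C^m(M)\subset N$ for one inclusion and $N=C^m(N)\subset C^m(M)$ for the other. The only difference is that you write out the induction propagating stabilization from one index to all larger ones, which the paper leaves implicit.
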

In other words, $\underline{M}$ is the unique $F$-pure submodule such that the quotient $M/\underline{M}$ is nilpotent. 
\begin{proof}
The submodule $\underline{M}$ satisfies the stated properties since $\underline{M}=C^m(M)$ 
for all sufficiently large $m$. 

Let $M'$ be an $\cO_X$-submodule of $M$ satisfying~(1) and~(2). By~(2), we have $C^m(M)\subset M'$ for all sufficiently large $m$, and hence $\underline{M}\subset M'$. 
 On the other hand, by~(1), we have $M'=C^m(M')$ for all $m\ge0$, which shows $M'\subset \underline{M}$. Therefore, $M'=\underline{M}$. 
\end{proof}

\subsection{Review of Gauges}
In this subsection, we briefly review  Anderson's theory of gauges. We follow \cite[Section~4]{Bli13}, where Blickle generalizes the theory to the $F$-finite field case.  

Let $k$ be an $F$-finite field, and let $R$ be a finitely generated $k$-algebra. Fix generators $t_1,\dots,t_n\in R$ as a $k$-algebra. Let $M$ be a finitely generated $R$-module, and fix generators  $x_1,\dots,x_r\in M$ as an $R$-module. Then we have 
\[
M=\sum_{j=1}^r\sum_{i_1,\dots,i_n\in\Z_{\ge0}}k\cdot t_1^{i_1}\cdots t_n^{i_n}\cdot x_j. 
\]

For an integer $d\ge0$, let $M_{\le d}$ denote the $k$-subspace of $M$ generated by the elements $t_1^{i_1}\cdots t_n^{i_n}\cdot x_j$ with $i_1+\cdots+i_n\le d$ and $j=1,\dots,r$. 
For any $x\in M$, let $\delta(x)$ be the integer $d\ge0$ satisfying 
\[
x\in M_{\le d}\setminus M_{\le d-1}. 
\]
Here, we set $M_{\le -1}=\emptyset$. The function $\delta$ is called a \emph{gauge} on $M$. 

We say that an $R$-submodule $N$ of $M$ is \emph{generated in gauge $\le d$} if $N\cap M_{\le d}$ generates $N$ as an $R$-module. Since $N$ is finitely generated and $M=\bigcup_dM_{\le d}$, it follows that $N$ is generated in gauge $\le d$ for some $d$.

Now assume that $M$ is equipped with the structure of a $q$-Cartier module, that is, an additive map 
\[
C\colon M\to M
\]
satisfying $C(a^{q}x)=a\,C(x)$. We recall the following fundamental result from \cite[Section~4]{Bli13}. 
\begin{thm}\label{thm: generated in gauge d}
There exists a constant $K\ge0$ with the following property: for any positive integer $m\ge1$ and for any $R$-submodule $N\subset M$ that is generated in gauge $\le d$, the submodule $C^m(N)$ is generated in gauge 
\[\le \frac{d}{q^m}+\frac{K}{q-1}+1. 
\]
\end{thm}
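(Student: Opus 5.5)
We prove this in the style of Anderson and Blickle: first a one-step estimate for $C$, then an iteration that sums a geometric series. Throughout, $M_{\le d}$ is the $k$-span of the monomials $t^{I}x_j$ with $|I|\le d$, and $R_{\le d}$ is the $k$-span of the monomials $t^I$ with $|I|\le d$.

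\emph{One-step estimate.} Since $k$ is $F$-finite, $k$ is finite over $k^q$. Choose a $k^q$-basis $\lambda_1,\dots,\lambda_s$ of $k$. Every monomial $t^I$ can be written as $t^I=(t^{J})^q t^{L}$ with $0\le L_i<q$ and $|J|\le |I|/q$. Hence every element of $M_{\le d}$ is a $k$-combination of terms
\[
(t^{J})^q\, t^{L} x_j,\qquad |J|\le d/q .
\]
Apply the $q$-linearity rule $C(a^q y)=aC(y)$, with $a=c^{1/q}$ expanded in the basis as $c=\sum_l a_l^q\lambda_l$. This gives
\[
C(M_{\le d})\subset \sum_{|J|\le d/q} R_{\le 0}\, t^J\cdot \Bigl(\sum_{l,L,j}k\cdot C(\lambda_l t^L x_j)\Bigr).
\]
The elements $C(\lambda_l t^L x_j)$ form a finite set, so they all lie in $M_{\le K}$ for a single constant $K$. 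We conclude
\[
C(M_{\le d})\subset M_{\le d/q+K}.
\]

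\emph{Passing to submodules.} Let $N$ be generated by $N\cap M_{\le d}$, so that $N=\sum_I t^I\,(N\cap M_{\le d})$. Then
\[
C(t^I y)=C\bigl(t^{qJ}t^{L}y\bigr)=t^J\,C(t^L y).
\]
Here $t^L y\in N\cap M_{\le d+n(q-1)}$, because $|L|\le n(q-1)$. Therefore $C(N)$ is generated by $C\bigl(N\cap M_{\le d+n(q-1)}\bigr)$, and by the one-step estimate these generators have gauge $\le (d+n(q-1))/q+K$. Absorbing $n(q-1)/q$ into the constant, we get the following: after enlarging $K$ (depending only on $R$, the $t_i$, the $x_j$ and $C$), if $N$ is generated in gauge $\le d$, then $C(N)$ is generated in gauge $\le d/q+K$.

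A subtlety is that a generator of gauge $\le d/q+K$ may itself be one of the elements $C(t^Ly)$, so it really lies in $C(N)\cap M_{\le\cdot}$. This is consistent with the definition of ``generated in gauge'', which asks for generators taken from $C(N)$ itself.

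\emph{Iteration.} Apply the estimate $m$ times. The bound after $m$ steps is
\[
\frac{d}{q^m}+K\Bigl(1+\frac1q+\cdots+\frac{1}{q^{m-1}}\Bigr)\le \frac{d}{q^m}+\frac{Kq}{q-1}=\frac{d}{q^m}+\frac{K}{q-1}+K.
\]
Replacing $K$ by a suitable multiple, and using that the gauge is an integer (so we may round up, which accounts for the $+1$), this fits the stated form $d/q^m+K'/(q-1)+1$.

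\emph{Main obstacle.} The delicate point is the one-step estimate in the non-perfect field case. The issue is that $C$ is only $q^{-1}$-linear over $k$, so scalars must be decomposed along the chosen $k^q$-basis $\lambda_1,\dots,\lambda_s$. This is precisely where $F$-finiteness of $k$ is used: it makes the set of elements $C(\lambda_l t^L x_j)$ finite, and that finiteness is what produces a uniform constant $K$.
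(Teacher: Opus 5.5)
Your argument is correct. It uses the same two ingredients as the chain of results from \cite{Bli13} that the paper cites, but it assembles them in the opposite order.

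The paper first iterates an elementwise bound: Blickle's Proposition~4.5 and Corollary~4.6 give $\delta(C^m(x))\le \delta(x)/q^m+K/(q-1)$ uniformly in $m$. It then passes to submodules only once, by applying Lemma~4.7 to the single map $C^m$. You pass to submodules already for $C$, paying the loss $|L|/q<n$ coming from the factors $t^L$. You then iterate at the level of submodules, where these per-step losses are damped by the geometric series.

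The paper's order pays for the factors $t^L$ only once, but it needs the elementwise estimate for every power $C^m$. Yours avoids Corollary~4.6 altogether and is self-contained, at the cost of a constant of the form $q(K+n)/(q-1)$.

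That enlargement of the constant is genuinely needed with the total-degree gauge defined in the paper. Take $R=M=\F_3[t_1,t_2]$ with $x_1=1$ and $C(\sum_I a_It^I)=\sum_J a_{3J}t^J$. Then $\delta(C^m(x))\le\delta(x)/3^m$, so $K=0$ is admissible in the elementwise bound. Yet $C^m(t_1t_2R)=t_1t_2R$ contains no nonzero element of gauge $\le 1$. On the other hand, the statement with $K=0$ and $d=2$ would demand generation in gauge $\le 2/3^m+1<2$.

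So your explicit absorption of $n$ into the constant is exactly what makes the statement come out right; the statement only asserts that some $K$ exists. The final ``$+1$'' is then mere slack, since generation in gauge $\le D$ is monotone in $D$.
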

\begin{proof}
By \cite[Proposition~4.5]{Bli13}, there exists a constant $K\ge0$ such that for all $x\in M$, 
\[
\delta(C(x))\le \frac{\delta(x)}{q}+\frac{K}{q-1}. 
\]
Moreover, by \cite[Corollary~4.6]{Bli13}, the same constant $K$ satisfies, for all $m\ge1$ and all $x\in M$,
\[
\delta(C^m(x))\le \frac{\delta(x)}{q^m}+\frac{K}{q-1}.
\]

Applying \cite[Lemma~4.7]{Bli13}  to $\varphi=C^m$, we obtain the desired statement. 
\end{proof}

\subsection{\texorpdfstring{The $\nu$-invariants}{The nu-invariants}}
Let $\cI$ be an ideal sheaf of $\cO_X$ that is locally generated by a single element. 
In this subsection, we generalize the definition of the $\nu$-invariants with respect to $\cI$ to $\cO_X$-submodules of $q$-Cartier modules. 

Let $(M,C)$ be a $q$-Cartier module on $X$, and let $N\subset M$ be an $\cO_X$-submodule. 

\begin{defn}\label{defn: nu}
For each integer $m\geq1$, we define  
\[
\nu_{\cI}(q^m;N):=\{n\in\Z_{\geq0}\mid C^m(\cI^nN)\neq C^m(\cI^{n+1}N)\}. 
\]
We call $\nu_{\cI}(q^m;N)$ the \emph{set of $\nu$-invariants of level $m$}. 

When $\cI$ is generated by an element $f\in\Gamma(X,\cO_X)$, we also write 
\[
\nu_{f}(q^m;N)\coloneqq\nu_{\cI}(q^m;N). 
\]
\end{defn}

\begin{remark}\label{rem: classical nu invariant}
The relationship between the above definition of $\nu$-invariants and that found in the literature is as follows.  

Let $R$ be a regular $F$-finite $\F_p$-algebra, and let $\mathfrak{a}$ be an ideal of $R$. For a proper ideal $J\subset R$ such that $\mathfrak{a} \subset \sqrt{J}$, the integer $\nu_{\mathfrak{a}}^J(q^m)$ is defined. This definition is given in \cite{MTW05} when $R$ is local, and in \cite{QG21} in the general case.  In \cite[Definition~4.1]{QG21}, the set 
\[\{\nu_{\mathfrak{a}}^J(q^m)\mid \text{$J\subset R$ is as above}\}
\]is called the set of $\nu$-invariants. 

Let $F_{q\ast}R$ denote $R$ viewed as an $R$-algebra via the Frobenius homomorphism $R\xrightarrow{F_q}R$. Note that the $F_{q\ast}R$-module ${\rm Hom}_{R}(F_{q\ast}R,R)$ is invertible (see Lemma~\ref{lem: generator}(1)). 
We assume that it is free of rank one as an $F_{q\ast}R$-module, and fix a generator $C$. Then the pair $(R,C)$ defines a $q$-Cartier module. 

If the ideal $\mathfrak{a}$ is locally principal, then we have the equality 
\[
\{\nu_{\mathfrak{a}}^J(q^m)\mid \text{$J\subset R$ is as above}\}=\nu_{\mathfrak{a}}(q^m;R), 
\]
where the right-hand side is defined by taking $N=R$ as a submodule of the $q$-Cartier module $(R,C)$. This is proved in \cite[Proposition~4.2]{QG21}. 
\end{remark}

\begin{lem}\label{basic_nu}
The following statements hold: 
\begin{enumerate}
    \item The sets $\nu_{\cI}(q^m;N)$ form a descending chain 
    \[
    \nu_{\cI}(q;N)\supset \nu_{\cI}(q^2;N)\supset \nu_{\cI}(q^3;N)\supset\cdots. 
    \]
    \item If $ n\in \nu_{\cI}(q^m;N)$ and $n\ge q^m$, then $n-q^m\in \nu_{\cI}(q^m;N)$. 
\end{enumerate}
\end{lem}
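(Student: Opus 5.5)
Both statements come straight from the two basic identities $C^{m+1}=C\circ C^m$ and $C^m(a^{q^m}x)=a\,C^m(x)$. The second identity is the $m$-fold iterate of the defining property of a $q$-Cartier operator. Both are local on $X$, since $\cI$ is locally principal and equality of submodules can be checked locally. We may therefore assume that $X$ is affine and that $\cI=f\cO_X$ for a single $f$.

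\emph{Proof of (1).} We argue by contraposition. Suppose $n\notin\nu_{\cI}(q^m;N)$, i.e.\ $C^m(\cI^nN)=C^m(\cI^{n+1}N)$. Applying $C$ to both sides gives
\[
C^{m+1}(\cI^nN)=C\bigl(C^m(\cI^nN)\bigr)=C\bigl(C^m(\cI^{n+1}N)\bigr)=C^{m+1}(\cI^{n+1}N),
\]
so $n\notin\nu_{\cI}(q^{m+1};N)$. Here we use that $C^k(P)$ for an $\cO_X$-submodule $P$ means the image $\cO_X$-submodule. This image is an $\cO_X$-submodule because $aC(x)=C(a^qx)$. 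Since equal submodules have equal images, the displayed equalities hold.

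\emph{Proof of (2).} The key point is the identity $C^m(f^{q^m}P)=f\,C^m(P)$ for any $\cO_X$-submodule $P$, which follows from $C^m(f^{q^m}x)=fC^m(x)$. We apply it with $P=\cI^{n-q^m}N$ and with $P=\cI^{n-q^m+1}N$, using $n\ge q^m$. This gives
\[
C^m(\cI^nN)=f\,C^m(\cI^{n-q^m}N),\qquad C^m(\cI^{n+1}N)=f\,C^m(\cI^{n-q^m+1}N).
\]
Now suppose $n-q^m\notin\nu_{\cI}(q^m;N)$, so the two submodules $C^m(\cI^{n-q^m}N)$ and $C^m(\cI^{n-q^m+1}N)$ coincide. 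Multiplying by $f$ then makes the right-hand sides equal, so $n\notin\nu_{\cI}(q^m;N)$. This is the contrapositive of (2).

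The only step needing care is the compatibility with localization, i.e.\ that forming $C^m$-images of submodules commutes with restriction to opens. This holds because $C^m\colon F^m_{q\ast}M\to M$ is $\cO_X$-linear and $F_q$ is the identity on the underlying space. Beyond that, both statements are formal.
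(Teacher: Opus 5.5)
Your proof is correct and follows the paper's argument. For (2), the paper also reduces locally to $\cI=f\cO_X$ and uses the identity $C^m(f^{q^m}P)=f\,C^m(P)$; you simply phrase it contrapositively. For (1), the paper just says it is immediate from the definition, and that is exactly the observation $C^{m+1}=C\circ C^m$ that you spell out.
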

\begin{proof}
(1) This follows immediately from the definition.

    (2) Since the claim can be proved locally, we may assume that $\cI$ is generated by a global section $f\in\Gamma(X,\cO_X)$. 
    The claim then follows from the computation 
    \[
    f\cdot C^m(f^{n-q^m}N)=C^m(f^nN)\neq C^m(f^{n+1}N)=f\cdot C^m(f^{n-q^m+1}N). 
    \]
\end{proof}

We now define the $\nu$-invariants of infinite level. 
\begin{defn}\label{defn: nu infty}
Let $(M,C)$ be a $q$-Cartier module on $X$, and let $N\subset M$ be an $\cO_X$-submodule. 
\begin{enumerate}
    \item A \emph{$\nu$-invariant of $N$ with respect to $\cI$ of infinite level} is a $p$-adic integer $\alpha\in \Z_p$ such that for any integer $m\geq1$, there exists an integer $n\in\nu_{\cI}(q^m;N)$ satisfying 
\[
\alpha-n\in q^m\Z_p. 
\] 
\item We denote by $\nu_\cI(q^\infty;N)$ the set of all $\nu$-invariants of infinite level. 

When $\cI$ is generated by an element $f\in\Gamma(X,\cO_X)$, we also write 
\[\nu_f(q^\infty;N)\coloneqq\nu_\cI(q^\infty;N).
\]
\end{enumerate}
\end{defn}

In other words, $\alpha\in\Z_p$ is a $\nu$-invariant of infinite level if it can be approximated $p$-adically by $\nu$-invariants of finite level. A more precise formulation is given below. 

For a $p$-adic integer $\alpha\in\Z_p$ and an integer $m\geq1$, 
we denote by $\alpha_m$ the unique integer in $[0,q^m)$ satisfying 
\[
\alpha-\alpha_m\in q^m\Z_p. 
\]

\begin{lem}\label{BSRnu}
The following statements hold: 
\begin{enumerate}
    \item The assignment $a\mapsto a_m$ induces a well-defined map 
    \[
    \nu_{\cI}(q^{m+1};N)\cap [0,q^{m+1})\to \nu_{\cI}(q^m;N)\cap [0,q^m). 
    \]
    \item The assignment $\alpha\mapsto (\alpha_m)_m$ induces a bijection 
    \[
   \nu_{\cI}(q^\infty;N)\to \varprojlim_{m\geq1}\Big(\nu_{\cI}(q^m;N)\cap [0,q^m)\Big), 
    \]
    where the transition maps in the projective limit are those defined in~(1).  
\end{enumerate}
\end{lem}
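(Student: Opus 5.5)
For part (1), we take $a\in\nu_{\cI}(q^{m+1};N)\cap[0,q^{m+1})$ and show that $a_m\in\nu_{\cI}(q^m;N)$. Since $0\le a<q^{m+1}$, we can write $a=a_m+kq^m$ with $k\ge0$.

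First, Lemma~\ref{basic_nu}(1) gives $a\in\nu_{\cI}(q^m;N)$.

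Next, we apply Lemma~\ref{basic_nu}(2) repeatedly. As long as the current element is at least $q^m$, subtracting $q^m$ keeps it inside $\nu_{\cI}(q^m;N)$. After $k$ steps we reach $a_m$, so $a_m\in\nu_{\cI}(q^m;N)\cap[0,q^m)$.

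This descent is the one place where an earlier result is used in a non-formal way. It is also where we need $\cI$ to be locally principal, which is built into Lemma~\ref{basic_nu}(2). The truncation maps are compatible, $(a_{m+1})_m=a_m$, so the $\nu$-invariant sets form an inverse system.

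For part (2), the map is well defined by a similar argument. Let $\alpha\in\nu_{\cI}(q^\infty;N)$ and fix $m$. By definition there is $n\in\nu_{\cI}(q^m;N)$ with $n\equiv\alpha\pmod{q^m}$. The same repeated subtraction via Lemma~\ref{basic_nu}(2) reduces $n$ to $n_m=\alpha_m$, so $\alpha_m\in\nu_{\cI}(q^m;N)\cap[0,q^m)$. The family $(\alpha_m)_m$ is compatible, $(\alpha_{m+1})_m=\alpha_m$, and therefore lies in the inverse limit.

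Injectivity holds because $\Z_p\cong\varprojlim\Z/q^m$: a $p$-adic integer is determined by its residues $\alpha_m$.

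For surjectivity, take a compatible family $(a_m)_m$ in the limit. Since the $a_m$ are compatible residues, they define a unique $\alpha\in\Z_p$ with $\alpha_m=a_m$ for all $m$. Then for each $m$ the integer $n=a_m$ lies in $\nu_{\cI}(q^m;N)$ and satisfies $\alpha-n\in q^m\Z_p$. Hence $\alpha\in\nu_{\cI}(q^\infty;N)$.

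There is no real obstacle here. The only substantive input is the downward-shift closure in Lemma~\ref{basic_nu}(2), which is what lets us replace an arbitrary representative $n$ by the canonical one in $[0,q^m)$.
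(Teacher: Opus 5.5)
Your proposal is correct and follows essentially the same route as the paper. In both, part (1) combines Lemma~\ref{basic_nu}(1) with repeated downward shifts by $q^m$ from Lemma~\ref{basic_nu}(2). Part (2) uses the same shifts for well-definedness, uniqueness of $p$-adic expansions for injectivity, and the $p$-adic limit of the compatible family for surjectivity.
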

\begin{proof}
    (1) Let $a\in \nu_{\cI}(q^{m+1};N)$. By Lemma~\ref{basic_nu}(1), we have $a\in\nu_{\cI}(q^{m};N)$. The claim then follows from Lemma~\ref{basic_nu}(2), since $a-a_m$ is a multiple of $q^m$.

    (2) Let $\alpha\in\nu_{\cI}(q^\infty;N)$. By definition, for every integer $m\geq1$, there exists $n\in\nu_{\cI}(q^m;N)$ such that $\alpha-n\in q^m\Z_p$. Applying Lemma \ref{basic_nu}(2) repeatedly, we obtain 
    \[\alpha_m\in \nu_{\cI}(q^m;N)\cap [0,q^m).\]
    Hence the map $\alpha\mapsto (\alpha_m)_m$ is well-defined. 

    Injectivity follows from the uniqueness of the $p$-adic expansion. 
    For surjectivity, let $(n_m)_m$ be an element of  
\[\varprojlim_{m\geq1}\Big(\nu_{\cI}(q^m;N)\cap [0,q^m)\Big). \]
By the definition of the transition maps, the sequence $(n_m)_{m\geq1}$ converges $p$-adically to some  element $\alpha\in\Z_p$. 
Since $\alpha-n_m\in q^m\Z_p$ for all $m\ge1$, we have $\alpha\in\nu_{\cI}(q^\infty;N)$, and its image under the above map is $(n_m)_m$. 
\end{proof}

We show that $\nu$-invariants are local invariants. 
\begin{lem}\label{lem: BSR is local}
    Let $X=\bigcup_{i\in I}U_i$ be an open covering. Then 
    \begin{align}
        &\label{nu=cup}\nu_{\cI}(q^m;N)=\bigcup_{i\in I}\nu_{\cI|_{U_i}}(q^m;N|_{U_i})\qquad(m\ge1),\\
        &\label{BSR=cup}\nu_{\cI}(q^\infty;N)=\bigcup_{i\in I}\nu_{\cI|_{U_i}}(q^\infty;N|_{U_i}). 
    \end{align}
\end{lem}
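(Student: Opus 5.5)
The plan is to prove \eqref{nu=cup} first and then deduce \eqref{BSR=cup} from it.

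For \eqref{nu=cup}, the point is that $C^m(\cI^nN)$ is a subsheaf of $M$ whose formation commutes with restriction to open subsets. Since $F_q$ is the identity on the underlying space, $C^m$ is a morphism of sheaves of abelian groups. Hence the image sheaf $C^m(\cI^nN)$ restricts on $U_i$ to the image of $C^m|_{U_i}$ applied to $(\cI|_{U_i})^n N|_{U_i}$; here I use that the product sheaf $\cI^nN$ is also compatible with restriction. Two subsheaves of $M$ are equal if and only if their restrictions to every member of an open cover are equal. Therefore $C^m(\cI^nN)\neq C^m(\cI^{n+1}N)$ holds if and only if it holds after restriction to some $U_i$, which is exactly \eqref{nu=cup}.

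For \eqref{BSR=cup}, the inclusion $\supset$ is immediate. If $\alpha\in\nu_{\cI|_{U_i}}(q^\infty;N|_{U_i})$, then for each $m$ there is some $n\in\nu_{\cI|_{U_i}}(q^m;N|_{U_i})\subset\nu_\cI(q^m;N)$, by \eqref{nu=cup}, with $\alpha-n\in q^m\Z_p$.

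The converse $\subset$ is the main obstacle, because the index $i$ witnessing membership could a priori depend on $m$. I would handle it with a finiteness and pigeonhole argument.
\begin{enumerate}
\item Since $X$ is Noetherian, hence quasi-compact, I cover it by finitely many affine opens $V_1,\dots,V_r$, each contained in some $U_i$ and each small enough that $\cI|_{V_j}$ is principal.
\item Applying \eqref{nu=cup} to the cover $\{V_j\}$, and again to the cover of $V_j$ given by $\{V_j\}$ and $\{V_j\cap U_i\}$, shows that $\nu_{\cI|_{V_j}}(q^m;N|_{V_j})\subset\nu_{\cI|_{U_i}}(q^m;N|_{U_i})$ whenever $V_j\subset U_i$. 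So it suffices to prove the claim for the finite cover $\{V_j\}$.
\item Let $\alpha\in\nu_\cI(q^\infty;N)$. By Lemma~\ref{BSRnu}(2), $\alpha_m\in\nu_\cI(q^m;N)$ for every $m$, so by \eqref{nu=cup} there is an index $j(m)$ with $\alpha_m\in\nu_{\cI|_{V_{j(m)}}}(q^m;N|_{V_{j(m)}})$.
\item Since there are only finitely many indices, some single $j$ occurs as $j(m)$ for infinitely many $m$.
\item Fix that $j$ and let $m'$ be arbitrary. Choose $m\ge m'$ with $j(m)=j$. Then $\alpha_m$ lies in the level-$m$ set on $V_j$, which is contained in the level-$m'$ set by Lemma~\ref{basic_nu}(1). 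Moreover $\alpha-\alpha_m\in q^m\Z_p\subset q^{m'}\Z_p$.
\end{enumerate}
Hence $\alpha\in\nu_{\cI|_{V_j}}(q^\infty;N|_{V_j})$. The monotonicity in Lemma~\ref{basic_nu}(1) is what lets a subsequence of levels suffice, and this completes the proof of \eqref{BSR=cup}.
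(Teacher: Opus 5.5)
Your proof is correct and follows the paper's argument. Statement \eqref{nu=cup} comes from the same observation that subsheaves agree if and only if they agree on each member of a cover, and the reverse inclusion in \eqref{BSR=cup} comes from quasi-compactness, pigeonhole on the index $j(m)$, and the monotonicity of Lemma~\ref{basic_nu}(1). Refining to affine opens on which $\cI$ is principal, and with it your step~2, is unnecessary: the paper simply passes to a finite subcover $\{U_i\}_{i\in J}$. Your justification of step~2 is muddled, but the inclusion $\nu_{\cI|_{V}}(q^m;N|_V)\subset\nu_{\cI|_{U}}(q^m;N|_U)$ for open $V\subset U$ is immediate from restriction.
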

\begin{proof}
    Let $n\ge0$ be an integer. Since $\{U_i\}_i$ is an open covering, the condition 
    \[C^m(\cI^nN)\neq C^m(\cI^{n+1}N)\]
    holds if and only if there exists some $i\in I$ such that 
    \[
    C^m(\cI^nN)|_{U_i}\neq C^m(\cI^{n+1}N)|_{U_i}. 
    \]
    This proves \eqref{nu=cup}. 

    We now prove \eqref{BSR=cup}. By \eqref{nu=cup} and Lemma~\ref{BSRnu}(2), we obtain   
    \[
    \bigcup_{i\in I}\nu_{\cI|_{U_i}}(q^\infty;N|_{U_i})\subset \nu_{\cI}(q^\infty;N). 
    \]
    For the reverse inclusion, since $X$ is quasi-compact, we may choose a finite subset $J\subset I$ such that $\{U_i\}_{i\in J}$ covers $X$. Replacing $I$ by $J$, we may assume that 
    $I$ is finite. 
    
    Let $\alpha\in\nu_{\cI}(q^\infty;N)$. Then for every $m\ge1$, we have 
    \[\alpha_m\in \nu_{\cI}(q^m;N).\]
    By \eqref{nu=cup}, there exists  $i(m)\in I$ such that 
    \[
    \alpha_m\in \nu_{\cI|_{U_{i(m)}}}(q^m;N|_{U_{i(m)}}). 
    \]
    Since $I$ is finite, there exist an index $i\in I$ and an increasing sequence of integers 
    \[
1\le m_1<m_2<\cdots,\qquad\lim_{n\to\infty}m_n=\infty,
    \]
    such that $i=i(m_n)$ for all $n$. By Lemma~\ref{BSRnu}(2), it follows that $\alpha\in \nu_{\cI|_{U_i}}(q^\infty;N|_{U_i})$. This proves~\eqref{BSR=cup}. 
\end{proof}

\medskip

From now on, let  $(M,C)$ be a coherent $q$-Cartier module on $X$, and let $N\subset M$ be a coherent $\cO_X$-submodule. The following assumption on $\big((M,C),N\big)$ will play a crucial role in what follows. 
\begin{assumption}\label{assumption: nu invariant is bound}
    There exists a constant $L\geq0$ such that 
    \[
    \#\Big(\nu_{\cI}(q^m;N)\cap [0,q^m)\Big)\leq L
    \]
    for all $m\geq1$. 
\end{assumption}

The theory of gauge provides a situation in which this assumption is automatically satisfied. 
\begin{prop}\label{consBl}
Suppose that there exists an open covering  
    \[
    X=\bigcup_iU_i, 
    \]
    where each $U_i$ is of finite type over an $F$-finite field. Then 
Assumption~\ref{assumption: nu invariant is bound} holds. 
\end{prop}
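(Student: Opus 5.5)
The plan is to reduce to an affine situation where Blickle's gauge bound (Theorem~\ref{thm: generated in gauge d}) applies. There, the modules $C^m(f^nN)$ for $0\le n\le q^m$ will all be generated in gauge bounded independently of $m$. A strictly decreasing chain of such modules then has length bounded by the dimension of a fixed finite-dimensional $k$-vector space.

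\textbf{Step 1 (reduction to the affine case).} Refine the given covering so that each member is an affine open $U=\operatorname{Spec}R$, with $R$ a finitely generated algebra over an $F$-finite field $k$, and $\cI|_U=fR$ for some $f\in R$. Affine opens of a finite type $k$-scheme are again of finite type over $k$, and $\cI$ is locally principal, so such a refinement exists. Since $X$ is Noetherian, hence quasi-compact, finitely many such $U$ suffice. By \eqref{nu=cup} of Lemma~\ref{lem: BSR is local},
\[
\#\Big(\nu_{\cI}(q^m;N)\cap[0,q^m)\Big)\le\sum_U \#\Big(\nu_{f}(q^m;N|_U)\cap[0,q^m)\Big).
\]
So it suffices to find, for each $U$, a bound $L_U$ independent of $m$. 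Write $M$ and $N$ for the finitely generated $R$-modules of sections over $U$.

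\textbf{Step 2 (gauge bookkeeping).} Fix $k$-algebra generators $t_1,\dots,t_n$ of $R$ and $R$-module generators $x_1,\dots,x_r$ of $M$, and let $\delta$ be the associated gauge. Choose the following constants.
\begin{itemize}
\item Choose $d_0$ with $N$ generated in gauge $\le d_0$.
\item Choose $c$ with $f$ a polynomial of total degree $\le c$ in the $t_i$. Then $f\cdot M_{\le d}\subset M_{\le d+c}$.
\end{itemize}
It follows that $f^jN$ is generated in gauge $\le d_0+jc$. For $0\le j\le q^m$, Theorem~\ref{thm: generated in gauge d} shows that $C^m(f^jN)$ is generated in gauge
\[
\le \frac{d_0+q^mc}{q^m}+\frac{K}{q-1}+1\le d_0+c+\frac{K}{q-1}+1=:D .
\]
Here $C^m(f^jN)$ is an $R$-submodule, being the image of the $R$-linear map $C^m\colon F^m_{q\ast}(f^jN)\to M$. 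The key point is that $D$ does not depend on $m$.

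\textbf{Step 3 (counting).} For $0\le j\le q^m$, the modules $C^m(f^jN)$ form a descending chain, since $f^{j+1}N\subset f^jN$. Suppose $P_1\supsetneq P_2$ are submodules both generated in gauge $\le D$. Then $P_1\cap M_{\le D}\supsetneq P_2\cap M_{\le D}$: equal intersections would generate equal modules.

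Hence each $n\in\nu_f(q^m;N)\cap[0,q^m)$ produces a strict drop in the chain of $k$-subspaces $C^m(f^nN)\cap M_{\le D}$ of $M_{\le D}$. The number of such drops is at most
\[
\dim_kM_{\le D}\le r\binom{D+n}{n}=:L_U .
\]
This bound is independent of $m$, which gives Assumption~\ref{assumption: nu invariant is bound}.

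The only real input is Theorem~\ref{thm: generated in gauge d}. The point needing care is that the additive gauge growth $jc\le q^mc$ is divided by $q^m$, which is why the bound $D$ is uniform in $m$.
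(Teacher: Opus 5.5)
Your proof is correct and follows the paper's argument. Both reduce to the affine principal case via Lemma~\ref{lem: BSR is local}, use Theorem~\ref{thm: generated in gauge d} to show that every $C^m(f^jN)$ is generated in a gauge bounded independently of $m$, and bound the number of strict drops by $\dim_k M_{\le D}$. The differences are cosmetic:
\begin{itemize}
\item The paper takes $t_1=f$, so multiplication by $f$ raises the gauge by at most $1$; you allow $f$ to have degree $c$ instead.
\item You include $j=q^m$ in the chain, which makes the count at $n=q^m-1$ explicit.
\item You sum the bounds over a finite affine cover, which spells out the localization step the paper only invokes.
\end{itemize}
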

\begin{proof}
 By Lemma~\ref{lem: BSR is local}, the statement is local. Hence 
 it suffices to treat the case where $X={\rm Spec}(R)$, for a finitely generated algebra $R$ over  an $F$-finite field $k$. We may further assume that $\cI$ is generated by an element $f\in R$.

By abuse of notation, we write $M$ and $N$ for the $R$-modules $\Gamma(X,M)$ and $\Gamma(X,N)$, respectively. We also write  $C$ for the induced $q$-Cartier operator on $M$. 

Set $t_1=f$, and choose elements $t_2,\dots,t_n\in R$ such that $t_1,\dots,t_n$ generate $R$ as a $k$-algebra. Choose generators of $M$ as an $R$-module, and let $\delta$ denote the induced gauge on $M$. Let $K$ be a constant satisfying the condition in Theorem~\ref{thm: generated in gauge d}. 

Let $d\ge0$ be an integer such that $N$ is generated in gauge $\le d$. For each $a=0,\dots, q^m-1$, 
the module $f^aN$ is generated in gauge 
\[\leq a+d\le q^m-1+d. \]
Therefore, by Theorem~\ref{thm: generated in gauge d}, $C^m(f^aN)$ is generated in gauge 
\[
\leq 1+\frac{d-1}{q^m}+\frac{K}{q-1}+1\leq 2+d+\frac{K}{q-1} \eqqcolon K_1. 
\]
Consequently, the $R$-module $C^m(f^aN)$ is generated by 
\[
C^m(f^aN) \cap M_{\leq K_1}. 
\]
Put 
\[L\coloneqq\dim_kM_{\le K_1}+1. 
\]
Since $M_{\le K_1}$ is finite-dimensional over $k$, this is a finite integer. Then the descending chain 
\[
C^m(N) \cap M_{\leq K_1}\supset C^m(fN) \cap M_{\leq K_1}\supset\cdots
\]
of $k$-subspaces of $M_{\le K_1} $ has length at most $ L$. 
Hence the cardinality of the set 
\[\{C^m(f^aN)\mid a=0,\dots,q^m-1\}
\]
is bounded by $L$. Thus Assumption~\ref{assumption: nu invariant is bound} is satisfied. 
\end{proof}

 The following is an immediate consequence of Assumption~\ref{assumption: nu invariant is bound}. 
\begin{prop}
 \label{BSRfin}
 If Assumption~\ref{assumption: nu invariant is bound} holds, then the set $\nu_{\cI}(q^\infty;N)$ is finite. 
\end{prop}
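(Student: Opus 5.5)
The plan is to use the bijection of Lemma~\ref{BSRnu}(2),
\[
\nu_{\cI}(q^\infty;N)\xrightarrow{\ \sim\ }\varprojlim_{m\ge1}\Big(\nu_{\cI}(q^m;N)\cap[0,q^m)\Big),
\]
and bound the size of the projective limit by $L$. Write $S_m\coloneqq\nu_{\cI}(q^m;N)\cap[0,q^m)$. By Assumption~\ref{assumption: nu invariant is bound}, $\#S_m\le L$ for every $m\ge1$.

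The key step is elementary. Suppose, for contradiction, that $\nu_{\cI}(q^\infty;N)$ contains $L+1$ distinct elements $\alpha^{(0)},\dots,\alpha^{(L)}$. Distinct $p$-adic integers have distinct residues modulo $q^m$ once $m$ is large. So there is some $m$ for which the integers $\alpha^{(i)}_m\in[0,q^m)$ are pairwise distinct. By the proof of Lemma~\ref{BSRnu}(2), each $\alpha^{(i)}_m$ lies in $S_m$. This gives $\#S_m\ge L+1$, a contradiction. Hence $\#\nu_{\cI}(q^\infty;N)\le L$, and in particular the set is finite.

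Equivalently, one can argue that an inverse limit of sets each of cardinality at most $L$ has cardinality at most $L$: any $L+1$ distinct elements of the limit are already separated at some finite stage.

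I expect no step to be a real obstacle. The only point needing care is that membership $\alpha_m\in\nu_{\cI}(q^m;N)$ for each $m$ is exactly what Lemma~\ref{BSRnu}(2) provides, through the reduction of Lemma~\ref{basic_nu}(2), and this is already established.
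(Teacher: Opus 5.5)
Your proposal is correct and follows essentially the same approach as the paper, which proves this in one line by citing the bijection of Lemma~\ref{BSRnu}(2). You simply make explicit the elementary fact that an inverse limit of sets of cardinality at most $L$ has cardinality at most $L$, which also yields the sharper bound $\#\nu_{\cI}(q^\infty;N)\le L$.
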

\begin{proof}
The claim follows immediately from Lemma \ref{BSRnu}(2). 
\end{proof}

\smallskip

Our next goal is to show that $\nu_{\cI}(q^\infty;N)$ is in fact contained in $\Q$ under an additional condition on $N$. Recall that a $p$-adic integer is rational if and only if its $p$-adic expansion is eventually periodic. The following condition on $N$ arises naturally in the proof. 
\begin{defn}
A coherent $\cO_X$-submodule $N$ of $M$ is said to be \emph{$C$-preperiodic} if there exist integers $i,j>0$ such that 
\[
C^i(N)=C^{i+j}(N). 
\]
\end{defn}

We give criterions for $N$ to be $C$-preperiodic. 

\begin{lem}\label{lem: when N isperiodic}
    Let $N$ be a coherent $\cO_X$-submodule of $M$. Then the following hold: 
    \begin{enumerate}
        \item If $X$ is of finite type over $\F_p$, then $N$ is $C$-preperiodic. 
        \item If $N$ satisfies 
        \[
        C(N)\subset N\quad\text{or}\quad N\subset C(N), 
        \]
        then $N$ is $C$-preperiodic. In particular, any coherent $q$-Cartier submodule is $C$-preperiodic. 
    \end{enumerate}
\end{lem}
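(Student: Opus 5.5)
For (2), I treat the two cases separately and use Proposition~\ref{F-pure submodule} together with the stabilization it provides. Suppose first that $C(N)\subset N$. Then $N$ is a coherent $q$-Cartier submodule of $M$, so Proposition~\ref{F-pure submodule} applies to $(N,C|_N)$. Hence the chain $N\supset C(N)\supset C^2(N)\supset\cdots$ stabilizes, and $C^i(N)=C^{i+1}(N)$ for some $i>0$. This gives preperiodicity with $j=1$.

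Suppose next that $N\subset C(N)$. Applying $C^m$ gives the ascending chain $N\subset C(N)\subset C^2(N)\subset\cdots$ of $\cO_X$-submodules. Each $C^m(N)$ is the image of the $\cO_X$-linear map $C^m\colon (F^m_q)_\ast N\to M$. Since $X$ is $F$-finite, $(F_q^m)_\ast N$ is coherent, so each $C^m(N)$ is coherent. The whole chain lies inside the coherent module $M$. Because $X$ is Noetherian, and hence quasi-compact with Noetherian coherent sheaves, the ascending chain stabilizes. This again yields $C^i(N)=C^{i+1}(N)$. The final clause of (2) is just the first case.

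For (1), I reduce to the finite-field setting. Since $X$ is Noetherian, $X$ is quasi-compact. It therefore suffices to show that the collection $\{C^m(N)\}_{m\ge0}$ is finite, because then pigeonhole gives $i<i'$ with $C^i(N)=C^{i'}(N)$. If $C^i(N)$ agrees with some earlier $C^{i''}(N)$ with $i''\ge1$, this is the required identity; otherwise I shift indices by one by applying $C$.

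To prove finiteness, I would use the gauge bound of Theorem~\ref{thm: generated in gauge d} with $k=\F_p$, working on a finite affine cover $U=\operatorname{Spec}R$. The module $N|_U$ is generated in gauge $\le d$. By the theorem, every $C^m(N)|_U$ is generated by its intersection with $M_{\le K_1}$, where $K_1\coloneqq d+\frac{K}{q-1}+1$ is independent of $m$. Since $k=\F_p$ is finite, $M_{\le K_1}$ is a finite set. So there are only finitely many possible $R$-submodules generated by subsets of it, and hence only finitely many possibilities for $C^m(N)|_U$.

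Because a subsheaf of $M$ is determined by its restrictions to the finitely many $U$, there are only finitely many possibilities for $C^m(N)$. This completes the pigeonhole argument. The main step to check is the transition from coherent sheaves to the affine pieces in the gauge theory. There the restriction of $C$ to each affine open is again a $q$-Cartier operator, so Theorem~\ref{thm: generated in gauge d} applies on each piece. The essential point is that the finiteness of $k$ turns the dimension bound into an actual finiteness of the set of submodules.
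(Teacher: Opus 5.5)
Your proposal is correct and follows essentially the same route as the paper. For (2), you use Proposition~\ref{F-pure submodule} when $C(N)\subset N$ and Noetherianity of the coherent module $M$ when $N\subset C(N)$. For (1), you use the uniform gauge bound of Theorem~\ref{thm: generated in gauge d}, together with the finiteness of $M_{\le K_1}$ over $\F_p$, to get only finitely many possible $C^m(N)$. The only cosmetic difference is how you globalize: you note that a subsheaf is determined by its restrictions to a finite affine cover, while the paper simply declares the assertion local.
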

\begin{proof}
(1) 
Since the assertion is local on $X$, we may assume that $X={\rm Spec}(R)$, where $R$ is finitely generated over $\F_p$. Choose a gauge $\delta$ on $M$, and let $d\ge0$ be  such that $N$ is generated in gauge $\le d$. By Theorem~\ref{thm: generated in gauge d}, for each $i\ge 1$, the module $C^i(N)$ is generated in gauge 
\[
\le \frac{d}{q^i}+\frac{K}{q-1}+1\le  d+1+\frac{K}{q-1}\eqqcolon K_2. 
\]
Since $M_{\le K_2}$ is finite-dimensional over $\F_p$, it is a finite set. Hence there are only finitely many 
$R$-submodules of $M$ generated by elements of $M_{\le K_2}$. Therefore, the set 
\[
\{C^i(N)\mid i\ge1\}
\]
is finite. The claim follows. 

\medskip

\noindent
(2) The case $C(N)\subset N$ follows from Proposition~\ref{F-pure submodule}. 
Assume that $N\subset C(N)$. Then we have an ascending chain 
\[
N\subset C(N)\subset C^2(N)\subset \cdots
\]
of coherent submodules of $M$. Since $M$ is Noetherian, the chain stabilizes, and we have $C^i(N)=C^{i+1}(N)$ for some $i\ge1$. 
\end{proof}

Let $\alpha$ be a $p$-adic integer. There exists a unique integer $i\in[0,q)$ such that $\alpha-i$ is divisible by $q$ in $\Z_p$. We define  
\[
T(\alpha)\coloneqq\frac{\alpha-i}{q}\in \Z_p. 
\]
\begin{lem}\label{Talpha}
The following statements hold: 
\begin{enumerate}
    \item For any  $\alpha\in\nu_\cI(q^\infty;N)$, we have $T(\alpha)\in\nu_\cI(q^\infty;C(N))$. 
    \item The assignment $\alpha\mapsto T(\alpha)$ defines a surjection  
    \[
    \nu_\cI(q^\infty;N)\to \nu_\cI(q^\infty;C(N)). 
    \]
\end{enumerate}
\end{lem}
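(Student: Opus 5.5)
The plan is to reduce both statements to a comparison of finite levels. The key identity is the following: for integers $k\ge0$ and $0\le i\le q$,
\[
C^{m+1}(\cI^{qk+i}N)=C^m\big(\cI^{k}\,C(\cI^iN)\big).
\]
This holds because $C(a^qx)=aC(x)$. It is an equality of subsheaves, so it can be checked locally where $\cI=(f)$; there $f^{qk+i}N=(f^k)^q f^iN$. This lets me work with $\cI$ globally and avoid localizing the infinite-level sets. I will also use the chain
\[
C(N)=C(\cI^0N)\supset C(\cI^1N)\supset\cdots\supset C(\cI^qN)=\cI\, C(N),
\]
together with the fact that $C^m(\cI^k\,\cdot)$ preserves inclusions.

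\textbf{Statement (1).} Let $\alpha\in\nu_\cI(q^\infty;N)$, write $\alpha\equiv i \pmod q$ with $0\le i<q$, and set $\beta=T(\alpha)$. Then $\alpha_{m+1}=i+q\beta_m$, and by Lemma~\ref{BSRnu}(2) we have $\alpha_{m+1}\in\nu_\cI(q^{m+1};N)$. Put $k=\beta_m$. By the identity, this membership says
\[
C^m\big(\cI^kC(\cI^iN)\big)\neq C^m\big(\cI^kC(\cI^{i+1}N)\big).
\]
The case $i+1=q$ is covered, since $\cI^kC(\cI^qN)=\cI^{k+1}C(N)$. Both modules are sandwiched between $C^m(\cI^kC(N))$ and $C^m(\cI^{k+1}C(N))$, so these outer two modules also differ. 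Hence $\beta_m\in\nu_\cI(q^m;C(N))$ for every $m$, and Lemma~\ref{BSRnu}(2) gives $\beta\in\nu_\cI(q^\infty;C(N))$.

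\textbf{Statement (2).} It remains to show surjectivity. Let $\beta\in\nu_\cI(q^\infty;C(N))$ and $k=\beta_m$, so that $C^m(\cI^kC(N))\ne C^m(\cI^{k+1}C(N))$. I telescope this along the chain above. Some step $0\le i<q$ must satisfy
\[
C^m\big(\cI^kC(\cI^iN)\big)\neq C^m\big(\cI^kC(\cI^{i+1}N)\big),
\]
which by the identity means $i+q\beta_m\in\nu_\cI(q^{m+1};N)$. The index $i=i(m)$ depends on $m$. Since it takes only $q$ values, some fixed $i$ occurs for infinitely many $m$; set $\alpha:=i+q\beta$, so that $T(\alpha)=\beta$.

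To check that $\alpha\in\nu_\cI(q^\infty;N)$, fix $m$ and pick $m'\ge m$ with $i(m')=i$. Then $\alpha_{m'+1}\in\nu_\cI(q^{m'+1};N)$, which is contained in $\nu_\cI(q^{m};N)$ by Lemma~\ref{basic_nu}(1). Since $\alpha_{m'+1}\equiv\alpha_m \pmod{q^m}$, repeated use of Lemma~\ref{basic_nu}(2) brings it down to $\alpha_m\in\nu_\cI(q^m;N)$. This verifies the definition of an infinite-level invariant.

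\textbf{Main obstacle.} The only delicate point is the pigeonhole step in (2): the index $i$ chosen at each level must be made uniform in $m$, and the argument must return to every level $m$, not just infinitely many. This is exactly what the descending chain of Lemma~\ref{basic_nu}(1) and the reduction of Lemma~\ref{basic_nu}(2) provide.
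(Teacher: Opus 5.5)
Your proof is correct and follows essentially the same route as the paper. Part (1) traps the strict inclusion at level $m+1$ between $C^m(\cI^{\beta_m}C(N))$ and $C^m(\cI^{\beta_m+1}C(N))$, and part (2) finds a jump index $i_m\in[0,q)$ at each level and fixes it by pigeonhole; the only differences are that you state the key identity for $\cI$ directly instead of first localizing to $\cI=f\cO_X$, and that you spell out, via Lemma~\ref{basic_nu}, how to pass from infinitely many levels to all levels, which the paper handles by citing Lemma~\ref{BSRnu}(2).
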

\begin{proof}
By Lemma~\ref{lem: BSR is local}, we may assume that $\cI$ is generated by a single element $f\in \Gamma(X,\cO_X)$. 

\medskip

\noindent
(1) Let $\alpha\in \nu_f(q^\infty;N)$. Then for any $m\geq1$, we have 
\[
C^{m+1}(f^{\alpha_{m+1}}N)\supsetneq C^{m+1}(f^{\alpha_{m+1}+1}N). 
\]
Write $\alpha=qT(\alpha)+i$ with $i\in[0,q)$. Then for any $m\ge1$, we have
\[
\alpha_{m+1}
=
q\,T(\alpha)_m+i.
\]
Consequently, we have 
\[
q\, T(\alpha)_m\le \alpha_{m+1},\qquad \alpha_{m+1}+1\le q\, (T(\alpha)_m+1). 
\]
Hence,  
\[
C^m(f^{T(\alpha)_m}C(N))\supset C^{m+1}(f^{\alpha_{m+1}}N)\supsetneq C^{m+1}(f^{\alpha_{m+1}+1}N)\supset C^m(f^{T(\alpha)_m+1}C(N)). 
\]
This shows that $T(\alpha)\in \nu_f(q^\infty;C(N))$. 

\medskip

\noindent
(2) Let $\beta\in \nu_f(q^\infty;C(N))$. Then for any $m\geq1$, 
\[
C^{m+1}(f^{q\beta_m}N)=C^m(f^{\beta_m}C(N))\supsetneq C^m(f^{\beta_m+1}C(N))=C^{m+1}(f^{q\beta_m+q}N). 
\]
    Thus for each $m$, there exists an integer $i_m\in[0,q)$ such that
    \[q\beta_m+i_m\in \nu_f(q^m;N).\]
    Since $i_m$ takes only finitely many values, we may choose an increasing 
  sequence $m_1<m_2<\cdots$ of integers such that $i_{m_n}=i$ is constant. Then the sequence $(q\beta_{m_n}+i)_{n\ge1}$ converges $p$-adically to 
    \[\alpha\coloneqq q\beta+i.\]
    By Lemma~\ref{BSRnu}(2), we obtain 
$\alpha\in \nu_f(q^\infty;N)$. 
    By construction, $T(\alpha)=\beta$, 
which proves surjectivity. 
\end{proof}
\begin{lem}\label{lem: nu invariants for Fpure}
Let $(M,C)$ be a coherent $q$-Cartier module on $X$, and let $N\subset M$ be a coherent $\cO_X$-submodule. Suppose that there exists an integer $j\ge1$ such that $N=C^j(N)$.  Then, for every $\alpha\in\nu_{\cI}(q^\infty;N)$ and every integer $n\ge1$, there exists an integer $m_n$ with $0\le m_n\le q^{nj}-1$ such that 
\[
q^{nj}\alpha+m_n\in\nu_{\cI}(q^\infty;N). 
\]
\end{lem}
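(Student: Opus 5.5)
The plan is to iterate the surjection of Lemma~\ref{Talpha}(2) along the periodic chain $N=C^j(N)$. Applying Lemma~\ref{Talpha}(2) successively to $N, C(N), \dots, C^{nj-1}(N)$ gives a composite surjection
\[
T^{nj}\colon \nu_{\cI}(q^\infty;N)\to \nu_{\cI}(q^\infty;C^{nj}(N)).
\]
Since $N=C^j(N)$, we have $C^{nj}(N)=N$ for every $n\ge1$, so $T^{nj}$ is a surjection of $\nu_{\cI}(q^\infty;N)$ onto itself.

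Now fix $\alpha\in\nu_{\cI}(q^\infty;N)$ and $n\ge1$. By surjectivity there is some $\gamma\in\nu_{\cI}(q^\infty;N)$ with $T^{nj}(\gamma)=\alpha$. Unwinding the definition of $T$, each application removes the lowest base-$q$ digit: if $\beta=q\,T(\beta)+i$ with $0\le i<q$, then iterating $nj$ times gives
\[
\gamma=q^{nj}T^{nj}(\gamma)+\gamma_{nj}=q^{nj}\alpha+\gamma_{nj},
\]
with $\gamma_{nj}\in[0,q^{nj})$ in the notation introduced before Lemma~\ref{BSRnu}. A short induction on the number of steps justifies this, using $\gamma_{k+1}=q\,T(\gamma)_k+i$, which is the same identity used in the proof of Lemma~\ref{Talpha}(1). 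Setting $m_n\coloneqq\gamma_{nj}$ gives $q^{nj}\alpha+m_n=\gamma\in\nu_{\cI}(q^\infty;N)$ with $0\le m_n\le q^{nj}-1$, as required.

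I do not expect a real obstacle here. The only points to check are that Lemma~\ref{Talpha}(2) applies at each stage, which it does because each $C^k(N)$ is again a coherent $\cO_X$-submodule (as the image of $C^k$, as noted after \eqref{Mbar}), and that the iterated $T$ corresponds exactly to stripping off the residue modulo $q^{nj}$.
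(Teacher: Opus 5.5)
Your proposal is correct and follows the paper's proof: iterate Lemma~\ref{Talpha}(2) to get a surjection $T^{nj}$ of $\nu_{\cI}(q^\infty;N)$ onto itself (using $C^{nj}(N)=N$), pick a preimage of $\alpha$, and read off $m_n$ as the residue modulo $q^{nj}$. Your explicit check that each $C^k(N)$ is coherent and your induction on the digit-stripping identity just spell out steps the paper leaves implicit.
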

\begin{proof}
    By the assumption $N=C^j(N)$ and Lemma~\ref{Talpha}(2), $T^{nj}$ induces a surjection 
\[
T^{nj}\colon\nu_{\cI}(q^\infty;N)\to\nu_{\cI}(q^\infty;N). 
\] 
Therefore, for every $\alpha\in \nu_{\cI}(q^\infty;N)$, 
there exists an element $\beta\in\nu_{\cI}(q^\infty;N)$ such that $T^{nj}(\beta)=\alpha$. By the definition of $T$, this means that $\beta$ can be written 
as 
\[
\beta=q^{nj}\alpha+m_n
\]
for some integer $m_n\in[0,q^{nj})$. The claim follows. 
\end{proof}

\begin{thm}\label{BSRrat}
 Let $(M,C)$ be a coherent $q$-Cartier module on $X$, and let  $N\subset M$ be a $C$-preperiodic coherent $\cO_X$-submodule. Assume that Assumption~\ref{assumption: nu invariant is bound} holds. Then the following statements hold:  
  \begin{enumerate}
    \item The set $\nu_\cI(q^\infty;N)$ is contained in 
    \[\Z_{(p)}=\Z_p\cap \Q. \]

\item Assume further that there exists an integer $j>0$ such that $N=C^j(N)$. Then 
\[\nu_\cI(q^\infty;N)\subset \Z_{(p)}\cap [-1,0]. 
\]
\end{enumerate}  
\end{thm}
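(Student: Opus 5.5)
We treat part (2) first and then reduce part (1) to it. Throughout we use Proposition~\ref{BSRfin}, which says that $S\coloneqq\nu_\cI(q^\infty;N)$ is finite.

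\textbf{Part (2).} Assume $N=C^j(N)$ with $j>0$, and set $T'\coloneqq T^j$.

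First we record what Lemma~\ref{Talpha}(2) gives. Iterating it $j$ times yields a surjection $T'\colon S\to\nu_\cI(q^\infty;C^j(N))=S$. Since $S$ is finite, this surjection is a bijection.

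Next we show every element of $S$ is rational. Because $T'$ is a permutation of the finite set $S$, there is some $r\ge1$ with $T'^r=\mathrm{id}_S$. Now $T^{jr}(\alpha)$ is obtained from $\alpha$ by deleting the first $jr$ digits of its base-$q$ expansion. So for each $\alpha\in S$ we have
\[
\alpha=q^{jr}\alpha+c
\]
for some integer $c$ with $0\le c\le q^{jr}-1$. Solving gives $\alpha=-c/(q^{jr}-1)$. Since $0\le c\le q^{jr}-1$, this lies in $\Z_{(p)}\cap[-1,0]$. Equivalently, $\alpha$ has a purely periodic $q$-adic expansion, and such elements are exactly the rationals in $[-1,0]$ that are $p$-integral. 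Lemma~\ref{lem: nu invariants for Fpure} encodes the same mechanism, and we can cite it for the existence of the preimages.

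\textbf{Part (1).} Now $N$ is only $C$-preperiodic: $C^i(N)=C^{i+j}(N)$ for some $i,j>0$. Put $N'\coloneqq C^i(N)$, so that $C^j(N')=N'$.

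Before applying part (2) to $N'$, we must check that Assumption~\ref{assumption: nu invariant is bound} holds for $N'$. We reduce this to the assumption for $N$ as follows.
\begin{itemize}
\item The argument in the proof of Lemma~\ref{Talpha}(2), applied at finite level, gives an inclusion
\[
\nu(q^m;C(N))\cap[0,q^m)\subset\{\lfloor n/q\rfloor : n\in\nu(q^{m+1};N)\cap[0,q^{m+1})\}.
\]
This holds because $C^m(f^bC(N))=C^{m+1}(f^{qb}N)$, so a jump for $C(N)$ at $b$ forces a jump for $N$ at some $n\in[qb,qb+q)$.
\item Hence the bound $L$ for $N$ transfers to $C(N)$, and by induction to $N'=C^i(N)$.
\item If the finite-level inclusion turns out to be delicate, we fall back to a hypothesis that suffices for Proposition~\ref{BSRfin}: we only need $\nu_\cI(q^\infty;N')$ to be finite. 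This follows directly from Lemma~\ref{Talpha}(2), since $\nu(q^\infty;N')$ is the surjective image of the finite set $S$ under $T^i$. In part (2) itself, only finiteness of $S$ was used.
\end{itemize}

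With this in hand, part (2) applies to $N'$ and gives $\nu_\cI(q^\infty;N')\subset\Z_{(p)}$. For $\alpha\in S$, Lemma~\ref{Talpha}(1) gives $T^i(\alpha)\in\nu_\cI(q^\infty;N')$, so $T^i(\alpha)$ is rational. Since $\alpha=q^iT^i(\alpha)+c$ for some integer $c\ge0$, it follows that $\alpha\in\Z_{(p)}$.

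The only delicate point is this bookkeeping: making sure that all we need for $N'$ is finiteness of $\nu(q^\infty;N')$, which is inherited from $S$ through $T^i$. The rest of the argument is the permutation argument in part (2).
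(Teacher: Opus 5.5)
Your proof is correct, but it is organized differently from the paper's.

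\textbf{How the paper argues.} For (1) the paper uses only the forward inclusion $T^j\bigl(\nu_\cI(q^\infty;C^i(N))\bigr)\subset\nu_\cI(q^\infty;C^i(N))$ from Lemma~\ref{Talpha}(1). Finiteness then forces $T^{nj}(\alpha)=T^{mj}(\alpha)$ for some $n>m$, so the base-$q$ expansion is eventually periodic. For (2) it uses surjectivity (Lemma~\ref{lem: nu invariants for Fpure}) to produce preimages $q^{nj}\alpha+m_n$. It then applies pigeonhole to get $q^{n_1j}\alpha+m_{n_1}=q^{n_ij}\alpha+m_{n_i}$ for infinitely many $i$, and obtains $\alpha\in[-1,0]$ only after letting $i\to\infty$ in the resulting bounds.

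\textbf{How you argue.} You observe that a surjective self-map of a finite set is a permutation, so $T^{jr}=\mathrm{id}$ on $\nu_\cI(q^\infty;N)$ for some $r\ge1$. This gives $\alpha=-c/(q^{jr}-1)$ with $0\le c\le q^{jr}-1$ in one step, with no limit. You then get (1) by applying (2) to $N'=C^i(N)$ and pulling back along $T^i$.

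\textbf{Comparison.} Your route puts both parts under one mechanism and records the extra fact that $T^j$ permutes the $\nu$-invariants of infinite level. The cost is that (1) now also needs the surjectivity lemma, whereas the paper gets (1) from Lemma~\ref{Talpha}(1) alone.

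\textbf{Finiteness for $C^i(N)$.} Your explicit treatment here is a genuine plus. The paper invokes Proposition~\ref{BSRfin} for $\nu_f(q^\infty;C^i(N))$, although Assumption~\ref{assumption: nu invariant is bound} is only posited for $N$; either of your arguments fills that tacit step. You also need no fallback, because your finite-level inclusion is valid as stated:
\begin{itemize}
\item $C^m(\cI^bC(N))=C^{m+1}(\cI^{qb}N)$;
\item a strict inclusion between the ends of the chain $C^{m+1}(\cI^{qb}N)\supset\cdots\supset C^{m+1}(\cI^{qb+q}N)$ forces a jump at some $n\in[qb,qb+q)\subset[0,q^{m+1})$;
\item hence the bound $L$ passes from $N$ to $C(N)$, and by induction to $C^i(N)$, unchanged.
\end{itemize}
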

\begin{proof}
Since the statement is local on $X$, we may assume that $\cI$ is generated by a single element $f\in\Gamma(X,\cO_X)$. 

\medskip

\noindent
(1) Note that $\alpha\in\Z_p$ is rational if and only if $T(\alpha)$ is rational. By Lemma \ref{Talpha}(1), it therefore suffices to show that $\nu_f(q^\infty;C^i(N))$ consists of rational numbers for some $i\geq1$. 

Since $N$ is $C$-preperiodic, there exist integers $i,j>0$ such that 
\[C^i(N)=C^{i+j}(N).\]
Let $\alpha\in\nu_f(q^\infty;C^i(N))$. By Lemma~\ref{Talpha}(1), we have 
 \[
 T^j( \nu_f(q^\infty;C^i(N)))\subset \nu_f(q^\infty;C^i(N)). 
 \]
Thus the sequence
\[
\alpha,\ T^j(\alpha),\ T^{2j}(\alpha),\ \dots
\]
takes values in the set $\nu_f(q^\infty;C^i(N))$, which is finite by Proposition~\ref{BSRfin}. 
Hence, there exist integers $n>m\geq0$ such that 
\[T^{nj}(\alpha)=T^{mj}(\alpha). \]
This implies that $\alpha\in\Q$. 

\medskip

\noindent
(2)  Let $\alpha\in\nu_{\cI}(q^\infty;N)$, and we show that $\alpha\in [-1,0]$. By Lemma~\ref{lem: nu invariants for Fpure}, for every integer $n\ge1$, there exists an integer $m_n\in[0,q^{nj})$ such that 
\[
q^{nj}\alpha+m_n\in \nu_{\cI}(q^\infty;N). 
\]
By Proposition~\ref{BSRfin}, the set $\nu_{\cI}(q^\infty;N)$ is finite. Therefore, there exists a strictly increasing sequence of positive integers 
\[
0<n_1<n_2<\cdots
\]
such that 
\[
q^{n_1j}\alpha+m_{n_1}=q^{n_ij}\alpha+m_{n_i} \qquad\text{for all $i\ge2$}. 
\]
Rewriting this equality, we obtain 
\[
\alpha=\frac{m_{n_1}-m_{n_j}}{q^{n_ij}-q^{n_1j}}. 
\]
Since $0\le m_{n_1}\le q^{n_1j}-1$ and $0\le m_{n_i}\le q^{n_ij}-1$, we have 
\[
-\frac{q^{n_ij}-1}{q^{n_ij}-q^{n_1j}}\le \alpha\le \frac{q^{n_1j}-1}{q^{n_ij}-q^{n_1j}}. 
\]
Letting $i\to\infty$, we obtain $\alpha\in[-1,0]$, as desired. 
\end{proof}

\subsection{Test Module Filtration}

Let $(M,C)$ be an $F$-pure $q$-Cartier module on $X$. 
    
\begin{defn}
Let $t\ge0$ be a real number. We define the \emph{test module $\tau(M,\cI^t)$ of $M$ with respect to $\cI$ at level $t$} by  
\[
\tau(M,\cI^t)\coloneqq\bigcup_{m\geq0}C^m(\cI^{\lceil tq^m\rceil}\cdot M). 
\]
Here, $\lceil u\rceil$ denotes the smallest integer satisfying $u\le \lceil u\rceil$.  

When $\cI=f\cO_X$, we also write 
\[
\tau(M,f^t)\coloneqq \tau(M,\cI^t). 
\]
\end{defn}
\begin{rem}
    Suppose that we are in the situation of Remark~\ref{rem: classical nu invariant}. Fix a generator $C$ as in loc.~cit. Then the test module $\tau(R,\cI^t)$ associated to the $q$-Cartier module $(R,C)$ agrees with the test ideal $\tau(\cI^t)$. This follows from \cite[Definition~2.9 and Proposition~2.22]{BMS08}. 
\end{rem}

At least when $t\notin\Z[\frac{1}{p}]$, the test module $\tau(M,\cI^t)$ admits an intrinsic description. See Proposition~\ref{prop: intrinsic description of test module} for details. 

\medskip

We record some basic properties of test modules. 

\begin{lem}\label{lem: test module is Ce for large e}
Let $M$ be an $F$-pure $q$-Cartier module, and let $t\in\R_{\geq0}$. Then for every integer $m\geq0$, 
\[C^m(\cI^{\lceil tq^m\rceil}M)\subset C^{m+1}(\cI^{\lceil tq^{m+1}\rceil}M). 
\]
Consequently, we have 
\[\tau(M,\cI^t)=C^m(\cI^{\lceil tq^m\rceil}M)
\]
for all sufficiently large $m$, and the test module is a coherent $\cO_X$-submodule of $M$. 
\end{lem}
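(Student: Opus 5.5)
The inclusion is local on $X$, so we may assume $\cI=f\cO_X$ for a global section $f$. The idea is to move the Frobenius power into the ideal exponent and then use $F$-purity of $M$.

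First we rewrite the left side using $M=C(M)$:
\[
C^m(f^{\lceil tq^m\rceil}M)=C^m\big(f^{\lceil tq^m\rceil}C(M)\big).
\]
Next we use the $q$-Cartier rule $aC(x)=C(a^qx)$, which gives $f^{k}C(M)=C(f^{qk}M)$. Hence
\[
C^m(f^{\lceil tq^m\rceil}M)=C^{m+1}\big(f^{q\lceil tq^m\rceil}M\big).
\]
Since $q\lceil tq^m\rceil\ge tq^{m+1}$ and the exponent is an integer, we have $q\lceil tq^m\rceil\ge\lceil tq^{m+1}\rceil$. Therefore $f^{q\lceil tq^m\rceil}M\subset f^{\lceil tq^{m+1}\rceil}M$. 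Applying $C^{m+1}$, which preserves inclusions, gives the desired inclusion.

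For the consequences, note that $C^m(\cI^{\lceil tq^m\rceil}M)$ is the image of the $\cO_X$-linear map $C^m\colon F^m_{q\ast}M\to M$ restricted to a coherent submodule. Since $X$ is $F$-finite, $F^m_{q\ast}$ of a coherent module is coherent, so each term is a coherent submodule of $M$. By the inclusion just proved, these submodules form an ascending chain. Because $M$ is coherent on a Noetherian scheme, the chain stabilizes. So the union $\tau(M,\cI^t)$ equals $C^m(\cI^{\lceil tq^m\rceil}M)$ for all large $m$, and it is coherent.

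The main point is the first step: the rewriting $f^kC(M)=C(f^{qk}M)$ requires $F$-purity. Everything after that is bookkeeping with ceilings.
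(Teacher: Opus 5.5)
Your proof is correct and is essentially the paper's argument: use $M=C(M)$ and the Cartier rule to get $C^m(f^{\lceil tq^m\rceil}M)=C^{m+1}(f^{q\lceil tq^m\rceil}M)$, compare exponents via $q\lceil tq^m\rceil\ge\lceil tq^{m+1}\rceil$, and conclude by the ascending chain condition on coherent submodules of $M$; you also justify why each term is coherent, which the paper leaves implicit. One small correction to your closing remark: the identity $f^kC(M)=C(f^{qk}M)$ holds in any $q$-Cartier module, and $F$-purity is used only to substitute $C(M)$ for $M$.
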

\begin{proof}
Since the statement is local, we may assume $\cI=f\cO_X$. 

As $M$ is $F$-pure, we have $M=C(M)$. Hence,  
\[
C^m(f^{\lceil tq^m\rceil}M)=C^m(f^{\lceil tq^m\rceil}C(M))=C^{m+1}(f^{q \lceil tq^m\rceil}M). 
\]
Since $q\lceil tq^m\rceil \ge \lceil tq^{m+1}\rceil$, the desired inclusion follows.

The second assertion follows because $M$ is Noetherian and the above inclusions form an ascending chain of coherent $\cO_X$-submodules. 
\end{proof}
\begin{lem}\label{basic_t}
Let $M$ be an $F$-pure $q$-Cartier module. Then the following statements hold: 
\begin{enumerate}

    \item For $t\geq s\geq0$, we have 
    \[
    \tau(M,\cI^t)\subset \tau(M,\cI^s). 
    \]
\item   For $t\geq0$, we have 
\[\tau(M,\cI^{t+1})=\cI\cdot \tau(M,\cI^t).\]

\item For $t\geq0$, we have 
\[C\big(\tau(M,\cI^{qt})\big)=\tau(M,\cI^t).\]

\item Let $n$ and $m$ be positive integers, and let $t=n/q^m$. Then 
\[
\tau(M,\cI^t)=C^m(\cI^nM). 
\]
\end{enumerate}
\end{lem}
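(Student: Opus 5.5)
Since all four assertions are local on $X$, I would first reduce to the case $\cI=f\cO_X$ with $f\in\Gamma(X,\cO_X)$. Throughout I would use Lemma~\ref{lem: test module is Ce for large e}: $\tau(M,f^t)=C^m(f^{\lceil tq^m\rceil}M)$ for all sufficiently large $m$. Since finitely many statements are involved, a single $m$ can be chosen large enough to serve for all the relevant exponents at once.

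For (1), let $t\ge s$. Then $\lceil tq^m\rceil\ge\lceil sq^m\rceil$, so $f^{\lceil tq^m\rceil}M\subset f^{\lceil sq^m\rceil}M$. Applying $C^m$ preserves this inclusion, and taking $m$ large gives $\tau(M,f^t)\subset\tau(M,f^s)$.

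For (2), I would use $\lceil (t+1)q^m\rceil=\lceil tq^m\rceil+q^m$ together with the rule $C^m(f^{q^m}x)=f\,C^m(x)$. Together these give
\[
C^m\bigl(f^{\lceil (t+1)q^m\rceil}M\bigr)=f\cdot C^m\bigl(f^{\lceil tq^m\rceil}M\bigr).
\]
For $m$ large, the left side is $\tau(M,f^{t+1})$ and the right side is $f\cdot\tau(M,f^t)$.

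For (3), I would use $\lceil qt\cdot q^m\rceil=\lceil tq^{m+1}\rceil$. This gives
\[
C\bigl(\tau(M,f^{qt})\bigr)=C\bigl(C^m(f^{\lceil tq^{m+1}\rceil}M)\bigr)=C^{m+1}\bigl(f^{\lceil tq^{m+1}\rceil}M\bigr),
\]
and for $m$ large the right-hand side is $\tau(M,f^t)$.

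For (4), let $t=n/q^m$. For every $m'\ge m$ we have $\lceil tq^{m'}\rceil=nq^{m'-m}$, so
\[
C^{m'}(f^{nq^{m'-m}}M)=C^m\bigl(C^{m'-m}(f^{nq^{m'-m}}M)\bigr)=C^m\bigl(f^n C^{m'-m}(M)\bigr)=C^m(f^nM).
\]
The middle equality again uses $C^k(f^{q^k a}x)=f^aC^k(x)$, and the last uses $F$-purity, $C^{m'-m}(M)=M$. Taking $m'$ large yields $\tau(M,f^t)=C^m(f^nM)$.

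The main point needing care is the ceiling-function identities. In (2) one needs $\lceil u+q^m\rceil=\lceil u\rceil+q^m$, which holds because $q^m$ is an integer. In (3) and (4) the exponents match exactly, so no further estimate is required. Beyond this, the only substantive input is the stabilization statement of Lemma~\ref{lem: test module is Ce for large e}, which lets every assertion be checked at a single sufficiently large level $m$.
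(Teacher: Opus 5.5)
Your proof is correct and follows the paper's argument essentially step for step. Both reduce to $\cI=f\cO_X$, use the stabilization of Lemma~\ref{lem: test module is Ce for large e}, apply the rule $C^m(f^{q^m}x)=f\,C^m(x)$ in (2) and (4), apply $C$ at a common large level in (3), and invoke $F$-purity in (4). The only cosmetic difference is in (1): the paper reads the inclusion directly off the definition as a union (termwise $C^m(\cI^{\lceil tq^m\rceil}M)\subset C^m(\cI^{\lceil sq^m\rceil}M)$ for every $m$), so no stabilization is needed there.
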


\begin{proof}
Since all the statements are local, we may assume $\cI=f\cO_X$.

(1) This is immediate from the definition of test modules. 

(2) For every $m\geq0$, we have 
\[C^m(f^{\lceil (t+1)q^m\rceil}M)=C^m(f^{q^m}f^{\lceil tq^m\rceil}M)=fC^m(f^{\lceil tq^m\rceil}M).
\]
Letting $m\to\infty$, the claim follows.

(3) 
Choose $m\gg0$ such that 
\[
\tau(M,f^{t})=C^{m+1}(f^{\lceil tq^{m+1}\rceil}M)\quad \text{and} \quad\tau(M,f^{qt})=C^{m}(f^{\lceil tq^{m+1}\rceil}M). 
\]
Applying $C$ to the second equality yields the desired result.

(4)  Let $m'\geq0$ be an integer such that 
\[
\tau(M,f^t)=C^{m+m'}(f^{\lceil tq^{m+m'}\rceil}M)=C^{m+m'}(f^{nq^{m'}}M). 
\]
Note that 
\[
C^{m+m'}(f^{nq^{m'}}M)=C^m(f^nC^{m'}(M)). 
\]
Since $M$ is $F$-pure, we have $C^{m'}(M)=M$. The claim follows. 
\end{proof}

\begin{defn}
Let $M$ be an $F$-pure $q$-Cartier module on $X$. 
\begin{enumerate}
    \item A \emph{right $F$-jumping exponent of $M$ with respect to $\cI$} is a real number $t\in\R_{\ge0}$ for which the inclusion 
\[
\tau(M,\cI^{t+\epsilon})\subset \tau(M,\cI^{t})
\]
is strict for all $\epsilon>0$. 

\item A \emph{left $F$-jumping exponent of $M$ with respect to $\cI$} is a real number $t\in\R_{>0}$ for which the inclusion 
\[
\tau(M,\cI^{t})\subset \tau(M,\cI^{t-\epsilon})
\]
is strict for all $0<\epsilon<t$. 
\end{enumerate}
We denote by 
\[\FJN^+(M,\cI)\quad(\text{resp. }\FJN^-(M,\cI))
\]
the set of all right (resp. left) $F$-jumping exponents. 
We also set 
\[\FJN(M,\cI):=\FJN^+(M,\cI)\cup \FJN^-(M,\cI). \]

When $\cI=f\cO_X$, we also write 
\[\FJN(M,f)\coloneqq \FJN(M,\cI). \]
Similar notation applies to $\FJN^+$ and $\FJN^-$. 
\end{defn}

\begin{lem}\label{FJNtrick}
    Let $M$ be an $F$-pure $q$-Cartier module, and fix $\star\in\{+,-,\emptyset\}$. 
    Then the following statements hold: 
    \begin{enumerate}
        \item If $t>1$ belongs to $\FJN^\star(M,\cI)$, then $t-1\in \FJN^\star(M,\cI)$. 
        \item If $t\in\FJN^\star(M,\cI)$, then $qt\in \FJN^\star(M,\cI)$. 
    \end{enumerate}
    Here, we interpret $\FJN^\emptyset(M,\cI)$ as $\FJN(M,\cI)$. 
\end{lem}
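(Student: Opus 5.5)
The plan is to transport jumps along the two basic identities of Lemma~\ref{basic_t}: multiplication by $\cI$ shifts the parameter by $1$ (part~(2)), and $C$ divides it by $q$ (part~(3)). Each case reduces to a strictness statement: a strict inclusion of test modules at one pair of parameters must force a strict inclusion at the transformed pair. Since everything is local, we will assume $\cI=f\cO_X$ throughout.

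For~(1), let $t>1$. By Lemma~\ref{basic_t}(2), for $s\ge0$ we have $\tau(M,f^{s+1})=f\,\tau(M,f^s)$. Suppose $t\in\FJN^+$ but $t-1\notin\FJN^+$. Then there is $\epsilon>0$ with $\tau(M,f^{t-1+\epsilon})=\tau(M,f^{t-1})$. Multiplying by $f$ gives $\tau(M,f^{t+\epsilon})=\tau(M,f^t)$, which contradicts $t\in\FJN^+$. The left case is identical: for $0<\epsilon<t-1$, an equality $\tau(M,f^{t-1})=\tau(M,f^{t-1-\epsilon})$ yields, after multiplying by $f$, the equality $\tau(M,f^{t})=\tau(M,f^{t-\epsilon})$. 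The left-jump condition at $t-1$ only involves $\epsilon<t-1$, and these $\epsilon$ are admissible for $t$ as well. The case $\star=\emptyset$ follows by taking the union of the two cases.

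For~(2), Lemma~\ref{basic_t}(3) gives $C(\tau(M,f^{qs}))=\tau(M,f^{s})$ for every $s\ge0$. Suppose $qt\notin\FJN^+$. Then $\tau(M,f^{qt+q\epsilon})=\tau(M,f^{qt})$ for all sufficiently small $\epsilon>0$, using monotonicity from Lemma~\ref{basic_t}(1). Applying $C$ gives $\tau(M,f^{t+\epsilon})=\tau(M,f^t)$, so $t\notin\FJN^+$. The left case works the same way with $t-\epsilon$ in place of $t+\epsilon$; the range $0<q\epsilon<qt$ corresponds exactly to $0<\epsilon<t$. In the direction used here, any equality at level $qt$ pushes forward via $C$ to an equality at level $t$, so no injectivity of $C$ is needed.

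The main point to check is that the contrapositive form of each argument uses $\tau$-identities only in the pushforward direction, namely applying $f\cdot(-)$ or $C(-)$ to an equality. Pulling back along these maps would fail, since neither is injective, and I expect this to be the only subtle step. For~(2) there is a second detail: non-jumping must mean existence of one $\epsilon$ giving equality, and by monotonicity this then holds for all smaller $\epsilon$. That is what allows the rescaling $\epsilon\mapsto q\epsilon$.
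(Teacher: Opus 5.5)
Your proof is correct and takes the same route as the paper, which only says that both parts follow directly from Lemma~\ref{basic_t}(2) and~(3); your contrapositive arguments, which push an equality of test modules forward along $\cI\cdot(-)$ or $C$, supply exactly the details left implicit there. Two steps are unnecessary but harmless: the reduction to $\cI=f\cO_X$ (Lemma~\ref{basic_t}(2),(3) already hold globally for $\cI$) and the monotonicity step (given one $\epsilon_0$ witnessing non-jumping at $qt$, you can simply take $\epsilon=\epsilon_0/q$).
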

\begin{proof}
Both statements follow directly from Lemma~\ref{basic_t}(2) and~(3).
\end{proof}

In contrast to left $F$-jumping exponents, right $F$-jumping exponents take a rather special form, as the following lemma shows. 
\begin{lem}\label{+1/p}
The set $\FJN^+(M,\cI)$ is contained in $\Z\big[\frac{1}{p}\big]$. 
\end{lem}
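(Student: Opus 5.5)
We will show that every $t\in\FJN^+(M,\cI)$ lies in $\Z[\frac1p]$ by showing directly that if $t\notin\Z[\frac1p]$, then $\tau(M,\cI^{t+\epsilon})=\tau(M,\cI^t)$ for some $\epsilon>0$. The statement is local. This is clear when $X$ is quasi-compact, since we only need to exhibit one $\epsilon$ working on each member of a finite cover. So we assume $\cI=f\cO_X$.

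By Lemma~\ref{lem: test module is Ce for large e}, there is $m\ge0$ with
\[
\tau(M,f^t)=C^m(f^{\lceil tq^m\rceil}M).
\]
Since $t\notin\Z[\frac1p]$, the number $tq^m$ is not an integer. Therefore $\lceil tq^m\rceil> tq^m$, and we may choose $\epsilon>0$ so small that $(t+\epsilon)q^m\le \lceil tq^m\rceil$. For such $\epsilon$ we get $\lceil (t+\epsilon)q^m\rceil=\lceil tq^m\rceil$. Then, by the definition of the test module as a union,
\[
\tau(M,f^{t+\epsilon})\supset C^m(f^{\lceil (t+\epsilon)q^m\rceil}M)=C^m(f^{\lceil tq^m\rceil}M)=\tau(M,f^t).
\]
Combined with the monotonicity in Lemma~\ref{basic_t}(1), this gives equality. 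So $t$ is not a right $F$-jumping exponent.

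The only subtle point is the order of choices. We must fix $m$ first, using stabilization at $t$ (Lemma~\ref{lem: test module is Ce for large e}), and only then choose $\epsilon$ depending on $m$. We need no stabilization at $t+\epsilon$, because the reverse inclusion comes for free from the union definition together with Lemma~\ref{basic_t}(1).
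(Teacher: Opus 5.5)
Your proposal is correct and follows essentially the same argument as the paper: fix $m$ with $\tau(M,\cI^t)=C^m(\cI^{\lceil tq^m\rceil}M)$, use $tq^m\notin\Z$ to pick $\epsilon>0$ with $\lceil (t+\epsilon)q^m\rceil=\lceil tq^m\rceil$, and conclude equality from the union definition together with monotonicity. Your reduction to the principal case is harmless but not needed, since the argument works verbatim with $\cI^{\lceil tq^m\rceil}$ in place of $f^{\lceil tq^m\rceil}$.
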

\begin{proof}
Let $t\in\R_{\ge0}$ be such that $t\notin \Z\big[\frac{1}{p}\big]$. We show that $t\notin \FJN^+(M,\cI)$. 

    Choose an integer $m\gg0$ such that 
    \[
    \tau(M,\cI^t)=C^m(f^{\lceil tq^m\rceil}M). 
    \]
    Since $tq^m$ is not an integer by assumption, there exists $\epsilon>0$ such that 
    \[\lceil tq^m\rceil=\lceil (t+\epsilon)q^m\rceil.\]
    Hence, 
    \[
    \tau(M,\cI^t)=C^m(\cI^{\lceil (t+\epsilon)q^m\rceil}M)\subset \tau(M,\cI^{t+\epsilon}). 
    \]
    Thus the inclusion  $\tau(M,\cI^{t+\epsilon})\subset \tau(M,\cI^t)$ is in fact an equality, which shows $t\notin \FJN^+(M,\cI)$. 
\end{proof}

\begin{rem}
    One may interpret $\FJN^+(M,\cI)$ as capturing some information about the extent to which $M$ admits a nontrivial morphism to an $F$-pure $q$-Cartier module supported on $Z(\cI)$. As evidence for this perspective, see Proposition~\ref{right-conti}. 
\end{rem}

Under Assumption~\ref{assumption: nu invariant is bound}, the discreteness and rationality of Frobenius jumping exponents extend to our setting. 

\begin{thm}\label{thm: discreteness and rationality of FJN}
Let $M$ be an $F$-pure $q$-Cartier module on $X$. Assume that Assumption~\ref{assumption: nu invariant is bound} holds with $N=M$. 
Then the set $\FJN(M,\cI)\cup\{0\}$ is discrete and contained in $\Q$. 
\end{thm}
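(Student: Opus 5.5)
Work locally with $\cI=f\cO_X$; this is allowed because test modules and the assumption are local and $X$ is quasi-compact. The plan is to show that, for each $m$, the only places where $\tau(M,f^t)$ can jump in $(0,1]$ lie in a set of at most $L$ points of $\frac{1}{q^m}\Z$. We then pass to the limit in $m$ and combine with Lemma~\ref{FJNtrick}.

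\textbf{Step 1 (finite-level control).} By Lemma~\ref{basic_t}(4), for $t=n/q^m$ we have $\tau(M,f^{n/q^m})=C^m(f^nM)$. Also $t\mapsto\tau(M,f^t)$ is decreasing by Lemma~\ref{basic_t}(1). Fix $m$ and consider the chain $\tau(M,f^{n/q^m})$ for $n=0,\dots,q^m$. By Assumption~\ref{assumption: nu invariant is bound} with $N=M$, it has at most $L$ strict drops. For any $t\in((n-1)/q^m,n/q^m]$ we have $\tau(M,f^{n/q^m})\subset\tau(M,f^t)\subset\tau(M,f^{(n-1)/q^m})$. So on $(0,1]$ the step function $t\mapsto\tau(M,f^t)$ is constant on each half-open interval $((n-1)/q^m,n/q^m]$ with $n-1\notin\nu_f(q^m;M)$. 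Hence every jumping exponent $t\in(0,1)$, whether left or right, lies within distance $1/q^m$ of one of at most $L$ points of the form $n/q^m$ with $n\in\nu_f(q^m;M)\cap[0,q^m)$.

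\textbf{Step 2 (finiteness in $(0,1]$).} Suppose $\FJN(M,\cI)\cap(0,1]$ contained $L+1$ distinct points. Choose $m$ with $1/q^m$ much smaller than all their mutual distances. Each point needs its own drop index in $\nu_f(q^m;M)\cap[0,q^m)$, which is impossible. A little care is needed here: a right jumping exponent $t=n/q^m$ sits at the left endpoint of an interval, so I would attach to each exponent the drop at index $\lceil tq^{m}\rceil-1$ or $\lfloor tq^m\rfloor$, and check that distinct well-separated exponents give distinct indices. So $\FJN\cap(0,1]$ has at most $L$ elements. Lemma~\ref{basic_t}(2) gives $\tau(M,f^{t+1})=f\tau(M,f^t)$, so jumps in $(k,k+1]$ correspond to jumps in $(k-1,k]$ up to the translation described in Lemma~\ref{FJNtrick}(1). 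I also need the converse of that lemma. If $t>1$ is not a jump in the relevant sense, then I want $t-1$ not a jump either; equivalently, if $t-1$ is not a jump, then multiplying by $f$ shows $t$ is not a jump. This is immediate, since equality of $\tau$'s near $t-1$ gives equality near $t$ after multiplying by $f$. Hence $\FJN\cap(k,k+1]=(\FJN\cap(0,1])+k$, and $\FJN\cup\{0\}$ is discrete. It is also bounded away from $0$ except at $0$ itself, because the set in $(0,1]$ is finite.

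\textbf{Step 3 (rationality).} Take $t\in\FJN(M,\cI)\cap(0,1]$. By Lemma~\ref{FJNtrick}(2) and~(1), the fractional-part map $t\mapsto\{qt\}$ (taking representatives in $(0,1]$) sends jumping exponents to jumping exponents. A small point needs checking: if $qt$ is an integer then $1\in\FJN$, and it stays there. The orbit of $t$ under this map lies in the finite set from Step~2, so it is eventually periodic. Then $q^at\equiv q^bt \pmod{\Z}$ for some $a\ne b$, giving $t\in\Q$.

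The main obstacle is the bookkeeping in Step~2: attaching to each left or right jumping exponent a distinct element of $\nu_f(q^m;M)\cap[0,q^m)$, so that the bound $L$ really applies. The remaining steps are formal.
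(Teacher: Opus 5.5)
Your argument is essentially the paper's. You bound $\FJN(M,\cI)\cap(0,1]$ by attaching to each jumping exponent a distinct drop index in $\nu_{\cI}(q^m;M)$, at a level $m$ where $q^{-m}$ is smaller than the gaps. Discreteness then follows from Lemma~\ref{FJNtrick}(1), and rationality from the eventually periodic orbit of $t\mapsto$ (representative of $qt$ in $(0,1]$). One assertion in Step~2 is false, though it is not needed.

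\textbf{The claimed converse of Lemma~\ref{FJNtrick}(1) is false.} Your statement $\FJN\cap(k,k+1]=(\FJN\cap(0,1])+k$ relies on it, and your justification does not prove it. The sentence ``if $t-1$ is not a jump, multiplying by $f$ shows $t$ is not a jump'' is the contrapositive of Lemma~\ref{FJNtrick}(1) itself, not of its converse. The converse would need $f\tau(M,\cI^{s})\subsetneq f\tau(M,\cI^{s'})$ whenever $\tau(M,\cI^{s})\subsetneq\tau(M,\cI^{s'})$, and this fails when $f$ is a zero divisor on $M$. Here is a counterexample.
\begin{itemize}
\item Work on $\operatorname{Spec}\F_p[x]$ with $f=x$, and let $M=\F_p[x]e_1\oplus\F_p e_2$ with $xe_2=0$.
\item Set $C(re_1+ce_2)=C_0(r)e_1+(\phi(r)+c)e_2$. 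Here $C_0(x^N)=x^{(N+1)/q-1}$ if $q\mid N+1$ and $0$ otherwise, and $\phi(r)$ is the coefficient of $x^{q-1}$ in $r$.
\item Then $M$ is $F$-pure, since $C(x^{q-1}e_1)=e_1+e_2$ and $C(e_2)=e_2$.
\item One computes $\tau(M,x^t)=M$ for $t\in[0,1/q)$ and $\tau(M,x^t)=\{re_1+r(0)e_2\}$ for $t\in[1/q,1)$.
\item For all $t\in[1,2)$ one gets $\tau(M,x^t)=x\F_p[x]e_1$.
\end{itemize}
So $1/q\in\FJN$ but $1+1/q\notin\FJN$. Simply delete this claim. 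Discreteness and the reduction of rationality to $(0,1]$ only use the inclusion $\FJN\cap(k,k+1]\subset(\FJN\cap(0,1])+k$, which is exactly Lemma~\ref{FJNtrick}(1). That inclusion still gives finiteness on each $(k,k+1]$.

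\textbf{A smaller point: the count is off by one at $t=1$.} If $1\in\FJN^+$, the index you attach is $\lfloor q^m\rfloor=q^m$, which lies outside $[0,q^m)$. As written, your count therefore gives only $\#(\FJN\cap(0,1])\le L+1$. This is precisely why the paper's proof assumes $\#\nu\le N-1$ and concludes $\le N$. You could recover $\le L$ via Lemma~\ref{basic_nu}(2), which replaces $q^m$ by $0$, after choosing $q^{-m}$ smaller than every exponent. Since only finiteness matters, this does not affect the proof.
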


\begin{proof}
We first prove discreteness. More precisely, we show that $\FJN(M,\cI)\cap (0,1]$ is finite;  the discreteness then follows  from Lemma \ref{FJNtrick}(1).

By assumption, there exists a positive integer $N$ such that 
\begin{equation}
    \label{nuN}
    \#\Big(\nu_\cI(q^m;M)\cap[0,q^m)\Big)\leq N-1
\end{equation}
for all $m\geq1$. We claim that 
\[\#\Big(\FJN(M,\cI)\cap (0,1]\Big)\leq N.\]
Suppose not, and choose elements 
\[
0<t_1<\cdots<t_{N+1}\le1
\]
in $\FJN(M,\cI)$. 
Choose a positive integer $m$ such that 
\[\frac{2}{q^m}<t_{i+1}-t_i \qquad \text{for all $i=1,\dots,N$.} 
\]
For each $i$,  let $n_i$ be the unique integer in $[0,q^{m})$ such that  
\[
\frac{n_i}{q^{m}}< t_i\leq\frac{n_i+1}{q^{m}}. 
\]
Since $t_i\in\FJN(M,\cI)$ and $\frac{n_i}{q^{m}}< t_i<\frac{n_i+2}{q^{m}}$, we have 
\[
C^{m}(\cI^{n_i}M)=\tau(M,\cI^{\frac{n_i}{q^{m}}})\supsetneq \tau(M,\cI^{\frac{n_i+2}{q^{m}}})=C^{m}(\cI^{n_i+2}M), 
\]
where the first and last equalities follow from Lemma \ref{basic_t}(4). 
Consequently, either $n_i$ or $n_i+1$ belongs to $\nu_\cI(q^{m};M)$. 

By our choice of $m$, the intervals 
\[
(\frac{n_i}{q^{m}},\frac{n_i+2}{q^{m}})
\]
are disjoint. Thus we obtain $N$ distinct elements of $\nu_\cI(q^{m};M)\cap [0,q^{m})$, contradicting the bound \eqref{nuN}. This proves discreteness. 

\medskip

Finally, we prove rationality. Let $\alpha\in \FJN(M,\cI)$. We show that $\alpha\in\Q$. It suffices to consider the case where $\alpha\notin\Z\bigl[\frac{1}{p}\bigr]$. By Lemma \ref{FJNtrick}(2), for each $m\ge0$, we have $q^m\alpha\in \FJN(M,\cI)$. By Lemma~\ref{FJNtrick}(1), the fractional part of $q^m\alpha$ belongs to the finite set $\FJN(M,\cI)\cap [0,1)$. Therefore, there exist integers $m'>m\ge0$ such that 
\[
(q^{m'}-q^m)\alpha=q^{m'}\alpha - q^m\alpha \in\Z, 
\]
which shows $\alpha\in\Q$. 
\end{proof}

\subsubsection{Relationship with the $\nu$-invariants}
In this subsection, we study the relationship between the $\nu$-invariants and Frobenius jumping exponents. Fix an $F$-pure $q$-Cartier module $M$ on $X$. 
For simplicity, we write $\FJN=\FJN(M,\cI)$, $\FJN^-=\FJN^-(M,\cI)$, and $\FJN^+=\FJN^+(M,\cI)$. 

\begin{lem}\label{lem: nu invariants produce FJN}
Let $m\in\Z_{\ge1}$ and let $n\in\nu_{\cI}(q^m;M)$. The following statements hold: 
\begin{enumerate}

\item There exists a sequence $\{t_i\}_{i\ge1}$ in $\Q\cap [\frac{n}{q^m},\frac{n+1}{q^m})$ satisfying the following properties: 
\begin{enumerate}
    \item $t_iq^{m+i}\in\nu_{\cI}(q^{m+i};M)$ for all $i\ge1$,  
    \item $t_i\le t_{i+1}<t_i+\frac{1}{q^{m+i}}$ for all $i\ge1$. 
\end{enumerate}
    \item Let $\{t_i\}_{i\ge1}$ be a sequence as in~(1). Then $\{t_i\}_{i\ge1}$ converges to some $t\in \bigl[\frac{n}{q^m},\frac{n+1}{q^m}\bigr]$. Moreover,  
    \[
    t\in\begin{cases}
        \FJN^-&\text{if }t_i<t\text{ for all }i,\\
         \FJN^+&\text{if }t_i=t\text{ for some }i. 
    \end{cases}
    \]
\end{enumerate}
\end{lem}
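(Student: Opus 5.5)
We construct the sequence inductively by a ``one more digit'' argument, and then read off the jumping behaviour from Lemma~\ref{basic_t}(4). Throughout we work locally, so we may assume $\cI=f\cO_X$ (the statements are about inclusions of subsheaves, and a strict inclusion on $X$ is detected on some member of a finite cover; alternatively one fixes an open set where $n\in\nu_f(q^m;M|_U)$ and works there). Since $M$ is $F$-pure, $M=C(M)$, so
\[
C^m(f^nM)=C^{m+1}(f^{qn}M),\qquad C^m(f^{n+1}M)=C^{m+1}(f^{qn+q}M).
\]
Since $n\in\nu_f(q^m;M)$, the left-hand sides differ. Hence the chain $C^{m+1}(f^{qn+j}M)$, $j=0,\dots,q$, has a strict step, so some $qn+j_1\in\nu_f(q^{m+1};M)$ with $0\le j_1\le q-1$.

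We set $t_1=(qn+j_1)/q^{m+1}\in[\frac{n}{q^m},\frac{n+1}{q^m})$. Iterating the same argument with $t_iq^{m+i}$ in place of $n$ produces $t_{i+1}=t_i+j_{i+1}/q^{m+i+1}$ with $0\le j_{i+1}\le q-1$. Hence $t_i\le t_{i+1}<t_i+\frac1{q^{m+i}}$, and all $t_i$ stay in $[\frac{n}{q^m},\frac{n+1}{q^m})$, since the sum of all the increments is at most $\frac{q-1}{q^{m+1}}\cdot\frac{1}{1-1/q}=\frac1{q^m}$. This proves (1).

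For (2), $(t_i)$ is nondecreasing and bounded above by $\frac{n+1}{q^m}$, so it converges to some $t$ in the closed interval. The bound $t_{j}-t_i<\sum_{k\ge i}q^{-(m+k)}\cdot\frac{q-1}{q}\cdot q\le\frac{1}{q^{m+i}}$ gives the key estimate
\[
t_i\le t\le t_i+\frac{1}{q^{m+i}}.
\]
By Lemma~\ref{basic_t}(4), condition (a) says precisely that
\[
\tau(M,\cI^{t_i})\supsetneq\tau\bigl(M,\cI^{t_i+q^{-(m+i)}}\bigr).
\]

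We now treat the two cases separately.

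\emph{Case $t_i<t$ for all $i$.} Given $0<\epsilon<t$, choose $i$ with $t_i>t-\epsilon$. Then monotonicity (Lemma~\ref{basic_t}(1)) gives
\[
\tau(M,\cI^{t-\epsilon})\supset\tau(M,\cI^{t_i})\supsetneq\tau\bigl(M,\cI^{t_i+q^{-(m+i)}}\bigr)\supset\tau(M,\cI^{t}),
\]
using $t\le t_i+q^{-(m+i)}$. So $t\in\FJN^-$.

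\emph{Case $t_i=t$ for some $i$.} Since the sequence is nondecreasing with limit $t$, we get $t_k=t$ for all $k\ge i$. Given $\epsilon>0$, choose $k\ge i$ with $q^{-(m+k)}\le\epsilon$. Then
\[
\tau(M,\cI^{t})\supsetneq\tau\bigl(M,\cI^{t+q^{-(m+k)}}\bigr)\supset\tau(M,\cI^{t+\epsilon}),
\]
so $t\in\FJN^+$.

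The only delicate point is the inductive step producing $j_{i+1}$ and the control on the tail sum; everything else is monotonicity of $\tau$. We expect this tail estimate, ensuring $t\le t_i+q^{-(m+i)}$, to be the main thing to check carefully.
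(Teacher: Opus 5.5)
Part~(1) and your case $t_i=t$ are fine. The case ``$t_i<t$ for all $i$'' has a genuine gap.

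\textbf{The reversed inclusion.} The last inclusion in your chain, $\tau\bigl(M,\cI^{t_i+q^{-(m+i)}}\bigr)\supset\tau(M,\cI^{t})$, goes the wrong way. By Lemma~\ref{basic_t}(1) it would require $t\ge t_i+q^{-(m+i)}$, but you only know $t\le t_i+q^{-(m+i)}$. What monotonicity actually gives is
\[
\tau(M,\cI^{t_i})\supset\tau(M,\cI^{t})\supset\tau\bigl(M,\cI^{t_i+q^{-(m+i)}}\bigr),
\]
with the outer inclusion strict. This only shows $t\in\FJN$. A priori the strict drop could sit between $t$ and $t_i+q^{-(m+i)}$, which would make $t$ a right rather than a left jumping exponent.

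\textbf{The missing ingredient.} You need an argument placing the drop to the left of $t$. The paper supplies it by splitting into two subcases.
\begin{enumerate}
\item If $t\notin\Z[\frac1p]$, Lemma~\ref{+1/p} gives $\epsilon'>0$ with $\tau(M,\cI^{t+\epsilon'})=\tau(M,\cI^{t})$. For $i\gg0$ we have $t_i+q^{-(m+i)}<t+\epsilon'$, so $\tau(M,\cI^{t})=\tau\bigl(M,\cI^{t_i+q^{-(m+i)}}\bigr)$. The strictness then transfers to $\tau(M,\cI^{t_i})\supsetneq\tau(M,\cI^{t})$.
\item If $tq^{m+i_0}\in\Z$ for some $i_0$, then for $i\ge i_0$ both $t_iq^{m+i}$ and $tq^{m+i}$ are integers with $t_iq^{m+i}<tq^{m+i}\le t_iq^{m+i}+1$. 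This forces $t=t_i+q^{-(m+i)}$. Only then does your chain become valid, with equality at the last step.
\end{enumerate}
Note that the delicate point is not the tail estimate you flagged but this question of which side of $t$ carries the strict drop.

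\textbf{A smaller point.} Statement~(2) concerns an arbitrary sequence satisfying (a) and (b), not only the one you constructed. So the estimate $t\le t_i+q^{-(m+i)}$ must be derived from (a) and (b), not from your digits $j_k\le q-1$. Since $t_iq^{m+i}$ and $t_{i+1}q^{m+i+1}$ are integers, (b) forces $t_{i+1}-t_i\le (q-1)q^{-(m+i+1)}$. Summing over $k\ge i$ then gives $t-t_i\le q^{-(m+i)}$.

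Also, your displayed tail sum $\sum_{k\ge i}q^{-(m+k)}\cdot\frac{q-1}{q}\cdot q$ actually equals $q^{1-(m+i)}$, which is too large. The sum should run over $k\ge i+1$ with terms $(q-1)q^{-(m+k)}$.
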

\begin{proof}
(1) Since $n\in\nu_{\cI}(q^m;M)$, we have $C^m(\cI^nM)\supset C^m(\cI^{n+1}M)$. Since $M=C(M)$, it follows that 
\[
C^{m+1}(\cI^{nq}M)=C^m(\cI^nC(M))\supsetneq C^m(\cI^{n+1}C(M))=C^{m+1}(\cI^{(n+1)q}M). 
\]
Hence, there exists an integer $r_1\in\{0,\dots,q-1\}$ such that $nq+r_1\in\nu_{\cI}(q^{m+1};M)$. Proceeding inductively, we construct a sequence $\{r_i\}_{i\ge1}$ with $r_i\in\{0,\dots,q-1\}$ such that, for each $i\ge1$,  
\[
nq^i+\sum_{j=1}^{i}q^{i-j}r_j\in\nu_{\cI}(q^{m+i};M). 
\]
Then the sequence $\{t_i\}_{i\ge1}$ defined by 
    \[
    t_i\coloneqq  \frac{nq^i+\sum_{j=1}^{i}q^{i-j}r_j}{q^{m+i}}\]
satisfies the required property. 

\medskip 

\noindent
(2) Since the sequence $\{t_i\}_{i\ge1}$ is increasing and satisfies $ t_i\le\frac{n+1}{q^m}$ for all $i$, it converges to some $t\in\R$. It is clear that  $t\in\bigl[\frac{n}{q^m},\frac{n+1}{q^m}\bigr]$. By assumption~$(b)$, we also have 
\begin{equation*}\label{equation: ti t ti+}
  t_i\le t\le t_i+\frac{1}{q^{m+i}} \qquad\text{for all }i.   
\end{equation*}
Hence, 
\[
\tau(M,\cI^{t_i})\supset \tau(M,\cI^t)\supset \tau(M,\cI^{t_i+\frac{1}{q^{m+i}}}). 
\]
Note that, by Lemma~\ref{basic_t}(4), together with assumption~$(a)$, the inclusion $\tau(M,\cI^{t_i})\supset \tau(M,\cI^{t_i+\frac{1}{q^{m+i}}})$ is strict. Since this holds for all $i$, it follows that $t\in\FJN$. 

\medskip 

First assume that $t\notin\Z[\frac{1}{p}]$. In this case, we have $t_i<t$ for all $i$. On the other hand, by Lemma~\ref{+1/p}, we have $t\notin\FJN^+$, and hence $t\in\FJN^-$. This proves the assertion in this case. 

Assume now that $tq^{m+i_0}\in\Z$ for some $i_0\ge1$. If $t_i<t$ for all $i$, then $t=t_i+\frac{1}{q^{m+i}}$ for all $i\ge i_0$. This shows that $\tau(M,\cI^{t_i})\supsetneq \tau(M,\cI^t)$, and hence $t\in\FJN^-$. On the other hand, if $t=t_i$ for some $i$ (and hence for all sufficiently large $i$), then $\tau(M,\cI^{t})\supsetneq \tau(M,\cI^{t+\frac{1}{q^{m+i}}})$. This shows $t\in\FJN^+$. 

This completes the proof. 
\end{proof}

For $t\in\R_{\ge0}$, let $\psi^-(t)$ and $\psi^+(t)$ denote the unique integers satisfying 
\[
\psi^-(t)<t\le \psi^-(t)+1, \qquad \psi^+(t)\le t< \psi^+(t)+1. 
\]
Explicitly, $\psi^-(t)=\lceil t\rceil-1$ and $\psi^+(t)=\lfloor t\rfloor$. 

The following theorem can be compared with \cite[Thoerem~6.7]{Mus09}, in view of Theorem~\ref{thm: nu invariants vs BSR Dmodule}. 
\begin{thm}\label{thm: FJN and nu-invariant}
Let $M$ be an $F$-pure $q$-Cartier module on $X$, and fix an integer $m\ge1$. Then the following hold: 
\begin{enumerate}
\item The map 
\[
\psi^-_m\colon \FJN^-\to \nu_{\cI}(q^m;M),\quad t\mapsto \psi^-(tq^m)
\]
 is well-defined. 
    \item The map 
\[
\psi^+_m\colon \FJN^+\to \nu_{\cI}(q^m;M),\quad t\mapsto \psi^+(tq^m)
\]
is well-defined.  
 \item We have 
 \[
\operatorname{Im}\psi^-_m\cup \operatorname{Im}\psi^+_m=\nu_{\cI}(q^m;M). 
 \]
\end{enumerate}
\end{thm}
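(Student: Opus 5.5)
Throughout I would work locally, so that $\cI=f\cO_X$ (Lemma~\ref{lem: BSR is local} reduces to this case). The basic tool is Lemma~\ref{basic_t}(4): for integers $n\ge0$ (with $n=0$ handled by $F$-purity, $C^m(M)=M=\tau(M,\cI^0)$) we have $\tau(M,\cI^{n/q^m})=C^m(f^nM)$. Hence
\[
n\in\nu_\cI(q^m;M)\iff \tau(M,\cI^{n/q^m})\supsetneq\tau(M,\cI^{(n+1)/q^m}).
\]
So $\nu_\cI(q^m;M)$ is exactly the set of $n$ such that the test module filtration jumps somewhere in the half-open interval $(n/q^m,(n+1)/q^m]$, measured between its endpoints. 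The whole proof consists of translating the one-sided jumping conditions into this statement.

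For (1), let $t\in\FJN^-$ and set $n=\psi^-(tq^m)$, so that $n/q^m<t\le(n+1)/q^m$. Choose $\epsilon>0$ with $t-\epsilon\ge n/q^m$. Monotonicity (Lemma~\ref{basic_t}(1)) gives
\[
C^m(f^nM)=\tau(M,\cI^{n/q^m})\supset\tau(M,\cI^{t-\epsilon})\supsetneq\tau(M,\cI^t)\supset\tau(M,\cI^{(n+1)/q^m})=C^m(f^{n+1}M),
\]
so $n\in\nu_\cI(q^m;M)$.

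For (2), let $t\in\FJN^+$ and set $n=\psi^+(tq^m)$, so that $n/q^m\le t<(n+1)/q^m$. Take $\epsilon$ with $t+\epsilon\le(n+1)/q^m$. The analogous chain
\[
\tau(M,\cI^{n/q^m})\supset\tau(M,\cI^t)\supsetneq\tau(M,\cI^{t+\epsilon})\supset\tau(M,\cI^{(n+1)/q^m})
\]
gives the claim. Both steps are routine.

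For (3), the inclusion "$\subset$" is (1) and (2). For "$\supset$", given $n\in\nu_\cI(q^m;M)$, I would invoke Lemma~\ref{lem: nu invariants produce FJN}. Part (1) of that lemma produces a sequence $t_i\in[n/q^m,(n+1)/q^m)$, and part (2) shows that its limit $t$ lies in $[n/q^m,(n+1)/q^m]$ and belongs to $\FJN^-$ or $\FJN^+$ according to the two cases.

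1. Suppose $t_i=t$ for some $i$. Then $t\in\FJN^+$, and since $t=t_i<(n+1)/q^m$ and $t\ge n/q^m$, we get $\psi^+(tq^m)=n$.
2. Suppose $t_i<t$ for all $i$. Then $t\in\FJN^-$, and since $t>t_1\ge n/q^m$ and $t\le(n+1)/q^m$, we get $\psi^-(tq^m)=n$.

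The only point needing care is checking these endpoint inequalities: strictness at the correct end ($t<(n+1)/q^m$ in the first case, $t>n/q^m$ in the second), which come from $t_i<(n+1)/q^m$ and from $t>t_1$ respectively. I expect this bookkeeping at the boundary, together with the use of $F$-purity to cover $n=0$, to be the main (mild) obstacle. The substantive existence argument has already been carried out in Lemma~\ref{lem: nu invariants produce FJN}.
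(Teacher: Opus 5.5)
Your proof is correct and is essentially the paper's argument: you get (1) and (2) by sandwiching $\tau(M,\cI^t)$ between $C^m(\cI^nM)$ and $C^m(\cI^{n+1}M)$ via Lemma~\ref{basic_t}(1),(4) together with the one-sided strictness, and (3) by taking the limit $t$ of the sequence from Lemma~\ref{lem: nu invariants produce FJN} and checking the same endpoint inequalities in the two cases. Your explicit treatment of $n=0$ via $F$-purity (Lemma~\ref{basic_t}(4) is stated only for positive $n$) covers a point the paper leaves implicit; in (1), also take $\epsilon<t$ as the definition of $\FJN^-$ requires, which costs nothing since $t>n/q^m\ge0$.
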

\begin{proof}
We first prove~(1) and~(2). Let $t\in\R_{\ge0}$, and let $n\in\Z_{\ge0}$ be such that 
\[
n\le tq^m\le n+1. 
\]
By Lemma~\ref{basic_t}(1) and~(4), we have 
\[
C^m(\cI^nM)=\tau(M,f^{\frac{n}{q^m}})\supset \tau(M,f^t)\supset\tau(M,f^{\frac{n+1}{q^m}})= C^m(\cI^{n+1}M). 
\]
If $t\in\FJN^-$ and  $n<tq^m$, then we have $\tau(M,f^{\frac{n}{q^m}})\supsetneq \tau(M,f^t)$. 
On the other hand, if $t\in\FJN^+$ and $tq^m<n+1$, then $\tau(M,f^t)\supsetneq\tau(M,f^{\frac{n+1}{q^m}})$. In both cases, we have  $C^m(\cI^nM)\supsetneq C^m(\cI^{n+1}M)$. This proves~(1) and~(2).

\medskip 

\noindent
We now prove~(3). Let $n\in\nu_{\cI}(q^m;M)$. Let $\{t_i\}_{i\ge1}$ be a sequence as in Lemma~\ref{lem: nu invariants produce FJN}, and let $t\in\R$ be its limit. We have $n\le tq^m\le n+1$. We divide the proof into two cases, as in Lemma~\ref{lem: nu invariants produce FJN}(2). 

First assume that $t_i<t$ for all $i$. In this case, we have $t\in\FJN^-$ and $n\le t_iq^m<tq^m$. It follows that $\psi_m^-(t)=n$. 

Next assume that $t_i=t$ for some $i$. In this case, $t\in\FJN^+$ and $tq^m=t_iq^m<n+1$. Hence, $\psi^+_m(t)=n$. This proves the claim.  
\end{proof}

In the following lemma, we show that $\FJN^-$ can be recovered from the family  $\bigl(\nu_{\cI}(q^m;M)\bigr)_{m\ge1}$. 
\begin{lem}\label{lem: nu invariants recover FJN-}
Let $t\in\R_{>0}$. Then $t\in \FJN^-$ if and only if 
\[
\psi^-(tq^m)\in\nu_{\cI}(q^m;M)\qquad\text{for all sufficiently large }m\ge1. 
\]
\end{lem}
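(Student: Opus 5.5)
We prove the two implications separately, using only Lemma~\ref{basic_t}(1),(4) and Lemma~\ref{lem: test module is Ce for large e}. Throughout, write $n_m\coloneqq\psi^-(tq^m)$, so that $n_m<tq^m\le n_m+1$. By Lemma~\ref{basic_t}(1) and~(4) we have the sandwich
\[
C^m(\cI^{n_m}M)=\tau\bigl(M,\cI^{n_m/q^m}\bigr)\supset\tau(M,\cI^t)\supset\tau\bigl(M,\cI^{(n_m+1)/q^m}\bigr)=C^m(\cI^{n_m+1}M).
\]

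For the forward direction, assume $t\in\FJN^-$. Since $n_m/q^m<t$, we may take $\epsilon=t-n_m/q^m\in(0,t)$. The definition of left jumping exponent then gives $\tau(M,\cI^{n_m/q^m})\supsetneq\tau(M,\cI^t)$. Combined with the sandwich, this yields $C^m(\cI^{n_m}M)\supsetneq C^m(\cI^{n_m+1}M)$, that is, $n_m\in\nu_\cI(q^m;M)$. This holds for every $m\ge1$, which is the same argument as Theorem~\ref{thm: FJN and nu-invariant}(1).

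For the converse, assume $n_m\in\nu_\cI(q^m;M)$ for all $m\ge m_0$. Fix $0<\epsilon<t$; we must show $\tau(M,\cI^{t-\epsilon})\supsetneq\tau(M,\cI^t)$.
\begin{itemize}
\item Choose $m\ge m_0$ large enough that $1/q^m<\epsilon$, so that $t-\epsilon<n_m/q^m$.
\item Lemma~\ref{basic_t}(1) then gives $\tau(M,\cI^{t-\epsilon})\supset C^m(\cI^{n_m}M)$.
\item It therefore suffices to show $C^m(\cI^{n_m}M)\supsetneq\tau(M,\cI^t)$.
\end{itemize}
The hypothesis only tells us that $C^m(\cI^{n_m}M)\ne C^m(\cI^{n_m+1}M)$, and $\tau(M,\cI^t)$ sits between these two. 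So we must rule out $\tau(M,\cI^t)=C^m(\cI^{n_m}M)$, and this is the main obstacle.

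To handle it, we pass to larger levels. By Lemma~\ref{lem: test module is Ce for large e}, there is $m'\ge m$ with $\tau(M,\cI^t)=C^{m'}(\cI^{\lceil tq^{m'}\rceil}M)=C^{m'}(\cI^{n_{m'}+1}M)$, since $\lceil tq^{m'}\rceil=n_{m'}+1$. On the other hand, using $M=C^{m'-m}(M)$,
\[
C^m(\cI^{n_m}M)=C^{m'}\bigl(\cI^{n_mq^{m'-m}}M\bigr)\supset C^{m'}(\cI^{n_{m'}}M).
\]
The inclusion holds because $n_mq^{m'-m}\le n_{m'}$: indeed $n_mq^{m'-m}<tq^{m'}$ and the left side is an integer. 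By hypothesis at level $m'$, $C^{m'}(\cI^{n_{m'}}M)\supsetneq C^{m'}(\cI^{n_{m'}+1}M)=\tau(M,\cI^t)$. Hence $C^m(\cI^{n_m}M)\supsetneq\tau(M,\cI^t)$, and therefore $\tau(M,\cI^{t-\epsilon})\supsetneq\tau(M,\cI^t)$.

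Since $\epsilon\in(0,t)$ was arbitrary, $t\in\FJN^-$.
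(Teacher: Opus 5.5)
Your proof is correct, and your converse takes a different route from the paper's. The paper deduces the ``if'' direction from Lemma~\ref{lem: nu invariants produce FJN}. It checks that $t_i=\psi^-(tq^{m+i})q^{-(m+i)}$ satisfies conditions (a) and (b) there, and then invokes part (2) of that lemma. The proof of part (2) first shows $t\in\FJN$ by a limiting sandwich argument. It then separates the cases $t\notin\Z[\frac{1}{p}]$ (via Lemma~\ref{+1/p}) and $t\in\Z[\frac{1}{p}]$.

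You instead use Lemma~\ref{lem: test module is Ce for large e} to identify $\tau(M,\cI^t)$, once $m'\gg0$, with the lower member $C^{m'}(\cI^{\psi^-(tq^{m'})+1}M)$ of the level-$m'$ pair. A single large level with $\psi^-(tq^{m'})\in\nu_\cI(q^{m'};M)$ then gives the strict drop $\tau(M,\cI^{t-\epsilon})\supsetneq\tau(M,\cI^t)$ directly. You need no limit, no case distinction, and no appeal to Lemma~\ref{+1/p}. As a bonus, your argument shows that the hypothesis for infinitely many $m$ already suffices.

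Your auxiliary level $m$ is superfluous. Choose $m'$ with $q^{-m'}<\epsilon$ as well; then Lemma~\ref{basic_t}(1),(4) give $\tau(M,\cI^{t-\epsilon})\supset C^{m'}(\cI^{n_{m'}}M)\supsetneq\tau(M,\cI^t)$ at once. In exchange, the paper's route reuses the sequence construction that also drives Theorem~\ref{thm: FJN and nu-invariant}(3), so the lemma there is a one-line consequence.

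One small slip in your forward direction: when $n_m=0$, your $\epsilon=t-n_m/q^m$ equals $t$ and is not in $(0,t)$. This is harmless, since only large $m$ matter, and in any case monotonicity gives $\tau(M,\cI^0)\supset\tau(M,\cI^{t/2})\supsetneq\tau(M,\cI^t)$.
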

\begin{proof}
    The ``only if'' direction follows from Theorem~\ref{thm: FJN and nu-invariant}(1). We prove the ``if'' direction. Choose an integer $m\ge1$ such that 
    \[
    \psi^-(tq^{m+i})\in\nu_{\cI}(q^{m+i};M)\qquad\text{for all }i\ge1. 
    \]
Then the sequence $\{t_i\}_{i\ge1}$ defined by $t_i=\psi^-(tq^{m+i})\cdot q^{-(m+i)}$ satisfies the conditions in Lemma~\ref{lem: nu invariants produce FJN}(1). By construction, we have $t_i<t$ for all $i$, and $\{t_i\}_{i\ge1}$ converges to $t$. Hence, $t \in \FJN^-$. 
\end{proof}

The set $\nu_\cI(q^\infty;M)$ can be described in terms of $\bigl(\FJN^-(M,\cI),\FJN^+(M,\cI)\bigr)$ as follows. 
\begin{thm}\label{BSRFJN}
Let $M$ be an $F$-pure $q$-Cartier module on $X$. 
The following statements hold: 
\begin{enumerate}
    \item We have \[
\nu_\cI(q^\infty;M)\cap\bigl(\Z_{(p)}\cap[-1,0)\bigr)=-\Big(\FJN^-(M,\cI)\cap\Z_{(p)}\cap (0,1]\Big). 
\]
\item We have $0\in  \nu_\cI(q^\infty;M)$ if and only if $0\in\FJN^+(M,\cI)$. 
\end{enumerate}
\end{thm}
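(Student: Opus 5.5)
The plan is to reduce both statements to level-by-level conditions. On the $\nu$-side I use Lemma~\ref{BSRnu}(2): $\alpha\in\nu_\cI(q^\infty;M)$ if and only if $\alpha_m\in\nu_\cI(q^m;M)$ for all $m\ge1$. On the $F$-jumping side I use Lemma~\ref{lem: nu invariants recover FJN-} and Lemma~\ref{basic_t}(4). The whole argument can be done locally, so I assume $\cI=f\cO_X$.

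First I record a descent property, valid because $M$ is $F$-pure. Let $m\ge m'$, let $n'\ge0$, and let $n=q^{m-m'}n'+j$ with $0\le j<q^{m-m'}$. If $n\in\nu_\cI(q^m;M)$, then $n'\in\nu_\cI(q^{m'};M)$. Indeed, using $C^{m-m'}(M)=M$ we get
\[
C^{m'}(f^{n'}M)=C^m(f^{q^{m-m'}n'}M)\supset C^m(f^nM)\supsetneq C^m(f^{n+1}M)\supset C^m(f^{q^{m-m'}(n'+1)}M)=C^{m'}(f^{n'+1}M).
\]
Since $\psi^-(qx)=q\psi^-(x)+j$ for some $j\in[0,q)$, this applies with $n=\psi^-(tq^m)$ and $n'=\psi^-(tq^{m'})$.

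For~(1), let $t\in\Z_{(p)}\cap(0,1]$ and $\alpha=-t$. Write $t=a/b$ with $p\nmid b$, and let $r\ge1$ be such that $q^r\equiv1\pmod b$. The key computation is the following. For every $m\in r\Z_{>0}$, the number $tq^m-t=t(q^m-1)$ is an integer. Since $0<t\le1$, this gives $\psi^-(tq^m)=tq^m-t$, which lies in $[0,q^m)$ and is congruent to $-t$ modulo $q^m$. Hence
\[
\alpha_m=\psi^-(tq^m)\qquad\text{for all }m\in r\Z_{>0}.
\]

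Suppose first that $t\in\FJN^-$. By Lemma~\ref{lem: nu invariants recover FJN-}, $\psi^-(tq^m)\in\nu_\cI(q^m;M)$ for all large $m$. So $\alpha_m\in\nu_\cI(q^m;M)$ for all large $m\in r\Z$. The descent property, together with the compatibility $(\alpha_m)_{m'}=\alpha_{m'}$, then gives $\alpha_{m'}\in\nu_\cI(q^{m'};M)$ for every $m'$. Hence $\alpha\in\nu_\cI(q^\infty;M)$.

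Conversely, suppose $\alpha\in\nu_\cI(q^\infty;M)$. Then $\psi^-(tq^m)\in\nu_\cI(q^m;M)$ for all $m\in r\Z$. For an arbitrary large $m'$, I pick $m\in r\Z$ with $m\ge m'$ and apply the descent property to obtain $\psi^-(tq^{m'})\in\nu_\cI(q^{m'};M)$. Lemma~\ref{lem: nu invariants recover FJN-} then gives $t\in\FJN^-$.

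For~(2), note that $0_m=0$ for all $m$. So $0\in\nu_\cI(q^\infty;M)$ if and only if $C^m(M)\ne C^m(fM)$ for every $m\ge1$. By $F$-purity and Lemma~\ref{basic_t}(4), this says exactly that $\tau(M,\cI^0)=M\supsetneq\tau(M,\cI^{1/q^m})$ for all $m$. Since the test modules decrease in $t$ (Lemma~\ref{basic_t}(1)), this is equivalent to $0\in\FJN^+$.

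I expect the main obstacle to be the arithmetic identification $\alpha_m=\psi^-(tq^m)$. It only holds along the subsequence $m\in r\Z$, and this is why the descent property is needed to pass to all levels in both directions.
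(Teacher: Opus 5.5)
Your argument is correct. Part~(2), and the identification $\alpha_m=\psi^-(tq^m)$ for $m\in r\Z_{>0}$, match the paper. The paper writes $t$ with denominator $q^r-1$ and compares the $q$-adic expansion of $-t$ with the real expansion of $t$; your direct computation $\psi^-(tq^m)=t(q^m-1)$ says the same thing more cleanly.

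Where you differ is in passing from the levels $m\in r\Z$ to the ``all sufficiently large $m$'' criterion of Lemma~\ref{lem: nu invariants recover FJN-}.
\begin{itemize}
\item The paper applies that lemma to the $q^r$-Cartier module $(M,C^r)$. This leaves implicit three facts: $(M,C^r)$ is still $F$-pure; its level-$e$ $\nu$-invariants are $\nu_\cI(q^{re};M)$; and its test modules, hence $\FJN^-$, agree with those of $(M,C)$. The last holds because $\tau(M,\cI^t)$ is the stable value of the ascending chain in Lemma~\ref{lem: test module is Ce for large e}, so it can be computed along a cofinal subsequence.
\item You instead stay at level $q$. You prove from $F$-purity the ``quotient'' descent $n\in\nu_\cI(q^m;M)\Rightarrow\lfloor n/q^{m-m'}\rfloor\in\nu_\cI(q^{m'};M)$, which is the same computation as in Lemma~\ref{Talpha}(1). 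You combine it with $\psi^-(q^kx)-q^k\psi^-(x)\in[0,q^k)$.
\end{itemize}
Your route costs one extra lemma but is self-contained. It also shows more generally that the criterion of Lemma~\ref{lem: nu invariants recover FJN-} can be tested along any cofinal set of levels.

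One misattribution needs fixing. In the direction $t\in\FJN^-\Rightarrow -t\in\nu_\cI(q^\infty;M)$, your descent property is not the right tool. It produces $\lfloor\alpha_m/q^{m-m'}\rfloor$ rather than $\alpha_{m'}$, and these differ when $r\nmid m'$. For example, with $q=2$, $t=1/3$, $r=2$, one has $\lfloor\alpha_2/2\rfloor=0$ but $\alpha_1=1$.

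What you need there is the residue descent of Lemma~\ref{BSRnu}(1), which is also what the paper uses at this point. Alternatively, argue from the definition: given $m'$, choose $m\in r\Z$ large with $m\ge m'$. Then $\alpha_m\in\nu_\cI(q^m;M)\subset\nu_\cI(q^{m'};M)$ by Lemma~\ref{basic_nu}(1), and $\alpha-\alpha_m\in q^{m'}\Z_p$. With that change the proof is complete.
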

Assume that Assumption~\ref{assumption: nu invariant is bound} holds with $N=M$. Then, by Theorem~\ref{BSRrat}(2), we have 
\[\nu_{\cI}(q^\infty; M)\subset\Z_{(p)}\cap[-1,0]. \]
Therefore, under this assumption, the theorem gives a complete description of $\nu_\cI(q^\infty;M)$ in terms of test modules. 
\begin{proof}
 We first prove part~(1). It suffices to show that, for any $\alpha\in\Z_{(p)}\cap (0,1]$, 
 \[-\alpha\in\nu_\cI(q^\infty;M)\quad \text{if and only if}\quad  \alpha\in\FJN^-(M,\cI) . \]

We compute an expansion of $-\alpha$ in $\Z_p$ and an expansion of $\alpha$ in $\R$. 
Since $\alpha\in\Z_{(p)}\cap(0,1]$, we can write 
\begin{equation*}\label{exp}
\alpha=\frac{r}{q^m-1}
\end{equation*}
for some integers $m\geq1$ and $1\leq r\leq q^m-1$. Write 
\[
r=\sum_{i=0}^{m-1}r_iq^i,\qquad r_i=0,\dots,q-1. 
\]
Extend the function $i\mapsto r_i$ to all  integers by the rule $r_i=r_{i-m}$. 

With this notation, we obtain the following expansion in $\Z_p$: 
\begin{equation}
    \label{ainZ_p}
    -\alpha=r(1+q^m+q^{2m}+\cdots)=\sum_{i=0}^\infty r_iq^i. 
\end{equation}
On the other hand, in $\R$, we have 
\begin{equation*}\label{ainR}
    \alpha=\Big(\sum_{i=1}^m\frac{r_{-i}}{q^i}\Big)\cdot\Big(1+\frac{1}{q^m}+\cdots\Big)=\sum_{i=1}^\infty\frac{r_{-i}}{q^i}. 
\end{equation*}

By Lemma \ref{BSRnu}(2) and the expansion in \eqref{ainZ_p}, the condition $-\alpha\in\nu_\cI(q^\infty;M)$ is equivalent to 
\[
\sum_{i=0}^{em-1}r_iq^i\in \nu_\cI(q^{em};M)
\]
for all sufficiently large integers $e\geq1$. On the other hand,    
\[
 q^{em}\sum_{i=1}^{em}\frac{r_{-i}}{q^i}=\sum_{i=0}^{em-1}r_iq^i. 
\]
Since $\alpha\neq0$, we have $r_i>0$ for some $i$. Hence, 
\[
q^{em}\sum_{i=1}^{em}\frac{r_{-i}}{q^i}=\psi^-(\alpha q^{em}), 
\]
where $\psi^-(t)=\lceil t\rceil-1$. 

Therefore, it suffices to show that $\alpha\in\FJN^-(M,\cI)$ if and only if 
\[
\psi^-(\alpha q^{em})\in \nu_\cI(q^{em};M)
\]
 for all sufficiently large $e\geq1$. 
This follows from Lemma~\ref{lem: nu invariants recover FJN-} applied with $C$ replaced by $C^m$ and $q$ replaced by $q^m$. This proves~(1). 

\medskip

    It remains to  prove~(2). By Lemma~\ref{BSRnu}, we have $0\in\nu_\cI(q^\infty;M)$ if and only if 
    \[
    M=C^m(M)\supsetneq C^m(\cI M)
    \]
    for all integers $m\geq1$. On the other hand, we have $0\in \FJN^+(M,\cI)$  if and only if 
    \[
    M=\tau(M,\cI^0)\supsetneq \tau(M,\cI^{\frac{1}{q^m}})=C^m(\cI M)
    \]
    for all $m\geq1$. This proves~(2).  
\end{proof}

\begin{rem}
    Theorems~\ref{thm: FJN and nu-invariant} and~\ref{BSRFJN} appear to be closely related. It is not clear to the author whether there exists a statement that unifies these results. 
\end{rem}

\subsection{\texorpdfstring{The Submodule $M_{\cI!}$}{The Submodule MI!}}
In this subsection, we define a certain submodule of an $F$-pure $q$-Cartier module. 
Let ${\cal C}_X$ denote the graded $\cO_X$-algebra 
\[
\mathcal{C}_X:=\bigoplus_{m\geq0}C^m\cdot \cO_X,
\]
constructed as follows. As a right $\cO_X$-module, $\mathcal{C}_X$ is free with basis $\{C^m\}_{m\ge0}$. The multiplication is determined by the relation 
\[
C\cdot a^q=a\cdot C
\]
for all local sections $a\in \cO_X$. Then the notion of a $q$-Cartier module coincides with that of a left $\mathcal{C}_X$-module. 

Let 
 \[
 {\cal C}_{X,+}\coloneqq \bigoplus_{m>0}C^m\cdot\cO_X
 \]denote its positive part. 
Set $U\coloneqq X\setminus Z(\cI)$. 
 
\begin{lem}\label{f-min}
    Let $M$ be an $F$-pure $q$-Cartier module on $X$. Then there exists a unique coherent $q$-Cartier submodule $N\subset M$ satisfying the following properties: 
    \begin{enumerate}
        \item $N|_U=M|_U$. 
        \item If $N'$ is a coherent $q$-Cartier submodule of $M$ such that $N'|_U=M|_U$, then 
        \[
        N\subset N'. 
        \]
    \end{enumerate}
    Explicitly, for any positive integer $n$, we have 
\[
N={\cal C}_X(\cI^nM)={\cal C}_{X,+}(\cI^nM). 
\]
\end{lem}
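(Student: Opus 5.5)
We take $N\coloneqq {\cal C}_{X,+}(\cI^nM)$ for a fixed $n\ge1$ and verify, in order, that it is a coherent $q$-Cartier submodule, that it satisfies (1) and (2), and finally that it does not depend on $n$ and agrees with ${\cal C}_X(\cI^nM)$. Uniqueness is formal: two submodules satisfying (1) and (2) contain each other. All statements are local (equalities of subsheaves), so we may assume $\cI=f\cO_X$ with $f\in\Gamma(X,\cO_X)$.

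\emph{Coherence and stability.} By definition ${\cal C}_{X,+}(f^nM)=\sum_{m\ge1}C^m(f^nM)$. Since $M=C(M)$, we have
\[
C^{m}(f^nM)=C^m(f^nC(M))=C^{m+1}(f^{nq}M)\subset C^{m+1}(f^nM).
\]
So the sum is an ascending union of coherent subsheaves. By Noetherianity it stabilizes, giving $N=C^m(f^nM)$ for $m\gg0$. In particular $N$ is coherent, and it is visibly stable under $C$ and $\cO_X$, hence a $q$-Cartier submodule. Moreover $f^nM\subset C(f^nM)$ is false in general, but
\[
{\cal C}_X(f^nM)=f^nM+N,
\]
and we have
\[
f^nM=f^nC^m(M)=C^m(f^{nq^m}M)\subset C^m(f^nM)\subset N .
\]
Hence ${\cal C}_X(f^nM)=N$.

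\emph{Property (1).} On $U$, $f$ is a unit, so $f^nM|_U=M|_U$. Using $F$-purity, $N|_U=C^m(M)|_U=M|_U$.

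\emph{Property (2) and independence of $n$.} Let $N'$ be a coherent $q$-Cartier submodule with $N'|_U=M|_U$. Then $M/N'$ is coherent and supported on $Z(f)$, so $f^kM\subset N'$ for some $k$. This is the key step. Applying $C^m$ and using $C(N')\subset N'$, we get
\[
C^m(f^kM)\subset N' \quad\text{for all } m.
\]
Now
\[
C^m(f^nM)=C^{m+m'}(f^{nq^{m'}}M)\subset C^{m+m'}(f^kM)
\]
once $nq^{m'}\ge k$. Therefore $N={\cal C}_{X,+}(f^nM)\subset N'$. Independence of $n$ then follows either from uniqueness, since each ${\cal C}_{X,+}(f^nM)$ satisfies (1) and (2), or from the same computation applied with $N'={\cal C}_{X,+}(f^{n'}M)$.

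The only non-formal step is the support argument giving $f^kM\subset N'$. It uses the coherence of $M/N'$ and that $\cI$ is locally principal: the annihilator of a coherent sheaf supported on $Z(f)$ contains a power of $f$. Everything else is the $F$-purity trick $M=C^{m'}(M)$, used to trade powers of $f$ for powers of $C$.
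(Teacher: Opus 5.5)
Your proof is correct, but it reaches minimality by a somewhat different route from the paper's.

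The paper notes that $\mathcal{C}_X(\cI M)\supset\mathcal{C}_X(\cI^nM)\supset\mathcal{C}_{X,+}(\cI^nM)$ all satisfy (1), so it suffices to check (2) for the largest one. For this it passes to the quotient $M/N'$, which is $F$-pure and killed by some $\cI^k$. It then cites the fact that an $F$-pure Cartier module has radical annihilator \cite[Lemma~3.13]{BB11} to get $\cI M\subset N'$.

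You instead verify (2) directly for the smallest candidate, via the identity $C^m(f^nM)=C^{m+m'}(f^{nq^{m'}}M)$. This is exactly the mechanism behind the radical-annihilator lemma, inlined. You also get $\mathcal{C}_X(f^nM)=\mathcal{C}_{X,+}(f^nM)$ directly from $f^nM\subset C^m(f^nM)$, rather than from the sandwich plus uniqueness.

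The paper's argument is shorter given the citation. Yours is self-contained and makes explicit the coherence of the candidate submodules, which the paper uses tacitly when applying (2) to $\mathcal{C}_{X,+}(\cI^nM)$. It also already yields the stabilization $N=C^m(f^nM)$ for $m\gg0$ and every $n\ge1$, which the paper proves separately, for $n=1$, in Lemma~\ref{lem: right reduced}(1).

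One small slip: your aside that $f^nM\subset C(f^nM)$ is false in general is wrong here. For $F$-pure $M$, your own computation with $m=1$ gives $f^nM=C(f^{nq}M)\subset C(f^nM)$. Nothing in the argument depends on it.
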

\begin{proof}
The uniqueness of $N$ is immediate. It is clear that the modules ${\cal C}_X(\cI^nM)$ and ${\cal C}_{X,+}(\cI^nM)$ satisfy condition~(1), and that 
\[
{\cal C}_X(\cI M)\supset{\cal C}_X(\cI^nM)\supset{\cal C}_{X,+}(\cI^nM). 
\]
Thus it suffices to show that ${\cal C}_X(\cI M)$ satisfies condition~(2). 

Let $N'$ be a coherent $q$-Cartier submodule of $M$ such that $N'|_U=M|_U$. Since $M$ is coherent and $X$ is quasi-compact, there exists an integer $m\geq1$ such that 
\[\cI^m\cdot (M/N')=0.\]
On the other hand, the $q$-Cartier module $M/N'$ is $F$-pure. Hence, its annihilator is a radical ideal by \cite[Lemma~3.13]{BB11}. It follows that $\cI\cdot (M/N')=0$, that is, 
\[\cI M\subset N'.\]
Consequently, 
\[
{\cal C}_X(\cI M)\subset N'. 
\]
This proves the claim. 
\end{proof}

\begin{defn}\label{defn: zero extension of Cartier module}
For an $F$-pure $q$-Cartier module $M$ on $X$, we denote by $M_{\cI!} $ 
the submodule $N$ satisfying the properties in Lemma~\ref{f-min}. When $\cI=f\cO_X$, we also write $M_{f!}\coloneqq M_{\cI!}$. 
\end{defn}
The notation $M_{\cI!}$ is motivated by the observation that $M_{\cI!}$ should be the image of the natural morphism 
\[``\ j_!j^\ast M \ " \to M. 
\]
Here, $j\colon U\hookrightarrow X$ denotes the open immersion, and 
$j_!$ denotes the (nonexistent) extension-by-zero  functor for coherent $q$-Cartier modules. 

\begin{lem}\label{lem: right reduced}
Let $M$ be an $F$-pure $q$-Cartier module on $X$. Then the following statements hold: 
\begin{enumerate}
    \item For all sufficiently large integers $m\gg0$, we have 
    \[
    M_{\cI!}=C^m(\cI M). 
    \]
    \item The $q$-Cartier submodule $M_{\cI!}$ is $F$-pure. 
\end{enumerate}
\end{lem}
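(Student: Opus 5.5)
Since both assertions are local on $X$ (the formation of $M_{\cI!}$ and of $C^m(\cI M)$ commutes with restriction to opens, and $X$ is quasi-compact so a uniform $m$ can be chosen over a finite cover), I will assume $\cI=f\cO_X$. The starting point is the explicit description $M_{\cI!}={\cal C}_{X,+}(fM)=\sum_{m\ge1}C^m(fM)$ from Lemma~\ref{f-min}.

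For (1), the plan is to show that the sequence $C^m(fM)$ is ascending in $m$, exactly as in Lemma~\ref{lem: test module is Ce for large e}. Using $M=C(M)$ we get
\[
C^m(fM)=C^m(fC(M))=C^{m+1}(f^qM)\subset C^{m+1}(fM).
\]
Here the middle equality is $C^m(f\,C(x))=C^{m+1}(f^q x)$, which follows from $C(a^qx)=aC(x)$. These submodules form an ascending chain of coherent submodules of the coherent module $M$, so the chain stabilizes by Noetherianity. Hence $\sum_{m\ge1}C^m(fM)=C^m(fM)$ for all $m\gg0$, which together with Lemma~\ref{f-min} gives (1). Equivalently, in the notation of the test module filtration, $M_{\cI!}=\tau(M,\cI^{\epsilon})$ for $\epsilon>0$ small, though I will not need this.

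For (2), $M_{\cI!}$ is coherent, being a coherent $q$-Cartier submodule by Lemma~\ref{f-min}. It remains to check $C(M_{\cI!})=M_{\cI!}$. Choose $m$ large enough that $M_{\cI!}=C^m(fM)=C^{m+1}(fM)$, which is possible by (1). Then
\[
C(M_{\cI!})=C(C^m(fM))=C^{m+1}(fM)=M_{\cI!}.
\]

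The only potential subtlety is the locality reduction in (1): the integer $m$ must be uniform over the cover. This is handled by taking a finite cover and the maximum of the resulting bounds, since once the chain stabilizes on each $U_i$ it stays stable for all larger $m$. No real obstacle is expected.
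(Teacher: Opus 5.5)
Your proposal is correct and follows the paper's proof essentially verbatim. Both arguments use $M=C(M)$ to get the ascending chain $C^m(\cI M)\subset C^{m+1}(\cI M)$, let it stabilize by Noetherianity, and then read off $C(M_{\cI!})=C^{m+1}(\cI M)=M_{\cI!}$. The one difference is that the paper reduces to $\cI=f\cO_X$ only to check the inclusion $C^m(\cI M)\subset C^{m+1}(\cI M)$, which is a local statement, and then lets the global chain stabilize in the Noetherian module $M$; this makes your step of choosing $m$ uniformly over a finite cover unnecessary, though harmless.
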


\begin{proof}
(1) By Lemma~\ref{f-min}, we have 
\[
M_{\cI!}={\cal C}_X(\cI M)=\sum_{m\geq0}C^m(\cI M). \]
Since $M$ is Noetherian, it suffices to show that 
\[C^m(\cI M)\subset C^{m+1}(\cI M)\]
for every integer $m\geq0$, so that the ascending chain stabilizes. As this can be checked locally, we may assume $\cI=f\cO_X$. Since $M=C(M)$, we have 
\[
C^m(fM)=C^m(fC(M))=C^{m+1}(f^pM)\subset C^{m+1}(fM). 
\]
This proves~(1). 

(2) By~(1), we may write $M_{\cI!}=C^m(\cI M)$ for all $m\gg0$. Hence 
\[C(M_{\cI!})=C^{m+1}(\cI M)=M_{\cI!}. 
\]
Thus $M_{\cI!}$ is $F$-pure. 
    \end{proof}

For $t\in\R_{\geq0}$, let $\cI^t{\cal C}_X$ denote the $\cO_X$-subalgebra of $\mathcal{C}_X$ defined by 
\[
\cI^t{\cal C}_X:=\bigoplus_{m\geq0}C^m\cdot \cI^{\lceil t(q^m-1)\rceil}. 
\]
       \begin{lem}\label{lem: test module is over ft}
       Let $M$ be an $F$-pure $q$-Cartier module on $X$. Then $\tau(M,\cI^t)$ is a module over $\cI^t{\cal C}_X$. 
    \end{lem}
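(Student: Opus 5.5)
We need to show that for every $m\ge0$ and every local section $a$ of $\cI^{\lceil t(q^m-1)\rceil}$, the operator $C^m\cdot a$ maps $\tau(M,\cI^t)$ into itself, i.e.
\[
C^m\bigl(\cI^{\lceil t(q^m-1)\rceil}\,\tau(M,\cI^t)\bigr)\subset\tau(M,\cI^t).
\]
The statement is local, so we may assume $\cI=f\cO_X$.

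By Lemma~\ref{lem: test module is Ce for large e}, choose $e\gg0$ with
\[
\tau(M,f^t)=C^e\bigl(f^{\lceil tq^e\rceil}M\bigr).
\]
Write $s=\lceil t(q^m-1)\rceil$. Using the Cartier relation $f^{s}C^e(y)=C^e(f^{sq^e}y)$, we get
\[
C^m\bigl(f^{s}\tau(M,f^t)\bigr)=C^m\bigl(f^sC^e(f^{\lceil tq^e\rceil}M)\bigr)=C^{m+e}\bigl(f^{sq^e+\lceil tq^e\rceil}M\bigr).
\]
The key numerical step is the inequality
\[
sq^e+\lceil tq^e\rceil\ \ge\ \lceil t(q^m-1)q^e\rceil+\lceil tq^e\rceil\ \ge\ \lceil tq^{m+e}\rceil,
\]
which holds because $\lceil x\rceil+\lceil y\rceil\ge\lceil x+y\rceil$ and $sq^e\ge t(q^m-1)q^e$ (so $sq^e\ge\lceil t(q^m-1)q^e\rceil$, being an integer). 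It follows that
\[
C^{m+e}\bigl(f^{sq^e+\lceil tq^e\rceil}M\bigr)\subset C^{m+e}\bigl(f^{\lceil tq^{m+e}\rceil}M\bigr)\subset\tau(M,f^t),
\]
where the last inclusion is the definition of the test module as a union (equivalently, Lemma~\ref{lem: test module is Ce for large e}).

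The case $m=0$ is trivial, since $s=0$ and then the action is just that of $\cO_X$ on the $\cO_X$-submodule $\tau(M,f^t)$. It remains to note that these operators $C^m\cdot a$, together with $\cO_X$, generate $\cI^t\mathcal C_X$ as an $\cO_X$-algebra; indeed, the sum of the pieces is the whole algebra by definition. Hence stability under each graded piece gives the module structure. No step is a real obstacle; the only care needed is the rounding inequality above.
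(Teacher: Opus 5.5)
Your proof is correct and is essentially the paper's argument: the same Cartier-relation computation $C^m\bigl(f^{s}C^{e}(f^{\lceil tq^e\rceil}M)\bigr)=C^{m+e}\bigl(f^{sq^e+\lceil tq^e\rceil}M\bigr)$, followed by the rounding inequality $sq^e+\lceil tq^e\rceil\ge\lceil tq^{m+e}\rceil$. The only difference is that the paper applies this computation to every term $C^{m'}(f^{\lceil tq^{m'}\rceil}M)$ of the union, rather than first invoking Lemma~\ref{lem: test module is Ce for large e} to reduce to a single term, so it does not need the stabilization (or $F$-purity) at this step.
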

    \begin{proof}
    Since the assertion is local, we may assume that $\cI=f\cO_X$. Let $m,m'\geq0$ be integers. Then 
        \[
        C^m\big(f^{\lceil t(q^m-1)\rceil}\cdot C^{m'}(f^{\lceil tq^{m'}\rceil}M)\big)=C^{m+m'}(f^{q^{m'}\lceil t(q^m-1)\rceil+\lceil tq^{m'}\rceil}M)\subset C^{m+m'}(f^{\lceil tq^{m+m'}\rceil}M). 
        \]
    This shows that 
    \[
\cI^t\mathcal{C}_X\cdot\tau(M,\cI^t)\subset\tau(M,\cI^t), 
    \]
    and hence $\tau(M,\cI^t)$ is a module over $\cI^t\mathcal{C}_X$. 
    \end{proof}
    
The test module $\tau(M,\cI^t)$ has the following description as an $\cI^t\mathcal{C}_X$-module. 
\begin{prop}\label{prop: intrinsic description of test module}
    Let $M$ be an $F$-pure $q$-Cartier module, and let $t\in\R_{\geq0}$. Assume that either $M=M_{\cI!}$ or $t\notin\Z[\frac{1}{p}]$. 
    Then the following hold:  
    \begin{enumerate}
        \item For any integer $n\geq t$, we have 
        \[\tau(M,\cI^t)=\cI^t{\cal C}_X\cdot \cI^nM. 
        \]
        \item Let $N$ be an $\cO_X$-coherent $\cI^t{\cal C}_X$-submodule of $M$ such that $N|_U=M|_U$. Then 
    \[
    \tau(M,\cI^t)\subset N. 
    \]
    \end{enumerate}
    Consequently, $\tau(M,\cI^t)$ is the smallest $\cO_X$-coherent $\cI^t\mathcal{C}_X$-submodule of $M$ such that $\tau(M,\cI^t)|_U=M|_U$. 
\end{prop}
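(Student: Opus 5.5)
Since every assertion is local, I would first reduce to the case $\cI=f\cO_X$ with $f\in\Gamma(X,\cO_X)$. I would then prove (1) by two inclusions and deduce (2) from (1).

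For the inclusion $\cI^t\mathcal{C}_X\cdot f^nM\subset\tau(M,f^t)$ I would argue as follows.
\begin{enumerate}
\item Since $M$ is $F$-pure, $\tau(M,f^0)=M$.
\item By Lemma~\ref{basic_t}(2) applied $n$ times, $\tau(M,f^n)=f^nM$.
\item Since $n\ge t$, Lemma~\ref{basic_t}(1) gives $f^nM\subset\tau(M,f^t)$.
\item By Lemma~\ref{lem: test module is over ft}, $\tau(M,f^t)$ is stable under $\cI^t\mathcal{C}_X$, which gives the inclusion.
\end{enumerate}

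For the reverse inclusion, I would write $\tau(M,f^t)=C^{m+k}(f^{\lceil tq^{m+k}\rceil}M)$ for $m,k\gg0$ (Lemma~\ref{lem: test module is Ce for large e}). I would split the exponent as $\lceil tq^{m+k}\rceil=q^k\lceil t(q^m-1)\rceil+b_{m,k}$. This gives
\[
\tau(M,f^t)=C^m\bigl(f^{\lceil t(q^m-1)\rceil}\,C^k(f^{b_{m,k}}M)\bigr),
\]
so it suffices to show $C^k(f^{b_{m,k}}M)\subset \cI^t\mathcal{C}_X\cdot f^nM$. Here $b_{m,k}$ is roughly $tq^k+q^k(\lceil t(q^m-1)\rceil-t(q^m-1))$ minus rounding. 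The aim is to rewrite $f^{b_{m,k}}M$, using $F$-purity $M=C^{k'}(M)$, in the form $f^{\lceil t(q^{k}-1)\rceil}\cdot(\text{element of } f^{n'}C^{k''}(M))$ with $n'\ge n$.

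This bookkeeping is the main obstacle, because the naive estimate has no room to absorb the extra factor $f^n$. This is exactly where the hypothesis must enter.
\begin{itemize}
\item If $t\notin\Z[\frac1p]$, the fractional parts of $tq^m$ never vanish. I would choose $m$ so that $\lceil t(q^m-1)\rceil-t(q^m-1)$ is bounded below by a fixed positive amount. Then after multiplying by $q^k$ with $k\gg0$, the surplus exceeds $nq^{k}$, giving the room needed.
\item If $M=M_{\cI!}$, Lemma~\ref{lem: right reduced}(1) gives $M=C^{k}(fM)$ for $k\gg0$, and iterating gives $M=C^{kn}(f^{c}M)$ with $c\ge n$. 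I would substitute this for $M$ inside $C^k(f^{b}M)$, which inserts an extra power of $f$ at no cost in the exponent bookkeeping.
\end{itemize}
In either case I would check that the resulting exponents satisfy the inequality $\lceil t(q^{a}-1)\rceil$ required for membership in $\cI^t\mathcal{C}_X$. I expect a careful choice of $m$ and $k$ to be the bulk of the work.

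For (2), let $N$ be an $\cO_X$-coherent $\cI^t\mathcal{C}_X$-submodule with $N|_U=M|_U$. Since $M/N$ is coherent and supported on $Z(f)$, and $X$ is quasi-compact, there is an integer $n\ge t$ with $f^nM\subset N$. By (1) applied with this $n$, $\tau(M,f^t)=\cI^t\mathcal{C}_X\cdot f^nM\subset N$. By (1), $\tau(M,f^t)$ is coherent, and it agrees with $M$ on $U$ because $f^nM\subset\tau(M,f^t)$; together with (2) this yields the final minimality characterization.
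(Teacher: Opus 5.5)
Your proof of the inclusion $\cI^t\mathcal{C}_X\cdot\cI^nM\subset\tau(M,\cI^t)$ is correct. It goes via $\tau(M,\cI^n)=\cI^nM$ and Lemma~\ref{lem: test module is over ft}, which is a tidy alternative to the paper's direct computation. Your deduction of (2) and of the minimality statement is also correct.

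The reverse inclusion has a genuine gap: the splitting $\lceil tq^{m+k}\rceil=q^k\lceil t(q^m-1)\rceil+b_{m,k}$ reduces you to a statement that is false in general.
\begin{enumerate}
\item Set $\delta_m=\lceil t(q^m-1)\rceil-t(q^m-1)$ and $\epsilon_j=\lceil tq^j\rceil-tq^j$, both in $[0,1)$. Then exactly $b_{m,k}=tq^k-q^k\delta_m+\epsilon_{m+k}$. The rounding surplus of the outer factor enters with a minus sign, so it is a deficit, not room. A ``surplus exceeding $nq^k$'' is impossible anyway, since $q^k\delta_m<q^k$.
\item When $b_{m,k}\ge0$, Lemma~\ref{basic_t}(4) gives $C^k(f^{b_{m,k}}M)=\tau(M,\cI^{s})$ with $s=t-\delta_m+\epsilon_{m+k}q^{-k}$. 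Since $\cI^t\mathcal{C}_X\cdot f^nM\subset\tau(M,\cI^t)$, your sufficient condition would force $\tau(M,\cI^s)\subset\tau(M,\cI^t)$. This fails whenever the test-module filtration jumps in $(s,t]$.
\item Choosing $m$ with $\delta_m$ bounded below only makes $s$ smaller, or makes $b_{m,k}$ negative.
\end{enumerate}
Concretely, take $p=q=2$, $M=\F_2[x]$ with $C(x^{2i+1})=x^i$ and $C(x^{2i})=0$, $f=x^2$, $t=\frac12$. Then $M$ is $F$-pure, $M=M_{\cI!}$ because $C^2(x^3)=1$, and $\tau(M,f^{1/2})=xM$. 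But $b_{m,k}=0$ for all $m\ge1$, so your reduction asks for $M\subset xM$. No substitution $M=C^K(f^cM)$ can repair a false inclusion.

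What is needed is to not split off an outer factor at all. Instead, show that each $C^m(f^{\lceil tq^m\rceil}M)$ lies in a single graded piece of $\cI^t\mathcal{C}_X$ applied to $f^nM$.
\begin{enumerate}
\item Case $tq^m\notin\Z$ (automatic for every $m$ when $t\notin\Z[\frac1p]$). $F$-purity gives $C^m(f^{\lceil tq^m\rceil}M)=C^{m+m'}(f^{q^{m'}\lceil tq^m\rceil}M)$. Moreover $q^{m'}\lceil tq^m\rceil=tq^{m+m'}+q^{m'}\epsilon_m>t(q^{m+m'}-1)+n$ as soon as $q^{m'}\epsilon_m>n-t$. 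Hence $q^{m'}\lceil tq^m\rceil\ge\lceil t(q^{m+m'}-1)\rceil+n$. The surplus that matters is $\epsilon_m>0$, not $\delta_m$, and no special choice of $m$ is required.
\item Case $M=M_{\cI!}$. Your identity $M=C^K(f^cM)$ with $c\ge n$ does the job when inserted directly: $C^m(f^{\lceil tq^m\rceil}M)=C^{m+K}(f^{q^K\lceil tq^m\rceil+c}M)$, and $q^K\lceil tq^m\rceil+c\ge\lceil tq^{m+K}\rceil+n\ge\lceil t(q^{m+K}-1)\rceil+n$.
\end{enumerate}
The paper does the first computation whenever $tq^m\notin\Z$. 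When $tq^m\in\Z$, it uses $M=\sum_{m'}C^{m'}(\cI^{\lceil n-t\rceil}M)$ from Lemma~\ref{f-min}, which gives $f^{tq^m}M=\sum_{m'}C^{m'}(f^{\lceil t(q^{m+m'}-1)\rceil}f^nM)$ exactly.
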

\begin{proof} 
Let $N\subset M$ be a submodule satisfying the condition in~(2). 
Since $M/N$ is coherent and $(M/N)|_U=0$, it follows that, for all sufficiently large integers $n\ge1$, 
\[
\cI^nM\subset N. 
\]
Hence, 
\[\cI^t{\cal C}_X\cdot \cI^nM\subset N. \]
Therefore, statement~(2) follows once~(1) is proved. 

\medskip

It remains to prove (1). Since the assertion is local, we may assume that $\cI=f\cO_X$. For any $m\geq0$, we compute 
\begin{align*}
C^m\cI^{\lceil t(q^m-1)\rceil}\cdot \cI^nM=C^m&f^{\lceil t(q^m-1)\rceil+n}M\\
&\subset C^mf^{\lceil tq^m\rceil}M\subset \tau(M,\cI^t). 
\end{align*}
Thus, 
\[\cI^t{\cal C}_X\cdot \cI^nM \subset\tau(M,\cI^t). \]
We now prove the reverse inclusion 
\[\tau(M,\cI^t)\subset \cI^t{\cal C}_X\cdot \cI^nM . \]
It suffices to show that 
\[C^mf^{\lceil tq^m\rceil}M\subset \cI^t{\cal C}_X\cdot f^nM\]
for every $m\geq0$. 

\medskip

First assume that $tq^m$ is not an integer. Then we may write 
\[
\lceil tq^m\rceil=tq^m+\epsilon
\]
for some $\epsilon>0$. 
Choose a positive integer $m'$ satisfying $\epsilon q^{m'}>n-t$. Since $M$ is $F$-pure, we have $M=C^{m'}M$. Hence, 
\begin{align*}
    C^mf^{\lceil tq^m\rceil}M&=
    C^mf^{\lceil tq^m\rceil }C^{m'}M
    = C^{m+m'}f^{\lceil tq^m\rceil q^{m'}}M. 
\end{align*}
By the choice of $m'$, we have 
\[
\lceil tq^m\rceil q^{m'}=tq^{m+m'}+\epsilon q^{m'}>t(q^{m+m'}-1)+n. 
\]
This shows the desired inclusion in this case. 

Now assume that $N\coloneqq tq^m$ is an integer. In this case, we further assume that $M=M_{\cI!}$. 
Then 
\begin{align*}
f^NM&=f^N\sum_{m'\geq0}C^{m'}f^{\lceil n-t\rceil}M\\
&=\sum_{m'\geq0}C^{m'}f^{Nq^{m'}+\lceil n-t\rceil}M=
    \sum_{m'\geq0}C^{m'}f^{\lceil t(q^{m+m'}-1) \rceil}f^nM. 
\end{align*}
Here, the first equality follows from Lemma \ref{f-min}, since $M=M_{\cI!}$. 
Consequently, 
\[C^m f^{tq^m}M\subset \cI^t{\cal C}_X\cdot f^nM,
\]
which completes the proof. 
\end{proof}

We prove that the filtration defined by test modules is right-continuous under the assumption that $M=M_{\cI!}$. 

\begin{prop}\label{right-conti}
    Assume that $M=M_{\cI!}$. Then, for every $t\geq0$, there exists $\epsilon>0$ such that 
    \[
    \tau(M,\cI^s)=\tau(M,\cI^t)\quad\text{for all }s\in [t,t+\epsilon). 
    \]
  Consequently, 
    \[
    \FJN^+(M,\cI)=\emptyset \quad\text{and}\quad  \FJN^-(M,\cI)= \FJN(M,\cI). 
    \]
\end{prop}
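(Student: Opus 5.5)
Since $M=M_{\cI!}$, Proposition~\ref{prop: intrinsic description of test module} applies for every $t\ge0$, so $\tau(M,\cI^t)$ is the smallest $\cO_X$-coherent $\cI^t\mathcal{C}_X$-submodule of $M$ agreeing with $M$ on $U$. The plan is to find $\epsilon>0$ such that $\tau(M,\cI^t)$ is already an $\cI^s\mathcal{C}_X$-module for all $s\in[t,t+\epsilon)$. By part~(2) of that proposition this gives $\tau(M,\cI^s)\subset\tau(M,\cI^t)$. Lemma~\ref{basic_t}(1) gives the reverse inclusion, so the two modules are equal.

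The statement is local and $X$ is quasi-compact, so we may assume $\cI=f\cO_X$; a finite cover lets us take the minimum of the finitely many $\epsilon$'s. Fix $m_0$ with $\tau(M,f^t)=C^{m_0}(f^{\lceil tq^{m_0}\rceil}M)$, using Lemma~\ref{lem: test module is Ce for large e}. The main obstacle is that $\cI^s\mathcal{C}_X=\bigoplus_m C^m\cI^{\lceil s(q^m-1)\rceil}$ involves infinitely many $m$. Moreover, $\lceil s(q^m-1)\rceil$ can exceed $\lceil t(q^m-1)\rceil$ for large $m$ however small $s-t>0$ is, so we cannot simply compare the algebras $\cI^s\mathcal{C}_X$ and $\cI^t\mathcal{C}_X$.

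To get around this, I would avoid comparing algebras and instead use part~(1) of Proposition~\ref{prop: intrinsic description of test module} with a fixed integer $n\ge t+1$:
\[
\tau(M,f^s)=\sum_{m\ge0}C^m\bigl(f^{\lceil s(q^m-1)\rceil+n}M\bigr).
\]
By Lemma~\ref{lem: right reduced}(1), $M=M_{f!}=C^{k}(fM)$ for all large $k$. Substituting this and using $C^m(f^a C^k(fM))=C^{m+k}(f^{aq^k+1}M)$, each summand becomes
\[
C^{m+k}\bigl(f^{(\lceil s(q^m-1)\rceil+n)q^k+1}M\bigr).
\]
The exponent is at least $s q^{m+k}+(n-s)q^k\ge \lceil tq^{m+k}\rceil$ as long as $s\ge t$. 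That only recovers the already known inclusion $\tau(M,f^s)\subset\tau(M,f^t)$; what we need is an inclusion that forces $\tau(M,f^s)$ to be large.

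For the needed inclusion I would use the finiteness of $\tau$. Since $\tau(M,f^t)$ is coherent, it is generated by finitely many sections, each lying in $\sum_{m\le m_1}C^m(f^{\lceil t(q^m-1)\rceil+n}M)$ for some bounded $m_1$ (by part~(1) at level $t$ and Noetherianity). Choose $\epsilon>0$ with $\lceil s(q^m-1)\rceil=\lceil t(q^m-1)\rceil$ for all $m\le m_1$ and $s\in[t,t+\epsilon)$. This is possible because only finitely many $m$ are involved and $x\mapsto\lceil x\rceil$ is right-constant near every point once equality is allowed. Then $\tau(M,f^t)\subset\sum_{m\le m_1}C^m(f^{\lceil s(q^m-1)\rceil+n}M)\subset\tau(M,f^s)$, which combined with Lemma~\ref{basic_t}(1) gives equality. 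The key point to check is that part~(1) with the same $n$ is valid at both $t$ and $s$, which requires $n\ge s$; taking $n\ge t+1$ and $\epsilon<1$ ensures this.

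The consequence follows directly. Right continuity means no $t$ has $\tau(M,\cI^{t+\epsilon})\subsetneq\tau(M,\cI^t)$ for every $\epsilon>0$, so $\FJN^+(M,\cI)=\emptyset$. Hence $\FJN(M,\cI)=\FJN^-(M,\cI)$.
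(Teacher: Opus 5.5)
Your proof breaks at the choice of $\epsilon$. You claim that, since only $m\le m_1$ matter, there is $\epsilon>0$ with $\lceil s(q^m-1)\rceil=\lceil t(q^m-1)\rceil$ for all $s\in[t,t+\epsilon)$, because $x\mapsto\lceil x\rceil$ is ``right-constant''. It is not. The ceiling function is constant on the intervals $(k-1,k]$. So whenever $t(q^m-1)\in\Z$ for some $1\le m\le m_1$, you get $\lceil s(q^m-1)\rceil=\lceil t(q^m-1)\rceil+1$ for every $s>t$ close to $t$.

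For $t\in\Z$ this happens for every $m\ge1$. For instance, at $t=0$ all exponents with $m\ge1$ jump from $0$ to $1$. These are exactly the points that matter. By Lemma~\ref{+1/p}, only $t\in\Z[\frac{1}{p}]$ can be right $F$-jumping exponents. For $t\in\Z[\frac{1}{p}]\setminus\Z$, the number $t(q^m-1)$ is never an integer, since $p\nmid q^m-1$. So your argument only handles the easy points and misses the integer ones.

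By Lemma~\ref{basic_t}(2), the integer case reduces to $t=0$, i.e.\ to showing $\tau(M,\cI^s)=M$ for small $s>0$. This is where $M=M_{\cI!}$ is really needed. Without it the statement fails at $t=0$: if $\cI M=0\neq M$, then $\tau(M,\cI^s)=0$ for all $s>0$.

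Your framework can be repaired. Use the fact that Proposition~\ref{prop: intrinsic description of test module}(1) holds for \emph{every} integer $n\ge t$, and shift $n$ by one between $t$ and $s$ instead of keeping it fixed. Take $n\ge t+2$, let $m_1$ be as in your argument, and take $0<\epsilon\le q^{-m_1}$. For $s\in[t,t+\epsilon)$ and $m\le m_1$ you then have $\lceil s(q^m-1)\rceil\le\lceil t(q^m-1)\rceil+1$ and $n-1\ge s$. Hence
\[
C^m\bigl(f^{\lceil t(q^m-1)\rceil+n}M\bigr)\subset C^m\bigl(f^{\lceil s(q^m-1)\rceil+n-1}M\bigr)\subset\tau(M,f^s),
\]
and therefore $\tau(M,f^t)\subset\tau(M,f^s)$.

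The paper's proof is more direct and does not need the intrinsic description. After Lemma~\ref{+1/p}, it takes $t\in\Z[\frac{1}{p}]$ with $n=tq^m\in\Z$. It then uses $M=C^i(\cI M)$ from Lemma~\ref{lem: right reduced}(1) to get
\[
\tau(M,\cI^t)=C^m(\cI^nM)=C^{m+i}(\cI^{nq^i+1}M)=\tau(M,\cI^{t+q^{-(m+i)}}).
\]
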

\begin{proof}
We show that any $t\in\R_{\ge0}$ does not belong to $\FJN^+(M,\cI)$. By Lemma~\ref{+1/p}, it suffices to treat the case where $t\in\Z[\frac{1}{p}]$. 

Choose an integer $m\ge0$ such that $n:=tq^m\in\Z$. By Lemma~\ref{basic_t}(4), we have 
\[
\tau(M,\cI^t)=C^m(\cI^nM). 
\]
Since $M=M_{\cI!}$, Lemma~\ref{lem: right reduced}(1) implies that 
\[
M= C^i(\cI M)
\]
for some $i>0$. 
Hence, 
\[
\tau(M,\cI^t)=C^m(\cI^nM)= C^{i+m}(\cI^{nq^i+1}M)=\tau(M,\cI^{t+\frac{1}{q^{i+m}}}). 
\]
This shows that $t\notin\FJN^+(M,\cI)$, and the claim follows.  
\end{proof}

\begin{cor}\label{BSRfmin}
Let $M$ be an $F$-pure $q$-Cartier module on $X$. Assume that Assumption~\ref{assumption: nu invariant is bound} holds with $N=M$.  
Assume further that $M=M_{\cI!}$. Then
\[\nu_\cI(q^\infty;M)\subset \Z_{(p)}\cap [-1,0). 
\]
\end{cor}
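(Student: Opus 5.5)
By Theorem~\ref{BSRrat}(2), applied with $N=M$ and $j=1$ (which is allowed since $M$ is $F$-pure, so $M=C(M)$, and Assumption~\ref{assumption: nu invariant is bound} holds by hypothesis), we already know that
\[
\nu_\cI(q^\infty;M)\subset\Z_{(p)}\cap[-1,0].
\]
It therefore only remains to rule out the value $0$, i.e.\ to show $0\notin\nu_\cI(q^\infty;M)$ when $M=M_{\cI!}$.

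The plan for this step is to combine Theorem~\ref{BSRFJN}(2) with Proposition~\ref{right-conti}. Theorem~\ref{BSRFJN}(2) says that $0\in\nu_\cI(q^\infty;M)$ if and only if $0\in\FJN^+(M,\cI)$. On the other hand, Proposition~\ref{right-conti} gives $\FJN^+(M,\cI)=\emptyset$ under the assumption $M=M_{\cI!}$. Hence $0\notin\nu_\cI(q^\infty;M)$.

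If one prefers to see the argument directly, it goes as follows. By Lemma~\ref{lem: right reduced}(1) there is some $i>0$ with $M=C^i(\cI M)$. Then $C^i(M)=M=C^i(\cI M)$. By Lemma~\ref{BSRnu}(2), the condition $0\in\nu_\cI(q^\infty;M)$ requires $0\in\nu_\cI(q^m;M)$ for every $m$, that is, $C^m(M)\neq C^m(\cI M)$ for all $m$. This already fails at $m=i$.

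Combining the two parts gives $\nu_\cI(q^\infty;M)\subset\Z_{(p)}\cap[-1,0)$. No step is a real obstacle; the only point needing care is that the definition of $\nu$-invariants of infinite level requires membership at every level $m$, so failure at the single level $m=i$ suffices.
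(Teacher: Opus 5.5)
Your proposal is correct and follows the paper's proof: Theorem~\ref{BSRrat}(2) gives containment in $\Z_{(p)}\cap[-1,0]$, and $0$ is excluded by Theorem~\ref{BSRFJN}(2) together with $\FJN^+(M,\cI)=\emptyset$ from Proposition~\ref{right-conti}. Your direct check is also valid: $M=C^i(\cI M)=C^i(M)$ for some $i>0$ (Lemma~\ref{lem: right reduced}(1) and $F$-purity), which gives $0\notin\nu_\cI(q^i;M)$ and hence $0\notin\nu_\cI(q^\infty;M)$ by Lemma~\ref{BSRnu}(2). This just unwinds the case $t=0$ of those two results.
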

\begin{proof}
By Theorem~\ref{BSRrat}(2), it remains to show that $0$ is not a $\nu$-invariant. This follows from Theorem~\ref{BSRFJN}(2), since $\FJN^+(M,\cI)=\emptyset$. 
\end{proof}

\section{\texorpdfstring{Preliminaries on Differential Operators}{Preliminaries on Differential Operators}}\label{section: Preliminaries on Differential Operators}

Let $X$ be a Noetherian regular $F$-finite  scheme over $\F_p$, and let $F$ denote the $p$-th Frobenius morphism $X\to X$. In this preliminary section, we recall definitions and results on differential operators in positive characteristic, which will be used in the following sections. 

Since $F$ is the identity on the underlying space,  the morphism $(F^m)^{\ast}\colon (F^m)^{-1}\cO_X\to \cO_X$ can be identified with the inclusion $\cO_X^{p^m}\hookrightarrow \cO_X$. In what follows, we will freely use this identification. 

\subsection{Rings of Differential Operators}\label{subsection: Ring of Differential Operators}

We begin by recalling the notion of differential operators. Let $J$ denote the kernel of the multiplication morphism  
\[
\cO_X\otimes_{\F_p}\cO_X\to\cO_X,\quad a\otimes b\mapsto ab. 
\]
An $\F_p$-linear endomorphism $P\colon \cO_X\to \cO_X$ is called a \emph{differential operator} if the induced morphism  
\[\bar{P}\colon \cO_X\otimes_{\F_p}\cO_X\to \cO_X,\quad \bar{P}(a\otimes b)=aP(b)\] 
satisfies $\bar{P}(J^{n+1})=0$ for some integer $n\geq0$. We denote by $\cD_X$ the sheaf of differential operators, defined as  the subalgebra of $\mathcal{E}{\rm nd}_{\F_p}(\cO_X)$ consisting of differential operators.

%When $X$ is affine ${\rm Spec}(R)$, we also write $D_R\coloneqq\Gamma(X,\cD_X)$. This is the ring of differential operators on $R$. 

Since $X$ is assumed $F$-finite, the ring $\cD_X$ admits the following description. For every integer $m\ge0$, set 
\[
\cD_X^{(m)}\coloneqq \mathcal{E}{nd}_{\cO_X^{p^m}}(\cO_X). 
\]
\begin{prop}[{\cite[Theorem~1.4.9]{Yek92}}]\label{prop: D=cup De}
As subalgebras of $\mathcal{E}{nd}_{\F_p}(\cO_X)$, we have 
    \[
    \cD_X=\bigcup_{m\ge0}\cD_X^{(m)}. 
    \]
\end{prop}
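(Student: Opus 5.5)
We prove the two inclusions separately. First, $\cD_X^{(m)}\subset\cD_X$ for every $m$. This is local, so assume $X=\operatorname{Spec}(R)$ and let $P\in\operatorname{End}_{R^{p^m}}(R)$. Let $J\subset R\otimes_{\F_p}R$ be the kernel of multiplication. It is generated by the elements $1\otimes b-b\otimes 1$. Consider the ideal $J^{[p^m]}$ generated by the elements $(1\otimes b-b\otimes 1)^{p^m}=1\otimes b^{p^m}-b^{p^m}\otimes 1$. The map $\bar P(a\otimes b)=aP(b)$ kills $J^{[p^m]}$, because $P$ is $R^{p^m}$-linear. By $F$-finiteness $R$ is a finite $R^p$-module, hence a finitely generated $\F_p$-algebra over $R^{p^m}$; more precisely $R$ is generated as an $R^{p^m}$-algebra by finitely many elements $b_1,\dots,b_r$. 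Then $J$ is generated by the $r$ elements $1\otimes b_i-b_i\otimes 1$ as an ideal of $R\otimes_{R^{p^m}}R$. Working modulo the kernel of $R\otimes_{\F_p}R\to R\otimes_{R^{p^m}}R$, which is contained in $J^{[p^m]}$, this gives $J^{r(p^m-1)+1}\subset J^{[p^m]}$ by pigeonhole. Hence $\bar P(J^{r(p^m-1)+1})=0$, so $P$ is a differential operator.

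For the converse, let $P$ satisfy $\bar P(J^{n+1})=0$, and choose $m$ with $p^m>n$. We claim $P$ is $R^{p^m}$-linear. For $c,x\in R$, the element $(1\otimes c-c\otimes 1)^{p^m}\cdot(1\otimes x)$ lies in $J^{p^m}\subset J^{n+1}$. Applying $\bar P$ gives $P(c^{p^m}x)-c^{p^m}P(x)=0$, which is exactly $R^{p^m}$-linearity.

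The only subtle step is the first inclusion: one has to bound a power of $J$ inside the Frobenius power $J^{[p^m]}$ using finite generation coming from $F$-finiteness. The second inclusion is a one-line Frobenius computation. Sheafifying gives the equality of subsheaves of $\mathcal{E}{nd}_{\F_p}(\cO_X)$; since both sides are local, this suffices. Regularity is not needed.
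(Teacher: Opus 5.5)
Your proof is correct, and it is essentially the standard argument behind the references the paper cites; the paper itself gives no proof and only points to Yekutieli and Smith--Van den Bergh. In both directions you compare the Frobenius power $J^{[p^m]}$ with ordinary powers of $J$. $R^{p^m}$-linearity means $\bar P$ factors through $R\otimes_{R^{p^m}}R$, whose diagonal ideal is nilpotent because it is generated by finitely many elements with vanishing $p^m$-th powers. Conversely, $(1\otimes c-c\otimes 1)^{p^m}\in J^{p^m}$ gives $R^{p^m}$-linearity.

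One wording slip to fix: what you actually use is that $R$ is a finite $R^{p^m}$-module, which follows by iterating finiteness over $R^p$. Also, the kernel of $R\otimes_{\F_p}R\to R\otimes_{R^{p^m}}R$ is in fact equal to $J^{[p^m]}$, not just contained in it. You are right that only $F$-finiteness is used, and only for the inclusion $\cD_X^{(m)}\subset\cD_X$; regularity plays no role.
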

\begin{proof}
For the proof, see \cite[Theorem~1.4.9]{Yek92} or \cite[2.5]{SV97}. 
\end{proof}

\subsubsection{Frobenius Descent}\label{subsubsection: Frobenius descent}

We recall a version of Frobenius descent. For an $\cO_X$-module $M$, its pullback $F^{m\ast}M=\cO_X\otimes_{F^m,\cO_X}M$ naturally carries the structure of a $\mathcal{D}_X^{(m)}$-module, defined by 
\[
P\cdot(a\otimes x)=P(a)\otimes x\qquad (P\in \cD_X^{(m)},\, a\otimes x\in F^{m\ast}M). 
\]

\begin{thm}\label{thm: Frobenius descent}
    The functor $M\mapsto F^{m\ast}M$ defines an equivalence of categories between the category of $\cO_X$-modules and that of left $\cD_X^{(m)}$-modules. 
\end{thm}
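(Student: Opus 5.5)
The plan is to reduce to Morita theory for the endomorphism ring of a locally free module. Since $X$ is regular and $F$-finite, Kunz's theorem shows that $F^m_\ast\cO_X$ is a locally free $\cO_X$-module of finite rank; this is the key input, and it is where regularity is used. The ring $\cD_X^{(m)}=\mathcal{E}nd_{\cO_X^{p^m}}(\cO_X)$ is then, after transporting along $F^m$, the endomorphism sheaf $\mathcal{E}nd_{\cO_X}(E)$ of the locally free sheaf $E\coloneqq F^m_\ast\cO_X$. Here we regard the source of $F^m$ as $\cO_X$ and identify $\cO_X^{p^m}\cong\cO_X$. Under this identification, $F^{m\ast}M=\cO_X\otimes_{\cO_X^{p^m}}M$ becomes $E\otimes_{\cO_X}M$ with $\mathcal{E}nd(E)$ acting on the first factor.

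The equivalence is the standard Morita equivalence between $\cO_X$-modules and left $\mathcal{E}nd_{\cO_X}(E)$-modules, which holds for any locally free $E$ of positive rank (locally free of finite rank and faithful). The functors are $M\mapsto E\otimes M$, with quasi-inverse $\sN\mapsto \mathcal{H}om_{\mathcal{E}nd(E)}(E,\sN)$, or equivalently $E^\vee\otimes_{\mathcal{E}nd(E)}\sN$. I will carry this out in three steps.

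First, I write down the unit map $M\to \mathcal{H}om_{\cD^{(m)}}(F^m_\ast\cO_X, F^{m\ast}M)$, sending $x\mapsto(a\mapsto a\otimes x)$, and the counit map $F^{m\ast}\mathcal{H}om_{\cD^{(m)}}(F^m_\ast\cO_X,\sN)\to\sN$ given by evaluation. Both are globally defined sheaf maps.

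Second, I check that both maps are isomorphisms. Because the question is local, I may assume that $E\cong\cO_X^{r}$ is free with $r\ge1$, so that $\cD_X^{(m)}\cong M_r(\cO_X)$ after choosing a basis. In that case the claim is classical matrix Morita equivalence. A left $M_r(\cO)$-module $\sN$ decomposes as $\sN=\bigoplus_i e_{ii}\sN$, with each summand isomorphic to $e_{11}\sN$ via the matrix units. Then $\mathcal{H}om(\cO^r,\sN)\cong e_{11}\sN$, and $\cO^r\otimes e_{11}\sN\cong\sN$. The compatibilities, for example that the unit is $x\mapsto e_1\otimes x$, are routine to verify.

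Third, I need these identifications to glue. This follows because the maps in the first step were defined intrinsically.

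The main obstacle is the first step: justifying that $F^m_\ast\cO_X$ is locally free, and that $\cD^{(m)}_X$ is precisely the full endomorphism ring. The latter holds by definition. For the former, I would cite Kunz's theorem, namely that $F$ is flat exactly when $X$ is regular, together with $F$-finiteness and Noetherianity to get finite presentation, so that the module is locally free. I also note that the rank is positive, since the inclusion $\cO_X\hookrightarrow F^m_\ast\cO_X$ shows the module is nonzero; positive rank is what makes $E$ a progenerator.
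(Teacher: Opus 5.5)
Your proposal is correct and is essentially the argument the paper relies on. The paper just cites \cite[Proposition~2.1]{AMBL05}, whose proof is exactly this Morita-theoretic one: $\cD_X^{(m)}=\mathcal{E}nd_{\cO_X^{p^m}}(\cO_X)$, and by Kunz's theorem together with $F$-finiteness, $\cO_X$ is a finitely generated projective generator over $\cO_X^{p^m}$. Your version, which defines the unit and counit intrinsically and checks them locally where $F^m_\ast\cO_X$ is free of positive rank, is the natural sheaf-theoretic globalization of that ring-level statement.
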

\begin{proof}
    For the proof, see \cite[Proposition~2.1]{AMBL05}. 
\end{proof}
A quasi-inverse to $M\mapsto F^{m\ast}M$ can be  constructed as follows. 
\begin{lem}\label{lem: Frobenius descendent}
    Let $N$ be a $\cD_X^{(m)}$-module. Set 
    \[
    N_{(m)}\coloneqq \{x\in N\mid P(ax)=P(a)x\quad (\forall a\in\cO_X, \forall P\in \cD_X^{(m)})\}. 
    \]
This is an $\cO_X^{p^m}$-submodule of $N$, and the induced morphism $\cO_X\otimes_{\cO_X^{p^m}}N_{(m)}\to N$ is an isomorphism of $\cD_X^{(m)}$-modules. 
\end{lem}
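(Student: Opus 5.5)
The plan is to derive the lemma from Theorem~\ref{thm: Frobenius descent}. Since that theorem gives an equivalence, we can write $N\cong F^{m\ast}M=\cO_X\otimes_{\cO_X^{p^m}}M$ for some $\cO_X$-module $M$, where we identify $F^{m\ast}M$ with $\cO_X\otimes_{\cO_X^{p^m}}M$ via the identification of $(F^m)^\ast$ with $\cO_X^{p^m}\hookrightarrow\cO_X$. Everything is local, so we may assume $X=\operatorname{Spec}R$ is affine. Because $X$ is regular and $F$-finite, Kunz's theorem makes $R$ finite projective over $R^{p^m}$, and $\cD_X^{(m)}=\operatorname{End}_{R^{p^m}}(R)$.

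First, the set $N_{(m)}$ is an $\cO_X^{p^m}$-submodule. Take $x\in N_{(m)}$ and $b\in R$. Then $P(a\,b^{p^m}x)=b^{p^m}P(ax)=b^{p^m}P(a)x=P(a)\,(b^{p^m}x)$, where the first equality holds because $b^{p^m}\in R^{p^m}$ commutes with $P$ inside $\cD^{(m)}$. Additivity is clear.

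Next, compute $N_{(m)}$ when $N=R\otimes_{R^{p^m}}M$. The element $1\otimes y$ lies in $N_{(m)}$, since $P(a(1\otimes y))=P(a\otimes y)=P(a)\otimes y$. So $1\otimes M\subset N_{(m)}$. For the converse, let $x=\sum_i e_i\otimes y_i\in N_{(m)}$. Because $R$ is projective over $R^{p^m}$, we may localize further and assume $R$ is free over $R^{p^m}$ with a basis $e_1=1,e_2,\dots,e_r$. Let $\pi_j\in\operatorname{End}_{R^{p^m}}(R)$ be the coordinate projection onto $R^{p^m}e_j$ followed by the map $e_j\mapsto 1$. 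Apply the defining condition of $N_{(m)}$ with $a=1$ and $P=\pi_j$: the right side is $\pi_j(1)x$, and $\pi_j(1)=\delta_{j1}$, while the left side is $\pi_j(x)=1\otimes y_j$. Hence $y_j=0$ for $j\ge2$, so $x=1\otimes y_1$. This gives $N_{(m)}=1\otimes M$. The map $M\to 1\otimes M$ is injective because $R^{p^m}\to R$ is a split injection: $1$ is part of a basis. Therefore $N_{(m)}\cong M$ as $\cO_X^{p^m}$-modules, where $M$ is regarded via $F^m$.

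Finally, the natural map $\cO_X\otimes_{\cO_X^{p^m}}N_{(m)}\to N$ becomes the identity of $R\otimes_{R^{p^m}}M$ under these identifications, so it is an isomorphism. It is $\cD^{(m)}$-linear because $P(a\otimes x)=P(a)x$ for $x\in N_{(m)}$, and this is exactly the defining property of $N_{(m)}$.

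The main obstacle is the converse inclusion $N_{(m)}\subset 1\otimes M$. It needs a local basis containing $1$, which requires freeness (or at least splitting) of $R$ over $R^{p^m}$. This comes from Kunz's theorem combined with $F$-finiteness, so the argument must pass to a cover on which $F^m_\ast\cO_X$ is free. Both sides of the isomorphism are compatible with restriction, so gluing is immediate. A small extra check is needed in that step: the transported $\cD^{(m)}$-structure on $N$ is the canonical one, so the whole argument takes place inside $N$ with no ambiguity.
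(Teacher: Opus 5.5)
Your proposal is correct and follows essentially the same route as the paper: write $N\cong F^{m\ast}M$ via Theorem~\ref{thm: Frobenius descent}, observe $1\otimes M\subset N_{(m)}$, and obtain the reverse inclusion from a local $\cO_X^{p^m}$-linear projection $P$ onto $\cO_X^{p^m}\cdot 1$ together with $P(y)=P(1)y=y$. The only difference is that the paper uses just a local splitting of $\cO_X^{p^m}\hookrightarrow\cO_X$ (your $\pi_1$), which already gives $x=\pi_1(x)=1\otimes y_1$. So the full basis containing $1$ and the projections $\pi_j$ for $j\ge 2$ are not needed.
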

\begin{proof}
We show that the morphism $\cO_X\otimes_{\cO_X^{p^m}}N_{(m)}\to N$ is an isomorphism. 
By the definition of $N_{(m)}$, it is clear that this morphism is $\cD_X^{(m)}$-linear. 

By Theorem~\ref{thm: Frobenius descent}, there exists an isomorphism of $\cD_X^{(m)}$-modules 
\[
N\cong F^{m^\ast}N'=\cO_X\otimes_{\cO_X^{p^m}}N'
\]
for some $\cO_X^{p^m}$-module $N'$. Since $F^m$ is faithfully flat, the morphism $N'\to F^{m\ast}N', x\mapsto 1\otimes x$ is injective, and identifies $N'$ with its image. Therefore, it suffices to show the equality 
\[
(F^{m\ast}N')_{(m)}=1\otimes N'. 
\]
The inclusion $\supset$ is clear. We prove the reverse inclusion $\subset$. Since this claim is local, we may assume that the inclusion $\cO_X^{p^m}\hookrightarrow\cO_X$ admits an $\cO_X^{p^m}$-linear splitting, and fix such a splitting $\bar{P}$. Let 
\[
P\colon \cO_X\xrightarrow{\bar{P}}\cO_X^{p^m}\hookrightarrow \cO_X
\]
denote the composite, 
which defines a section of $\cD_{X}^{(m)}$. 
Now let $y\in (F^{m\ast}N')_{(m)}$. Then 
\[
P(y)=P(1)y=y. 
\]
    By the construction of $P$, the left-hand side lies in $1\otimes N'$. This proves the claim. 
\end{proof}

\medskip 

Using the above results, we describe the category of left $\cD_X$-modules as follows. 
\begin{const}
Consider the following category $\mathscr{S}_X$. An object of $\sS_X$ is a system $M_{(\bullet)}=(M_{(m)})_{m\ge0}$ of the form  
\[
\cdots\to M_{(2)}\to M_{(1)}\to M_{(0)}, 
\]
where $M_{(m)}$ is an $\cO_X^{p^m}$-module and $M_{(m+1)}\to M_{(m)}$ is an $\cO_X^{p^{m+1}}$-linear morphism such that the induced morphism 
$\cO_X^{p^m}\otimes_{\cO_X^{p^{m+1}}}M_{(m+1)}\to M_{(m)}$ is an isomorphism. We refer to the morphism $M_{(m+1)}\to M_{(m)}$  as the \emph{transition morphism}. 

For two objects $M_{(\bullet)}$ and $ N_{(\bullet)}$, a morphism $M_{(\bullet)}\to N_{(\bullet)}$ is a system of $\cO_X^{p^m}$-linear morphisms $M_{(m)}\to N_{(m)}$ compatible with the transition morphisms. 

Let $M_{(\bullet)}$ be an object of $\sS_X$. For every $m\ge0$, we have an isomorphism 
$\cO_X\otimes_{\cO_X^{p^m}}M_{(m)}\cong M_{(0)}$. Via this isomorphism, we endow $M_{(0)}$ with a $\cD_X^{(m)}$-module structure. Since these $\cD_X^{(m)}$-module structures on $M_{(0)}$ are compatible with each other, they define a $\cD_X$-module structure on $M_{(0)}$. We denote by $\Phi M_{(\bullet)}$ the resulting $\cD_X$-module. 

\medskip

Conversely, for a $\cD_X$-module $\sM$, we construct an object of $\sS_X$ as follows. For each $m\ge0$, define 
\[
    M_{(m)}\coloneqq \{x\in \sM\mid P(ax)=P(a)x\quad(\forall a\in\cO_X,\forall P\in\cD_X^{(m)})\}. 
    \]
This is an $\cO_X^{p^m}$-module. The family $(M_{(m)})_{m\ge0}$ is equipped with transition morphisms  
given by the inclusions $M_{(m+1)}\hookrightarrow M_{(m)}$. 
\end{const}
\begin{lem}\label{lem: Mm is an object of S}
The system $(M_{(m)})_m$ constructed above defines an object of $\sS_X$. 
\end{lem}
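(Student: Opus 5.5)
The plan is to verify the three requirements in the definition of $\sS_X$ directly. The main tool is Lemma~\ref{lem: Frobenius descendent}, applied to $\sM$ regarded as a $\cD_X^{(m)}$-module by restriction along $\cD_X^{(m)}\subset\cD_X$ (Proposition~\ref{prop: D=cup De}). The extra ingredient is the faithful flatness of $\cO_X$ over $\cO_X^{p^m}$, which follows from Kunz's theorem since $X$ is regular.

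First, the structural points. $M_{(m)}$ is an $\cO_X^{p^m}$-submodule of $\sM$: additivity is clear. For $b=c^{p^m}$, $x\in M_{(m)}$, $a\in\cO_X$ and $P\in\cD_X^{(m)}$, the $\cO_X^{p^m}$-linearity of $P$ gives
\[
P(a\cdot bx)=P(ab)\,x=b\,P(a)\,x .
\]
Hence $P$ acting on $bx$ behaves as required, so $bx\in M_{(m)}$. (This is just the assertion of Lemma~\ref{lem: Frobenius descendent} for $N=\sM$.) Since $\cD_X^{(m)}\subset\cD_X^{(m+1)}$, the condition defining $M_{(m+1)}$ is stronger than the one defining $M_{(m)}$. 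Thus $M_{(m+1)}\subset M_{(m)}$, and the inclusion is $\cO_X^{p^{m+1}}$-linear.

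Next, the isomorphism condition. Lemma~\ref{lem: Frobenius descendent}, applied for levels $m$ and $m+1$, gives isomorphisms
\[
\cO_X\otimes_{\cO_X^{p^m}}M_{(m)}\xrightarrow{\sim}\sM,\qquad \cO_X\otimes_{\cO_X^{p^{m+1}}}M_{(m+1)}\xrightarrow{\sim}\sM,
\]
both given by $a\otimes x\mapsto ax$. Let $\iota\colon\cO_X^{p^m}\otimes_{\cO_X^{p^{m+1}}}M_{(m+1)}\to M_{(m)}$ be the map induced by the inclusion. Under the identification
\[
\cO_X\otimes_{\cO_X^{p^m}}\bigl(\cO_X^{p^m}\otimes_{\cO_X^{p^{m+1}}}M_{(m+1)}\bigr)\cong\cO_X\otimes_{\cO_X^{p^{m+1}}}M_{(m+1)},
\]
the base change $\cO_X\otimes_{\cO_X^{p^m}}\iota$ composed with the first isomorphism is again $a\otimes x\mapsto ax$, that is, the second isomorphism. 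Hence $\cO_X\otimes_{\cO_X^{p^m}}\iota$ is an isomorphism.

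The step I expect to need the most care is deducing that $\iota$ itself is an isomorphism. For this I will use that $\cO_X^{p^m}\hookrightarrow\cO_X$ is isomorphic to $F^m$, which is flat by Kunz's theorem since $X$ is regular. It is also finite, because $X$ is $F$-finite, and injective, so it is faithfully flat. A faithfully flat base change reflects isomorphisms, so $\iota$ is an isomorphism and $(M_{(m)})_m$ is an object of $\sS_X$. Alternatively, one could argue locally with an $\cO_X^{p^m}$-linear splitting, as in the proof of Lemma~\ref{lem: Frobenius descendent}, but faithful flatness is cleaner.
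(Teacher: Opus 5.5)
Your proposal is correct and follows essentially the same route as the paper: base change along the faithfully flat map $\cO_X^{p^m}\hookrightarrow\cO_X$ (Kunz), and then Lemma~\ref{lem: Frobenius descendent} at levels $m$ and $m+1$ identifies the base-changed transition map with the identity of $\sM$. The extra checks you include, namely that $M_{(m)}$ is an $\cO_X^{p^m}$-submodule and that $M_{(m+1)}\subset M_{(m)}$, are left implicit in the paper.
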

\begin{proof}
It suffices to prove that for every $m\ge0$,  the morphism $\cO_X^{p^m}\otimes_{\cO_X^{p^{m+1}}}M_{(m+1)}\to M_{(m)}$ is an isomorphism. Since $F^m$ is faithfully flat, it suffices to check this after the scalar extension $F^{m\ast}$. Hence, we are reduced to proving that 
\[
F^{m+1\ast}M_{(m+1)}\to F^{m\ast}M_{(m)}
\]
is an isomorphism. By  Lemma~\ref{lem: Frobenius descendent}, this morphism is identified with the identity morphism $\sM\to \sM$. This proves the claim. 
\end{proof}
\begin{prop}\label{prop: Frobenius descent for Dmodule}
The assignment $M_{(\bullet)}\mapsto \Phi M_{(\bullet)}$ defines an equivalence of categories from $\sS_X$ to the category of left $\cD_X$-modules. A quasi-inverse is given by $\sM\mapsto (M_{(m)})_m$. 
\end{prop}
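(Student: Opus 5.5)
We have to show two things: that $\Phi$ and $\sM\mapsto(M_{(m)})_m$ are well-defined functors, and that both composites are naturally isomorphic to the identity. Lemma~\ref{lem: Mm is an object of S} already shows that $(M_{(m)})_m$ is an object of $\sS_X$. A $\cD_X$-linear map $\varphi\colon\sM\to\sN$ preserves the defining condition $P(ax)=P(a)x$, so it sends $M_{(m)}$ into $N_{(m)}$ compatibly with the inclusions; this gives functoriality.

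\textbf{Well-definedness of $\Phi$.} Given $M_{(\bullet)}$, the $\cD_X^{(m)}$-structure on $M_{(0)}$ is transported from $\cO_X\otimes_{\cO_X^{p^m}}M_{(m)}$. We must check that the $\cD_X^{(m+1)}$-structure restricts to the $\cD_X^{(m)}$-structure. Identify
\[
\cO_X\otimes_{\cO_X^{p^{m+1}}}M_{(m+1)}\cong\cO_X\otimes_{\cO_X^{p^m}}\bigl(\cO_X^{p^m}\otimes_{\cO_X^{p^{m+1}}}M_{(m+1)}\bigr).
\]
For $P\in\cD_X^{(m)}\subset\cD_X^{(m+1)}$, the formula $P(a\otimes x)=P(a)\otimes x$ with $x\in M_{(m+1)}$ agrees with the formula for the $m$-structure on elements $a\otimes(1\otimes x)$. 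These elements generate the module as an $\cO_X^{p^m}$-module, and $P$ is $\cO_X^{p^m}$-linear, so the two structures agree. By Proposition~\ref{prop: D=cup De}, the compatible actions glue to a $\cD_X$-action. Morphisms in $\sS_X$ induce maps $\cO_X\otimes M_{(m)}\to\cO_X\otimes N_{(m)}$ that are visibly $\cD_X^{(m)}$-linear for every $m$, hence $\cD_X$-linear.

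\textbf{The composite $\sM\mapsto(M_{(m)})\mapsto\Phi$.} Here $\Phi(M_{(\bullet)})=M_{(0)}$. Since $\cD_X^{(0)}=\mathcal{E}nd_{\cO_X}(\cO_X)=\cO_X$, the condition defining $M_{(0)}$ is automatic, so $M_{(0)}=\sM$. The $\cD_X^{(m)}$-structure on $\Phi$ is the one transported along $\cO_X\otimes_{\cO_X^{p^m}}M_{(m)}\to\sM$, $a\otimes x\mapsto ax$. By Lemma~\ref{lem: Frobenius descendent}, this map is a $\cD_X^{(m)}$-linear isomorphism, so the transported structure coincides with the original one. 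Hence the composite is the identity, naturally in $\sM$.

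\textbf{The composite $M_{(\bullet)}\mapsto\Phi\mapsto(\Phi_{(m)})$.} This is the main step. Set $\sM=\Phi M_{(\bullet)}$. There is a natural map $\iota_m\colon M_{(m)}\to\sM$, namely $x\mapsto 1\otimes x$ under $\cO_X\otimes_{\cO_X^{p^m}}M_{(m)}\cong M_{(0)}$. It is injective by faithful flatness of $F^m$, and it is compatible with the transition maps by construction. We need $\iota_m(M_{(m)})=\sM_{(m)}$, where $\sM_{(m)}$ is computed for the $\cD_X$-action, which only uses the $\cD_X^{(m)}$-action. The $\cD_X^{(m)}$-module $\sM$ is $F^{m\ast}M_{(m)}$, and the proof of Lemma~\ref{lem: Frobenius descendent} establishes exactly $(F^{m\ast}N')_{(m)}=1\otimes N'$ for any $\cO_X^{p^m}$-module $N'$. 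Applying this with $N'=M_{(m)}$ gives an isomorphism $M_{(\bullet)}\cong(\sM_{(m)})_m$ in $\sS_X$. Naturality in $M_{(\bullet)}$ is clear.
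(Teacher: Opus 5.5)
Your proof is correct and follows the same route the paper intends: the paper only says the statement is clear from the construction, and you unwind that claim explicitly. The only real inputs are the equality $(F^{m\ast}N')_{(m)}=1\otimes N'$ established in the proof of Lemma~\ref{lem: Frobenius descendent}, used for the composite on $\sS_X$, and the $\cD_X^{(m)}$-linearity of $\cO_X\otimes_{\cO_X^{p^m}}M_{(m)}\to\sM$, used for the composite on $\cD_X$-modules. Your additional checks are all valid: the $\cD_X^{(m)}$- and $\cD_X^{(m+1)}$-structures on $M_{(0)}$ are compatible, $M_{(0)}=\sM$ because $\cD_X^{(0)}=\cO_X$, and $\iota_m$ is injective by faithful flatness.
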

\begin{proof}
   The statement is clear from the construction of the functors. 
\end{proof}

The above proposition simplifies several  constructions for $\cD_X$-modules. 
\begin{const}\label{construction: pullback pushforward tensor product of Dmodules}
Let $\sM$ be a $\cD_X$-module with corresponding system $M_{(\bullet)}$. 
    \begin{enumerate}
\item Let $f\colon Y\to X$ be a morphism of  Noetherian regular $F$-finite  $\F_p$-schemes. For every $m\ge0$, we set 
\[
N_{(m)}\coloneqq \cO_Y^{p^m}\otimes_{f^{-1}\cO_X^{p^m}}f^{-1}M_{(m)}. 
\]
The transition morphisms $M_{(m+1)}\to M_{(m)}$ induce morphisms $N_{(m+1)}\to N_{(m)}$. One checks that the system $(N_{(m)})_m$ defines an object of $\sS_Y$. We denote by $f^\ast \sM$ the corresponding $\cD_Y$-module. By construction, the underlying $\cO_Y$-module of $f^\ast \sM$   coincides with the usual pullback of the $\cO_X$-module $\sM$. 

        \item Let $f\colon X\to Y$ be an  \'etale morphism of Noetherian regular $F$-finite  $\F_p$-schemes. For every $m\ge0$, we set $N_{(m)}\coloneqq f_\ast( M_{(m)})$,   which is an $\cO_Y^{p^m}$-module. Since $\cO_Y^{p^m}$ is locally finite free over $\cO_Y^{p^{m+1}}$, the canonical morphism 
        \[
 \cO_Y^{p^m}\otimes_{\cO_Y^{p^{m+1}}}f_\ast (M_{(m+1)})\to f_\ast(f^{-1}\cO_Y^{p^m}\otimes_{f^{-1}\cO_Y^{p^{m+1}}} M_{(m+1)})
        \]
        is an isomorphism. Moreover, since $f$ is \'etale, the right-hand side is canonically isomorphic to 
        \[f_\ast(\cO_X^{p^m}\otimes_{ \cO_X^{p^{m+1}}} M_{(m+1)})\cong f_\ast( M_{(m)}).
        \]
      This shows that the induced morphism $\cO_Y^{p^m}\otimes_{\cO_Y^{p^{m+1}}}N_{(m+1)}\to N_{(m)}$ is an isomorphism, and hence $(N_{(m)})_m$ defines an element of $\sS_Y$. We denote by $f_\ast \sM$ the corresponding $\cD_Y$-module. By construction, the underlying $\cO_Y$-module of $f_\ast \sM$ coincides with the usual pushforward of $ \sM$. 
        
        \item Let $\sM$ and $\sN$ be two $\cD_X$-modules with corresponding systems   $(M_{(m)})_m$ and $ (N_{(m)})_m$. 
For every $m\ge0$, set 
\[L_{(m)}\coloneqq M_{(m)}\otimes_{\cO_X^{p^m}}N_{(m)}. \]
In a natural way, the modules 
$(L_{(m)})_m$ form an object of $\sS_X$. We denote by $\sM\otimes_{\cO_X}\sN$ the associated $\cD_X$-module. As $\cO_X$-modules, this coincides with the usual tensor product.  
    \end{enumerate}
\end{const}

\subsection{\texorpdfstring{Differential Operators on $\A^1$}{Differential Operators on A1}}
In this subsection, we study  $\cD$-modules over $\A^1_X$. We begin by introducing an important algebra. 

\subsubsection{The Algebra $(D_{\F_p[t]})_0$}
Set  $D_{\F_p[t]}\coloneqq\Gamma(\A^1_{\F_p},\cD_{\A^1_{\F_p}})$. 
We consider the following subalgebra of $D_{\F_p[t]}$: 
\[
(D_{\F_p[t]})_0:=\{P\in D_{\F_p[t]}\mid \forall n\ge0, \exists a_n\in\F_p\text{ such that } P(t^n)=a_nt^n\}. 
\]
In other words, this is the subring of differential operators preserving the grading $\F_p[t]=\bigoplus_{m\ge0}\F_p\cdot t^m$. 

This algebra admits the following description. 
\begin{lem}[{\cite[Proposition~III.5]{Qui21}}]\label{lem: D0 has a simple des}
Let $\mathscr{C}\coloneqq C(\Z_p,\F_p)$ denote the ring of locally constant functions $\Z_p\to\F_p$, where $\Z_p$ is equipped with the $p$-adic topology. 
Then the following hold: 
\begin{enumerate}
    \item For every $\varphi\in \sC$, the $\F_p$-linear map 
    \[
    \tilde{\varphi}\colon \F_p[t]\to\F_p[t],\quad t^m\mapsto \varphi(-m-1)\cdot t^m
    \]
    is a differential operator. It defines an element of $(D_{\F_p[t]})_0$. 
    \item The assignment $\varphi\mapsto \tilde{\varphi}$ defines a ring isomorphism 
    \[
    \sC\xrightarrow{\cong} (D_{\F_p[t]})_0. 
    \]
\end{enumerate}
\end{lem}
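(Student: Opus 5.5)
Write $\cD^{(m)}\coloneqq\cD^{(m)}_{\A^1_{\F_p}}=\operatorname{End}_{\F_p[t^{p^m}]}(\F_p[t])$. The plan is to compute $(D_{\F_p[t]})_0$ level by level and then pass to the union using Proposition~\ref{prop: D=cup De}. Since $\F_p[t]$ is free over $\F_p[t^{p^m}]$ with basis $1,t,\dots,t^{p^m-1}$, an $\F_p[t^{p^m}]$-linear endomorphism is determined by the images of $t^0,\dots,t^{p^m-1}$. Such an endomorphism preserves the grading exactly when it acts diagonally on this basis. Consequently
\[
(D_{\F_p[t]})_0\cap \cD^{(m)}\;\cong\;\F_p^{\,p^m}\cong \operatorname{Map}(\Z/p^m,\F_p),
\]
where the operator sends $t^n\mapsto a_{n\bmod p^m}t^n$. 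The grading-preserving operators of level $m$ are thus exactly those whose eigenvalue sequence $(a_n)_{n\ge0}$ is $p^m$-periodic.

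For part~(1), take $\varphi\in\sC$. Since $\Z_p$ is compact, $\varphi$ is locally constant and hence uniformly so: there is $m$ with $\varphi(x)$ depending only on $x\bmod p^m$. Then $n\mapsto\varphi(-n-1)$ is $p^m$-periodic, so by the description above $\tilde\varphi\in\cD^{(m)}\subset\cD$, and it preserves the grading by construction.

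For part~(2), the map $\varphi\mapsto\tilde\varphi$ is multiplicative and additive, because the operators are diagonal in the basis $\{t^n\}$ and composition multiplies eigenvalues pointwise. It is injective because $\{-n-1\mid n\ge0\}=\Z_{<0}$ is dense in $\Z_p$ and $\varphi$ is continuous. For surjectivity, let $P\in(D_{\F_p[t]})_0$. By Proposition~\ref{prop: D=cup De}, $P\in\cD^{(m)}$ for some $m$, so its eigenvalues satisfy $a_n=a_{n'}$ whenever $n\equiv n'\pmod{p^m}$. Define $\varphi(x)\coloneqq a_n$ for any $n\ge0$ with $-n-1\equiv x\pmod{p^m}$. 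This is well defined by periodicity, and it is locally constant since it factors through $\Z_p\to\Z/p^m$. Then $\tilde\varphi=P$.

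The main point to get right is the identification of grading-preserving level-$m$ operators with $p^m$-periodic eigenvalue sequences. One direction uses $\F_p[t^{p^m}]$-linearity: $P(t^{kp^m+r})=t^{kp^m}P(t^r)$ with $0\le r<p^m$. Conversely, any periodic diagonal map is $\F_p[t^{p^m}]$-linear. Everything else is formal.
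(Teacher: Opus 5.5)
Your proof is correct and follows the paper's argument essentially step for step. In (1) you use that $\varphi$ factors through $\Z/p^m\Z$ (correctly citing compactness of $\Z_p$ for this) to get $\F_p[t^{p^m}]$-linearity. In (2) you use density of $\Z_{<0}$ for injectivity, and Proposition~\ref{prop: D=cup De} with $\F_p[t^{p^m}]$-linearity to get $p^m$-periodic eigenvalues for surjectivity; your explicit description of grading-preserving level-$m$ operators as periodic diagonal maps just spells out what the paper leaves implicit.
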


We adopt the normalization $\tilde{\varphi}(t^m) = \varphi(-m-1)\,t^m$, rather than $\tilde{\varphi}(t^m) = \varphi(m)\,t^m$. With this choice, the functor $N_f(-)_\alpha$ introduced in Section~\ref{section: Nearby Cycles via the Graph Embedding} is related to the $\mathcal{D}$-module defined by $f^\alpha$. See Proposition~\ref{prop: non-integral case}(2) for details. 
\begin{proof}
    We include a proof for the reader's convenience. 

    (1) Let $\varphi\in \sC$. Since $\F_p$ is equipped with the discrete topology, there exists a positive integer $m\ge1$ such that $\varphi$ factors through the projection $\Z_p\to \Z/p^m\Z$. It then follows  that $\tilde{\varphi}$ is $\F_p[t^{p^m}]$-linear, and hence defines an element of $D_{\F_p[t]}$ by Proposition~\ref{prop: D=cup De}. 

    (2) Injectivity follows from the fact that $\Z_{<0}$ is dense in $\Z_p$. Let $P\in (D_{\F_p[t]})_0$. We show that $P$ lies in the image of the map $\sC\to (D_{\F_p[t]})_0$. For each integer $m<0$, define $\varphi(m)\in\F_p$ by the condition $P(t^{-m-1})=\varphi(m)t^{-m-1}$. 
    
    It remains to show that the map $\varphi\colon \Z_{<0}\to \F_p$ extends to a locally constant function $\Z_p\to \F_p$. Since $P\in D_{\F_p[t]}$, there exists an integer $e\ge0$ such that $P$ is $\F_p[t^{p^e}]$-linear. This implies that 
    $\varphi$ factors through the quotient $\Z_{<0}\to \Z/p^e\Z$, and the claim follows. 
\end{proof}

\begin{example}\label{example: binomial}
  For an integer $n\ge0$, let $\partial^{[n]}$ denote the differential operator defined by 
  \[
  \F_p[t]\to\F_p[t],\quad t^m\mapsto \binom{m}{n}t^{m-n}, 
  \]
  where we set $t^{m-n}=0$ if $m<n$. 
  We define  
  \[
  \vartheta_n\coloneqq \partial^{[n]}\cdot t^n\in(D_{\F_p[t]})_0. 
  \]
  Under the isomorphism of Lemma~\ref{lem: D0 has a simple des}(2), the operator $\vartheta_n$ corresponds to a certain function on $\Z_p$. This function is described as follows.  Consider the binomial polynomial 
\[
\binom{x}{n}\coloneqq\frac{x(x-1)\cdots(x-n+1)}{n!}\in\Q[x]. 
\]
Note that for $\alpha\in\Z_p$, the value $\binom{\alpha}{n}$ is a $p$-adic integer. Hence its reduction modulo $p$, 
\[\binom{\alpha}{n}\bmod p\in\F_p,\]
is well-defined. 
  
\begin{lem}\label{lem: varthetan}
    For every integer $n\ge0$, the operator $\vartheta_n$ corresponds to the function \[\alpha\mapsto(-1)^n\binom{\alpha}{n}\in\F_p.\]
    Equivalently, for every $m\ge0$, we have 
    \[
    \vartheta_n( t^m)=(-1)^n\binom{-m-1}{n}\cdot t^m. 
    \]
\end{lem}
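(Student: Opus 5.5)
Since $\vartheta_n$ and $t^m$ are explicit, the plan is a direct evaluation followed by a binomial identity. First compute $\vartheta_n(t^m)=\partial^{[n]}(t^{m+n})=\binom{m+n}{n}t^m$; note that $t^{m+n-n}=t^m$, so the truncation convention never applies. Next apply the upper negation identity in $\Q[x]$:
\[
\binom{-x-1}{n}=\frac{(-x-1)(-x-2)\cdots(-x-n)}{n!}=(-1)^n\frac{(x+1)(x+2)\cdots(x+n)}{n!}=(-1)^n\binom{x+n}{n}.
\]
Evaluating at $x=m$ gives $\binom{m+n}{n}=(-1)^n\binom{-m-1}{n}$, both sides being integers. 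This is the second formulation of the lemma.

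It remains to identify the corresponding function in $\sC$ under Lemma~\ref{lem: D0 has a simple des}. By the normalization $\tilde\varphi(t^m)=\varphi(-m-1)t^m$, we need a locally constant $\varphi$ with $\varphi(-m-1)=(-1)^n\binom{-m-1}{n}\bmod p$ for all $m\ge0$. The candidate is $\varphi(\alpha)=(-1)^n\binom{\alpha}{n}\bmod p$, and it agrees with the required values on the dense subset $\Z_{<0}$.

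The only point needing care is that this candidate is actually locally constant, so that it lies in $\sC$. The map $\alpha\mapsto\binom{\alpha}{n}$ is a polynomial with $\Q_p$-coefficients, hence continuous $\Z_p\to\Q_p$. It takes $\Z_{\ge0}$ into $\Z$, hence by density maps $\Z_p$ into $\Z_p$. Composing with the reduction $\Z_p\to\F_p$ (continuous to a discrete space) gives a locally constant function. Alternatively, one can argue directly: $\vartheta_n$ is a differential operator lying in $(D_{\F_p[t]})_0$, so Lemma~\ref{lem: D0 has a simple des}(2) already produces a locally constant $\varphi$. That $\varphi$ and the candidate are two continuous functions agreeing on $\Z_{<0}$, hence they are equal.
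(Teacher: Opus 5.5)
Your proposal is correct and follows the paper's approach: you compute $\vartheta_n(t^m)=\partial^{[n]}(t^{m+n})=\binom{m+n}{n}t^m$ and apply the identity $\binom{x+n}{n}=(-1)^n\binom{-x-1}{n}$, which is exactly the identity the paper cites. You also spell out why $\alpha\mapsto(-1)^n\binom{\alpha}{n}\bmod p$ is locally constant on $\Z_p$, and hence lies in $\sC$; the paper leaves this step implicit.
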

\begin{proof}
The assertion follows from the equality of polynomials 
\[
\binom{x+n}{n}=(-1)^n\binom{-x-1}{n}. 
\]
\end{proof}

\end{example}

We recall standard properties of the ring $\sC$. 
\begin{lem}\label{lem: Spec C Zp}
The following statements hold: 
	\begin{enumerate}    
    \item Let $\alpha\in\Z_p$. Then the evaluation map 
    \[
\ev_\alpha\colon \sC\to \F_p, \quad \varphi\mapsto\varphi(\alpha)
\]
is surjective and flat. Moreover, the map $\ev_\alpha$ identifies the target $\F_p$ with the localization $\sC_{\ker(\ev_\alpha)}$ at the maximal ideal $\ker(\ev_\alpha)$. 
\item 
The assignment $\alpha\mapsto \ker(\ev_{\alpha})$ induces a homeomorphism 
\[
\Z_p\cong{\rm Spec}(\sC). 
\]
\end{enumerate}
\end{lem}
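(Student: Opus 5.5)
The plan is to use the standard structure of $\sC=C(\Z_p,\F_p)$ as a filtered colimit of finite products of $\F_p$. For each $m\ge0$, let $\sC_m\subset\sC$ be the subring of functions that factor through the projection $\Z_p\to\Z/p^m\Z$. Then $\sC_m\cong\prod_{\Z/p^m\Z}\F_p$. Every locally constant function on the compact space $\Z_p$ is uniformly locally constant, so $\sC=\bigcup_m\sC_m=\varinjlim_m\sC_m$. In particular $\sC$ is a Boolean-type ring: every element satisfies $\varphi^p=\varphi$, so $\sC$ is absolutely flat (von Neumann regular), and every prime is maximal.

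For (1), surjectivity of $\ev_\alpha$ is immediate, since constant functions lie in $\sC$. For flatness and the localization statement, let $\fm_\alpha=\ker(\ev_\alpha)$ and work at each finite level. In $\sC_m$, evaluation at $\alpha$ is projection onto the factor indexed by $\alpha\bmod p^m$. This projection is the localization at the corresponding idempotent $e_{m,\alpha}$, the indicator function of $\alpha+p^m\Z_p$, so it is flat and equals $(\sC_m)_{\fm_\alpha\cap\sC_m}$. Since localization commutes with filtered colimits, $\sC_{\fm_\alpha}=\varinjlim_m(\sC_m)_{\fm_\alpha\cap\sC_m}=\varinjlim\F_p=\F_p$, and the induced map is $\ev_\alpha$. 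Flatness of $\F_p=\sC_{\fm_\alpha}$ over $\sC$ then follows because a localization is flat. Alternatively, one can note directly that $\F_p\cong\sC[e_{m,\alpha}^{-1}]$ fails at any finite stage but is the colimit of the flat $\sC$-algebras $\sC e_{m,\alpha}$, and a filtered colimit of flat modules is flat.

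For (2), I would first show that every prime $\fp$ of $\sC$ is of the form $\fm_\alpha$. For each $m$, the idempotents $e_{m,a}$ with $a\in\Z/p^m\Z$ sum to $1$ and are pairwise orthogonal. Hence exactly one of them lies outside $\fp$. Call its index $a_m$; it is well defined because $\fp$ is prime and the products $e_{m,a}e_{m,b}=0$ for $a\ne b$ lie in $\fp$. The indices $a_m$ are compatible as $m$ varies, since $e_{m,a}=\sum_{b\equiv a}e_{m+1,b}$. They therefore define an element $\alpha\in\Z_p$. Next, $\fp\cap\sC_m$ is the prime of $\sC_m$ corresponding to $a_m$, namely $\fm_\alpha\cap\sC_m$. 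Taking the union over $m$ gives $\fp=\fm_\alpha$. Injectivity of $\alpha\mapsto\fm_\alpha$ holds because, for $\alpha\ne\beta$, some $e_{m,\alpha}$ separates them.

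For the topology, the basic opens $D(\varphi)$ of ${\rm Spec}(\sC)$ pull back to $\{\alpha:\varphi(\alpha)\neq0\}$, which is clopen in $\Z_p$. Conversely, each basic clopen set $\alpha+p^m\Z_p$ equals $D(e_{m,\alpha})$. So the bijection is a homeomorphism. I expect no real obstacle here; the only step needing care is the identification of primes, that is, showing that the compatible choice $(a_m)$ exists. That step is handled by the idempotent argument above.
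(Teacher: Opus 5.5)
Your proposal is correct and follows the same route as the paper, which writes $\sC\cong\varinjlim_m C(\Z/p^m\Z,\F_p)$ and reduces both assertions to the finite product rings $\prod_{\Z/p^m\Z}\F_p$. Your idempotent argument for identifying the primes, and your check on basic open sets, spell out the compatibility of localization and ${\rm Spec}$ with this filtered colimit, which the paper leaves implicit.
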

   \begin{proof}
       Since $\F_p$ is equipped with the discrete topology, the canonical map 
       \[
\varinjlim_{m\ge0}C(\Z_p/p^m\Z_p,\F_p)\to C(\Z_p,\F_p)
       \]
       is an isomorphism. The assertions then follow from the corresponding results for the finite product ring $C(\Z_p/p^m\Z_p,\F_p)\cong \prod_{\Z_p/p^m\Z_p}\F_p$. 
   \end{proof}

For $\alpha\in\Z_p$, we obtain a ring homomorphism $(D_{\F_p[t]})_0\to\F_p$ given by the composition  
\[
(D_{\F_p[t]})_0\xrightarrow\cong \sC\xrightarrow{\ev_\alpha}\F_p. 
\]
To compute this homomorphism, we use Lucas' theorem, which we record here in the following form. 

\begin{thm}[Lucas]\label{thm: Lucas theorem}
    Let $n$ be a nonnegative integer and let $\alpha\in\Z_p$. Write their $p$-adic expansions as 
    \[
    n=\sum_{i\ge0}n_ip^i,\quad \alpha=\sum_{i\ge0}a_ip^i\qquad (n_i,a_i\in\{0,\dots,p-1\}). 
    \]
    Then 
    \[
    \binom{\alpha}{n}\equiv \prod_{i\ge0}\binom{a_i}{n_i}\mod p. 
    \]
    Note that the product on the right-hand side is finite, since $n_i=0$ for all sufficiently large $i$. 
\end{thm}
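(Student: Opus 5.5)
The plan is the standard proof of Lucas' theorem via generating functions, followed by a reduction from $p$-adic integers to nonnegative integers.

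\emph{Case $\alpha\in\Z_{\ge0}$.} Write $\alpha=\sum_i a_ip^i$, a finite sum. In $\F_p[x]$ the Frobenius is additive, so $(1+x)^{p^i}=1+x^{p^i}$. Hence
\[
(1+x)^\alpha=\prod_{i\ge0}(1+x)^{a_ip^i}=\prod_{i\ge0}\bigl(1+x^{p^i}\bigr)^{a_i}=\prod_{i\ge0}\sum_{k=0}^{a_i}\binom{a_i}{k}x^{kp^i}.
\]
Now compare coefficients of $x^n$. Since every exponent $k\le a_i\le p-1$, uniqueness of base-$p$ expansions shows that the only contribution to $x^n$ comes from choosing $k=n_i$ in each factor. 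This gives $\binom{\alpha}{n}\equiv\prod_i\binom{a_i}{n_i}$. If some $n_i>a_i$, the corresponding factor is $0$, which is consistent.

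\emph{General $\alpha\in\Z_p$.} Fix $n$ and choose $N$ with $p^N>n$, so that $n_i=0$ for $i\ge N$. Let $\alpha_N=\sum_{i<N}a_ip^i\in\Z_{\ge0}$, so that $\alpha\equiv\alpha_N\pmod{p^N\Z_p}$. The key step is to show
\[
\binom{\alpha}{n}\equiv\binom{\alpha_N}{n}\pmod p.
\]
For this, use that $x\mapsto\binom{x}{n}$ sends $\Z_{\ge0}$, and hence by density and continuity all of $\Z_p$, into $\Z_p$. Then argue that it is locally constant modulo $p$ with modulus $p^N$. One way is to apply the nonnegative-integer case to pairs $m,m'\in\Z_{\ge0}$ with $m\equiv m'\pmod{p^N}$. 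Their base-$p$ digits agree in positions $<N$, and for positions $\ge N$ the factors are $\binom{\cdot}{0}=1$, so Lucas for integers already gives $\binom{m}{n}\equiv\binom{m'}{n}\pmod p$. Thus the function $m\mapsto\binom{m}{n}\bmod p$ on $\Z_{\ge0}$ factors through $\Z/p^N$. Since $\binom{x}{n}$ is a polynomial and hence $p$-adically continuous, and $\Z_{\ge0}$ is dense in $\Z_p$, this extends to all of $\Z_p$. Approximating $\alpha$ by nonnegative integers $m\equiv\alpha\pmod{p^N}$, all of which have $\binom{m}{n}\equiv\binom{\alpha_N}{n}$, we conclude.

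Combining the two cases, $\binom{\alpha}{n}\equiv\prod_{i<N}\binom{a_i}{n_i}=\prod_{i\ge0}\binom{a_i}{n_i}\pmod p$, because $\binom{a_i}{0}=1$ for $i\ge N$.

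The only mildly delicate point is the continuity step. A polynomial with $\Q$-coefficients is continuous on $\Z_p$, so if $m\to\alpha$ $p-$adically then $\binom{m}{n}\to\binom{\alpha}{n}$. In particular, $\binom{\alpha}{n}$ is congruent mod $p$ to $\binom{m}{n}$ for $m$ close enough to $\alpha$, and for such $m$ we also have $m\equiv\alpha_N\pmod{p^N}$.
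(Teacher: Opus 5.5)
Your proposal is correct and follows the same route as the paper: compare coefficients in $(1+x)^\alpha=\prod_i(1+x^{p^i})^{a_i}$ over $\F_p$ for $\alpha\in\Z_{\ge0}$, then extend to $\Z_p$ using the $p$-adic continuity of $\binom{x}{n}$ and the density of $\Z_{\ge0}$. The only difference is that you spell out the continuity step in more detail than the paper does, by choosing $m\in\Z_{\ge0}$ with $m\equiv\alpha\pmod{p^N}$ and $\binom{m}{n}\equiv\binom{\alpha}{n}\pmod p$.
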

\begin{proof}
The classical form of Lucas’ theorem establishes the result when $\alpha$ is a nonnegative integer. This case can be proved by examining the coefficients in the expansion 
\[
(1+x)^\alpha=\prod_{i\ge0}(1+x^{p^i})^{a_i}. 
\]
The general case follows from this since the binomial function is $p$-adically continuous, and the set of nonnegative integers is dense in $\Z_p$. 
\end{proof}

\begin{lem}\label{lem: vartheta C}
The following statements hold: 
\begin{enumerate}
\item Let $\alpha\in\Z_p$, and write $\alpha=\sum_{i\ge0}a_ip^i$ with $a_i=0,\dots,p-1$. Then the composite map 
\[
(D_{\F_p[t]})_0\xrightarrow{\cong} \sC\xrightarrow{{\rm ev}_\alpha}\F_p
\]
sends $\vartheta_{p^i}$ to $-a_i$. 

    \item Under the isomorphism $\sC\cong (D_{\F_p[t]})_0$, the subalgebra $C(\Z/p^m\Z,\F_p)\subset \sC$ corresponds to the subalgebra 
\[
\F_p[\vartheta_1,\vartheta_p,\dots,\vartheta_{p^{m-1}}]\subset (D_{\F_p[t]})_0. 
\]
\end{enumerate}
\end{lem}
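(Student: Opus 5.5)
For (1), I will combine Lemma~\ref{lem: varthetan} with Lucas' theorem (Theorem~\ref{thm: Lucas theorem}). By Lemma~\ref{lem: varthetan}, $\vartheta_{p^i}$ corresponds to the function $\alpha\mapsto(-1)^{p^i}\binom{\alpha}{p^i}\bmod p$. The $p$-adic expansion of $n=p^i$ has a single nonzero digit, equal to $1$ in position $i$. Lucas' theorem therefore gives
\[
\binom{\alpha}{p^i}\equiv\binom{a_i}{1}\prod_{j\ne i}\binom{a_j}{0}=a_i\pmod p.
\]
For the sign, if $p$ is odd then $(-1)^{p^i}=-1$. If $p=2$ then $-1=1$ in $\F_2$, so $(-1)^{p^i}\equiv -1$ in $\F_p$ in every case. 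Hence $\ev_\alpha(\vartheta_{p^i})=-a_i$, which proves (1).

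For (2), I will first establish the inclusion $\F_p[\vartheta_1,\dots,\vartheta_{p^{m-1}}]\subset C(\Z/p^m\Z,\F_p)$. By (1), for $i\le m-1$ the function attached to $\vartheta_{p^i}$ is $\alpha\mapsto -a_i$. Since $a_i$ depends only on $\alpha\bmod p^m$, this function factors through $\Z_p\to\Z/p^m\Z$. The set $C(\Z/p^m\Z,\F_p)$ is a subring of $\sC$, so the whole algebra generated by these elements lands in it.

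For the reverse inclusion I will use that $C(\Z/p^m\Z,\F_p)\cong\prod_{\Z/p^m\Z}\F_p$, which is spanned by the indicator functions $\delta_c$ of the residue classes $c\in\Z/p^m\Z$. Write $c=\sum_{i<m}c_ip^i$ with digits $c_i\in\{0,\dots,p-1\}$. The polynomial
\[
g_{c_i}(y)\coloneqq 1-(y+c_i)^{p-1}\in\F_p[y]
\]
vanishes on $y\in\F_p$ unless $y=-c_i$, where it equals $1$. Applying this with $y=\vartheta_{p^i}$, whose function is $-a_i$ by (1), shows that $g_{c_i}(\vartheta_{p^i})$ is the indicator of the set $\{a_i=c_i\}$. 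Then
\[
\delta_c=\prod_{i<m}g_{c_i}(\vartheta_{p^i}),
\]
which lies in the algebra generated by the $\vartheta_{p^i}$. Alternatively, one can count: the $\vartheta_{p^i}$ separate the points of $\Z/p^m\Z$, and a point-separating subalgebra of the finite product ring is the whole ring.

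I do not expect a serious obstacle in either part. The only points needing care are the sign convention in (1), including the case $p=2$, and confirming that the digit $a_i$ depends only on $\alpha \bmod p^m$ when $i<m$.
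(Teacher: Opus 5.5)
Your proof is correct. Part (1) is the paper's argument: Lemma~\ref{lem: varthetan} combined with Lucas' theorem. Your explicit check of the sign for $p=2$ fills in a step the paper leaves implicit.

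For the reverse inclusion in (2) you take a different route. The paper sets $A=\F_p[\vartheta_1,\dots,\vartheta_{p^{m-1}}]$ and $B=C(\Z/p^m\Z,\F_p)$. It shows that $A\hookrightarrow B$ is an isomorphism by proving that $\operatorname{Spec}(B)\to\operatorname{Spec}(A)$ is bijective. Injectivity comes from the digits $-a_i$ separating the points of $\Z/p^m\Z$, and surjectivity from $A\to B$ being finite and injective. It then compares local rings, using that each local ring of $B$ is $\F_p$.

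You instead write down the elements $\prod_{i<m}\bigl(1-(\vartheta_{p^i}+c_i)^{p-1}\bigr)$ directly. These correspond to the indicator functions of the classes $c+p^m\Z_p$, and those indicators span $B$.

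Both arguments rest on the same fact: the $\vartheta_{p^i}$ with $i<m$ separate the points of $\Z/p^m\Z$. Your version is more elementary and avoids the local-ring step. It also gives explicit projectors onto the summands $\sM_c$ of the eigenspace decomposition behind Lemma~\ref{lem: how vartheta acts}. The paper's version is the more structural formulation. Your alternative remark, that a point-separating subalgebra of a finite product of copies of $\F_p$ is the whole ring, is essentially the paper's argument phrased in terms of functions.
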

\begin{proof}
We first prove part~(1). 
By Lemma~\ref{lem: varthetan}, the element $\vartheta_{p^i}$ corresponds to 
\[
\alpha\mapsto-\binom{\alpha}{p^i}. 
\]
By Theorem~\ref{thm: Lucas theorem}, we have $\binom{\alpha}{p^i}\equiv a_i\bmod p$. This proves the claim. 

We now prove~(2). Set $A\coloneqq \F_p[\vartheta_1,\vartheta_p,\dots,\vartheta_{p^{m-1}}]$ and $B\coloneqq C(\Z/p^m\Z,\F_p)$. 
By~(1), the isomorphism $(D_{\F_p[t]})_0\xrightarrow{\cong} \sC$ restricts to an injection $A\hookrightarrow B$. 
To show that this is an isomorphism, it suffices to prove that the induced morphism of schemes 
\[
\varphi\colon X\coloneqq{\rm Spec}(B)\to Y\coloneqq{\rm Spec}(A)
\]
is an isomorphism. Since both $A$ and $B$ are finite $\F_p$-algebras, the schemes $X$ and $Y$ are finite and discrete. Hence, it suffices to show that $\varphi$ is bijective and that, for each point $x\in X$, the induced homomorphism of local rings $\cO_{Y,f(x)}\to \cO_{X,x}$ is an isomorphism. The latter follows since the map is injective and $\cO_{X,x}=\F_p$. 

We now prove that $\varphi$ is bijective. The surjectivity of $\varphi$ follows from the fact that  $A\to B$ is finite and injective. We prove injectivity. To this end, we identify $X$ with the discrete set $\Z/p^m\Z$ via evaluation. Let $a,b\in \Z/p^m\Z$, and suppose that $\varphi(a)=\varphi(b)$. Write 
\[
a=\sum_{i=0}^{m-1}a_ip^i,\quad b=\sum_{i=0}^{m-1}b_ip^i\qquad(a_i,b_i\in\{0,\dots,p-1\}). 
\]
Evaluating at $\vartheta_{p^i}$, we obtain $a_i=b_i$ for all $i=0,\dots,m-1$. Hence $a=b$, which proves injectivity and completes the proof. 
\end{proof}

For later use, we record another consequence of Lucas' theorem. 
\begin{lem}\label{lem: expansion of polynomial}
Let $m$ be a positive integer, and $n$ be an integer with $0\le n\le p^m-1$. 
\begin{enumerate}
    \item In the ring $\F_p[x,y]$, we have 
\[
(x-y)^{p^m-n-1}=(-1)^n\sum_{r=0}^{p^m-n-1}\binom{n+r}{n}x^ry^{p^m-n-r-1}. 
\]
    \item Write $n=\sum_{i=0}^{m-1}n_ip^i$ with $n_i=0,\dots,p-1$. Then we  have 
\[
(x-y)^{p^m-n-1}=(-1)^{n}\sum_{r_0,\dots,r_{m-1}}
\prod_{i=0}^{m-1}\binom{n_i+r_i}{n_i}\cdot x^{\sum_{i=0}^{m-1}r_ip^{i}}\cdot
y^{p^m-n-\sum_{i=0}^{m-1}r_ip^i-1}
\]
in the ring $\F_p[x,y]$, 
where the sum is taken over all integers $r_0,\dots,r_{m-1}$ such that $0\leq r_i\leq p-n_i-1$. 
\end{enumerate}
\end{lem}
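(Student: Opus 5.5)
Part~(1) reduces to a binomial-coefficient identity in characteristic $p$. Set $N\coloneqq p^m-n-1\ge0$ and expand with the binomial theorem:
\[
(x-y)^{N}=\sum_{r=0}^{N}\binom{N}{r}(-1)^{N-r}x^ry^{N-r}.
\]
Since $N-r=p^m-n-r-1$, the exponents of $y$ already agree with the claim. It therefore suffices to prove, for $0\le r\le N$, the congruence
\[
(-1)^{N-r}\binom{N}{r}\equiv(-1)^n\binom{n+r}{n}\pmod p.
\]
We use the standard polynomial identity $\binom{-a-1}{r}=(-1)^r\binom{a+r}{r}$, the same one underlying Lemma~\ref{lem: varthetan}, applied with $a=n$. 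This gives $(-1)^r\binom{n+r}{n}=\binom{-n-1}{r}$ in $\Z$, hence in $\Z_p$. Now $N=p^m+(-n-1)\equiv -n-1 \pmod{p^m}$. For $r<p^m$, the value $\binom{\alpha}{r}\bmod p$ depends only on $\alpha\bmod p^m$; this follows from Theorem~\ref{thm: Lucas theorem}, since only the first $m$ digits of $\alpha$ enter. Hence $\binom{N}{r}\equiv(-1)^r\binom{n+r}{n}$. Multiplying by $(-1)^{N-r}$ gives $(-1)^{N}\binom{n+r}{n}$. Finally $(-1)^N=(-1)^{p^m-1}(-1)^{n}$, and $(-1)^{p^m-1}=1$ in $\F_p$: for $p$ odd the exponent is even, and for $p=2$ the sign is irrelevant. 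This proves~(1).

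For~(2), start from~(1) and write $r=\sum_i r_ip^i$ with digits $0\le r_i\le p-1$, noting that $r\le N<p^m$. We analyse $\binom{n+r}{n}$ mod $p$ by Lucas' theorem (Theorem~\ref{thm: Lucas theorem}), splitting into two cases according to whether the addition $n+r$ in base $p$ has a carry.

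\emph{No carry}, i.e.\ $n_i+r_i\le p-1$ for all $i$. Then the digits of $n+r$ are $n_i+r_i$, and Lucas gives $\binom{n+r}{n}\equiv\prod_i\binom{n_i+r_i}{n_i}$.

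\emph{Some carry.} This is Kummer's criterion: $p$ divides $\binom{n+r}{n}$ exactly when a carry occurs. To keep the argument self-contained via Lucas, take the lowest carrying position $i$. The digit of $n+r$ there is $n_i+r_i-p<n_i$, so the $i$-th Lucas factor $\binom{n_i+r_i-p}{n_i}$ vanishes.

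Consequently only the digit tuples with $0\le r_i\le p-n_i-1$ contribute. Every such tuple satisfies $r\le\sum_i(p-1-n_i)p^i=N$, so none of these terms falls outside the range of summation in~(1). Substituting $x^{r}=x^{\sum r_ip^i}$ yields the stated formula. The only delicate point is the reduction of $\binom{N}{r}$ to $\binom{-n-1}{r}$ modulo $p$ in~(1), which is handled by Lucas' theorem for $p$-adic arguments. An alternative route for~(2) is to factor $(x-y)^{N}=\prod_i(x^{p^i}-y^{p^i})^{p-1-n_i}$ using the digit expansion $N=\sum_i(p-1-n_i)p^i$, and then apply~(1) with $m=1$ in each factor. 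I would mention this as a cross-check.
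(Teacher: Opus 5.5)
Your proposal is correct and follows the paper's proof essentially step by step. In (1) you use Lucas' theorem to replace $\binom{p^m-n-1}{r}$ by $\binom{-n-1}{r}$ and then apply the identity $\binom{-n-1}{r}=(-1)^r\binom{n+r}{n}$. In (2) you split according to whether the base-$p$ addition $n+r$ has a carry, and kill the carry case through the Lucas factor at the lowest carrying digit.

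You also make explicit two points the paper leaves implicit: that $(-1)^{p^m-1}=1$ in $\F_p$, and that every admissible digit tuple gives $r\le p^m-n-1$. Your cross-check via $(x-y)^{p^m-n-1}=\prod_i(x^{p^i}-y^{p^i})^{p-1-n_i}$ is itself a valid alternative proof of (2).
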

\begin{proof}
(1) 
It suffices to prove the congruence 
\[
(-1)^{p^m-n-r-1}\binom{p^m-n-1}{r}\equiv (-1)^n\binom{n+r}{n}\mod p. 
\]
Since $r<p^m$, Theorem~\ref{thm: Lucas theorem}  implies 
\[
\binom{p^m-n-1}{r}\equiv \binom{-n-1}{r} \mod p. 
\]
On the other hand, we have 
\[
\binom{-n-1}{r} =(-1)^r\binom{n+r}{r}=(-1)^r\binom{n+r}{n}. 
\]
This proves the claim. 

\medskip

\noindent
(2) We compute the coefficients in the right-hand side of~(1). 
Let $r$ be an integer with $0\le r\le p^m-n-1$, and write $r=\sum_{i=0}^{m-1}r_ip^i$ for its $p$-adic expansion. 

Suppose that $n_i+r_i\le p-1$ for all  $i=0,\dots,m-1$. Then 
\[
n+r=\sum_{i}(n_i+r_i)p^i
\]
is the $p$-adic expansion of $n+r$. Hence, 
by Theorem~\ref{thm: Lucas theorem}, we have 
\[
\binom{n+r}{n}=\prod_{i=0}^{m-1}\binom{n_i+r_i}{n_i}. 
\]

On the other hand, suppose that $n_i+r_i> p-1$ for some $i$, and let $i$ be the smallest index with this property. Then the coefficient of $p^i$ in the $p$-adic expansion of $n+r$ is equal to $n_i+r_i-p$, which is strictly less than $n_i$. Hence, again by Theorem~\ref{thm: Lucas theorem} we obtain 
\[\binom{n+r}{n}\equiv0\mod p.\]
The assertion then follows from (1). 
\end{proof}

\medskip

Let $X$ be a Noetherian regular $F$-finite   $\F_p$-scheme. 
Let $\A^1_X={\rm Spec}(\cO_X[t])$, and let  ${\rm pr}$ denote the projection $\A^1_X=X\times_{\F_p}\A^1_{\F_p}\to X$. 
\begin{const}
  We construct a ring homomorphism 
\[
{\rm pr}^{-1}\cD_X\otimes_{\F_p}D_{\F_p[t]}\to \cD_{\A^1_X}
\]
as follows. By adjunction, it suffices to construct 
\begin{equation}\label{equation: Dx Dft toDa1}
    \cD_X\otimes_{\F_p}D_{\F_p[t]}\to {\rm pr}_\ast\cD_{\A^1_X}. 
\end{equation}
Let $U\subset X$ be an affine open subscheme with coordinate ring $R=\Gamma(U,\cO_X)$, and set $D_R\coloneqq\Gamma(U,\cD_X)$. 
For $P\in D_R$  and $Q\in D_{\F_p[t]}$, the tensor product $P\otimes Q$ defines a differential operator on $R[t]=R\otimes_{\F_p}\F_p[t]$. 
This construction induces a ring homomorphism 
\[
\Gamma(U,\cD_X\otimes_{\F_p}D_{\F_p[t]})=D_R\otimes_{\F_p}D_{\F_p[t]}\to \Gamma(\A^1_U,\cD_{\A^1_X}). 
\]
Since this is compatible with the restriction maps for  affine open immersions $V\hookrightarrow U$, it glues to a morphism of sheaves. This defines the morphism in \eqref{equation: Dx Dft toDa1}. 
\end{const}

Set $\mathscr{C}\coloneqq C(\Z_p,\F_p)$ and $\sC^{(m)}\coloneqq C(\Z/p^m\Z,\F_p)$.  Composing the homomorphisms 
\[
{\rm pr}^{-1}\cD_X\otimes_{\F_p}\mathscr{C}\xrightarrow[\cong]{\text{Lemma~\ref{lem: D0 has a simple des}}}{\rm pr}^{-1}\cD_X\otimes_{\F_p}(D_{\F_p[t]})_0 \hookrightarrow {\rm pr}^{-1}\cD_X\otimes_{\F_p}D_{\F_p[t]}\to \cD_{\A^1_X}, 
\]
we regard $\cD_{\A^1_X}$-modules as ${\rm pr}^{-1}\cD_X\otimes_{\F_p}\mathscr{C}$-modules. In particular, the ring $\sC$ acts on $\cD_{\A^1_X}$-modules. 

\begin{defn}
Let $\sM$ be a left $D_{\F_p[t]}$-module, or more generally, a sheaf of left $D_{\F_p[t]}$-modules on a topological space. 
\begin{enumerate}
    \item For an element $a\in \Z/p^m\Z$, set 
\[
\sM_{a}\coloneqq \F_p\otimes_{{\rm ev}_{a},\sC^{(m)}}\sM, 
\]
where $\ev_{a}$ denotes the evaluation map 
\[
\sC^{(m)}\to \F_p,\quad \varphi\mapsto \varphi(a). 
\]
\item For an element $\alpha\in \Z_p$, set 
\[
\sM_{\alpha}\coloneqq \F_p\otimes_{{\rm ev}_\alpha,\sC}\sM. 
\]
\end{enumerate}
\end{defn}

We recall some basic properties of these constructions. 
For every integer $m\ge0$, the ring homomorphism 
\[
(\ev_{a})_{a}\colon \sC^{(m)}\to\prod_{a\in\Z/p^m\Z}\F_p
\]
is an isomorphism. Consequently, we obtain a decomposition 
\begin{align}\label{align: eigenspace decomposition of M}
\sM=\bigoplus_{a\in\Z/p^m\Z}
\sM_{a}.
\end{align}
This decomposition induces a natural projection $\sM\to \sM_{a}$.

Let $a_{m+1}\in\Z/p^{m+1}\Z$, and let $a_m$ denote its image  in $\Z/p^m\Z$. Since the composite map \[\sC^{(m)}\hookrightarrow\sC^{(m+1)}\xrightarrow{\ev_{a_{m+1}}}\F_p\]
coincides with $\ev_{a_m}$, the projection $\sM\to \sM_{a_{m+1}}$ factors through $\sM_{a_m}$, and induces 
 a natural surjection  $\sM_{a_m}\to \sM_{a_{m+1}}$

Now let $\alpha\in\Z_p$, and let $\alpha_m$ denote its image in $\Z/p^m\Z$. The morphisms $\sM_{\alpha_{m}}\to \sM_{\alpha_{m+1}}$ constructed above induce an isomorphism 
\[
\sM_\alpha\cong \varinjlim_m \sM_{\alpha_m}. 
\]

\begin{lem}\label{lem: how vartheta acts}
Let $a\in\Z/p^m\Z$, and write $a=\sum_{i=0}^{m-1}a_ip^i$ with $0\le a_i\le p-1$. Then, under the decomposition~\eqref{align: eigenspace decomposition of M}, we have 
\[
\{x\in \sM\mid \forall i=0,\dots, m-1,\, \vartheta_{p^i}x=-a_ix\}=\sM_{a}.  
\]
\end{lem}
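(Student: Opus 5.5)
The plan is to reduce everything to the finite product ring $\sC^{(m)}\cong\prod_{b\in\Z/p^m\Z}\F_p$ and to locate the idempotents that cut out the summands. Under this isomorphism, $\sC^{(m)}$ has the primitive idempotents $e_b$ (the indicator function of $b$). The decomposition \eqref{align: eigenspace decomposition of M} reads $\sM=\bigoplus_b e_b\sM$, and $\sM_b\coloneqq\F_p\otimes_{\ev_b,\sC^{(m)}}\sM$ is identified with $e_b\sM$. Indeed, $\F_p$ regarded as a $\sC^{(m)}$-module via $\ev_b$ is $\sC^{(m)}e_b$, so $\F_p\otimes_{\sC^{(m)}}\sM\cong e_b\sM$. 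Since $e_b$ is a section of the constant sheaf, all of this works for sheaves of modules as well.

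For the inclusion $\sM_a\subset\{x\mid \vartheta_{p^i}x=-a_ix\}$, take $x\in e_a\sM$, so $x=e_ax$. By Lemma~\ref{lem: vartheta C}(2) we have $\vartheta_{p^i}\in\sC^{(m)}$ for $i<m$, and by Lemma~\ref{lem: vartheta C}(1) its value at $a$ is $-a_i$. Here one should check that the digits of $a$ viewed in $\Z/p^m\Z$ agree with the first $m$ digits of any lift $\alpha\in\Z_p$; this is clear. Since $\vartheta_{p^i}e_a=\vartheta_{p^i}(a)e_a=-a_ie_a$ in $\sC^{(m)}$, we get $\vartheta_{p^i}x=\vartheta_{p^i}e_ax=-a_ix$.

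For the reverse inclusion, let $x$ satisfy the eigenvalue conditions and decompose $x=\sum_b e_bx$. For $b\ne a$ there is some $i<m$ with $b_i\neq a_i$. Applying the inclusion just proved to $e_bx\in\sM_b$ gives $\vartheta_{p^i}e_bx=-b_ie_bx$. On the other hand, $\vartheta_{p^i}$ commutes with $e_b$, because $\sC$ is commutative. Hence $\vartheta_{p^i}e_bx=e_b\vartheta_{p^i}x=-a_ie_bx$, and so $(a_i-b_i)e_bx=0$. Since $a_i-b_i$ is a unit in $\F_p$, we conclude $e_bx=0$, and therefore $x=e_ax\in\sM_a$.

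A cleaner alternative for the reverse inclusion is to write $e_a$ explicitly as a polynomial in the $\vartheta_{p^i}$, namely $e_a=\prod_{i<m}\bigl(1-(\vartheta_{p^i}+a_i)^{p-1}\bigr)$. This follows from Fermat's little theorem together with Lemma~\ref{lem: vartheta C}(1). With this formula, the eigenvalue conditions directly give $e_ax=x$. No step here is really an obstacle. The only point that needs care is the identification of $\F_p\otimes_{\ev_a}\sM$ with the summand $e_a\sM$ inside $\sM$, which is how the equality in the statement should be interpreted.
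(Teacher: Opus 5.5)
Your proposal is correct and follows the same route as the paper. The paper's proof consists only of the remark that the claim follows from Lemma~\ref{lem: vartheta C}, and you have spelled this out: part~(2) places $\vartheta_{p^i}$ ($i<m$) in $\sC^{(m)}$, part~(1) gives its value $-a_i$ at $a$, and the idempotents $e_b$ of $\sC^{(m)}\cong\prod_{b}\F_p$ identify $\sM_b$ with $e_b\sM$ and handle both inclusions; your explicit formula $e_a=\prod_{i<m}\bigl(1-(\vartheta_{p^i}+a_i)^{p-1}\bigr)$ is also correct and gives a one-line version of the reverse inclusion.
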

\begin{proof}
The claim follows from Lemma~\ref{lem: vartheta C}. 
\end{proof}

\section{The Graph Embedding and Bernstein--Sato roots}\label{section: Nearby Cycles via the Graph Embedding}
Let $X$ be a Noetherian regular $F$-finite  $\F_p$-scheme. Fix  an element $f\in\Gamma(X,\cO_X)$. 
\subsection{Graph Construction for Differential Modules}\label{subsection: Graph Construction for Differential Modules}
For a functoriality of $\cD$-modules, we refer to Construction~\ref{construction: pullback pushforward tensor product of Dmodules}. 

Set $\A^1_X\coloneqq{\rm Spec}(\cO_X[t])$, and consider the morphisms of schemes 
\[X\xrightarrow{\gamma}\A^1_X\xrightarrow{{\rm pr}}X, 
\]
where ${\rm pr}\colon \A^1_X=X\times_{\F_p}\A^1_{\F_p}\to X$ is the projection, and $\gamma$ is the graph embedding of $f\colon X\to \A^1_{\F_p}$. Let $j_V$ denote the open complement $V\hookrightarrow \A^1_X$ of the closed immersion $\gamma$. We consider the following $\cD_{\A^1_X}$-module: 
\[
B_f\coloneqq (j_{V\ast}\cO_V)/\cO_{\A^1_X}.  
\]

\begin{defn}
    For a $\cD_X$-module $\sM$, we define the $\cD_{\A^1_X}$-module $\gamma_+ \sM$ by 
    \[
    \gamma_+\sM\coloneqq ({\rm pr}^\ast \sM)\otimes_{\cO_{\A^1_X}}B_f. 
    \]
\end{defn}

We regard $\gamma_+$ as the $\cD$-module pushforward along the closed immersion $\gamma$. 

Note that the sheaves $B_f$ and $\gamma_+\sM$ are supported on the graph of $f$. Thus, they may be  identified with sheaves on the topological space $X$.  
As sheaves on $X$, $B_f$ and $\gamma_+\sM$ are modules over the sheaf of rings  
\[
\gamma^{-1}({\rm pr}^{-1}\cD_X\otimes_{\F_p}D_{\F_p[t]})=\cD_X\otimes_{\F_p}D_{\F_p[t]}. 
\]
In particular, we regard them as $\cO_X$-modules via the canonical morphism $\cO_X\to \cD_X$.

For every integer $m\ge0$, set 
\[
\delta_m\coloneqq (f-t)^{-m-1}\in B_f. 
\]
Then, as an $\cO_X$-module, we have a direct sum decomposition 
\[
B_f=\bigoplus_{m\ge0}\cO_X\cdot \delta_m. 
\]
Tensoring with ${\rm pr}^\ast \sM$, we obtain 
\begin{equation*}
    \gamma_+\sM=\bigoplus_{m\ge0}\sM\cdot \delta_m. 
\end{equation*}

\subsubsection{Definition of Bernstein--Sato Roots}

Let $\sM$ be a  $\cD_X$-module. For an $\cO_X$-submodule $M\subset \sM$, we define 
\[
N_{f}(M)\coloneqq
(\cD_X\otimes_{\F_p} \mathscr{C})\cdot M\delta_0 \subset \gamma_+\sM. 
\]
For each $\alpha\in\Z_p$, we set 
\[
N_{f}(M)_\alpha\coloneqq N_f(M)\otimes_{\sC,\ev_\alpha}\F_p. 
\]

\begin{defn}[{cf. \cite[Definition~1.2.4]{Bit18}}]
\label{defn:Bernstein--Sato roots for Dmodule}
We identify $\Z_p\cong{\rm Spec}(\sC)$ as in Lemma~\ref{lem: Spec C Zp}. We define 
\[
\BSR(M,f)\coloneqq\{\alpha\in\Z_p\mid N_{f}(fM)_\alpha \subsetneq N_{f}(M)_\alpha\}. 
\]
An element of this set  is called a \emph{Bernstein--Sato root of $M$ with respect to $f$}. 
\end{defn}

We also define a finite-level version. 
\begin{defn}
For each integer $m\ge1$, we define 
\[
\BSR^{(m)}(M,f)\coloneqq\{n\in\Z_{\ge0}\mid \cD_X^{(m)}f^{n+1}M\subsetneq \cD_X^{(m)}f^nM\}. 
\]
\end{defn}
To describe the  relation between $\BSR(M,f)$ and $\BSR^{(m)}(M,f)$, we introduce a finite-level version $N_{f}^{(m)}(M)$ of $N_{f}(M)$.
\subsubsection{Finite-Level Version of $N_{f}(M)$}
Let $m$ be a positive integer, and set $\sC^{(m)}\coloneqq C(\Z/p^m\Z,\F_p)$. 
 We define 
\[
N^{(m)}_{f}(M)\coloneqq (\cD_X^{(m)}\otimes \sC^{(m)})\cdot M\delta_0. 
\]
Since $\cD_X=\bigcup_{m\ge 1}\cD_X^{(m)}$ and 
$\sC=\bigcup_{m\ge 1}\sC^{(m)}$, we have 
\[
N_{f}(M)=\bigcup_{m\ge1}N_{f}^{(m)}(M). 
\]

We describe the module $N^{(m)}_{f}(M)$ following the ideas of \cite{Mus09} and \cite{Bit18}. For an integer $n$ with $0\le n\le p^m-1$, define an  element $Q_n^{(m)}\in B_f$ by 
\[
Q_n^{(m)}\coloneqq t^{p^m-n-1}(f-t)^{-p^m}, 
\]
which was introduced by Musta\c{t}\u{a}\footnote{In \cite[Section~5]{Mus09}, Musta\c{t}\u{a}  considers a tuple $a=(i_0,\dots,i_{m-1})$ of integers with $i_l=0,\dots,p-1$, and defines an element $Q_{a}^0$ of $B_f$ by using the $F$-module structure of $B_f$. This element is equal to our $Q_{\sum_{l=0}^{m-1}i_lp^l}^{(m)}$. }. 
 We define a morphism  
\[
T_{n}^{(m)}\colon \sM\to\gamma_+\sM, \quad x\mapsto (1\otimes x)\otimes Q_n^{(m)}, 
\]
where $1\otimes x$ is regarded as an element of ${\rm pr}^\ast \sM=\cO_{\A^1_X}\otimes_{\cO_X}\sM$. 
Observe that $T_n^{(m)}$ is a morphism of $\cD_X^{(m)}$-modules. Indeed, the canonical morphism ${\rm pr}^{-1}\cD_X\to \cD_{\A^1_X}$ restricts to ${\rm pr}^{-1}\cD_X^{(m)}\to \cD_{\A^1_X}^{(m)}$. This implies that any element of $\cD_X^{(m)}$ commutes with multiplication by $(f-t)^{-p^m}$. On the other hand, it is clear that $\cD_X^{(m)}$ commutes with multiplication by $t^{p^m-n-1}$. 

\begin{lem}\label{lem: eigenspace}
Let $n\in \{0,\dots,p^m-1\}$, and 
let $\bar{n}\in \Z/p^m\Z$ denote its image. 
Then 
\[
T_{n}^{(m)}(\sM)\subset (\gamma_+\sM)_{\bar{n}}. 
\]
\end{lem}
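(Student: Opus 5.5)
By Lemma~\ref{lem: how vartheta acts}, it suffices to show that for every $i=0,\dots,m-1$ and every $x\in\sM$,
\[
\vartheta_{p^i}\bigl((1\otimes x)\otimes Q_n^{(m)}\bigr)=-n_i\,(1\otimes x)\otimes Q_n^{(m)},
\]
where $n=\sum_{i=0}^{m-1}n_ip^i$ is the $p$-adic expansion. I will compute how $\vartheta_{p^i}$, which comes from $D_{\F_p[t]}$ via ${\rm pr}^{-1}\cD_X\otimes D_{\F_p[t]}\to\cD_{\A^1_X}$, acts on $\gamma_+\sM=({\rm pr}^\ast\sM)\otimes B_f$.

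First reduce to the action on $B_f$. The operator $\vartheta_{p^i}=\partial^{[p^i]}t^{p^i}$ lies in $D_{\F_p[t]}$ and is $\F_p[t^{p^m}]$-linear for $i<m$. On the tensor product, the $\cD^{(m)}$-structure is built from the level-$m$ descents. The element $1\otimes x$ lies in the descended part $({\rm pr}^\ast\sM)_{(m)}$ with respect to operators coming from $D_{\F_p[t]}^{(m)}$, since ${\rm pr}^\ast$ is defined by base change of the systems $M_{(m)}$ and $t$-operators act only on the $\cO_{\A^1}$ factor. Hence $\vartheta_{p^i}$ acts on $(1\otimes x)\otimes Q$ as $(1\otimes x)\otimes\vartheta_{p^i}(Q)$. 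More concretely, I would write $x$ as an $\cO_X$-combination of elements of $M_{(m)}$ and use the fact that $(f-t)^{-p^m}$ is a $p^m$-th power, so $\vartheta_{p^i}$ commutes with multiplication by it. This reduces the claim to showing
\[
\vartheta_{p^i}\bigl(t^{p^m-n-1}\bigr)=-n_i\,t^{p^m-n-1}
\]
in $\F_p[t]$, after which one multiplies by the $p^m$-th power $(f-t)^{-p^m}$.

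The reduced claim follows from Lemma~\ref{lem: varthetan}: $\vartheta_{p^i}(t^k)=-\binom{-k-1}{p^i}t^k$ with $k=p^m-n-1$, so $-k-1=n-p^m\equiv n \pmod{p^m}$. By Lucas' theorem (Theorem~\ref{thm: Lucas theorem}) applied to the $p$-adic integer $n-p^m$, whose first $m$ digits are those of $n$, we get $\binom{n-p^m}{p^i}\equiv n_i$. This gives eigenvalue $-n_i$, matching Lemma~\ref{lem: how vartheta acts} for $a=\bar n$.

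The main obstacle is the first step: making rigorous that the $D_{\F_p[t]}$-action on $({\rm pr}^\ast\sM)\otimes B_f$, defined through the descent systems of Construction~\ref{construction: pullback pushforward tensor product of Dmodules}, satisfies $P\bigl((1\otimes x)\otimes gQ\bigr)=(1\otimes x)\otimes P(g)Q$ for $P$ of level $m$ in $t$, $g\in\F_p[t]$, and $Q$ a $p^m$-th power. I would check this on the level-$m$ system. Locally $x=\sum a_jx_j$ with $x_j\in M_{(m)}$, and the element equals $\sum (a_j\,t^{k}) \otimes (x_j\otimes (f-t)^{-p^m})$. The second factor lies in the level-$m$ descended part of the tensor system, so a level-$m$ operator acting only in $t$ acts through $a_jt^k$. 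Since it commutes with $a_j\in\cO_X$, it acts on $t^k$ alone.
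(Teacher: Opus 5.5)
Your proof is correct and follows the paper's route. You reduce via Lemma~\ref{lem: how vartheta acts} to an eigenvalue computation, pull out $(f-t)^{-p^m}$ using the $\cO_{\A^1_X}^{p^m}$-linearity of $\vartheta_{p^i}$, and finish with Lucas' theorem. The differences are minor: the paper computes $\partial^{[p^i]}(t^{p^m+p^i-n-1})=\binom{p^m+p^i-n-1}{p^i}t^{p^m-n-1}$ directly and tacitly lets $\vartheta_{p^i}$ act only on the $B_f$ factor, whereas you pass through Lemma~\ref{lem: varthetan} with the $p$-adic integer $n-p^m$ and justify the reduction from $\gamma_+\sM$ to $B_f$ explicitly via the level-$m$ descent systems.
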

\begin{proof}
Write $n=\sum_{i=0}^{m-1}n_ip^i$ with $0\le n_i\le p-1$. 
By Lemma~\ref{lem: how vartheta acts}, it suffices to show that for every $i=0,\dots,m-1$, 
\[
\vartheta_{p^i}Q_n^{(m)}=-n_iQ_n^{(m)}. 
\]
Since $\vartheta_{p^i}$ is $\cO_{\A^1_X}^{p^m}$-linear, we compute in $\cO_{\A^1_X}[(f-t)^{-1}]$  
\begin{align*}
    \vartheta_{p^i}Q_n^{(m)}&=\partial^{[p^i]}(t^{p^m+p^i-n-1}(f-t)^{-p^m})\\
    &=(f-t)^{-p^m}\binom{p^m+p^i-n-1}{p^i}t^{p^m-n-1}=\binom{p^m+p^i-n-1}{p^i}Q_n^{(m)}. 
\end{align*}
By Theorem~\ref{thm: Lucas theorem},  this binomial coefficient is congruent to $-n_i$. The assertion follows. 
 \end{proof}

We recall the following computation, due to Musta\c{t}\u{a}. 

\begin{lem}[{\cite[Lemma~5.3]{Mus09}}]\label{lem: Mustata}
Let $n$ be an integer with $0\le n\le p^m-1$, and write 
$n=\sum_{i=0}^{m-1}n_ip^i$ for its $p$-adic expansion. Then 
\[
Q_n^{(m)}=(-1)^{n}\sum_{r_0,\dots,r_{m-1}}
\prod_{i=0}^{m-1}
\binom{n_i+r_i}{n_i}\cdot f^{\sum_{i=0}^{m-1}r_ip^{i}}\cdot \delta_{n+\sum_{i=0}^{m-1}r_ip^i}, 
\]
where the sum is taken over all integers $r_0,\dots,r_{m-1}$ such that $0\leq r_i\leq p-n_i-1$. 
\end{lem}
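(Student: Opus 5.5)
The plan is to rewrite $Q_n^{(m)}$ as a single expression in $\cO_{\A^1_X}[(f-t)^{-1}]$, expand it polynomially using Lemma~\ref{lem: expansion of polynomial}, and read off the coefficients in the $\cO_X$-basis $\{\delta_k\}$ of $B_f$.

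First, write $t=f-(f-t)$. Then
\[
Q_n^{(m)}=t^{p^m-n-1}(f-t)^{-p^m}=\bigl(f-(f-t)\bigr)^{p^m-n-1}(f-t)^{-p^m}.
\]
Apply Lemma~\ref{lem: expansion of polynomial}(2) in $\F_p[x,y]$ with $x=f$ and $y=f-t$; this is legitimate because the identity is polynomial and we may specialize along the ring map $\F_p[x,y]\to\cO_X[t]$. It gives
\[
\bigl(f-(f-t)\bigr)^{p^m-n-1}=(-1)^n\sum_{r_0,\dots,r_{m-1}}\prod_{i}\binom{n_i+r_i}{n_i}f^{r}(f-t)^{p^m-n-r-1},
\]
with $r=\sum_i r_ip^i$ and the sum taken over $0\le r_i\le p-n_i-1$.

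Next, multiply by $(f-t)^{-p^m}$. Each term then becomes $f^{r}(f-t)^{-n-r-1}=f^r\delta_{n+r}$. The constraint $r_i\le p-n_i-1$ forces $n+r\le p^m-1$, so every exponent $-n-r-1$ is negative. Hence each term is a genuine basis element $\delta_{n+r}$ rather than a polynomial term that vanishes in the quotient $B_f$, and no terms are lost or need separate handling.

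The only care needed is in the identification of $B_f=(j_{V\ast}\cO_V)/\cO_{\A^1_X}$ with $\bigoplus\cO_X\delta_k$: one should note that $\cO_X[t]$-multiplication is computed in the localization before passing to the quotient. Since the computation is just an expansion, I expect no real obstacle. The substantive content, the Lucas-theorem coefficient identity, is already handled in Lemma~\ref{lem: expansion of polynomial}.
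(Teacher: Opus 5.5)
Your proposal is correct and is essentially the paper's proof: the paper also reduces to the polynomial identity $t^{p^m-n-1}=(-1)^n\sum\prod_i\binom{n_i+r_i}{n_i}f^{r}(f-t)^{p^m-n-r-1}$, obtained from Lemma~\ref{lem: expansion of polynomial}(2) with $x=f$ and $y=f-t$, and then multiplies by $(f-t)^{-p^m}$. Your additional check that $n+r\le p^m-1$, so that each term is a genuine basis element $\delta_{n+r}$ of $B_f$, is correct and makes explicit a point the paper leaves implicit.
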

\begin{proof}
We include a proof for the reader's  convenience. Since 
$Q_n^{(m)}=t^{p^m-n-1}(f-t)^{-p^m}$, 
   it suffices to prove that 
    \[
    t^{p^m-n-1}=(-1)^{n}\sum_{r_0,\dots,r_{m-1}}
\prod_{i=0}^{m-1}
\binom{n_i+r_i}{n_i}\cdot f^{\sum_{i=0}^{m-1}r_ip^{i}}\cdot (f-t)^{p^m-n-\sum_{i=0}^{m-1}r_ip^i-1}. 
    \]
    The equality follows from Lemma~\ref{lem: expansion of polynomial}(2) by setting $x=f$ and $y=f-t$. 
\end{proof}

The following lemma is proved by repeated applications of the above lemma. 

\begin{lem}\label{lem: injectivity of L}
    The morphism of $\cD_X^{(m)}$-modules 
    \[
    T_\bullet^{(m)}\colon \bigoplus_{n=0}^{p^m-1}\sM\to\gamma_{+}\sM,\quad (x_n)_{n}\mapsto \sum_{n}T_n^{(m)}(x_{n})
    \]
    is injective. 
\end{lem}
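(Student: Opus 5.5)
The plan is to exploit the direct sum decomposition $\gamma_+\sM=\bigoplus_{k\ge0}\sM\cdot\delta_k$ together with the explicit formula of Lemma~\ref{lem: Mustata}, and to argue by a triangularity (leading term) argument. Rewriting Lemma~\ref{lem: Mustata} for a section $x\in\sM$ gives
\[
T_n^{(m)}(x)=(-1)^n\sum_{r_0,\dots,r_{m-1}}\prod_{i=0}^{m-1}\binom{n_i+r_i}{n_i}\, f^{r}x\cdot\delta_{n+r},\qquad r=\sum_i r_ip^i .
\]
The term with all $r_i=0$ is $(-1)^n x\,\delta_n$, since $\binom{n_i}{n_i}=1$. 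All other terms lie in $\bigoplus_{k>n}\sM\cdot\delta_k$. Moreover, every index $n+r$ that occurs satisfies $n+r\le p^m-1$, because $r_i\le p-n_i-1$.

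So, relative to the basis $\delta_0,\dots,\delta_{p^m-1}$, the map $T^{(m)}_\bullet$ composed with the projection onto $\bigoplus_{k=0}^{p^m-1}\sM\cdot\delta_k$ is given by a matrix with entries in $\cO_X$ (multiplication operators $\pm\prod\binom{\cdot}{\cdot}f^r$). This matrix is lower unitriangular up to the signs $(-1)^n$ on the diagonal. Injectivity follows from this as follows. Suppose $\sum_n T_n^{(m)}(x_n)=0$ with not all $x_n$ zero. Let $n_0$ be the smallest index with $x_{n_0}\neq 0$. Compare the $\delta_{n_0}$-components: only $T_{n}^{(m)}$ with $n\le n_0$ can contribute, and those with $n<n_0$ have $x_n=0$. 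Hence the $\delta_{n_0}$-component equals $(-1)^{n_0}x_{n_0}$, which must vanish. This is a contradiction. The argument works on local sections, and since the decomposition of $\gamma_+\sM$ is a direct sum of $\cO_X$-modules, comparing components is legitimate.

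It then remains only to note that $T_\bullet^{(m)}$ is $\cD_X^{(m)}$-linear, which holds because each $T_n^{(m)}$ is, as observed before Lemma~\ref{lem: eigenspace}.

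The only point requiring care, which I expect to be the main (though mild) obstacle, is bookkeeping: checking that the identification $\gamma_+\sM=\bigoplus\sM\delta_k$ is compatible with the formula for $(1\otimes x)\otimes Q_n^{(m)}$. Here $f^r$ acts on $\sM$ because, on the graph, $t$ acts as $f$ on $\mathrm{pr}^\ast\sM\otimes B_f$ modulo the relevant identifications. This is already implicit in Lemma~\ref{lem: Mustata}, which expresses $Q_n^{(m)}$ in $B_f$ with coefficients in $\cO_X$.

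An alternative route uses Lemma~\ref{lem: eigenspace}: the images $T_n^{(m)}(\sM)$ lie in distinct eigenspaces $(\gamma_+\sM)_{\bar n}$, so the sum is direct. It then suffices to show each $T_n^{(m)}$ is injective, which is the same leading-term observation applied to the $\delta_n$-component. Both routes work, and I would present the eigenspace version as the cleaner one.
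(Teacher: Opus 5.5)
Your proof is correct. Your first route is a triangularity argument like the paper's, but ordered differently; your second route is genuinely different.

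\textbf{First route.} The paper also reads off $\delta$-coefficients from Lemma~\ref{lem: Mustata}, but it inducts on the digit sum $s(n)=\sum_i n_i$. That choice forces a no-carry argument: $b+\sum_i r_ip^i=n$ with $0\le r_i\le p-b_i-1$ gives $b_i+r_i=n_i$, which is used to rule out contributions to $\delta_n$ from $b\neq n$ with $s(b)\ge s(n)$. Ordering by the size of the index, as you do, makes this unnecessary. Since $r=\sum_i r_ip^i\ge0$, with equality only when $r_0=\dots=r_{m-1}=0$, the map $T_b^{(m)}$ lands in $\bigoplus_{k\ge b}\sM\cdot\delta_k$ with $\delta_b$-coefficient $(-1)^b$. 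Your minimal-index argument then finishes the proof, and is simpler than the paper's.

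\textbf{Second route.} Here you replace the ordering altogether by the decomposition \eqref{align: eigenspace decomposition of M}. Lemma~\ref{lem: eigenspace} is proved before and independently of this lemma, so there is no circularity. It places the summands $T_n^{(m)}(\sM)$ in the pairwise distinct components $(\gamma_+\sM)_{\bar n}$. So injectivity of $T_\bullet^{(m)}$ reduces to injectivity of each single $T_n^{(m)}$, which is visible on the $\delta_n$-coefficient. This is the cleanest version. It does rely on the $\vartheta_{p^i}$-eigenvalue computation (via Lucas' theorem) rather than on coefficient bookkeeping alone; the paper invokes that lemma only afterwards, in Proposition~\ref{prop: image of L}.

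\textbf{One inaccuracy.} Your remark that $t$ acts as $f$ on $B_f$ is false: one has $t\delta_k=f\delta_k-\delta_{k-1}$ for $k\ge1$. You do not need it, though. All you use is $\gamma_+\sM\cong\sM\otimes_{\cO_X}B_f$, which lets the $\cO_X$-coefficients $f^r$ in Lemma~\ref{lem: Mustata} be moved onto $x$.
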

\begin{proof}
For an integer $n$ with $0\le n\le p^m-1$, set 
\[s(n)\coloneqq \sum_{i=0}^{m-1}n_i,\]
where $n=\sum_{i=0}^{m-1}n_ip^i$ is the $p$-adic expansion of $n$. 

Let $y=(x_n)_{n}\in\ker T_\bullet$. We prove that $x_{n}=0$ for all $n=0,\dots,p^m-1$ by induction on $s(n)$. 
By Lemma~\ref{lem: Mustata}, the index 
\[
(n,r_0,\dots,r_{m-1})=(0,0,\dots,0)
\]
is the only one that contributes to the coefficient of $\delta_0$ in $T_\bullet(y)$. Hence, since $T_\bullet(y)=0$, we obtain $x_{0}=0$.

Now fix a nonnegative integer $N$, and 
 suppose that 
 \[\text{$x_{b}=0$ for all $b\in\{0,\dots,p^m-1\}$ with $s(b)\leq N$.} \]
 Let $n\in\{0,\dots,p^m-1\}$ be such that  $s(n)=N+1$. The coefficient of $\delta_n$ in $T^{(m)}_n(x_n)$ is equal to $(-1)^nx_n$. Therefore, to show that $x_n=0$, it suffices to prove that 
 the coefficient of $\delta_n$ in 
 \[
 T_\bullet^{(m)}(y)-T_n^{(m)}(x_n)=\sum_{b\neq n}T_b^{(m)}(x_b)
 \]
 is zero. 
 
 Let  $b\neq n$. If $s(b)\le N$, then $x_b=0$ by the induction hypothesis, and hence $T_b^{(m)}(x_b)=0$. Suppose now that $s(b)\ge s(n)$, and write $b=\sum_{i=0}^{m-1}b_ip^i$
 for its $p$-adic expansion. For integers $r_0,\dots, r_{m-1}$ with $0\le r_i\le p^m-b_i-1$, the equality 
 \[
 b+\sum_{i=0}^{m-1}r_ip^i=n
 \]
 implies that $b_i+r_i=n_i$ for all $i$, since both of $\sum_{i=0}^{m-1}(b_i+r_i)p^i$ and $\sum_{i=0}^{m-1}n_ip^i$ give the $p$-adic expansion of $n$. 

Since $s(b)\ge s(n)$, these equalities imply  that $r_i=0$ for all $i$ and $b=n$, contradicting the assumption $b\neq n$. 

Thus all $x_{n}$ vanish, and $T_\bullet^{(m)}$ is injective.
\end{proof}

\begin{prop}[{cf. \cite[Proposition~6.1]{Mus09}}]\label{prop: image of L}
Let $M$ be an $\cO_X$-submodule of a $\cD_X$-module $\sM$. Then the morphism $T_\bullet^{(m)}$ induces an isomorphism of $\cD_X^{(m)}$-modules
\[
 \bigoplus_{n=0}^{p^m-1}\cD_X^{(m)}f^{n} M\xrightarrow{\cong} N_{f}^{(m)}(M). 
\]
Moreover, the direct summand $\cD_X^{(m)}f^nM$ corresponds to $N_{f}^{(m)}(M)_{\bar{n}}$, where $\bar{n}\in\Z/p^m\Z$ denotes the class of $n$. 
\end{prop}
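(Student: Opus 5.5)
Since $T_\bullet^{(m)}$ is an injective morphism of $\cD_X^{(m)}$-modules (Lemma~\ref{lem: injectivity of L}), it suffices to identify its image on $\bigoplus_n \cD_X^{(m)}f^nM$ with $N_f^{(m)}(M)$. We also need to check that the $n$-th summand lands in the $\bar n$-eigenspace. The latter is Lemma~\ref{lem: eigenspace}: $T_n^{(m)}(\sM)\subset(\gamma_+\sM)_{\bar n}$. Once the image is identified, the decomposition \eqref{align: eigenspace decomposition of M} of $N_f^{(m)}(M)$ under $\sC^{(m)}$ therefore matches the direct sum decomposition, and $\cD_X^{(m)}f^nM$ corresponds to $N_f^{(m)}(M)_{\bar n}$.

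For the inclusion of the image into $N_f^{(m)}(M)$, I would show $T_n^{(m)}(f^nx)\in N_f^{(m)}(M)$ for $x\in M$. Then $\cD_X^{(m)}$-linearity of $T_n^{(m)}$ gives the whole summand. The idea is to produce $Q_n^{(m)}$ from $\delta_0$ by operators. Using $\cO^{p^m}_{\A^1_X}$-linearity, $\delta_0=(f-t)^{p^m-1}(f-t)^{-p^m}$. Multiplying by $t^{n}$ together with the $\partial_t^{[k]}$'s lies in $D_{\F_p[t]}$, but we only have $(D_{\F_p[t]})_0\cong\sC$ and $\cD_X^{(m)}$. So I would argue differently: apply the idempotent $e_{\bar n}\in\sC^{(m)}$ to $f^{n}x\,\delta_0$. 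Expanding $(f-t)^{p^m-1}$ binomially, $x\delta_0=\sum_k c_k\, x f^{k}t^{p^m-1-k}(f-t)^{-p^m}$. Each term $t^{p^m-1-k}(f-t)^{-p^m}=Q_k^{(m)}$ lies in the $\bar k$-eigenspace. Hence $e_{\bar n}(x\delta_0)=c_n f^nx\,Q_n^{(m)}$ with $c_n=\binom{p^m-1}{n}(-1)^{p^m-1-n}$, a unit mod $p$ by Lucas. So $T_n^{(m)}(f^nx)\in N_f^{(m)}(M)$. Note that the eigenspace condition is proven via $\vartheta_{p^i}$ computations, exactly as in Lemma~\ref{lem: eigenspace}, now applied with coefficients $xf^k\in\sM$ pulled back.

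For the reverse inclusion, $N_f^{(m)}(M)$ is generated as a $\cD_X^{(m)}$-module by $\sC^{(m)}\cdot M\delta_0=\sum_{\bar n}e_{\bar n}M\delta_0$, because $\cD_X^{(m)}$ commutes with $\sC^{(m)}$ (they come from the two tensor factors). By the previous computation, $e_{\bar n}(x\delta_0)=c_nT_n^{(m)}(f^nx)$. Thus the generators lie in the image, and $\cD_X^{(m)}$-linearity finishes the argument.

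The main obstacle is the eigenspace bookkeeping. Two points need care. First, $\cD_X^{(m)}$ must be shown to commute with $\sC^{(m)}$ and with the $T_n^{(m)}$; the paper has already checked this commutation for $T_n^{(m)}$. Second, the expansion of $\delta_0$ into the $Q_k^{(m)}$ must have unit coefficients. This is the identity $(f-t)^{p^m-1}=\sum_k (-1)^{k}\ldots$, which should follow from Lemma~\ref{lem: expansion of polynomial}(1) with $n=0$, with $x=t$ and $y=f$ swapped appropriately.
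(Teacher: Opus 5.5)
Your proposal is correct and is essentially the paper's proof. Both arguments expand $\delta_0=(f-t)^{p^m-1}(f-t)^{-p^m}$ as $\sum_n c_n f^nQ_n^{(m)}$ with $c_n\not\equiv 0 \pmod p$, then use Lemma~\ref{lem: eigenspace} together with $\sC^{(m)}\cong\prod_{\Z/p^m\Z}\F_p$ (your idempotents $e_{\bar n}$) to get $\sC^{(m)}\cdot(x\delta_0)=\bigoplus_n\F_p\, f^nT_n^{(m)}(x)$, and finish by applying $\cD_X^{(m)}$ and invoking the injectivity in Lemma~\ref{lem: injectivity of L}. Your sign $(-1)^{p^m-1-n}$ agrees with the paper's $(-1)^n$ modulo $p$, and Lemma~\ref{lem: expansion of polynomial}(1) with $n=0$ confirms that all these coefficients are in fact congruent to $1$.
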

\begin{proof}
Since $T_\bullet^{(m)}$ is injective by Lemma~\ref{lem: injectivity of L}, it suffices to determine its image.

The identity 
\[
(f-t)^{p^m-1}=\sum_{n=0}^{p^m-1}(-1)^{n}\binom{p^m-1}{n}f^{n}t^{p^m-n-1}
\]
 shows that for any $x\in M$,
\begin{equation*}\label{xd_0}
    (1\otimes x)\otimes\delta_0=\sum_{n=0}^{p^m-1}(-1)^{n}\binom{p^m-1}{n}f^{n}T_{n}^{(m)}(x). 
\end{equation*}
    By Lemma~\ref{lem: eigenspace}, 
 each $T_n^{(m)}(x)$ is an eigenvector for the action of $\sC^{(m)}$ with eigencharacter 
\[
\ev_{\bar{n}}\colon \sC^{(m)}\to\F_p. 
\]
 Moreover, the homomorphism 
\[
(\ev_{\bar{n}})_{0\le n\le p^m-1}\colon \sC^{(m)}\to \prod_{n=0}^{p^m-1}\F_p
\]
is an isomorphism. It follows that 
\[
\sC^{(m)}\cdot((1\otimes x)\otimes\delta_0)=\bigoplus_{n=0}^{p^m-1}
\F_p\cdot f^nT_n^{(m)}(x), 
\]
 where we  use that $\binom{p^m-1}{n}\not\equiv 0 \pmod p$. 

Finally, applying $\cD_X^{(m)}$ and letting $x$ vary in $M$, we obtain the desired description of the image, which proves the claim. 
\end{proof}

As a consequence, we obtain the following. 
\begin{cor}\label{cor: finite BSR vs infinite BSR}
    Let $n$ be an integer with $0\le n\le p^m-1$, and let $\bar{n}\in \Z/p^m\Z$ denote its class. Then $n\in\BSR^{(m)}(M,f)$ if and only if 
    \[
    N_{f}^{(m)}(fM)_{\bar{n}}\subsetneq N_{f}^{(m)}(M)_{\bar{n}}. 
    \]
\end{cor}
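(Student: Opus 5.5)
The plan is to apply Proposition~\ref{prop: image of L} to both $M$ and $fM$ and compare the eigenspace decompositions. By that proposition, $T_\bullet^{(m)}$ gives an isomorphism $\bigoplus_{n=0}^{p^m-1}\cD_X^{(m)}f^nM\cong N_f^{(m)}(M)$, under which the summand $\cD_X^{(m)}f^nM$ maps onto $N_f^{(m)}(M)_{\bar n}$. Replacing $M$ by $fM$, the same map $T_\bullet^{(m)}$ (it does not depend on the submodule) gives $\bigoplus_{n}\cD_X^{(m)}f^{n+1}M\cong N_f^{(m)}(fM)$, with $\cD_X^{(m)}f^{n+1}M$ mapping onto $N_f^{(m)}(fM)_{\bar n}$.

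Next I check that the inclusion $N_f^{(m)}(fM)\subset N_f^{(m)}(M)$ is compatible with the decomposition. Both decompositions come from the $\sC^{(m)}$-eigenspace decomposition~\eqref{align: eigenspace decomposition of M} inside $\gamma_+\sM$, so the projections commute with the inclusion. Hence $N_f^{(m)}(fM)_{\bar n}\subset N_f^{(m)}(M)_{\bar n}$, and both are identified as subsets of $\gamma_+\sM$ via the \emph{same} injective map $T_n^{(m)}$. Concretely, $N_f^{(m)}(fM)_{\bar n}=T_n^{(m)}(\cD_X^{(m)}f^{n+1}M)$ and $N_f^{(m)}(M)_{\bar n}=T_n^{(m)}(\cD_X^{(m)}f^nM)$. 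Here I identify the tensor product $N\otimes_{\sC^{(m)},\ev_{\bar n}}\F_p$ with the corresponding direct summand, which is legitimate since $\sC^{(m)}$ is a finite product of copies of $\F_p$.

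Since $T_n^{(m)}$ is injective by Lemma~\ref{lem: injectivity of L}, strict inclusion of the images is equivalent to strict inclusion $\cD_X^{(m)}f^{n+1}M\subsetneq\cD_X^{(m)}f^nM$. By definition, this means $n\in\BSR^{(m)}(M,f)$.

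The only delicate point is the bookkeeping in the second step. One must make sure that the summand for index $n$ in the decomposition for $fM$ is $\cD_X^{(m)}f^{n}(fM)$ and lies in the \emph{same} eigenspace $\bar n$. This holds because Lemma~\ref{lem: eigenspace} attaches the eigencharacter to $T_n^{(m)}$ and not to the submodule. I expect no real obstacle beyond this point.
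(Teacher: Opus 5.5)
Your proposal is correct and is exactly the argument the paper has in mind: the corollary is stated as an immediate consequence of Proposition~\ref{prop: image of L}. Applying that proposition to $M$ and to $fM$ identifies $N_f^{(m)}(M)_{\bar n}$ and $N_f^{(m)}(fM)_{\bar n}$ with $T_n^{(m)}(\cD_X^{(m)}f^nM)$ and $T_n^{(m)}(\cD_X^{(m)}f^{n+1}M)$ inside $\gamma_+\sM$. The injectivity of $T_n^{(m)}$ then turns strict inclusion of these images into the defining condition of $\BSR^{(m)}(M,f)$.
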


We now describe how the isomorphism in Proposition~\ref{prop: image of L} behaves as $m$ varies. 
Let $\Psi_m$ denote the composite morphism 
\[
\bigoplus_{n=0}^{p^m-1}\cD_X^{(m)}f^{n} M\xrightarrow{\cong} N_{f}^{(m)}(M)\hookrightarrow N_{f}^{(m+1)}(M)\xrightarrow{\cong} \bigoplus_{n=0}^{p^{m+1}-1}\cD_X^{(m+1)}f^{n} M. 
\]

\begin{prop}\label{prop: how L varies}
Let $\sM$ be a $\cD_X$-module, and let $M\subset \sM$ be an $\cO_X$-submodule. Let $n_1$ and $n_2$ be integers with $0\le n_1\le p^m-1$ and $0\le n_2\le p^{m+1}-1$. Then the composite morphism 
\[
\cD_X^{(m)}f^{n_1}M\hookrightarrow \bigoplus_{n=0}^{p^m-1}\cD_X^{(m)}f^{n} M\xrightarrow{\Psi_m}\bigoplus_{n=0}^{p^{m+1}-1}\cD_X^{(m+1)}f^{n}M\twoheadrightarrow \cD_X^{(m+1)}f^{n_2}M
\]
is zero unless $n_2-n_1\in p^m\Z$. 

If $n_2-n_1\in p^m\Z$, write $n_2=n_1+ip^m$ with $i=0,\dots,p-1$. Then the composite morphism is given by multiplication by $(-1)^i\binom{p-1}{i}f^{ip^m}$. 
\end{prop}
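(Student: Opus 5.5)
Everything reduces to an identity in $B_f$: express $T_{n_1}^{(m)}(x)$ as a combination of the $T_{n}^{(m+1)}$. Take $x\in\cD_X^{(m)}f^{n_1}M$, so the element $T^{(m)}_{n_1}(x)=(1\otimes x)\otimes Q^{(m)}_{n_1}$ lies in $N^{(m)}_f(M)\subset N^{(m+1)}_f(M)$. The inverse of the isomorphism in Proposition~\ref{prop: image of L} at level $m+1$ sends an element $\sum_n T_n^{(m+1)}(y_n)$ to $(y_n)_n$. So it suffices to find an expansion
\[
Q^{(m)}_{n_1}=\sum_{n_2=0}^{p^{m+1}-1} c_{n_2}\,Q^{(m+1)}_{n_2},\qquad c_{n_2}\in \cO_X,
\]
and check that $c_{n_2}x\in\cD_X^{(m+1)}f^{n_2}M$. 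Injectivity of $T_\bullet^{(m+1)}$ (Lemma~\ref{lem: injectivity of L}) then makes the components $y_{n_2}=c_{n_2}x$ well defined. The coefficients $c_{n_2}$ should be functions of $f$ only, so that multiplication by them commutes appropriately with ${\rm pr}^\ast$.

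The key identity comes from multiplying numerator and denominator by $(f-t)^{(p-1)p^m}$:
\[
Q^{(m)}_{n_1}=t^{p^m-n_1-1}(f-t)^{-p^m}=t^{p^m-n_1-1}(f-t)^{(p-1)p^m}(f-t)^{-p^{m+1}}.
\]
Since we are in characteristic $p$, $(f-t)^{(p-1)p^m}=(f^{p^m}-t^{p^m})^{p-1}$, and the binomial theorem gives
\[
(f-t)^{(p-1)p^m}=\sum_{i=0}^{p-1}\binom{p-1}{i}f^{ip^m}(-1)^{p-1-i}t^{(p-1-i)p^m}.
\]
Hence
\[
Q^{(m)}_{n_1}=\sum_{i=0}^{p-1}(-1)^{p-1-i}\binom{p-1}{i}f^{ip^m}\,t^{p^{m+1}-(n_1+ip^m)-1}(f-t)^{-p^{m+1}}
\]
\[
=\sum_{i}(-1)^{p-1-i}\binom{p-1}{i}f^{ip^m}Q^{(m+1)}_{n_1+ip^m}.
\]
The exponent works out because $p^m-n_1-1+(p-1-i)p^m=p^{m+1}-(n_1+ip^m)-1$. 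Moreover $0\le n_1+ip^m\le p^{m+1}-1$, so each term is a valid level-$(m+1)$ index.

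Applying $T$ then gives
\[
T^{(m)}_{n_1}(x)=\sum_i(-1)^{p-1-i}\binom{p-1}{i}T^{(m+1)}_{n_1+ip^m}(f^{ip^m}x),
\]
since $f^{ip^m}$ pulled back along ${\rm pr}$ moves freely into the $\sM$ factor of ${\rm pr}^\ast\sM\otimes B_f$. The components vanish unless $n_2-n_1\in p^m\Z$. When $n_2=n_1+ip^m$, the component is multiplication by a sign times $\binom{p-1}{i}f^{ip^m}$.

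For $p$ odd, $(-1)^{p-1-i}=(-1)^i$. For $p=2$, signs are irrelevant mod $2$. This gives exactly $(-1)^i\binom{p-1}{i}f^{ip^m}$.

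It remains to confirm that the component indeed lands in $\cD_X^{(m+1)}f^{n_2}M$. Write $x=\sum P_j f^{n_1}x_j$ with $P_j\in\cD_X^{(m)}$ and $x_j\in M$. Since $f^{ip^m}$ is a $p^m$-th power, it commutes with $\cD_X^{(m)}$. Therefore $f^{ip^m}x=\sum P_j f^{n_2}x_j\in\cD_X^{(m)}f^{n_2}M\subset\cD_X^{(m+1)}f^{n_2}M$.

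The main bookkeeping obstacle is being careful that the identification in Proposition~\ref{prop: image of L} really is $(y_n)\mapsto\sum T_n(y_n)$ with no extra scalar factors. Those factors only enter the decomposition of $\delta_0$, not the map itself, so they do not affect the computation above.
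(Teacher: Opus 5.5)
Your proposal is correct and follows the paper's proof. The paper likewise reduces everything to the identity $Q_{n}^{(m)}=\sum_{i=0}^{p-1}(-1)^i\binom{p-1}{i}f^{ip^m}Q^{(m+1)}_{n+ip^m}$, obtained by expanding $(f-t)^{p^{m+1}-p^m}=(f^{p^m}-t^{p^m})^{p-1}$, and then reads off the components from the construction of $T_\bullet$. Your additional checks only spell out what the paper leaves implicit: that $(-1)^{p-1-i}=(-1)^i$ in characteristic $p$, that injectivity of $T_\bullet^{(m+1)}$ makes the components well defined, and that $f^{ip^m}x\in\cD_X^{(m)}f^{n_2}M$ because $f^{ip^m}$ is a $p^m$-th power.
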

\begin{proof}
Let $n$ be an integer with $0\le n\le p^m-1$. We claim that (cf.~\cite[Remark~5.7]{Mus09})
\[
Q_{n}^{(m)}=\sum_{i=0}^{p-1}(-1)^i\binom{p-1}{i}f^{ip^m}Q^{(m+1)}_{n+ip^m}. 
\]
Indeed, this is equivalent to the identity 
\[
t^{p^m-n-1}(f-t)^{p^{m+1}-p^m}=\sum_{i=0}^{p-1}(-1)^i\binom{p-1}{i}f^{ip^m}t^{p^{m+1}-n-ip^m-1}, 
\]
which follows by expanding $(f-t)^{p^{m+1}-p^m}=(f^{p^m}-t^{p^m})^{p-1}$.  

The assertion then follows from the construction of $T_\bullet^{(m)}$.    
\end{proof}

The following result should be compared with Lemma~\ref{BSRnu}. For an integer $m\ge1$ and   $\alpha\in\Z_p$, let $\alpha_m$ denote the unique integer in $\{0,\dots,p^m-1\}$ such that $\alpha-\alpha_m\in p^m\Z_p$. 

\begin{prop}\label{prop: infinite BSR vs finite}
Let $\sM$ be a  $\cD_X$-module, and let $M\subset \sM$ be an $\cO_X$-submodule. 
Then the following statements hold:  
\begin{enumerate}
    \item The assignment $a\mapsto a_m$ induces a well-defined map 
    \[
    \BSR^{(m+1)}(M,f)\cap[0,p^{m+1})\to \BSR^{(m)}(M,f)\cap[0,p^{m}). 
    \]
    \item The assignment $\alpha\mapsto (\alpha_m)_m$ induces a well-defined injective map 
    \[
    \BSR(M,f)\to\varprojlim_{m\ge1}\Big(\BSR^{(m)}(M,f)\cap [0,p^m)\Big). 
    \]
    Further if $M$ is locally finitely generated as an $\cO_X$-module, then this map is bijective. 
\end{enumerate}
\end{prop}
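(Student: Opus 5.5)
Throughout, I will translate everything into statements about the eigenspace decompositions supplied by Proposition~\ref{prop: image of L} and Proposition~\ref{prop: how L varies}.

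\emph{Part~(1).} Let $a\in\BSR^{(m+1)}(M,f)\cap[0,p^{m+1})$ and write $a=a_m+ip^m$ with $0\le i\le p-1$. By definition, $\cD_X^{(m+1)}f^{a+1}M\subsetneq\cD_X^{(m+1)}f^aM$. I will first pass to level $m$: since $\cD_X^{(m+1)}\supset\cD_X^{(m)}$ and each $\cD_X^{(m+1)}f^bM$ is the $\cD_X^{(m+1)}$-span of $\cD_X^{(m)}f^bM$, equality $\cD_X^{(m)}f^{b+1}M=\cD_X^{(m)}f^bM$ implies equality at level $m+1$. This gives $a\in\BSR^{(m)}(M,f)$, which is the analogue of Lemma~\ref{basic_nu}(1).

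Next I will reduce $a$ modulo $p^m$. Multiplication by $f^{p^m}$ commutes with $\cD_X^{(m)}$, because $f^{p^m}\in\cO_X^{p^m}$. Hence $\cD_X^{(m)}f^{b+p^m}M=f^{p^m}\cD_X^{(m)}f^bM$. Consequently, if $\cD_X^{(m)}f^{b+1}M=\cD_X^{(m)}f^{b}M$, then the same equality holds after multiplying by $f^{p^m}$. Contrapositively, $b+p^m\in\BSR^{(m)}$ implies $b\in\BSR^{(m)}$. Iterating this gives $a_m\in\BSR^{(m)}(M,f)$, which is exactly the argument of Lemma~\ref{basic_nu}(2).

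\emph{Part~(2), well-definedness.} Fix $\alpha\in\Z_p$. Since $\sC=\bigcup_m\sC^{(m)}$ and $N_f(M)=\bigcup_m N_f^{(m)}(M)$, tensor products commute with filtered colimits. Using $\sM_\alpha\cong\varinjlim\sM_{\alpha_m}$, I get
\[
N_f(M)_\alpha\cong\varinjlim_m N_f^{(m)}(M)_{\bar\alpha_m},
\]
and similarly for $fM$. The transition maps are the composite described in Proposition~\ref{prop: how L varies}. Under Proposition~\ref{prop: image of L}, the $m$-th term becomes $\cD_X^{(m)}f^{\alpha_m}M$, and the transition map to $\cD_X^{(m+1)}f^{\alpha_{m+1}}M$ is multiplication by the unit scalar $(-1)^i\binom{p-1}{i}$ times $f^{ip^m}$, where $\alpha_{m+1}=\alpha_m+ip^m$. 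Since $\cD_X^{(m+1)}f^{\alpha_{m+1}}M$ is generated over $\cD_X^{(m+1)}$ by $f^{ip^m}\cD_X^{(m)}f^{\alpha_m}M$, the transition maps are surjective.

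Both systems sit inside $\gamma_+\sM$, where $N_f(fM)\subset N_f(M)$. Taking $\alpha$-parts is exact on $\sC$-modules by Lemma~\ref{lem: Spec C Zp}(1), so each $N_f(\cdot)_\alpha$ is a direct summand of $N_f(\cdot)$. Therefore $N_f(fM)_\alpha\subsetneq N_f(M)_\alpha$ holds iff the inclusion is strict at some finite stage $m$ of the colimit. Strictness at stage $m$ means $\alpha_m\in\BSR^{(m)}(M,f)$, by Corollary~\ref{cor: finite BSR vs infinite BSR}.

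By surjectivity of the transition maps, strictness at stage $m+1$ forces strictness at stage $m$. Concretely, if the inclusion is an equality at stage $m$, then its image at stage $m+1$ is also an equality, since the stage-$(m+1)$ modules are generated by these images. So $\alpha\in\BSR(M,f)$ iff $\alpha_m\in\BSR^{(m)}$ for all sufficiently large $m$, and by part~(1) this is equivalent to the condition holding for all $m$. This shows the map is well defined. It is injective because $\alpha$ is determined by the sequence $(\alpha_m)_m$.

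\emph{Part~(2), bijectivity.} Suppose $M$ is locally finitely generated. Given a compatible sequence $(n_m)_m$, let $\alpha=\lim n_m$, so that $n_m=\alpha_m$ and $\alpha_m\in\BSR^{(m)}$ for every $m$. By the characterization above, it remains to show that the colimit of the strict inclusions
\[
\cD_X^{(m)}f^{\alpha_m+1}M\subsetneq\cD_X^{(m)}f^{\alpha_m}M
\]
stays strict. This is the main obstacle: a union of strict inclusions need not be strict.

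My plan for this step is as follows. Since the question is local, I will work on an affine open where $M$ has finitely many generators. The colimit $N_f(M)_\alpha$ is generated over $\cD_X$ by the images of the generators $x\otimes f^{\alpha}$, for $x\in M$. If $N_f(fM)_\alpha=N_f(M)_\alpha$, then each of these finitely many generators lies in $N_f(fM)_\alpha$. Each membership is witnessed at some finite stage, and finitely many witnesses can be taken at one common stage $m$. At that stage, $\cD_X^{(m)}f^{\alpha_m}M\subset\cD_X^{(m)}f^{\alpha_m+1}M$, up to replacing $m$ by a slightly larger level. This contradicts $\alpha_m\in\BSR^{(m)}$.

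To make this precise, I must check that an element of the stage-$m$ module becoming zero in the colimit quotient is already zero at some later stage. This holds because the colimit is an increasing union of submodules of $\gamma_+\sM$ (after projection), so no injectivity issues arise. I will verify this using that $N_f^{(m)}(M)\subset N_f^{(m+1)}(M)$ are genuine inclusions.
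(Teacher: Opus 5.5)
Your proof follows the paper's. Part~(1) is the same $\cO_X^{p^m}$-linearity and generation argument. Your surjectivity step (finitely many generators of $M$, a common finite stage at which their images already lie in $\cD_X^{(m)}f^{\alpha_m+1}M$, then apply $\cD_X^{(m)}$) is the paper's argument, phrased with generators instead of with the vanishing of the stage-$1$-to-stage-$m$ map on the quotients $\cD_X^{(m)}f^{\alpha_m}M/\cD_X^{(m)}f^{\alpha_m+1}M$. Those load-bearing steps are sound. However, several supporting claims you make are false as stated and should be replaced.

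(i) The transition maps $u\,f^{ip^m}\colon \cD_X^{(m)}f^{\alpha_m}M\to\cD_X^{(m+1)}f^{\alpha_{m+1}}M$ are not surjective. Their image $\cD_X^{(m)}f^{\alpha_{m+1}}M$ only generates the target over $\cD_X^{(m+1)}$. For example, take $X=\A^1_{\F_p}$, $\sM=\cO_X[x^{-1}]$, $M=\cO_X x^{-1}$, $\alpha=0$ and any $f$: then $\cD_X^{(m)}M=x^{-p^m}\cO_X$, which strictly increases with $m$. Generation is all you actually use, and it suffices.

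(ii) $N_f(\cdot)_\alpha$ is not a direct summand of $N_f(\cdot)$. Already $\F_p=\sC_{\ker(\ev_\alpha)}$ is not a summand of $\sC$, since $\{\alpha\}$ is not open in $\Z_p$. Only flatness of $\ev_\alpha$ is needed.

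(iii) The equivalence ``$N_f(fM)_\alpha\subsetneq N_f(M)_\alpha$ iff the inclusion is strict at some finite stage'' fails in the backward direction. Take $X=\A^1_{\F_p}$, $\sM=M=\cO_X$, $f=x$, $\alpha=p-1$. Stage~$1$ is strict, since $x^p\cO_X\subsetneq\cO_X$. Stage~$2$ is not, since $\cD_X^{(2)}x^p\cO_X=\cO_X$, so $p-1\notin\BSR(\cO_X,x)$. Were the equivalence true, surjectivity would hold with no finiteness hypothesis, contradicting your own later remark. Also, ``strict at some stage'' plus ``strictness descends'' does not yield ``strict for all large $m$''. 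What well-definedness needs is the contrapositive of your ``concretely'' sentence. Equality at stage $m$ propagates to every later stage and hence to the colimit. So $\alpha\in\BSR(M,f)$ forces $\alpha_m\in\BSR^{(m)}(M,f)$ for every $m$.

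(iv) After projecting to eigenspaces, the colimit is not an increasing union inside $\gamma_+\sM$: the maps $u\,f^{ip^m}$ can have kernels, for instance when $\sM$ has $f$-torsion. But the fact you need still holds for every filtered colimit. An element whose image in $\varinjlim_m\bigl(\cD_X^{(m)}f^{\alpha_m}M/\cD_X^{(m)}f^{\alpha_m+1}M\bigr)$ vanishes already vanishes at a finite stage.

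(v) Passing to an affine open on which $M$ is finitely generated needs the pigeonhole argument of Lemma~\ref{lem: BSR is local}. Choose an open of a finite cover on which $\alpha_m\in\BSR^{(m)}$ for infinitely many $m$, then use part~(1). Alternatively, argue directly on $X$ via quasi-compactness, as the paper does.
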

\begin{proof}
    (1) Write $a=a_m+ip^m$ with $i\ge0$. If 
    \[
    \cD_X^{(m)}f^{a_m+1}M= \cD_X^{(m)}f^{a_m}M, 
    \]
    then $\cD_X^{(m)}f^{a+1}M= \cD_X^{(m)}f^{a}M$ since $\cD_X^{(m)}$ consists of $\cO_X^{p^m}$-linear maps. Applying $\cD_X^{(m+1)}$, we obtain $a\notin \BSR^{(m+1)}(M,f)$.

    (2) The well-definedness follows from Corollary~\ref{cor: finite BSR vs infinite BSR}. Injectivity follows from the uniqueness of the $p$-adic expansion. 

It remains to prove that the map is surjective, provided that $ M$ is locally finitely generated. Let $(a_m)_m$ be an element of the target, and let $\alpha$ denote its $p$-adic limit. It suffices to show that $\alpha\in \BSR(M,f)$. Suppose, to the contrary, that $\alpha\notin\BSR(M,f)$. Write 
\[
\Big(N_{f}(M)/N_{f}(fM)\Big)_\alpha\cong \varinjlim_{m\ge1}\Big(N_{f}^{(m)}(M)/N_{f}^{(m)}(fM)\Big)_{\overline{a_m}}, 
\]
    where $\overline{a_m}\in\Z/p^m\Z$ denotes the class of $a_m$. By Propositions~\ref{prop: image of L} and~\ref{prop: how L varies}, the colimit on the right-hand side can be identified with 
    \[
    \varinjlim_{m\ge1}(\cD_X^{(m)}f^{a_m}M)/(\cD_X^{(m)}f^{a_m+1}M), 
    \]
    where the transition morphisms are given by multiplication by $u_m\cdot f^{a_{m+1}-a_m}$ for some unit $u_m\in \F_p^\times$. 

    Since $\alpha\notin \BSR(M
    ,f)$, this colimit vanishes. As $M$ is locally finitely generated, it follows that for a sufficiently large $m$, the morphism 
    \[
(\cD_X^{(1)}f^{\alpha_1}M)/(\cD_X^{(1)}f^{\alpha_1+1}M)\to (\cD_X^{(m)}f^{\alpha_m}M)/(\cD_X^{(m)}f^{\alpha_m+1}M)
    \]
    is zero. Equivalently, 
    \[
    f^{a_m-a_1}\cdot\cD_X^{(1)}f^{a_1}M=\cD_X^{(1)}f^{a_m}M\subset \cD_X^{(m)}f^{a_m+1}M. 
    \]
    Applying $\cD_X^{(m)}$, we obtain $\cD_X^{(m)}f^{a_m}M= \cD_X^{(m)}f^{a_m+1}M$, which contradicts the assumption $a_m\in \BSR^{(m)}(M,f)$. 
    This  proves the surjectivity of the map. 
\end{proof}

\subsection{\texorpdfstring{The Functors $N_{f,\alpha}$}{The Functors Nfalpha}}\label{subsection: The functors Nfalpha}

Let $\cP$ be the category defined as follows.  An object of $\cP$ is a pair $(\sM,M)$ consisting of a  $\cD_X$-module $\sM$ and an $\cO_X$-submodule $M\subset \sM$. A morphism 
\[(\sM,M)\to (\sM',M')\]
is a morphism of $\cD_X$-modules $\sM\to \sM'$ that sends $M$ into $M'$.

Let $\alpha\in\Z_p$. In Subsection~\ref{subsection: Graph Construction for Differential Modules}, a $\cD_X$-module $N_{f}(M)_\alpha$ is constructed from an object  $(\sM,M)$ of $\cP$. The assignment
\[
(\sM,M)\mapsto N_{f}(M)_\alpha
\]
defines a functor from the category $\cP$ to the category of $\cD_X$-modules. In this subsection, we study this functor. 

Recall that  $t\sC=\sC t$ 
inside $D_{\F_p[t]}$ \cite[Lemma~1.2.2]{Bit18}. 
Moreover, we have $t\delta_0=f\delta_0$. Hence multiplication by $t$ induces a $\cD_X$-linear endomorphism 
\[t\colon N_f(M)\to N_f(M).\]
 We will use the following computation. 
\begin{prop}[{\cite[Proposition~4.5]{Mus09}}]\label{prop: action of t}
Let $E$ be a $D_{\F_p[t]}$-module. 
For $a\in \Z/p^m\Z$, we have 
\[
t\cdot E_{a}\subset E_{a-1}. 
\]
\end{prop}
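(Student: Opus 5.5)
We have to show that for $x\in E_a$, $tx\in E_{a-1}$. By Lemma~\ref{lem: how vartheta acts}, $E_a$ is the simultaneous eigenspace on which $\vartheta_{p^i}$ acts by $-a_i$ for $i=0,\dots,m-1$, where $a=\sum a_ip^i$. The whole question is therefore how $t$ interacts with $\sC^{(m)}$ inside $D_{\F_p[t]}$. The plan is to prove the commutation rule
\[
\varphi\cdot t=t\cdot \varphi(\,\cdot\,-1)
\]
for every $\varphi\in\sC$, where $\varphi(\,\cdot\,-1)$ is the translate $\alpha\mapsto\varphi(\alpha-1)$. This rule also explains $t\sC=\sC t$.

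\textbf{Step 1: the commutation rule.} Identify $\varphi$ with $\tilde\varphi\in(D_{\F_p[t]})_0$ via Lemma~\ref{lem: D0 has a simple des}. Then compute on the basis $t^k$:
\[
\tilde\varphi(t\cdot t^k)=\varphi(-k-2)\,t^{k+1}.
\]
Define $\psi(\alpha)\coloneqq\varphi(\alpha-1)$. Then
\[
t\cdot\tilde\psi(t^k)=\psi(-k-1)\,t^{k+1}=\varphi(-k-2)\,t^{k+1}.
\]
Since differential operators on $\F_p[t]$ are determined by their values on $\F_p[t]$ (they are endomorphisms of it), this gives $\tilde\varphi\, t=t\,\tilde\psi$ in $D_{\F_p[t]}$. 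Translation by $1$ preserves both $\sC$ and $\sC^{(m)}$, because a function factoring through $\Z/p^m\Z$ still factors after translation.

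\textbf{Step 2: conclude.} Let $x\in E_a$. For $\varphi\in\sC^{(m)}$ we have $\psi\in\sC^{(m)}$, and $\psi$ acts on $x$ by the scalar $\psi(a)=\varphi(a-1)$, since $E_a$ is the $\ev_a$-part of the decomposition~\eqref{align: eigenspace decomposition of M}. Hence
\[
\varphi\cdot(tx)=t\,\psi\, x=\varphi(a-1)\,tx.
\]
Because $\sC^{(m)}\cong\prod_{b}\F_p$, an element on which every $\varphi$ acts by $\varphi(a-1)$ lies in the summand $E_{a-1}$. To see this, take $\varphi$ to be the idempotent $e_{a-1}$: it fixes $tx$, and every other idempotent kills it. For sheaves, the same argument applies on sections over each open set.

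The only point needing care is the sign convention $\tilde\varphi(t^k)=\varphi(-k-1)t^k$, which makes the shift go to $a-1$ rather than $a+1$. Step 1's explicit computation settles this, so no real obstacle is expected.
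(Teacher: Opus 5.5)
Your proof is correct, and it takes a different route from the paper. The paper gives no argument of its own: it only observes that $E_a$ is Musta\c{t}\u{a}'s $E_{a_0,\dots,a_{m-1}}$ and cites \cite[Proposition~4.5]{Mus09}. There the eigenspaces are indexed by the digits $a_i$, i.e.\ by the eigenvalues of the individual operators $\vartheta_{p^i}$ (cf.\ Lemma~\ref{lem: how vartheta acts}), so the shift $a\mapsto a-1$, with its borrows across digits, has to be tracked through those operators.

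You instead work with the whole algebra $\sC\cong(D_{\F_p[t]})_0$. You verify the translation rule $\tilde\varphi\, t=t\,\tilde\psi$, with $\psi(\alpha)=\varphi(\alpha-1)$, by a one-line check on monomials. Since translation preserves $\sC^{(m)}$, the idempotent $e_{a-1}$ then finishes the argument.

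Your route has several advantages:
\begin{enumerate}
\item It is self-contained, needing only the map $\varphi\mapsto\tilde\varphi$ of Lemma~\ref{lem: D0 has a simple des}(1), since that map is what defines the $\sC$-action.
\item It requires no digit or binomial-coefficient bookkeeping.
\item It makes transparent that the normalization $\tilde\varphi(t^k)=\varphi(-k-1)t^k$ is exactly what produces the shift to $a-1$ rather than $a+1$.
\item It proves in passing the relation $t\sC=\sC t$, which the paper imports from \cite[Lemma~1.2.2]{Bit18}.
\end{enumerate}
The citation route buys brevity and stays in the $\vartheta_{p^i}$-coordinates the paper uses elsewhere (e.g.\ Lemma~\ref{lem: eigenspace}). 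Note that you never actually use Lemma~\ref{lem: how vartheta acts}, so that reference in your first paragraph can be dropped.
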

\begin{proof}
Write $a=\sum_{i=0}^{m-1}a_ip^i$ with $a_i=0,\dots,p-1$. 
In loc.~cit, the eigenspace $E_a$ is denoted by  
\[
E_{a_0,\cdots,a_{m-1}}. 
\]
The assertion is therefore a restatement of  \cite[Proposition~4.5]{Mus09}. 
\end{proof}

\begin{prop}\label{prop: t sends a to a-1}
    Let $(\sM,M)$ be an object of $\cP$, and let $\alpha\in\Z_p$. Then the following hold: 
    \begin{enumerate}
        \item The operator $t$ induces a commutative diagram 
        \[
        \xymatrix{
        N_f(M)\ar[d]\ar[r]^-t&N_f(M)\ar[d]\\
         N_f(M)_{\alpha+1}\ar[r]^-t& N_f(M)_{\alpha}, 
        }
        \]
        where the vertical arrows are the canonical surjections. 
        \item  The image of the induced morphism 
        \[
        t\colon N_f(M)_{\alpha+1}\to N_f(M)_{\alpha}
        \]
        coincides with the submodule $N_f(fM)_{\alpha}\subset N_f(M)_{\alpha}$. Moreover, if  $\alpha\neq-1$, this morphism is injective. 
    
        \item The morphism in~(2) is surjective if and only if $\alpha\notin\BSR(M,f)$. 
    \end{enumerate}
\end{prop}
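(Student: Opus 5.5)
We work at finite level and pass to the colimit, using $N_f(M)=\bigcup_m N_f^{(m)}(M)$ and $N_f(M)_\alpha\cong\varinjlim_m N_f^{(m)}(M)_{\alpha_m}$, where $\alpha_m$ is the class of $\alpha$ in $\Z/p^m\Z$.

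For (1), we first note that $t$ preserves $N_f^{(m)}(M)$. Indeed, $t$ commutes with $\cD_X^{(m)}$, and $t\sC^{(m)}\subset\sC^{(m)}t$ by Proposition~\ref{prop: action of t}, since $t$ maps each summand $E_a$ into $E_{a-1}$. Also $t\delta_0=f\delta_0$. Applying Proposition~\ref{prop: action of t} to $E=N_f^{(m)}(M)$, multiplication by $t$ maps the $\overline{\alpha_m+1}$-component to the $\overline{\alpha_m}$-component. These maps are compatible with the decompositions \eqref{align: eigenspace decomposition of M} and with the transition maps as $m$ grows. So $t$ descends to a map $N_f(M)_{\alpha+1}\to N_f(M)_\alpha$, and the square commutes by construction.

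For (2), we use the explicit model of Proposition~\ref{prop: image of L}: $N_f^{(m)}(M)_{\bar n}\cong\cD_X^{(m)}f^nM$ through $T_n^{(m)}$. The key computation is how $t$ acts on $T_n^{(m)}(x)=x\otimes t^{p^m-n-1}(f-t)^{-p^m}$.
\begin{itemize}
\item For $n\ge1$, we have $t\,Q_n^{(m)}=Q_{n-1}^{(m)}$. Under the identification, $t$ is therefore the identity-type inclusion $\cD_X^{(m)}f^nM\hookrightarrow\cD_X^{(m)}f^{n-1}M$, and its image is $\cD_X^{(m)}f\cdot f^{n-1}M$.
\item For $n=0$, we write $t^{p^m}=f^{p^m}-(f-t)^{p^m}$. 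The term $(f-t)^{p^m}(f-t)^{-p^m}=1$ dies in $B_f$, which leaves $t\,Q_0^{(m)}=f^{p^m}Q_{p^m-1}^{(m)}$. The map is then $\cD_X^{(m)}M\to\cD_X^{(m)}f^{p^m-1}M$, given by multiplication by $f^{p^m}$. Its image is $\cD_X^{(m)}f^{p^m}M=\cD_X^{(m)}f^{p^m-1}(fM)$.
\end{itemize}
In both cases the image in the $\overline{n-1}$-component is exactly the corresponding component of $N_f^{(m)}(fM)$. Passing to the colimit over $m$ gives $\operatorname{Im}(t)=N_f(fM)_\alpha$.

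For injectivity when $\alpha\ne-1$, we choose $m$ large enough that $\alpha_m\ne p^m-1$, and we may take $\alpha_m+1\ge1$ for all large $m$. Then the relevant level-$m$ maps are the inclusions $\cD_X^{(m)}f^{n}M\hookrightarrow\cD_X^{(m)}f^{n-1}M$ with $n=\alpha_m+1$, which are injective. Injectivity survives the filtered colimit provided the source and target colimit systems are matched compatibly. The transition maps for both are multiplication by $u f^{(\cdot)}$, as described in Proposition~\ref{prop: how L varies}, and $t$ commutes with these since it is an honest operator on $\gamma_+\sM$. So injectivity passes to the limit.

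The main obstacle is this compatibility of the identifications and the boundary case at $n=0$. Only when $\alpha=-1$ does $\alpha_m+1$ wrap around to $0$ for every $m$, and there the multiplication by $f^{p^m}$ may fail to be injective. To settle it, we will verify directly that $tQ_n^{(m)}=Q_{n-1}^{(m)}$ is compatible with the relation in Proposition~\ref{prop: how L varies}.

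For (3), it follows from (2) that $t$ is surjective if and only if $N_f(fM)_\alpha=N_f(M)_\alpha$. By Definition~\ref{defn:Bernstein--Sato roots for Dmodule}, this holds if and only if $\alpha\notin\BSR(M,f)$.
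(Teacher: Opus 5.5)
Your proposal is correct and follows essentially the same route as the paper: work at finite level $m$, use $tQ_n^{(m)}=Q_{n-1}^{(m)}$ for $n\ge1$ and $tQ_0^{(m)}=f^{p^m}Q_{p^m-1}^{(m)}$ together with Proposition~\ref{prop: image of L} to identify the image (and injectivity when $\alpha\neq-1$, where the class of $\alpha+1$ modulo $p^m$ is nonzero for large $m$), then pass to the colimit. The extra compatibility check you flag at the end is unnecessary. Since $t$ is a genuine operator on $N_f(M)$, the level-$m$ maps automatically form a morphism of direct systems, so exactness of filtered colimits already gives the image statement and the injectivity.
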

\begin{proof}
Part~(1) follows from Proposition~\ref{prop: action of t}. 

    We now prove~(2). 
    For each integer $m\ge1$, let $\beta_m$ be  the unique integer in $\{0,\dots,p^m-1\}$ such that $\alpha+1-\beta_m\in p^m\Z_p$. We denote by $\overline{\beta_m}$ the image of $\beta_m$ in $\Z/p^m\Z$. 
    It suffices to identify the morphism  
    \begin{equation}\label{equation: Nfb to Nfb-1}
         t\colon N_f^{(m)}(M)_{\overline{\beta_m}}\to N_f^{(m)}(M)_{\overline{\beta_m}-1}
    \end{equation}
     for all sufficiently large $m$. 

First assume that $\alpha+1\neq0$. Then  $\beta_m>0$ for all but finitely many $m$. For such $m$, we have $tQ_{\beta_m}^{(m)}=Q_{\beta_m-1}^{(m)}$. 
    Therefore, by Proposition~\ref{prop: image of L}, the above morphism is injective, and its image coincides with $N_f^{(m)}(fM)_{\overline{\beta_m}-1}$. This proves the case $\alpha\neq-1$. 

    Now assume that $\alpha=-1$. In this case, $\beta_m=0$ for all $m$. A direct computation shows that 
    \[
    tQ_0^{(m)}=f^{p^m}Q_{p^m-1}^{(m)}. 
    \]
    Hence, it follows that 
    the morphism \eqref{equation: Nfb to Nfb-1} can be identified with  
    \[
    f^{p^m}\colon \cD_X^{(m)}M\to \cD_X^{(m)}f^{p^m-1}M. 
    \]
    Since $\cD_X^{(m)}$ consists of $\cO_X^{p^m}$-linear morphisms, the image of this morphism is equal to $\cD_X^{(m)}f^{p^m}M=N_f^{(m)}(fM)_{p^m-1}$, which proves the case $\alpha=-1$.

    Part~(3) follows from~(2) and the definition of $\BSR(M,f)$. 
\end{proof}

In what follows, we compute the morphism $t\colon N_f(M)_{\alpha+1}\to N_f(M)_{\alpha}$ in detail. We first treat the case where $\alpha$ is an integer. 

\subsubsection{The Integral Case}
\begin{prop}\label{prop: computation of t integral case}
    Let $(\sM,M)$ be an object of $\cP$, and let $n$ be an integer. Let $\sM_f\coloneqq \sM\otimes_{\cO_X}\cO_X[f^{-1}]$, and 
    let $\xi\colon \sM\to\sM_f$ denote the natural morphism. 
    Then the morphism $t\colon N_f(M)_{n+1}\to N_f(M)_{n}$ is described as follows: 
    \begin{enumerate}
\item When $n\ge0$, it can be identified with the natural inclusion $\cD_Xf^{n+1}M\hookrightarrow \cD_Xf^{n}M$. 

\item When $n=-1$, it can be identified with $\cD_XM\to \cD_Xf^{-1}\cdot\xi(M)$ induced by $\xi$. 

\item When $n\le -2$, it can be identified with the natural inclusion $\cD_Xf^{n+1}\cdot\xi(M)\hookrightarrow \cD_Xf^{n}\cdot\xi(M)$. 
    \end{enumerate}
\end{prop}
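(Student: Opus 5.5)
Throughout, the plan is to compute $N_f(M)_k$ for integers $k$ via the finite-level description of Proposition~\ref{prop: image of L}, and then pass to the colimit in $m$. For each $m$, let $\beta_m\in\{0,\dots,p^m-1\}$ be the residue of $k$ modulo $p^m$. We have $N_f(M)_k\cong\varinjlim_m N_f^{(m)}(M)_{\overline{\beta_m}}\cong\varinjlim_m\cD_X^{(m)}f^{\beta_m}M$. By Proposition~\ref{prop: how L varies}, the transition maps are multiplication by $u_m f^{\beta_{m+1}-\beta_m}$ with $u_m\in\F_p^\times$. I will absorb the units by rescaling, which should be harmless since the signs $(-1)^i\binom{p-1}{i}\equiv1$ in $\F_p$ anyway. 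The task is then to identify this colimit, and the map $t$ between consecutive colimits, with concrete submodules of $\sM$ or $\sM_f$.

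For $k=n\ge0$ and $m$ large, $\beta_m=n$ and the transition maps are inclusions $\cD_X^{(m)}f^nM\subset\cD_X^{(m+1)}f^nM$. The colimit is therefore $\bigcup_m\cD_X^{(m)}f^nM=\cD_Xf^nM$ by Proposition~\ref{prop: D=cup De}. In the proof of Proposition~\ref{prop: t sends a to a-1}, for $\beta_m>0$ one has $tQ^{(m)}_{\beta_m}=Q^{(m)}_{\beta_m-1}$. Hence $t$ is, level by level, the inclusion $\cD_X^{(m)}f^{n+1}M\hookrightarrow\cD_X^{(m)}f^nM$, and passing to the colimit gives case~(1).

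For $k=n<0$, we have $\beta_m=p^m+n$ for large $m$, so $\beta_{m+1}-\beta_m=(p-1)p^m$. I will map $\cD_X^{(m)}f^{p^m+n}M\to\sM_f$ by $y\mapsto f^{-p^m}\xi(y)$. Since $\cD_X^{(m)}$ is $\cO_X^{p^m}$-linear, this is $\cD_X^{(m)}$-linear, and its image is $\cD_X^{(m)}f^{n}\xi(M)$. The maps are compatible with the transitions, because $f^{-p^{m+1}}f^{(p-1)p^m}=f^{-p^m}$. The colimit therefore maps onto $\bigcup_m\cD_X^{(m)}f^n\xi(M)=\cD_Xf^n\xi(M)$.

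The main obstacle is injectivity on the colimit. An element $y$ of $\cD_X^{(m)}f^{p^m+n}M$ with $\xi(y)=0$ is killed by some power $f^N$, and $f^{(p-1)p^m}\cdots$ eventually exceeds $N$ after a few transition steps. Thus $y$ dies in the colimit, and the kernel of $\varinjlim\to\sM_f$ is exactly the colimit of $f$-power torsion, which vanishes. Here I need some care, since the torsion exponent is not uniform for non-finitely-generated $M$; I only need it elementwise, which suffices for colimits.

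With these identifications in hand, $t$ in case~(3) ($n\le-2$, so $\beta_m(n+1)=\beta_m(n)+1>0$) is again level-wise the inclusion, and becomes $\cD_Xf^{n+1}\xi(M)\hookrightarrow\cD_Xf^n\xi(M)$. In case~(2), the source is $N_f(M)_0=\cD_XM$ by case~(1), with $\beta_m=0$. The target corresponds to $\beta_m=p^m-1$, and the proof of Proposition~\ref{prop: t sends a to a-1} gives $tQ^{(m)}_0=f^{p^m}Q^{(m)}_{p^m-1}$. The level-$m$ map is thus $y\mapsto f^{p^m}y\in\cD_X^{(m)}f^{p^m-1}M$, which under our identification goes to $f^{-p^m}f^{p^m}\xi(y)=\xi(y)$. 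This is the map induced by $\xi$, as claimed.
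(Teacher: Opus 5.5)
Your proposal is correct and follows the paper's proof. Both arguments identify $N_f(M)_k$ with $\varinjlim_m \cD_X^{(m)}f^{\beta_m}M$ via Propositions~\ref{prop: image of L} and~\ref{prop: how L varies}. Both handle $k<0$ by passing to $\sM_f$ through $y\mapsto f^{-p^m}\xi(y)$; your elementwise torsion argument for injectivity is just an explicit form of the paper's remark that the colimit is $f$-torsion free. Both then read off $t$ from $tQ^{(m)}_{\beta}=Q^{(m)}_{\beta-1}$ for $\beta>0$ and from $tQ_0^{(m)}=f^{p^m}Q_{p^m-1}^{(m)}$ in case~(2). Your observation that $(-1)^i\binom{p-1}{i}\equiv 1 \pmod p$, so that no rescaling is actually needed, is also correct.
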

\begin{proof}
We describe the $\cD_X$-module $N_f(M)_{n}$ as follows. 

    First assume that $n\ge0$. By Proposition~\ref{prop: image of L}, for each positive integer $m$ with $p^m-n>0$, we have 
    \[
    N_{f}^{(m)}(M)_{n}\cong \cD_X^{(m)}f^{n}M. 
    \]
    Moreover, by Proposition~\ref{prop: how L varies}, the transition morphism  $N_{f}^{(m)}(M)_{n}\to  N_{f}^{(m+1)}(M)_{n}$ corresponds to the inclusion $\cD_X^{(m)}f^{n}M\hookrightarrow \cD_X^{(m+1)}f^nM$. Passing to the colimit over $m$, we obtain $N_f(M)_n\cong \cD_Xf^nM$. 

    Next assume that $n<0$, and write $n=-a$. Then, for every $m>0$, we have 
    \[n\equiv p^m-(a-1)-1\mod p^m.\]
     If $p^m>a-1$, Proposition~\ref{prop: image of L} yields 
    \[
    N_f^{(m)}(M)_n\cong \cD_X^{(m)}f^{p^m-(a-1)-1}M. 
    \]
 Moreover, 
by Proposition~\ref{prop: how L varies}, the transition morphism  $N_{f}^{(m)}(M)_{n}\to N_{f}^{(m+1)}(M)_{n}$ corresponds to 
\begin{equation}\label{equation: f qm+1}
    f^{p^{m+1}-p^m}\colon \cD_X^{(m)}f^{p^m-(a-1)-1}M\to \cD_X^{(m+1)}f^{p^{m+1}-(a-1)-1}M. 
\end{equation}
Since these transition morphisms are given by multiplication by positive powers of $f$, the colimit is $f$-torsion free. Thus we may replace $\sM$ by $\sM_f$. In this case, we have a commutative diagram 
\[
\xymatrix{
 \cD_X^{(m)}f^{p^m-(a-1)-1}M\ar[r]^-{\eqref{equation: f qm+1}} \ar[d]_-{f^{-p^m}}&\cD_X^{(m+1)}f^{p^{m+1}-(a-1)-1}M\ar[d]^-{f^{-p^{m+1}}}\\
 \cD_X^{(m)}f^{-a}M\ar[r]&\cD_X^{(m+1)}f^{-a}M, 
}
\]
where the bottom horizontal arrow is the natural inclusion. 
Passing to the colimit over $m$, we obtain $N_f(M)_n\cong \cD_Xf^{-a}\cdot\xi(M)$. 

The identifications in~(1) and~(3) follow from the above description of $N_f(M)_n$. For~(2), we use 
 the identity $tQ_0^{(m)}=f^{p^m}Q_{p^m-1}^{(m)}$. This completes the proof. 
\end{proof}

 \subsubsection{The Non-Integral Case}\label{subsubsection: Non-Integral Case}
Let $\alpha\in\Z_p\setminus\Z$. 
Inspired by \cite{BMS09} and \cite{BQG25}, we consider the following $\cD_X$-module: 
\[
\sL_{f,\alpha}\coloneqq \cO_X[f^{-1}]\cdot f^\alpha,
\]
where $f^\alpha$ is a formal symbol such that the assignment $x\mapsto x\cdot f^\alpha$ defines an isomorphism  $\cO_X[f^{-1}]\xrightarrow{\cong}\mathscr{L}_{f,\alpha}$. 

The algebra $\cD_X$ acts on $\sL_{f,\alpha}$ as follows. 
For each $P\in\cD_X$, choose an integer $m\ge 0$ such that $P\in\cD_X^{(m)}$, 
and an integer $n$ such that $\alpha-n\in p^m\Z_p$. We then define
\[
P(x\cdot f^\alpha)=f^{-n}P(xf^{n})\cdot f^\alpha.
\]
This definition is independent of the choices of $m$ and $n$. 

Let $n$ be an integer. Then the morphism 
\begin{equation}\label{equation: a a+n}
    \mathscr{L}_{f,\alpha}\to \sL_{f,\alpha+n},\quad a\cdot f^\alpha\mapsto af^{-n}\cdot f^{\alpha+n}
\end{equation}
defines an isomorphism of $\cD_X$-modules.

Write 
\[
\alpha=\sum_{i\ge0}a_ip^i\quad(a_i=0,\dots,p-1). 
\]
For each integer $m\ge1$, set $\alpha_m=\sum_{i=0}^{m-1}a_ip^i$, and set $\alpha_0=0$. 
\begin{lem}\label{lem: system correponding to K}
Under the equivalence in Proposition~\ref{prop: Frobenius descent for Dmodule}, the $\cD_X$-module $\mathscr{L}_{f,\alpha}$ corresponds to the system $(K_{(m)})_{m\ge0}$ defined by 
\[
K_{(m)}=f^{-\alpha_m}\cO_X[f^{-1}]^{p^m}\cdot f^\alpha. 
\]
The transition morphisms are given by the natural inclusions $K_{(m+1)}\hookrightarrow K_{(m)}$. 
\end{lem}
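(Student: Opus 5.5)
The plan is to use the explicit quasi-inverse from Proposition~\ref{prop: Frobenius descent for Dmodule}: the system attached to $\sL_{f,\alpha}$ is $(L_{(m)})_m$ with
\[
L_{(m)}=\{y\in\sL_{f,\alpha}\mid P(ay)=P(a)y\ \ (\forall a\in\cO_X,\ \forall P\in\cD_X^{(m)})\}.
\]
So it suffices to show $L_{(m)}=K_{(m)}$ for every $m\ge0$. The transition maps are inclusions by construction, so that part of the statement is then automatic. As a consistency check, $\cO_X\otimes_{\cO_X^{p^m}}K_{(m)}\cong\sL_{f,\alpha}$, because $\cO_X[f^{-1}]$ is generated over $\cO_X[f^{-1}]^{p^m}$ by $\cO_X$ and $f^{-\alpha_m}$ is a unit there.

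\emph{Step 1: $K_{(m)}\subset L_{(m)}$.} Take $y=f^{-\alpha_m}b^{p^m}\cdot f^\alpha$ with $b\in\cO_X[f^{-1}]$, and $P\in\cD_X^{(m)}$. Since $\alpha-\alpha_m\in p^m\Z_p$, the definition of the action allows the choice $n=\alpha_m$ when computing $P$ on $\sL_{f,\alpha}$, and this choice is independent of which level $\ge m$ we regard $P$ in. Then
\[
P(ay)=f^{-\alpha_m}P(a\,b^{p^m})\cdot f^\alpha=f^{-\alpha_m}b^{p^m}P(a)\cdot f^\alpha=P(a)y,
\]
using that $P$ extends $\cO_X^{p^m}$-linearly to $\cO_X[f^{-1}]$, which commutes with $b^{p^m}$. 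Only this localization point needs a line of justification: $P$ commutes with $f^{p^mk}$, so it extends uniquely to the localization.

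\emph{Step 2: $L_{(m)}\subset K_{(m)}$.} I would use the isomorphism $\sL_{f,\alpha}\to\cO_X[f^{-1}]$, $x\cdot f^\alpha\mapsto xf^{\alpha_m}$. By Step 1's computation, this intertwines the $\cD_X^{(m)}$-action on $\sL_{f,\alpha}$ with the natural action on $\cO_X[f^{-1}]$. So $L_{(m)}$ corresponds to $(\cO_X[f^{-1}])_{(m)}$, the level-$m$ flat sections of the $\cD_X^{(m)}$-module $\cO_X[f^{-1}]=F^{m\ast}(\cO_X[f^{-1}]^{p^m})$. By Lemma~\ref{lem: Frobenius descendent} (the identity $(F^{m\ast}N')_{(m)}=1\otimes N'$), this is exactly $\cO_X[f^{-1}]^{p^m}$. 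Transporting back gives $f^{-\alpha_m}\cO_X[f^{-1}]^{p^m}\cdot f^\alpha=K_{(m)}$.

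\emph{Main obstacle.} The only delicate point is bookkeeping. One must check that the single-level identification of Step 2 is compatible with the definition of the $\cD_X$-action on $\sL_{f,\alpha}$ for operators of all levels $\le m$. This holds because the definition is independent of the choice of $n$. One must also check that the two sides match inside $\sL_{f,\alpha}$, so that the transition maps are the literal inclusions. The inclusion $K_{(m+1)}\subset K_{(m)}$ itself is clear: $f^{-\alpha_{m+1}}=f^{-\alpha_m}\,(f^{-a_m})^{p^m}$.
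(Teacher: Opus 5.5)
Your proof is correct, but it reaches the reverse inclusion by a different route from the paper.

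\textbf{The paper's argument.} It proves $K_{(m)}\subset L_{(m)}$ by the same direct computation as your Step~1. It then gets equality from faithful flatness of $\cO_X$ over $\cO_X^{p^m}$. Both $\cO_X\otimes_{\cO_X^{p^m}}K_{(m)}$ and $\cO_X\otimes_{\cO_X^{p^m}}L_{(m)}$ map isomorphically onto $\sL_{f,\alpha}$; for the second this is the statement of Lemma~\ref{lem: Frobenius descendent}. So the inclusion becomes an isomorphism after base change, hence is one already. In other words, your ``consistency check'' together with Step~1 is the paper's entire proof.

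\textbf{Your argument.} Step~2 instead untwists. The map $x\cdot f^\alpha\mapsto xf^{\alpha_m}$ is $\cO_X$-linear and $\cD_X^{(m)}$-linear, so it reduces the problem to the flat sections of $\cO_X[f^{-1}]\cong F^{m\ast}(\cO_X[f^{-1}]^{p^m})$. You then read these off from the identity $(F^{m\ast}N')_{(m)}=1\otimes N'$.

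\textbf{Comparison.} Your route gives both inclusions at once, which makes Step~1 redundant. It also explains directly why the twist $f^{-\alpha_m}$ appears. The price is two extra checks: that the untwisting map respects the level-$m$ action, and the identity $(F^{m\ast}N')_{(m)}=1\otimes N'$ itself. That identity is proved inside the proof of Lemma~\ref{lem: Frobenius descendent} rather than stated as a result. The paper's route needs no auxiliary isomorphism, but it does need the direct inclusion first.
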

\begin{proof}
For each $m\ge0$, set 
\[
K_{(m)}'\coloneqq \{y\in \mathscr{L}_{f,\alpha}\mid P(ay)=P(a)y\quad(\forall a\in \cO_X,\forall P\in\cD_X^{(m)})\}. 
\]
By Proposition~\ref{prop: Frobenius descent for Dmodule}, $\mathscr{L}_{f,\alpha}$ corresponds to the system $(K_{(m)}')_m$. Thus, it suffices to show that $K_{(m)}=K_{(m)}'$. 

The inclusion $K_{(m)}\subset K_{(m)}'$ follows from a direct computation. Hence, it remains to show that the induced inclusion $K_{(m)}\hookrightarrow K_{(m)}'$ is an isomorphism. Since $\cO_X$ is faithfully flat over $\cO_X^{p^m}$, it suffices to check this after base change to $\cO_X$. By Lemma~\ref{lem: Frobenius descendent}, 
we have $\cO_X\otimes_{\cO_X^{p^m}}K_{(m)}'=\mathscr{L}_{f,\alpha}$. 
On the other hand, we have 
\[\cO_X\otimes_{\cO_X^{p^m}}K_{(m)}=\mathscr{L}_{f,\alpha},\] 
since 
\[
\cO_X\otimes_{\cO_X^{p^m}}\cO_X[f^{-1}]^{p^m}=\cO_X[f^{-1}]. 
\]
This proves the claim. 
\end{proof}

For a $\cD_X$-module $\sM$, we set  
\[\sM_{f,\alpha}\coloneqq \sM\otimes_{\cO_X}\mathscr{L}_{f,\alpha}, 
\]
equipped with the $\cD_X$-module structure given by Construction~\ref{construction: pullback pushforward tensor product of Dmodules}. For notational simplicity, we  write $x\otimes (a\cdot f^\alpha)$ as $ax\cdot f^\alpha$. Then there is a natural isomorphism of $\cO_X$-modules 
\[
\sM_{f,\alpha}\cong \sM_f\cdot f^\alpha, \quad x\otimes (a\cdot f^\alpha)\mapsto ax\cdot f^\alpha, 
\]
where $\sM_f=\sM[f^{-1}]$. 
\begin{lem}\label{lem: Lfalpha}
    Let $\sM$ be a $\cD_X$-module, and let $\xi\colon\sM\to\sM_f$ denote the natural morphism. 
    Let $M\subset \sM$ be an $\cO_X$-submodule. Then the following hold: 
    \begin{enumerate}
        \item Let $m\ge 0$ be an integer, and let $n\in\Z$ be such that $\alpha-n\in p^m\Z_p$. Then 
         the morphism 
        \[
         \sM\to \sM_f\cdot f^\alpha,\quad x\mapsto f^{-n}x\cdot f^\alpha
        \]
        is $\cD_X^{(m)}$-linear. 
\item Consider the inductive system \[(L_m)_{m\ge0}\coloneqq(\cD_X^{(m)}f^{\alpha_m}M)_{m\ge 0}\] with transition morphisms $L_m\to L_{m+1}$ given by  multiplication by $f^{a_mp^m}$. Then the morphisms 
\[
f^{-\alpha_m}\colon L_m\to \sM_f\cdot f^\alpha
\]
are $\cD_X^{(m)}$-linear and compatible with the transition morphisms. The induced morphism 
\[
\varphi\colon \varinjlim_mL_m\to \sM_f\cdot f^\alpha 
\]
is injective, and its image is equal to  $\cD_X(\xi(M)\cdot f^\alpha)$. 
    \end{enumerate}
\end{lem}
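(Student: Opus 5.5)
For~(1), I will use the Frobenius-descent description of the $\cD_X$-module structure on $\sM_f\cdot f^\alpha=\sM\otimes_{\cO_X}\sL_{f,\alpha}$ from Construction~\ref{construction: pullback pushforward tensor product of Dmodules}(3). It identifies the level-$m$ descent datum with $M_{(m)}\otimes_{\cO_X^{p^m}}K_{(m)}$. By Lemma~\ref{lem: system correponding to K}, $K_{(m)}=f^{-\alpha_m}\cO_X[f^{-1}]^{p^m}\cdot f^\alpha$. Since $\alpha-n\in p^m\Z_p$ forces $n\equiv\alpha_m \pmod{p^m}$, we get $f^{-n}\cdot f^\alpha\in K_{(m)}$, because $f^{-n}=f^{-\alpha_m}\cdot (f^{(\alpha_m-n)/p^m})^{p^m}$.

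Hence, for $x\in M_{(m)}$, the element $x\otimes f^{-n}f^\alpha$ lies in the level-$m$ descent datum of the tensor product. A local section of $\sM$ has the form $\sum a_i x_i$ with $a_i\in\cO_X$ and $x_i\in M_{(m)}$, by Lemma~\ref{lem: Frobenius descendent}. For $P\in\cD_X^{(m)}$ we then compute
\[
P\Big(\sum a_ix_i\otimes f^{-n}f^\alpha\Big)=\sum P(a_i)\,x_i\otimes f^{-n}f^\alpha ,
\]
and this is exactly the image of $P(\sum a_ix_i)$. That proves $\cD_X^{(m)}$-linearity.

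For~(2), the $\cD_X^{(m)}$-linearity of $f^{-\alpha_m}\colon L_m\to\sM_f\cdot f^\alpha$ is~(1) with $n=\alpha_m$. Compatibility is the identity $f^{-\alpha_{m+1}}f^{a_mp^m}=f^{-\alpha_m}$, using $\alpha_{m+1}=\alpha_m+a_mp^m$. The maps are $\cD_X^{(m)}$-linear, their union over $m$ is $\cD_X$-linear, and the image of $L_m$ is
\[
\cD_X^{(m)}\big(f^{-\alpha_m}f^{\alpha_m}\xi(M)\cdot f^\alpha\big)=\cD_X^{(m)}\big(\xi(M)\cdot f^\alpha\big).
\]
Taking the union over $m$ gives image $\cD_X(\xi(M)\cdot f^\alpha)$.

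Injectivity is the point needing care. I will argue that each individual map $L_m\to\sM_f\cdot f^\alpha$ has kernel equal to the $f$-power torsion of $L_m\subset\sM$, since $\sM_f\cdot f^\alpha\cong\sM_f$ as $\cO_X$-modules and multiplication by $f^{-\alpha_m}$ is invertible there. So an element of the colimit killed by $\varphi$ comes from some $y\in L_m$ with $f^Ny=0$ in $\sM$.

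The transition from $L_m$ to $L_{m'}$ is multiplication by $f^{\alpha_{m'}-\alpha_m}$. Because $\alpha\notin\Z$, infinitely many digits $a_i$ are nonzero, so $\alpha_{m'}-\alpha_m\to\infty$. For $m'$ large this exponent exceeds $N$, so the image of $y$ in $L_{m'}$ vanishes. This is where the hypothesis $\alpha\notin\Z$ enters, and it is the main thing to get right. Without it, the colimit can retain $f$-torsion, which is consistent with the integral case treated in Proposition~\ref{prop: computation of t integral case}.
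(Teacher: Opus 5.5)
Your proposal is correct and follows essentially the same route as the paper. Part~(1) uses the level-$m$ descent datum $K_{(m)}=f^{-\alpha_m}\cO_X[f^{-1}]^{p^m}\cdot f^\alpha$, which contains $f^{-n}\cdot f^\alpha$. Injectivity in part~(2) comes from the kernel of $\varphi$ consisting of $f$-power torsion, which dies in the colimit because $\alpha_{m'}-\alpha_m$ is unbounded; the paper phrases this as ``the colimit is $f$-torsion free, so one may replace $\sM$ by $\sM_f$''. Your explicit use of $\alpha_{m'}-\alpha_m\to\infty$ is slightly more careful than the paper's wording (``positive powers of $f$''), since individual digits $a_m$ may vanish.
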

\begin{proof}
    (1) Let $M_{(m)}\subset \sM$ denote the $\cO_X^{p^m}$-submodule defined in Lemma~\ref{lem: Frobenius descendent}. 
Since $\alpha-n\in p^m\Z_p$, the morphism 
\[
M_{(m)}\to M_{(m)}\otimes_{\cO_X^{p^m}}(f^{-\alpha_m}\cO_X[f^{-1}]^{p^m}\cdot f^\alpha),\quad x\mapsto x\otimes f^{-n}\cdot f^\alpha
\]
is well-defined. 
Extending scalars to $\cO_X$, and using Lemmas~\ref{lem: Frobenius descendent} and~\ref{lem: system correponding to K}, we obtain a $\cD_X^{(m)}$-linear morphism 
\[\sM\to \sM_f\cdot f^\alpha.\]
One  checks that this morphism is given by multiplication by $f^{-n}$. 

\medskip 

\noindent
(2) The first assertion follows from~(1). Observe that for every $m\ge0$, the image of $L_0\to L_m$ generates $L_m$ as a $\cD_X^{(m)}$-module. It follows that the image of $\varphi$ is equal to $\cD_X(\xi(M)\cdot f^\alpha)$. 

We show that $\varphi$ is injective. Since the transition morphisms of $(L_m)_{m\ge0}$ are given by multiplication by positive powers of $f$, the colimit is $f$-torsion free. Hence, we may replace $\sM$ by $\sM_f$ and $M$ by $\xi(M)$. In this case, the injectivity is clear. 
\end{proof}

%For every $\beta\in \Z_p$ and every integer $m\ge0$, we write $\beta_m$ for the unique integer in $\{0,\dots, p^m-1\}$ such that $\beta-\beta_m\in p^m\Z_p$. 
\begin{prop}\label{prop: non-integral case}
Let $(\sM,M)$ be an object of $\cP$. Let 
$\xi\colon\sM\to\sM_f$ denote the natural morphism. 
 Let $\alpha\in\Z_p\setminus\Z$. 
 Then the following hold: 
\begin{enumerate}
    
\item The $\cD_X$-module $N_f(M)_{\alpha}$ is isomorphic to the colimit of the inductive system $(\cD_X^{(m)}f^{\alpha_m}M)_{m\ge0}$ whose  transition morphism $\cD_X^{(m)}f^{\alpha_m}M\to \cD_X^{(m+1)}f^{\alpha_{m+1}}M$ is given by  multiplication by $f^{a_mp^m}$. 

\item The morphism $t\colon N_f(M)_{\alpha+1}\to N_f(M)_{\alpha}$ is identified with the natural inclusion 
\[\cD_X(f\xi(M)\cdot f^\alpha)\hookrightarrow \cD_X(\xi(M)\cdot f^\alpha). 
\]
\end{enumerate}
\end{prop}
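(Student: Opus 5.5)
The plan is to mirror the proof of Proposition~\ref{prop: computation of t integral case}. We work at finite level with Propositions~\ref{prop: image of L} and~\ref{prop: how L varies}, pass to the colimit over $m$, and then compare the result with the system $(L_m)_m$ of Lemma~\ref{lem: Lfalpha}(2).

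For part~(1), write $\alpha=\sum_i a_ip^i$ and $\alpha_m=\sum_{i<m}a_ip^i\in[0,p^m)$. Since $N_f(M)=\bigcup_m N_f^{(m)}(M)$ and $\ev_\alpha$ restricted to $\sC^{(m)}$ is $\ev_{\overline{\alpha_m}}$, we obtain
\[
N_f(M)_\alpha\cong\varinjlim_m N_f^{(m)}(M)_{\overline{\alpha_m}}.
\]
By Proposition~\ref{prop: image of L}, the $m$-th term is $\cD_X^{(m)}f^{\alpha_m}M$, embedded via $T^{(m)}_{\alpha_m}$. Proposition~\ref{prop: how L varies}, with $n_1=\alpha_m$, $n_2=\alpha_{m+1}=\alpha_m+a_mp^m$ and $i=a_m$, shows that the transition map is multiplication by $(-1)^{a_m}\binom{p-1}{a_m}f^{a_mp^m}$. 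The scalar $(-1)^{a_m}\binom{p-1}{a_m}$ is congruent to $1$ modulo $p$, because $\binom{p-1}{i}\equiv(-1)^i$. Alternatively, any unit in $\F_p^\times$ can be absorbed by rescaling the identifications inductively. Either way the transition map is exactly multiplication by $f^{a_mp^m}$, which proves~(1).

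For part~(2), combine~(1) with Lemma~\ref{lem: Lfalpha}(2). The colimit $\varinjlim_m L_m$ is identified, injectively via $f^{-\alpha_m}$, with $\cD_X(\xi(M)\cdot f^\alpha)\subset\sM_f\cdot f^\alpha$. Hence $N_f(M)_\alpha\cong\cD_X(\xi(M)\cdot f^\alpha)$. We do the same for $\alpha+1$, which is also non-integral.
\begin{itemize}
\item Whenever $\beta_m=(\alpha+1)_m>0$, the expansions satisfy $(\alpha+1)_m=\alpha_m+1$. This holds for all large $m$ except when $\alpha_m=p^m-1$ infinitely often, which would force $\alpha=-1$.
\item As in the proof of Proposition~\ref{prop: t sends a to a-1}(2), for large $m$ we have $tQ^{(m)}_{\alpha_m+1}=Q^{(m)}_{\alpha_m}$. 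So $t$ at level $m$ is the inclusion $\cD_X^{(m)}f^{\alpha_m+1}M\hookrightarrow\cD_X^{(m)}f^{\alpha_m}M$.
\item Under $f^{-(\alpha_m+1)}$ and $f^{-\alpha_m}$, the source goes into $\sM_f\cdot f^{\alpha+1}$ and the target into $\sM_f\cdot f^\alpha$.
\end{itemize}
Composing with the isomorphism~\eqref{equation: a a+n}, $\sL_{f,\alpha+1}\cong\sL_{f,\alpha}$, which sends $y\cdot f^{\alpha+1}$ to $fy\cdot f^\alpha$, the map $t$ becomes the natural inclusion $\cD_X(f\xi(M)\cdot f^\alpha)\hookrightarrow\cD_X(\xi(M)\cdot f^\alpha)$. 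Here we also use that $N_f(M)_{\alpha+1}$ is identified with $\cD_X(\xi(M)\cdot f^{\alpha+1})$.

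The main obstacle is bookkeeping with indices and normalizations. One must check that the identifications with $\sM_f\cdot f^\alpha$ are compatible with the transition maps both for $\alpha$ and for $\alpha+1$; the two systems carry different digit sequences but agree for large $m$. One must also confirm that the sign and unit constants from Proposition~\ref{prop: how L varies} match the normalization in Lemma~\ref{lem: Lfalpha}. We address this by restricting to $m$ large, which is harmless in a colimit.
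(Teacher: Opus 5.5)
Your proposal is correct and follows essentially the paper's proof: level-$m$ identifications via Propositions~\ref{prop: image of L} and~\ref{prop: how L varies}, passage to the colimit, and Lemma~\ref{lem: Lfalpha}(2). Your remark that $(-1)^i\binom{p-1}{i}\equiv 1 \pmod p$ (equivalently, $(x-y)^{p-1}=\sum_{i=0}^{p-1}x^iy^{p-1-i}$ in characteristic $p$) correctly shows that the rescaling units $u_m$ used in the paper are all equal to $1$. For~(2) you redo the level-$m$ computation $tQ^{(m)}_{\alpha_m+1}=Q^{(m)}_{\alpha_m}$ and match the two embeddings through~\eqref{equation: a a+n}, whereas the paper simply cites Proposition~\ref{prop: t sends a to a-1}(2) (injectivity of $t$ with image $N_f(fM)_\alpha$) together with $N_f(fM)_\alpha\cong\cD_X(f\xi(M)\cdot f^\alpha)$, which is the same argument at a different level of explicitness.
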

\begin{proof}
We first prove~(1). 
Since $\alpha\neq0$, there exists an integer $m_0\ge0$ such that $\alpha_{m_0}>0$. 
Fix such $m_0$, and consider the inductive system
\[
(\cD_X^{(m)}f^{\alpha_m}M)_{m\ge m_0}
\]
where the transition morphisms are given by   multiplication by $f^{a_mp^m}$. 
For every $m\ge m_0$, we define an element $u_m\in\F_p^\times$ by 
\[u_m\coloneqq \prod_{i=m_0}^{m-1}(-1)^{a_i}\binom{p-1}{a_i}, 
\]
with the convention that $u_{m_0}=1$.  Then, by Proposition~\ref{prop: how L varies},  for every $m\ge m_0$ the diagram 
\[
\xymatrix{
\cD_X^{(m)}f^{\alpha_m}M\ar[d]_-{f^{a_mp^m}}\ar[rr]^-{u_mT^{(m)}_{\alpha_m}}&&N_f^{(m)}(M)_{\overline{\alpha_m}}\ar[d]\\
\cD_X^{(m+1)}f^{\alpha_{m+1}}M\ar[rr]^-{u_{m+1}T^{(m+1)}_{\alpha_{m+1}}}&&N_f^{(m+1)}(M)_{\overline{\alpha_{m+1}}}
}
\]
is commutative, where $\overline{\alpha_m}$ and $\overline{\alpha_{m+1}}$ denote the classes of $\alpha$ in $\Z/p^m\Z$ and in $\Z/p^{m+1}\Z$, respectively. Since the horizontal arrows are isomorphisms by Proposition~\ref{prop: image of L}, assertion~(1) follows by taking  colimits over $m$. 

We now prove~(2). By~(1) and Lemma~\ref{lem: Lfalpha}(2), $N_f(M)_{\alpha}$ is naturally isomorphic to $\cD_X(\xi(M)\cdot f^\alpha)$. Then the claim follows from Proposition~\ref{prop: t sends a to a-1}(2).

\end{proof}

\section{The Case of Unit Frobenius Modules}\label{section: The Case of Unit Frobenius Modules}

It is well known that a unit $F$-module naturally carries both  a Cartier module structure and a $\cD$-module structure. Consequently, it is associated with two types of invariants: 
the $\nu$-invariants and the Bernstein--Sato roots. In this section, we study the relationship between these invariants. 

Let $X$ be a Noetherian regular $F$-finite  $\F_p$-scheme. Fix a positive integer $e$ and set $q=p^e$.

\subsection{Frobenius Modules}
In this subsection, we review  unit Frobenius modules. 
Let $\cO_X[F]$ be the $\cO_X$-algebra defined by 
\[
\cO_X[F]\coloneqq \bigoplus_{e'\ge0}\cO_X\cdot F^{e'}. 
\]
As a left $\cO_X$-module, it is free with basis $\{F^{e'}\}_{e'\ge0}$, and its multiplication is determined by the relation 
\[
F\cdot a=a^pF\qquad(a\in\cO_X). 
\]
We denote by $\cO_X[F^e]\subset\cO_X[F]$ the $\cO_X$-subalgebra generated by $F^e$. 

In what follows, an $\cO_X[F^e]$-module always means a left module. An $\cO_X[F^e]$-module is also referred to as an $F^e$-module on $X$. 

Let $\sM$ be an $\cO_X$-module. Then giving $\sM$ an $\cO_X[F^e]$-module structure that is compatible with its $\cO_X$-module structure is equivalent to giving an $\cO_X$-linear morphism $F^\ast_{q} \sM\to \sM$.

\begin{defn}\label{defn: lfgu Femodule}
    Let $\sM$ be an $\cO_X$-quasi-coherent $\cO_X[F^e]$-module. 
    \begin{enumerate}
        \item We say that $\sM$ is \emph{unit} if the induced morphism  
        \[
        F^\ast_{q} \sM\to \sM
        \] is an isomorphism. 
        \item We say that $\sM$ is \emph{locally finitely generated unit} (lfgu for short) if it is unit and is locally finitely generated as an $\cO_X[F^e]$-module. 
    \end{enumerate}
\end{defn}

Let $\mathscr{M}$ be a unit $\cO_X[F^e]$-module. 
We write 
\[\theta_{\sM}\colon \mathscr{M}\xrightarrow{\cong} F_q^\ast  \mathscr{M}
\]
for the inverse of the isomorphism $F_q^\ast \mathscr{M}\to \mathscr{M}$. 
The morphism $\theta_\sM$ induces a sequence of isomorphisms
\[
\mathscr{M}\xrightarrow{\theta_\sM} F_q^\ast  \mathscr{M}\xrightarrow{F_q^\ast \theta_\sM} F_q^{2\ast}  \mathscr{M}\xrightarrow{F_q^{2\ast}\theta_\sM} \cdots. 
\]
In the sequel, we usually identify $\sM$ with the colimit of this system. For example, a submodule $N\subset F_q^\ast\sM$ is identified with the submodule $\theta_\sM^{-1}(N)\subset\sM$. 

We recall the following definition. 
\begin{defn}
    Let $\sM$ be a unit $\cO_X[F^e]$-module. A \emph{root} of $\sM$ is a coherent $\cO_X$-submodule $M\subset\sM$ satisfying the following properties: 
    \begin{enumerate}
        \item   $\theta_\sM(M)\subset F_q^\ast M. $
        \item   $\sM=\bigcup_{m\ge0}F_q^{m\ast} M.$
    \end{enumerate}
\end{defn}

It is known that a unit $F^e$-module admits a root if and only if it is lfgu. The following proposition provides a more precise form of the ``if'' direction. 
\begin{prop}\label{prop: existence of minimal root}
    Let $\sM$ be an lfgu $\cO_X[F^e]$-module. Then   $\sM$ admits a root. Moreover, it admits a minimal root. Here, a root is said to be \emph{minimal} if it is contained in every root  of $\mathscr{M}$. 
\end{prop}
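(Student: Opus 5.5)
The plan is to prove the two assertions separately. Existence of a root follows Lyubeznik's argument. Minimality comes from two ingredients: a stability property of roots under intersection, and a finiteness argument. The finiteness argument passes to the associated Cartier structure and invokes Proposition~\ref{F-pure submodule}.

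\emph{Existence.} First I work locally on an affine open where $\sM$ is generated as an $\cO_X[F^e]$-module by a coherent $\cO_X$-submodule $N$. Let $N_k\subset\sM$ denote the $\cO_X$-submodule $\sum_{i\le k}\operatorname{Im}(F_q^{i\ast}N\to\sM)$. These submodules are coherent and exhaust $\sM$. Since $\theta_\sM(N)$ is coherent and $F_q^\ast\sM=\bigcup_kF_q^\ast N_k$, there is some $k$ with $\theta_\sM(N)\subset F_q^\ast N_k$. For $i\ge1$, $\theta_\sM$ carries the image of $F_q^{i\ast}N$ into $F_q^\ast$ of the image of $F_q^{(i-1)\ast}N$. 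Hence $\theta_\sM(N_k)\subset F_q^\ast N_k$, and condition~(2) holds because $N\subset N_k$ generates $\sM$. To globalize, I cover $X$ by finitely many such opens $U_j$ (quasi-compactness) with local roots $M_j$. I extend each $M_j$ to a coherent submodule $\widetilde M_j\subset\sM$ using quasi-coherence. Then I rerun the same saturation argument on the coherent generator $\sum_j\widetilde M_j$ globally; the index $k$ can be chosen uniformly by quasi-compactness.

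\emph{Minimality.} Since $X$ is regular, $F_q$ is flat by Kunz's theorem, so $F_q^\ast$ commutes with finite intersections. Given roots $M,M'$, condition~(1) for $M\cap M'$ then follows immediately. For condition~(2), coherence of $M$ together with quasi-compactness gives some $k$ with $M\subset F_q^{k\ast}M'$. Iterating condition~(1) gives $M\subset F_q^{k\ast}M$. Flatness then yields $F_q^{k\ast}(M\cap M')=F_q^{k\ast}M\cap F_q^{k\ast}M'\supset M$, so $M\cap M'$ also generates $\sM$. Thus the roots form a filtered-downward family, and it remains to show this family has a least element. I fix a root $M_0$ and look only at roots contained in $M_0$. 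Twisting by $\omega_X^{-1}$ and using Grothendieck duality for the finite flat morphism $F_q$ (compare the footnote in the introduction and Lemma~\ref{lem: generator}), the inclusion $\theta_\sM(M_0)\subset F_q^\ast M_0$ should dualize to a $q$-Cartier operator $C$ on $M_0\otimes\omega_X^{-1}$ (or a suitable twist). My candidate for the minimal root is the twist back of $\underline{M_0\otimes\omega_X^{-1}}=C^m(M_0\otimes\omega_X^{-1})$ for $m\gg0$; this is coherent by Proposition~\ref{F-pure submodule}.

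\emph{Main obstacle.} The hard part is matching roots with Cartier submodules. Concretely, for a root $N\subset M_0$ I must show that $C^m(M_0)$ (twisted) lies in $N$ for $m\gg0$, and that $\underline{M_0}$ is itself a root. For the first claim, I would take $k$ with $M_0\subset F_q^{k\ast}N$ and then apply $C^k$, using that $C^k$ sends $F_q^{k\ast}N\cap M_0$ into $N$; this should follow from the description of $C$ as the trace of $\theta$. For the second, I would use that $\underline{M_0}$ is $F$-pure: then $\underline{M_0}=C(\underline{M_0})$, which dualizes to $\theta_\sM(\underline{M_0})\subset F_q^\ast\underline{M_0}$. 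I would also check that $M_0/\underline{M_0}$ nilpotent forces $F_q^{k\ast}\underline{M_0}\supset M_0$ for large $k$, so $\underline{M_0}$ still generates $\sM$. If the duality bookkeeping becomes awkward, my fallback is to quote Blickle's correspondence between minimal roots and $F$-pure Cartier submodules (Lyubeznik's theorem in the finite-type case) and reduce to that.
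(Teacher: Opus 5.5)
Your proposal is correct, but it takes a genuinely different route from the paper. The paper's proof is just a citation of \cite[Corollary~2.26]{Blickle08}, i.e.\ Blickle's theory of minimal $\gamma$-sheaves, which also yields existence of a root. You instead derive both assertions from tools already in the paper. The existence part is Lyubeznik's saturation argument and is fine as written. For minimality, the only nontrivial input is Proposition~\ref{F-pure submodule}, which is the same stabilization phenomenon underlying Blickle's result.

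Your ``main obstacle'' closes directly via Lemma~\ref{lem: C theta eta}, working locally where $\mathcal{C}_X^{(1)}$ is trivial:
\begin{itemize}
\item Let $N$ be \emph{any} root, and choose $k$ with $\theta_k(M_0)\subset F_q^{k\ast}N$. Then $C^k(M_0)=(\eta^k\otimes{\rm id})(\theta_k(M_0))\subset N$. Since $C(N)\subset N$ by Lemma~\ref{lem: minimal root is Fpure}(1), you get $\underline{M_0}\subset N$.
\item For $k\gg0$, $\theta_k(M_0)\subset \cD_X^{(ek)}\theta_k(M_0)=F_q^{k\ast}C^k(M_0)=F_q^{k\ast}\underline{M_0}$. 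Hence $\underline{M_0}$ generates $\sM$; this is also the forward half of the proof of Lemma~\ref{lem: minimal root is Fpure}(2).
\end{itemize}
So the intersection step is dispensable. You only use part~(1) of Lemma~\ref{lem: minimal root is Fpure}, which does not depend on the present proposition, so there is no circularity; only the converse direction of part~(2) does.

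Two small corrections on the twist:
\begin{itemize}
\item With $\mathcal{C}_X^{(1)}\cong\omega_X^{\otimes(1-q)}$, the global Cartier operator lives on $M_0\otimes\omega_X$, not on $M_0\otimes\omega_X^{-1}$.
\item Twisting is unnecessary anyway. Replacing the generator $\eta$ by $u\eta$, with $u$ a unit, replaces $C$ by $C(u\,\cdot\,)$. This leaves every image $C^m(N)$ unchanged, so the locally defined $\underline{M_0}$ glue.
\end{itemize}

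What your route buys over the citation is self-containedness and an explicit formula: the minimal root equals $C^m(M)$ for any root $M$ and all $m\gg0$. In particular, this proves both directions of Lemma~\ref{lem: minimal root is Fpure}(2) without appealing to Blickle. The citation buys brevity.
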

\begin{proof}
It suffices to prove the existence of a minimal root, which is established in \cite[Corollary~2.26]{Blickle08}. 
\end{proof}

We now recall a more precise statement for the ``only if '' direction.  Consider a quasi-coherent $\cO_X$-module $M$ equipped with an $\cO_X$-linear morphism $\theta\colon M\to F_q^\ast M$. 
Let $\sM$ denote the colimit of the diagram 
\[
M\xrightarrow{\theta}F_q^\ast M\xrightarrow{F_q^\ast\theta}F_q^{2\ast}M\xrightarrow{F_q^{2\ast}\theta}\cdots
\]
Since $F_q^\ast$ commutes with filtered colimits, $\theta$ induces an $\cO_X$-linear isomorphism  
\[
\theta_\sM\colon \sM\xrightarrow{\cong} F_q^\ast\sM. 
\]
Then the inverse of $\theta_\sM$ endows $\sM$ with the structure of an $F^e$-module. By construction, this $F^e$=module is unit. Following \cite{BL19}, we refer to this unit $F^e$-module as the \emph{unitalization of $(M,\theta)$.}
\begin{prop}\label{prop: a root generates lfgu Fmodule}
Let $M$ be a coherent $\cO_X$-module  equipped with an $\cO_X$-linear morphism $\theta\colon M\to F_q^\ast M$. 
Then the unitalization $\sM$ of $(M,\theta)$ is lfgu.  
\end{prop}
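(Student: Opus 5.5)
The plan is to show directly that $\sM$ is unit and locally finitely generated as an $\cO_X[F^e]$-module. Unitality holds by construction: $\theta_\sM$ is an isomorphism because $F_q^\ast$ commutes with filtered colimits. Quasi-coherence of $\sM$ is also immediate, since it is a filtered colimit of the quasi-coherent (indeed coherent) modules $F_q^{m\ast}M$. What remains is local finite generation.

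First, I would identify how the $F^e$-action interacts with the colimit. For each $m\ge0$, let $\iota_m\colon F_q^{m\ast}M\to\sM$ be the canonical map into the colimit. By construction, the structure morphism $F_q^\ast\sM\cong\sM$, which is the inverse of $\theta_\sM$, restricts on $F_q^\ast(\iota_m(F_q^{m\ast}M))$ to give the image $\iota_{m+1}(F_q^{(m+1)\ast}M)$. Concretely, for a local section $x\in F_q^{m\ast}M$ and $a\in\cO_X$, the element $F^e\cdot\iota_m(x)$ is the image of $1\otimes x\in F_q^{(m+1)\ast}M$, and the $\cO_X$-span of such elements is all of $\iota_{m+1}(F_q^{(m+1)\ast}M)$. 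By induction, the $\cO_X[F^e]$-submodule generated by $\iota_0(M)$ therefore contains $\iota_m(F_q^{m\ast}M)$ for every $m$. Since $\sM=\bigcup_m\iota_m(F_q^{m\ast}M)$, we conclude that $\sM=\cO_X[F^e]\cdot\iota_0(M)$.

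Second, I would work locally. On an affine open $U=\operatorname{Spec}R$, the module $M$ is finitely generated, say by $x_1,\dots,x_r$. Then $\iota_0(x_1),\dots,\iota_0(x_r)$ generate $\sM|_U$ as an $\cO_U[F^e]$-module, so $\sM$ is locally finitely generated.

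The only step needing care is the precise bookkeeping of the $F^e$-action on the colimit. One must check that $F^e(\iota_m(x))=\iota_{m+1}(1\otimes x)$ under the identification $F_q^\ast F_q^{m\ast}M=F_q^{(m+1)\ast}M$. This follows from the commutative squares expressing $\theta_\sM\circ\iota_m=F_q^\ast\iota_m\circ F_q^{m\ast}\theta$ together with $F_q^\ast\iota_m=\iota_{m+1}$ after that identification. Beyond this, I expect no real obstacle; coherence of $M$ is used only to obtain finitely many local generators.
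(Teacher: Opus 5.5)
Your argument is correct, but it is not the route the paper takes. The paper's proof reduces to the affine case and then simply cites \cite[Corollary~11.2.6]{BL19} for finite generation over $R[F^e]$. You instead prove directly that $\sM=\cO_X[F^e]\cdot\iota_0(M)$ on all of $X$. This uses two facts:
\begin{enumerate}
\item the identity $F^e\cdot\iota_m(x)=\iota_{m+1}(1\otimes x)$;
\item $F_q^{(m+1)\ast}M$ is spanned over $\cO_X$ by the elements $1\otimes x$.
\end{enumerate}
Local $\cO_X$-generators of the coherent sheaf $M$ then give local $\cO_X[F^e]$-generators of $\sM$.

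Your route is self-contained and elementary. It needs no affine reduction for the generation statement itself; locality enters only when choosing finitely many generators of $M$. It also makes explicit the quasi-coherence and unitality that the paper takes for granted. The paper's citation is shorter, and it places the statement in the Bhatt--Lurie framework of fgu modules that the paper uses again in Lemma~\ref{lem: lfgu is fgu}.

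The bookkeeping you flag is correct, with one precision: the relation you need is $\theta_\sM^{-1}\circ F_q^\ast\iota_m=\iota_{m+1}$, not $F_q^\ast\iota_m=\iota_{m+1}$ literally. To get it, combine $\theta_\sM\circ\iota_{m+1}=F_q^\ast\iota_{m+1}\circ F_q^{(m+1)\ast}\theta$ with $\iota_{m+1}\circ F_q^{m\ast}\theta=\iota_m$. This gives $\theta_\sM(\iota_{m+1}(y))=F_q^\ast\iota_m(y)$ for $y\in F_q^{(m+1)\ast}M$, which is exactly what you use.
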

\begin{proof}
It suffices to show that $\sM$ is locally finitely generated as an $\cO_X[F^e]$-module. Since the assertion is local, we may assume that $X$ is affine. In this case, 
 the claim is proved in \cite[Corollary~11.2.6]{BL19}. 
\end{proof}

Suppose that $X={\rm Spec}(R)$ is an affine scheme. Since $\cO_X[F^e]$ is a quasi-coherent $\cO_X$-algebra, the global section functor induces an equivalence of categories between the category of $\cO_X$-quasi-coherent $\cO_X[F^e]$-modules and that of $R[F^e]$-modules. One readily checks that, under this equivalence, unit $\cO_X[F^e]$-modules correspond to unit $R[F^e]$-modules. In the next lemma, we show that lfgu $\cO_X[F^e]$-modules correspond to fgu $R[F^e]$-modules. 
\begin{lem}\label{lem: lfgu is fgu}
Let $\sM$ be a unit $\cO_X[F^e]$-module with associated unit $R[F^e]$-module $M$. Then the following conditions are equivalent: 
\begin{enumerate}
    \item The $\cO_X[F^e]$-module $\sM$ is lfgu. 
    \item The $R[F^e]$-module $M$ is fgu, where ``fgu" stands for ``finitely generated unit" in the sense of \cite[Definition~11.1.3]{BL19}.   
\end{enumerate}
\end{lem}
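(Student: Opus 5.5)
The plan is to reduce both directions to the existence of a root, using Proposition~\ref{prop: existence of minimal root} and Proposition~\ref{prop: a root generates lfgu Fmodule} on the sheaf side. On the module side we use the analogous facts from \cite{BL19}: a unit $R[F^e]$-module is fgu if and only if it admits a root, that is, a finitely generated $R$-submodule $M_0$ with $M_0\subset F_q^\ast M_0$ (via $\theta$) whose Frobenius pullbacks exhaust $M$. The bridge is that quasi-coherent sheaves on the affine scheme $X={\rm Spec}(R)$ correspond to $R$-modules, and that coherent subsheaves correspond to finitely generated submodules. This correspondence is compatible with $F_q^\ast$, with $\theta$, and with filtered unions.

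For (1)$\Rightarrow$(2): by Proposition~\ref{prop: existence of minimal root}, $\sM$ admits a root $M'\subset\sM$. Set $M_0\coloneqq\Gamma(X,M')$. This is a finitely generated $R$-module. Since $\Gamma(X,-)$ is exact on quasi-coherent sheaves over an affine scheme, commutes with $F_q^\ast$ (as $F_q^\ast$ of a quasi-coherent sheaf corresponds to $R\otimes_{F_q,R}(-)$), and commutes with filtered unions, the two root conditions transfer to $M_0$. Concretely, $\theta(M_0)\subset F_q^\ast M_0$ and $M=\bigcup_m F_q^{m\ast}M_0$. Hence $M$ is the unitalization of $(M_0,\theta|_{M_0})$, and it is fgu by \cite[Corollary~11.2.6]{BL19}.

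For (2)$\Rightarrow$(1): being locally finitely generated as an $\cO_X[F^e]$-module is implied by being globally finitely generated. So it suffices to show that finitely many generators of $M$ as an $R[F^e]$-module generate $\sM$ as a sheaf of $\cO_X[F^e]$-modules. Consider the map $\cO_X[F^e]^{\oplus r}\to\sM$ defined by these generators. Its source is a quasi-coherent $\cO_X$-module, since $\cO_X[F^e]$ is a direct sum of copies of $F^{e'}$-twists, each quasi-coherent as a left $\cO_X$-module. Its global sections are surjective by hypothesis. Surjectivity of a morphism of quasi-coherent sheaves on an affine scheme can be checked on global sections, so the map is surjective.

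The only step needing care is confirming that the cited notion ``fgu'' in \cite[Definition~11.1.3]{BL19} means finitely generated over $R[F^e]$ and unit, possibly defined instead via roots. Whichever form it takes, the argument above covers it: with the generator definition, the last paragraph gives one direction and the root argument plus \cite[Corollary~11.2.6]{BL19} gives the other. With the root definition, both directions go through roots as in the (1)$\Rightarrow$(2) paragraph.
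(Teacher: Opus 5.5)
Your proof is correct. The direction (2)$\Rightarrow$(1) matches the paper, which simply calls it clear; you supply the routine justification.

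For (1)$\Rightarrow$(2), however, you take a genuinely different route. The paper does not use roots here. It covers $X$ by finitely many basic opens $D(f_i)$ on which $\sM$ is generated by finitely many sections $x_{ij}$. It then replaces $x_{ij}$ by $f_i^Nx_{ij}$ so that, by quasi-coherence, they extend to global sections $y_{ij}\in M$. Finally it observes that the $y_{ij}$ generate $M$ over $R[F^e]$: the quasi-coherent $\cO_X[F^e]$-submodule they generate agrees with $\sM$ on each $D(f_i)$, since $f_i$ is a unit there.

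That argument is elementary and self-contained. The generation step uses neither the unit property nor any external theorem. In fact it shows that any quasi-coherent, locally finitely generated $\cO_X[F^e]$-module on a Noetherian affine scheme is globally finitely generated.

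Your argument instead relies on two substantial results: the existence of a root for lfgu modules (Proposition~\ref{prop: existence of minimal root}, i.e.\ Blickle's result) and \cite[Corollary~11.2.6]{BL19}. So it is heavier and genuinely depends on $\sM$ being unit. In exchange, it exhibits $M$ as the unitalization of a finitely generated $R$-submodule, so you obtain a root of $M$ along the way.

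One small point should be made explicit. Identifying $M$ with the unitalization of $(M_0,\theta|_{M_0})$, and regarding the $F_q^{m\ast}M_0$ as submodules whose union is $M$, uses that $F_q^{m\ast}M_0\to F_q^{m\ast}M\cong M$ is injective. This is flatness of Frobenius on the regular scheme $X$.
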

\begin{proof}
    The implication (2)$\Rightarrow$(1) is clear. 
    
    For the converse, assume that $\sM$ is lfgu. 
    Then $M$ is a unit $R[F^e]$-module. It remains to show that $M$ is finitely generated over $R[F^e]$. Take a finite affine open covering 
    \[
    X=\bigcup_{i=1}^nU_i=\bigcup_{i=1}^n{\rm Spec}(R_{f_i})\qquad (f_i\in R)
    \]
    such that for each $i$, $\sM|_{U_i}$ is generated by finitely many sections $x_{i1},\dots,x_{im_i}\in\Gamma(U_i,\sM)$. Replacing $x_{ij}$ by $f_i^Nx_{ij}$ for a sufficiently large integer $N$ if necessary, we may assume that every $x_{ij}$ lifts to a section $y_{ij}\in \Gamma(X,\sM)=M$. Then $M$ is generated by the elements $y_{ij}$ as an $R[F^e]$-module. 
\end{proof}

%Note that $\cO_X[F^e]$ is a quasi-coherent $\cO_X$-algebra. Therefore, when $X$ is an affine scheme ${\rm Spec}(R)$,  the global section functor induces an equivalence of categories between the category of $\cO_X$-quasi-coherent $\cO_X[F^e]$-modules and that of $R[F^e]$-modules. Similarly to the scheme case, we define the notion of unit $R[F^e]$-modules. 

%In the affine case, the notion of lfgu can be replaced by ``fgu''. 
%\begin{lem}\label{lem: lfgu is fgu}
%Assume that $X={\rm Spec}(R)$. Let $\sM$ be a unit $\cO_X[F^e]$-module with associated unit $R[F^e]$-module $M$. Then the following conditions are equivalent: 
%\begin{enumerate}
 %   \item The module $\sM$ is lfgu. 
 %   \item The module $M$ is finitely generated as an $R[F^e]$-module. 
%\end{enumerate}
%\end{lem}
%\begin{proof}
 %   The implication (2)$\Rightarrow$(1) is clear. 
    
%    We prove the reverse implication. Assume that $\sM$ is lfgu. Then, by Proposition~\ref{prop: existence of minimal root}(1), it admits a root $\sM'$.  Then $M'\coloneqq\Gamma(X,\sM')$ is a finitely generated $R$-submodule of $M$ such that the unitalization 
%    \[
%    M'^u=\colim(M'\to F_q^\ast M'\to\cdots)
%    \]
%    is isomorphic to $M$. By \cite[Corollary~11.2.6]{BL19}, condition (2) is satisfied. 
%\end{proof}

We recall some functorialities of $F^e$-modules.

\begin{lem}\label{lem: functoriality for Fmodule}
Let $Y$ and $Z$ be  Noetherian regular $F$-finite  $\F_p$-schemes. 
\begin{enumerate}
    \item Let $f\colon Y\to Z$ be a morphism of schemes. If $\mathscr{N}$ is a unit   $\cO_Z[F^e]$-module, then $f^\ast\mathscr{N}$ naturally becomes a unit $\cO_Y[F^e]$-module. If $\sN$ is lfgu, then so is $f^\ast\sN$. 
    \item Let 
    $j\colon V\to Y$ be an affine open immersion. If $\sM$ is a unit $\cO_V[F^e]$-module, then 
     $j_\ast\sM$ naturally becomes a unit $\cO_Y[F^e]$-module. If $\sM$ is lfgu, then so is $j_\ast\sM$. 
     \item Let $\sM$ and $\mathscr{N}$ be unit $\cO_Y[F^e]$-modules. Then $\sM\otimes_{\cO_Y}\mathscr{N}$ naturally becomes a unit $\cO_Y[F^e]$-module. If $\sM$ and $\sN$ are lfgu, then so is $\sM\otimes_{\cO_Y}\mathscr{N}$. 
\end{enumerate}
\end{lem}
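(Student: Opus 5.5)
The plan is to prove each of the three parts by exhibiting, for each functor, an explicit unit structure, and then to reduce the lfgu assertion to the existence of roots. By Proposition~\ref{prop: existence of minimal root} and Proposition~\ref{prop: a root generates lfgu Fmodule}, a unit $F^e$-module is lfgu exactly when it is, locally, the unitalization of a coherent $\cO$-module $M$ with a map $\theta\colon M\to F_q^\ast M$. Since being lfgu is local (Lemma~\ref{lem: lfgu is fgu}), it suffices in each case to produce such a coherent pair whose unitalization is the module in question, after passing to an affine cover.

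For (1), I will define the structure map as the composite
\[
F_{Y,q}^\ast f^\ast\sN\cong f^\ast F_{Z,q}^\ast\sN\to f^\ast\sN,
\]
which uses $F_Z\circ f=f\circ F_Y$. It is an isomorphism whenever the structure map of $\sN$ is. If $N$ is a root of $\sN$ with $\theta\colon N\to F_q^\ast N$, then $f^\ast N$ is coherent and carries $f^\ast\theta$. The unitalization of $(f^\ast N,f^\ast\theta)$ is $f^\ast\sN$, because $f^\ast$ commutes with filtered colimits and with $F_q^\ast$. Proposition~\ref{prop: a root generates lfgu Fmodule} then shows that $f^\ast\sN$ is lfgu. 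Here $f^\ast N$ need not embed in $f^\ast\sN$, but that causes no trouble, because only the unitalization is used.

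For (3), I take the tensor product of the two structure isomorphisms, using $F_q^\ast(\sM\otimes\sN)\cong F_q^\ast\sM\otimes F_q^\ast\sN$. For roots $M$ and $N$, the module $M\otimes N$ with the map $\theta_M\otimes\theta_N$ has unitalization
\[
\operatorname{colim}_m F_q^{m\ast}M\otimes F_q^{m\ast}N=\sM\otimes\sN,
\]
since the two colimits are taken over the same index set. This gives the lfgu property for $\sM\otimes\sN$.

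For (2), the key input is that for a flat morphism (here Frobenius, by Kunz's theorem, since $Y$ is regular) there is a base change isomorphism $F_q^\ast j_\ast\sM\cong j_\ast F_q^\ast\sM$, valid for $j$ affine and $\sM$ quasi-coherent. Composing with $j_\ast$ of the structure map gives the unit structure on $j_\ast\sM$.

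The main obstacle is the lfgu part of (2), because a root of $\sM$ does not extend coherently to $Y$. I plan to work locally with $Y=\operatorname{Spec}R$. The complement of $V$ is then a closed set $Z$, and I can reduce to the case $V=\operatorname{Spec}R_g$ by a Čech/Mayer--Vietoris argument on a finite cover of $V$ by basic opens. That argument needs lfgu modules to be closed under finite limits, which holds because the category of lfgu modules is abelian, a result of Lyubeznik and Emerton--Kisin over $F$-finite regular rings. In the basic case, the module is $j_\ast\sM=\sM$ regarded over $R$, as an $R_g$-module. I will choose an $R$-coherent $M_0$ with $M_0[g^{-1}]$ a root of $\sM$, and then modify the map $\theta$ by powers of $g$. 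Concretely, I use $g^{q-1}\theta\colon M_0\to F_q^\ast M_0$, in the manner of the standard argument that $R_g$ is generated by $g^{-1}$. After clearing denominators, the unitalization of this modified pair is $\sM$ viewed as an $R$-module. This follows the standard construction of localization of fgu modules, as in \cite{BL19}, which I would cite if it is available there.
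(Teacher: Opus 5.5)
Your part (3) is exactly the paper's argument. For the lfgu assertions in (1) and (2), the paper simply reduces to the affine case and cites \cite[Proposition~2.9(a),(b)]{Lyubeznik-Fmod}. Your proof of (1) instead pulls back a root and applies Proposition~\ref{prop: a root generates lfgu Fmodule}. That argument is correct and more self-contained, since it uses only results already in the paper. Your unit structure in (2), obtained by flat base change along $F_q$, is also fine. Your lfgu argument for (2) is a legitimate alternative in outline. Note, though, that the \v{C}ech reduction rests on Lyubeznik's theorem that lfgu modules form an abelian category, so it does not actually avoid the literature the paper cites.

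The one step that is not pinned down, and that fails under its most natural reading, is the principal-open case. For an arbitrary lattice $M_0$ with $M_0[g^{-1}]$ a root, $g^{q-1}\theta$ need not map $M_0$ into $F_q^\ast M_0$. Suppose you ``clear denominators'' by taking the least $k$ with $g^k\theta(M_0)\subset F_q^\ast M_0$. Then the unitalization of $(M_0,g^k\theta)$ can be strictly smaller than $\sM$. For example, take $\sM=R_g$ with its canonical structure and $M_0=R$: then $k=0$, and the unitalization is $R\neq R_g$.

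The reason is that $\sM$ modulo this unitalization is the unitalization of $(M_0[g^{-1}]/M_0,\overline{g^k\theta})$. To make every element of $g^{-\ell}M_0/M_0$ die after finitely many steps, you need one extra factor of $g$, namely $g^k\theta(M_0)\subset g\,F_q^\ast M_0$. Under that condition, $g^{-\ell}M_0$ is sent into $F_q^\ast\bigl(g^{-\lceil(\ell-1)/q\rceil}M_0\bigr)$, so the exponent strictly decreases until it reaches $0$.

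The cleanest fix has two steps. First replace $M_0$ by $g^{-c}M_0$ with $c\gg0$, so that $\theta(M_0)\subset F_q^\ast M_0$. Only then use $g^{q-1}\theta$. This pair is the tensor product of $(M_0,\theta)$ with the pair $(R,g^{q-1})$, whose unitalization is $R_g$. By your argument for (3), its unitalization is therefore $\mathcal{U}\otimes_R R_g$, where $\mathcal{U}$ is the unitalization of $(M_0,\theta)$. Finally, $\mathcal{U}\otimes_R R_g=\sM$, because localization commutes with the colimit and $M_0[g^{-1}]$ is a root. Equivalently, the principal case follows from (3) applied to the fgu $R$-modules $\mathcal{U}$ and $R_g$. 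With this normalization stated explicitly, your proof of (2) goes through.
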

In the situation of~(2), we denote by $j_+\sM$ the unit $F^e$-module $j_\ast\sM$. 
\begin{proof}
(1) 
The isomorphism $ F_{Z,q}^\ast\mathscr{N}\xrightarrow{\cong}  \mathscr{N}$ induces an isomorphism 
\[
f^\ast F_{Z,q}^\ast\mathscr{N}\xrightarrow{\cong} f^\ast \mathscr{N}. 
\]
Applying the canonical isomorphism $f^\ast F_{Z,q}^\ast\cong F_{Y,q}^\ast f^\ast$ to the source, we obtain an isomorphism 
\[
F_{Y,q}^\ast f^\ast\mathscr{N}\xrightarrow{\cong} f^\ast \mathscr{N}, 
\]
which endows $f^\ast\mathscr{N}$ with a unit $F^e$-module structure. 

To show that $f^\ast \mathscr{N}$ is lfgu when $\mathscr{N}$ is lfgu, we may reduce to the case where $Y$ and $Z$ are affine.  
This case is proved in \cite[Proposition~2.9(a)]{Lyubeznik-Fmod}.\footnote{The notion of $F^e$-finite modules in \cite{Lyubeznik-Fmod} is equivalent to that of fgu $F^e$-modules by \cite[Corollary~11.2.12]{BL19}. 
Moreover, the functor $f^\ast$ in our notation is denoted by $f_\ast$ in  loc.~cit. (see \cite[Definition-Proposition 1.3 (a)]{Lyubeznik-Fmod}).}  

(2) Note that the canonical morphism $F_{Y,q}^\ast j_\ast\to j_\ast F_{V,q}^\ast$ is an isomorphism. Hence, the isomorphism $F_{V,q}^\ast\sM\xrightarrow{\cong}\sM$ induces an isomorphism 
\[
F_{Y,q}^\ast j_\ast\sM\xrightarrow{\cong} j_\ast\sM. 
\]
Thus, $j_\ast\sM$ is a unit $\cO_Y[F^e]$-module. To show that it is lfgu when $\sM$ is lfgu, we may assume that $Y$ is affine. In this case the assertion is proved in \cite[Proposition~2.9(b)]{Lyubeznik-Fmod}. 

(3) The unit $F^e$-module structure on $\sM\otimes_{\cO_X}\sN$ is given by 
\[
F_q^\ast(\sM\otimes_{\cO_Y}\mathscr{N})\cong F_q^\ast\sM\otimes_{\cO_Y}F_q^\ast\mathscr{N}\xrightarrow{\cong}\sM\otimes\mathscr{N}, 
\]
where the first isomorphism is the canonical one, and the second is the tensor product of the respective isomorphisms $F_q^\ast\sM\to\sM$ and $F_q^\ast\mathscr{N}\to\mathscr{N}$. 

Suppose now that $\sM$ and $\sN$ are lfgu. Let $M\subset\sM$ and $N\subset \mathscr{N}$ be respective roots. Then the morphisms $M\to F_q^\ast M$ and $N\to F_q^\ast N$ induce a morphism 
\[\theta\colon M\otimes_{\cO_X}N\to F_q^\ast( M\otimes_{\cO_X}N).\]
One easily checks that $\sM\otimes_{\cO_Y}\mathscr{N}$ is the unitalization of $(M\otimes_{\cO_X}N,\theta)$. 
 Hence, by Proposition~\ref{prop: a root generates lfgu Fmodule}, $\sM\otimes_{\cO_Y}\mathscr{N}$ is lfgu. 
\end{proof}

\begin{lem}\label{lem: minimal root restrict to minimal root}
Let $\sM$ be an lfgu $F^e$-module on $X$, and let $M$ denote its minimal root. Let $U\subset X$ be an open subscheme. Then the restriction $M|_U$ is the minimal root of $\sM|_U$. 
\end{lem}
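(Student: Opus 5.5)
We need to show: if $M$ is the minimal root of $\sM$ and $U\subset X$ is open, then $M|_U$ is the minimal root of $\sM|_U$.

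\emph{Step 1: $M|_U$ is a root.} The conditions $\theta_\sM(M)\subset F_q^\ast M$ and $\sM=\bigcup_m F_q^{m\ast}M$ are local. Restriction to $U$ commutes with $F_q^\ast$ and with unions, so $M|_U$ is a root of $\sM|_U$.

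\emph{Step 2: minimality.} Let $N'\subset\sM|_U$ be any root; we must show $M|_U\subset N'$. The natural approach is to extend $N'$ to a root $N$ of $\sM$ on $X$ with $N|_U\subset N'$. Minimality of $M$ then gives $M\subset N$, hence $M|_U\subset N|_U\subset N'$.

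\emph{Step 3: constructing $N$.} Since $X$ is Noetherian, there is a coherent subsheaf $N_1\subset\sM$ with $N_1|_U=N'$. To build it, take the subsheaf $j_\ast N'\cap\sM$ of $\sM$, a union of coherent subsheaves, and apply the standard extension lemma for coherent subsheaves of quasi-coherent sheaves.

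Then set
\[
N\coloneqq \sum_{i\ge0}\theta^{-1}\bigl(F_q^{i\ast}(N_1\cap M')\bigr)\cap\cdots,
\]
but this is awkward. A cleaner choice uses intersections, since a root is stable under intersections in a suitable sense. First,
\[
N_1\cap M' \quad\text{is a root of } \sM \text{ whenever } N_1 \text{ and } M' \text{ are roots,}
\]
because $F_q^\ast$ is exact on a regular scheme (Kunz), so $F_q^\ast(A\cap B)=F_q^\ast A\cap F_q^\ast B$. It is therefore enough to find \emph{some} root $N$ of $\sM$ with $N|_U\subset N'$.

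\emph{Step 4: producing a root with $N|_U\subset N'$.} Take any root $M_0$ of $\sM$. Its restriction $M_0|_U$ is a root on $U$, so $M_0|_U\subset F_q^{k\ast}N'$ for some $k$: $M_0|_U$ is coherent and $\bigcup_k F_q^{k\ast}N'=\sM|_U$, and $U$ is quasi-compact.

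Next, the root $N'$ satisfies $\theta(N')\subset F_q^\ast N'$. Setting $N'_k\coloneqq\theta^{-k}(N'\cap\ldots)$ does not lead anywhere; the key trick is rather to use the standard fact that if $P$ is a root, then $\theta^{-1}(F_q^\ast P)\cap\ldots$ fails in general, so we consider "descended" roots instead. Concretely, the natural candidate is
\[
N\coloneqq M_0\cap \tilde N',\qquad \tilde N'\coloneqq \{x\in\sM : x|_U\in N'\}.
\]
This $N$ is coherent, being a submodule of $M_0$, and satisfies $N|_U=M_0|_U\cap N'$. Stability $\theta(N)\subset F_q^\ast N$ follows from exactness of $F_q^\ast$ and flatness of restriction: $\theta(N)\subset F_q^\ast M_0\cap F_q^\ast\tilde N'=F_q^\ast N$. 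Here $F_q^\ast\tilde N'=\widetilde{F_q^\ast N'}$, which holds because $F_q$ is flat and commutes with $j_\ast$ for quasi-compact $j$.

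For the generating condition, we need $\bigcup_m F_q^{m\ast}N=\sM$. This reduces to showing that $F_q^{m\ast}M_0\cap F_q^{m\ast}\tilde N'$ exhausts $\sM$. That in turn needs $\bigcup_m F_q^{m\ast}\tilde N'=\sM$, i.e.\ that every local section $x$ lies in $F_q^{m\ast}\tilde N'$ for large $m$. On $U$ this holds because $N'$ is a root. On all of $X$ it is the main obstacle: $\tilde N'$ has no condition off $U$, so the claim is plausible, but a careful argument with $F_q^{m\ast}\tilde N'=\{x : x|_U\in F_q^{m\ast}N'\}$ gives it directly.

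Finally $N|_U\subset N'$, so minimality of $M$ gives $M\subset N$, and hence $M|_U\subset N'$.
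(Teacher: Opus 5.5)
Your final construction $N=M_0\cap\tilde N'$ with $\tilde N'=\xi^{-1}(j_\ast N')$ is exactly the paper's argument, which takes $M_0=M$ and $N'$ the minimal root of $\sM|_U$, and it is correct. The exhaustion step you flag as the obstacle does go through as you indicate, since $j_\ast$ commutes with the filtered union $\bigcup_m F_q^{m\ast}N'$; $N$ is coherent because $j_\ast$ preserves quasi-coherence, so $\tilde N'$ is quasi-coherent; and you should delete the abandoned attempts in Steps~3 and~4 (the incomplete formula ending in $\cap\cdots$ and the remarks about what ``fails in general'').
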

\begin{proof}
Let $j\colon U\hookrightarrow X$ denote the inclusion. Clearly, $j^\ast M$ is a root of $j^\ast\sM$. We show that this is minimal. Let $M'$ denote the minimal root of $j^\ast\sM$. Let $\xi\colon \sM\to j_+j^\ast\sM$ be the canonical morphism. Since $j_\ast$ commutes with filtered colimits and preserves quasi-coherence, the intersection 
\[M\cap \xi^{-1}(j_\ast M')\]
is a root of $\sM$. By the minimality of $M$, it follows that $\xi(M)\subset j_\ast M'$. Restricting to $U$, we obtain 
\[
j^\ast M\subset M'. 
\]
Since $M'$ is minimal, it follows that $j^\ast M=M'$. The assertion follows.  
\end{proof}

We recall that a unit $\cO_X[F^e]$-module naturally carries a $\cD_X$-module structure. 
\begin{const}\label{construction: Dmodule str}
    Let $\sM$ be a unit $\cO_X[F^e]$-module. We construct a ring homomorphism 
\begin{equation}\label{equation: map D to End in Cartier}
    \cD_X\to\mathcal{E}nd_{\F_p}(\sM)
\end{equation}
as follows. Let $P\in \cD_X$. By Proposition~\ref{prop: D=cup De}, there exists an integer $n\ge0$ such that $P$ is $\cO_X^{q^n}$-linear. On the other hand, the $F^{e}$-structure on $\sM$ induces an isomorphism 
\[
\varphi\colon F_q^{n\ast}\sM\xrightarrow{\cong} \sM. 
\]
 We define $\tilde{P}$ to be the composite 
\[
\sM\xrightarrow{\varphi^{-1}}F_q^{n\ast}\sM\xrightarrow{P\otimes {\rm id}_\sM}F_q^{n\ast}\sM\xrightarrow{\varphi}\sM. 
\]
One checks easily that $\tilde{P}$  does not depend on the choice of $n$, and that $P\mapsto \tilde{P}$ defines a ring homomorphism. 

This completes the construction of \eqref{equation: map D to End in Cartier}, and endows $\sM$ with a $\cD_X$-module structure. We refer to this as the \emph{induced $\cD_X$-module structure}. 
\end{const}

\subsection{Unit Frobenius Modules and Cartier Modules}\label{subsection: Unit Frobenius Modules and Cartier Modules}

 Here, we recall the fact that a unit $F^e$-module naturally admits the structure of a $q$-Cartier module. 
For every integer $m\ge0$, set 
\[
\mathcal{C}_X^{(m)}\coloneqq\mathcal{H}om_{\cO_X}((F_q^m)_\ast\cO_X,\cO_X). 
\]
We regard $\mathcal{C}_X^{(m)}$ as an $\mathcal{O}_X$-module via 
\begin{equation}\label{equation: module structure of CXm}
    a\cdot\phi\coloneqq \phi(a-), 
\end{equation}
where we use the identification $\cO_X\cong (F_q^m)_{\ast}\cO_X$ induced by the fact that $F_q^m$ is the identity on the underlying topological space. 
 
We first recall the following result, which is a special case of \cite[III, Theorem~6.7]{Har66}. 
\begin{prop}\label{prop: C to theta first version}For $\cO_X$-modules $M$ and $N$, there exists a natural bijection  
\[
{\rm Hom}_{\cO_X}((F_q^m)_\ast M,N)\xrightarrow{\cong} {\rm Hom}_{\cO_X}( M,\mathcal{C}_X^{(m)}\otimes_{\cO_X}F_q^{m\ast}N). 
\] 
\end{prop}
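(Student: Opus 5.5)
The plan is to reduce the statement to standard duality for the finite morphism $F_q^m$, in the form $(F_q^m)_\ast$ left adjoint to $(F_q^m)^!$, together with the identification $(F_q^m)^!N\cong \mathcal{C}_X^{(m)}\otimes_{\cO_X}F_q^{m\ast}N$, which holds because $X$ is regular and $F$-finite. First I would note that, since $F_q^m$ is finite and the identity on underlying spaces, an $\cO_X$-linear map $(F_q^m)_\ast M\to N$ is the same as an $\cO_X$-linear map $M\to \mathcal{H}om_{\cO_X}((F_q^m)_\ast\cO_X,N)$. Here the $\cO_X$-structure on the target is the one coming from the source variable, exactly as in \eqref{equation: module structure of CXm}. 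This step is the usual tensor--hom adjunction for the $\cO_X$-algebra $(F_q^m)_\ast\cO_X$, using $(F_q^m)_\ast M=(F_q^m)_\ast\cO_X\otimes_{(F_q^m)_\ast\cO_X}M$.

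Second, I would construct a natural morphism
\[
\mathcal{C}_X^{(m)}\otimes_{\cO_X}F_q^{m\ast}N\to \mathcal{H}om_{\cO_X}((F_q^m)_\ast\cO_X,N).
\]
Locally it is given by sending $\phi\otimes(a\otimes y)$ to the map $b\mapsto \phi(ab)\,y$; note that $F_q^{m\ast}N=\cO_X\otimes_{\cO_X^{q^m}}N$ in the identification of the paper. One checks that this is well defined, in particular compatible with $a^{q^m}\otimes y=1\otimes a y$, because $\phi$ is linear over $\cO_X^{q^m}$. One also checks that it is linear for the stated module structures.

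Third, I would show this map is an isomorphism. The question is local, so I may assume $X=\operatorname{Spec}R$. By Kunz's theorem, $R$ regular and $F$-finite implies $F_\ast^{m}R$ is finite projective over $R$, and after further localizing it is finite free. For a finite projective $R$-module $P$ the canonical map $\operatorname{Hom}_R(P,R)\otimes_R N\to \operatorname{Hom}_R(P,N)$ is an isomorphism. Applying this with $P=F_\ast^m R$ and rewriting $\operatorname{Hom}_R(P,R)\otimes_R N$ as $\operatorname{Hom}_R(P,R)\otimes_{F^m_\ast R}(F^m_\ast R\otimes_R N)$ identifies it with $\mathcal{C}_X^{(m)}\otimes_{\cO_X}F_q^{m\ast}N$, with the twist of module structures kept track of.

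The main obstacle is the bookkeeping in this last identification: which copy of $\cO_X$ acts where, and ensuring that the tensor product $\otimes_{\cO_X}$ in the statement uses the structure \eqref{equation: module structure of CXm} on $\mathcal{C}_X^{(m)}$ and the natural one on $F_q^{m\ast}N$. Naturality in $M$ and $N$ follows because each step is natural. Alternatively, one can simply cite \cite[III, Theorem~6.7]{Har66}, using $F_q^{m!}\cO_X\cong\mathcal{C}_X^{(m)}$ and the fact that $F_q^{m!}N\cong F_q^{m!}\cO_X\otimes F_q^{m\ast}N$ for finite flat $F_q^m$.
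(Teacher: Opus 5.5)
Your proposal is correct and follows essentially the same route as the paper: first the adjunction $C\mapsto\tilde C$ with $\tilde C(x)(a)=C(ax)$, then the inverse of the canonical map $\mathcal{C}_X^{(m)}\otimes_{\cO_X}F_q^{m\ast}N\to\mathcal{H}om_{\cO_X}((F_q^m)_\ast\cO_X,N)$. This map is an isomorphism because $(F_q^m)_\ast\cO_X$ is locally free of finite rank (Kunz), and your formula $\phi\otimes(a\otimes y)\mapsto[b\mapsto\phi(ab)y]$ agrees with the paper's. The one cosmetic point is that $M$ and $N$ are arbitrary $\cO_X$-modules, so your third step should be stated sheaf-theoretically (locally $(F_q^m)_\ast\cO_X\cong\cO_X^{r}$) rather than for $R$-modules; the composite of the two bijections then yields the result directly, without invoking Hartshorne.
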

\begin{proof}

For the reader's convenience, we recall the construction of the bijection. Let 
$C\colon (F_q^m)_\ast M\to N$ be an $\cO_X$-linear morphism. 
The morphism $C$ induces a morphism 
\[\tilde{C}\colon M\to \mathcal{H}om_{\cO_X}((F_q^m)_\ast\cO_X, N),\quad \tilde{C}(x)(a):=C(ax). 
\] 
Here, we identify $(F_q^m)_\ast M$ with $M$ by using the fact that $F_q^m$ induces the identity on the underlying space. 

Since $X$ is $F$-finite and regular, $(F_q^m)_\ast\cO_X$ is a locally free $\cO_X$-module of finite rank. 
 Hence, the canonical morphism  
\begin{equation*}
    \mathcal{C}_X^{(m)}\otimes_{\cO_X}F_q^{m\ast} N\to 
\mathcal{H}om_{\cO_X}((F_q^m)_\ast\cO_X, N),\quad \phi\otimes(1\otimes n)\mapsto [a\mapsto \phi(a)n]
\end{equation*}
is an isomorphism, where the $\cO_X$-module structure of $\mathcal{C}_X^{(m)}$ is given by~\eqref{equation: module structure of CXm}. 
 Composing its inverse with $\tilde{C}$, we obtain a morphism of abelian sheaves 
\[
\theta_C'\colon M\to \mathcal{C}_X^{(m)}\otimes_{\cO_X} F_q^{m\ast}N. 
\]
By unwinding the definitions, one checks that $\theta_C$ is $\cO_X$-linear. 

The assignment $C\mapsto \theta'_C$ is a bijection, since it coincides with the construction in \cite[III, Theorem~6.7]{Har66}. 
\end{proof}

We next identify the $\cO_X$-module $\mathcal{C}_X^{(m)}$.  
We begin with the following general lemma. 
\begin{lem}\label{lem: general Y to Z}
Let $f\colon Y\to Z$ be a finite flat morphism of  Noetherian schemes. Assume that the $f_\ast\cO_Y$-module 
\[
\mathcal{H}\coloneqq \mathcal{H}om_{\cO_Z}(f_\ast\cO_Y,\cO_Z)
\]
    is invertible. Let 
    \[
    \tilde{\eta}\colon f_\ast\cO_Y\to \mathcal{H}
    \]
    be an $f_\ast\cO_Y$-linear morphism, and set $\eta\coloneqq\tilde{\eta}(1)\colon f_\ast\cO_Y\to \cO_Z$. For any point $z\in Z$ with residue field $k(z)$, let 
    \[
    \eta_z\colon f_{z\ast}\cO_{Y_z}\to k(z)
    \]
    denote the base change of $\eta$ to $k(z)$. 
    Then the following are equivalent: 
    \begin{enumerate}
        \item The morphism $\tilde{\eta}$ is an isomorphism. 
        \item For every point $z\in Z$, there exists no nonzero coherent $\cO_{Y_z}$-submodule $M\subset \cO_{Y_z}$ such that $\eta_z(f_{z\ast}M)=0$. 
    \end{enumerate}
\end{lem}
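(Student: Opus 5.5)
Since $\mathcal{H}$ and $f_\ast\cO_Y$ are both invertible $f_\ast\cO_Y$-modules, I would reduce the statement to a fibrewise check. Whether the $f_\ast\cO_Y$-linear map $\tilde{\eta}$ is an isomorphism is local on $Z$. So I may assume $Z=\operatorname{Spec}(A)$ is local with closed point $z$, $f_\ast\cO_Y=B$ is finite flat (hence free) over $A$, and $\mathcal{H}=\operatorname{Hom}_A(B,A)$ is invertible over the semilocal ring $B$, hence free of rank one. Then $\tilde{\eta}$ is an endomorphism of a free rank-one $B$-module, i.e. multiplication by some element $b\in B$ after choosing a generator. It is an isomorphism if and only if $b$ is a unit of $B$. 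Since $B$ is finite over $A$, every maximal ideal of $B$ lies over $\mathfrak{m}_A$, so by Nakayama this holds if and only if the image of $b$ in $B\otimes_A k(z)$ is a unit. In other words, $\tilde{\eta}$ is an isomorphism if and only if its base change $\tilde{\eta}\otimes k(z)$ is an isomorphism for every $z$; in the non-local setting I would apply this at every point $z\in Z$.

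Next I would identify the base change. Since $B$ is finite free over $A$, the formation of $\operatorname{Hom}_A(B,A)$ commutes with base change: $\mathcal{H}\otimes_A k(z)\cong \operatorname{Hom}_{k(z)}(B_z,k(z))$, where $B_z=\Gamma(Y_z,\cO_{Y_z})$. Under this identification $\tilde{\eta}\otimes k(z)$ becomes the map $B_z\to \operatorname{Hom}_{k(z)}(B_z,k(z))$, $b\mapsto \eta_z(b\,\cdot)$. This map is $B_z$-linear because $\tilde{\eta}$ is $B$-linear, so $\tilde{\eta}(b)=b\cdot\eta=\eta(b\,\cdot)$.

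It remains to prove the statement over the field $k(z)$. Source and target are finite-dimensional $k(z)$-vector spaces of the same dimension, so the map is an isomorphism if and only if it is injective. Its kernel is $\{b\in B_z : \eta_z(bB_z)=0\}$, which is the largest ideal $I$ of $B_z$ with $\eta_z(I)=0$. Indeed, if $I$ is an ideal with $\eta_z(I)=0$, then every $b\in I$ satisfies $\eta_z(bB_z)\subset\eta_z(I)=0$. Coherent $\cO_{Y_z}$-submodules of $\cO_{Y_z}$ are exactly the ideals of $B_z$, since $Y_z$ is affine. Hence injectivity is equivalent to condition (2) at $z$, and combining this with the first paragraph gives (1)$\Leftrightarrow$(2).

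The step needing the most care is the first one: justifying that an endomorphism of an invertible module over the finite $A$-algebra $B$ can be tested on fibres over points of $Z$ rather than over points of $Y$. My plan is to handle this via the determinant/unit argument above. Base-change compatibility of $\mathcal{H}$ is where the flatness of $f$ is used.
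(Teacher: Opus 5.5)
Your proof is correct and follows the same route as the paper: reduce to the fibres over points $z\in Z$, then over $k(z)$ use the equality of dimensions together with the observation that the kernel of $b\mapsto\eta_z(b\,\cdot)$ is the largest ideal of $B_z$ annihilated by $\eta_z$. The only variation is in the fibrewise reduction. The paper notes that $\tilde{\eta}$ is an isomorphism if and only if it is surjective and applies Nakayama to the coherent $\cO_Z$-module $\mathcal{H}$, whereas you trivialize $\mathcal{H}$ over the semilocal ring $B$ and test whether the corresponding element is a unit modulo $\mathfrak{m}_A B$; you also spell out the base-change identification of $\mathcal{H}$, which the paper leaves implicit.
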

\begin{proof}
Since $\mathcal{H}$ is invertible by assumption, condition~(1) is equivalent to  $\tilde{\eta}$ being surjective. By Nakayama's lemma, this is further equivalent to requiring that, for every point $z\in Z$, the induced map 
\[
\tilde{\eta}_z\colon f_{z\ast}\cO_{Y_z}\to \mathcal{H}\otimes_{\cO_Z}k(z)
\]
is surjective. Hence we may reduce to the case where $Z={\rm Spec}(k)$ for a field $k$. 

Assume that $Z={\rm Spec}(k)$. In this case, $\Gamma(Y,\cO_Y)$ and $\Gamma(Z,\mathcal{H})$ are finite-dimensional $k$-vector spaces of the same dimension. Therefore, $\tilde{\eta}$ is an isomorphism if and only if it is injective. Note that, by adjunction, $\ker\tilde{\eta}$ is the largest $f_\ast\cO_Y$-submodule that is annihilated by $\eta$. Hence, the injectivity of $\tilde{\eta}$ 
is equivalent to~(2). The assertion follows. 
\end{proof}

\begin{lem}\label{lem: generator}
The following statements hold: 
\begin{enumerate}
    \item The $\mathcal{O}_X$-module $\mathcal{C}_X^{(m)}$ is invertible. 
    \item Let $\eta\in \mathcal{C}_X^{(1)}$ be a generator as an $\cO_X$-module. Then the composite $\eta^m\in \mathcal{C}_X^{(m)}$ is a generator 
    as an $\mathcal{O}_X$-module. 
\end{enumerate}
\end{lem}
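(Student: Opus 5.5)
Both statements are local on $X$, so I work over an affine open $\operatorname{Spec}(R)$ on which $F_{q\ast}\cO_X$ is free. This is possible because $X$ is regular and $F$-finite: by Kunz's theorem $F_q$ is flat, and being finite it is locally free of finite rank. Part~(1) for general $m$ follows from the case $m=1$ applied with $q$ replaced by $q^m$. I will therefore phrase the argument for the finite flat morphism $F_q^m\colon X\to X$ and identify $\mathcal{C}_X^{(m)}=\mathcal{H}om_{\cO_X}((F^m_q)_\ast\cO_X,\cO_X)$ with Grothendieck duality's $f^!\cO_X$ for $f=F_q^m$.

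For~(1), the key fact is that $f^!\cO_X=\mathcal{H}om(f_\ast\cO_X,\cO_X)$ is a dualizing module on the source. The source is regular, so its dualizing module is invertible; equivalently, $f_\ast\cO_X$ is Gorenstein over $\cO_X$. Concretely, I would argue stalkwise. For a regular local ring $A$ with $F$-finite Frobenius, $F^m_{q\ast}A$ is a finite free $A$-module and $A$ is Gorenstein, so $\operatorname{Hom}_A(F_{q\ast}^mA,A)$ is the canonical module of the Gorenstein (indeed regular) ring $F^m_{q\ast}A$. Since it is a finite flat extension, $\operatorname{Hom}_A(F^m_\ast A,A)$ is the canonical module of $F^m_\ast A$. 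Because $F^m_\ast A\cong A$ is regular local, that canonical module is free of rank one. This gives invertibility as an $(F_q^m)_\ast\cO_X$-module. Under the convention \eqref{equation: module structure of CXm}, this is exactly invertibility as an $\cO_X$-module.

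For~(2), I use Lemma~\ref{lem: general Y to Z} with $f=F_q^m$ and $\tilde\eta\colon a\mapsto \eta^m(a-)$. By that lemma, it suffices to check fiberwise nondegeneracy, and I expect this check to be the main obstacle. An alternative route is more direct. Every element of $\mathcal{C}^{(1)}$ is $\eta(a\,-)$ for some $a$, by~(1). Then $\mathcal{C}^{(m)}$ is generated by the composites $\phi_1\circ\cdots\circ\phi_m$ with $\phi_i\in\mathcal{C}^{(1)}$; this follows by the transitivity $\mathcal{H}om(F^{m}_\ast\cO,\cO)\cong\mathcal{H}om(F^{m-1}_\ast(F_\ast\cO),\ldots)$, i.e.\ from the isomorphism $\mathcal{H}om(F_\ast^{m}\cO,\cO)\cong \mathcal{H}om(F^{m-1}_\ast\cO,\cO)$-composed-with-$\mathcal{H}om(F_\ast\cO,\cO)$ arising from adjunction for the finite flat $F^{m-1}$ together with~(1).

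Each composite can then be rewritten as $\eta(a_1\eta(a_2\cdots\eta(a_m-)))$. Using the rule $\eta(b\,\eta(c))=\eta(\eta(b^qc))$, I move all coefficients inward and obtain $\eta^m(a\,-)$ for a single $a$. This shows that $\eta^m$ generates $\mathcal{C}^{(m)}$, and I proceed by induction on $m$. The care needed is in the transitivity isomorphism: it must hold at the level of generators. That is the hom–tensor adjunction $\mathcal{H}om_{\cO}(F^{m}_\ast\cO,\cO)\cong\mathcal{H}om_{F_\ast\cO}(F^{m}_\ast\cO,\mathcal{H}om_{\cO}(F_\ast\cO,\cO))$, combined with the invertibility $\mathcal{H}om_{\cO}(F_\ast\cO,\cO)=F_\ast\cO\cdot\eta$ from~(1).
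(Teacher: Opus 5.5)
Your proposal is correct. For~(1) you follow the paper's route. The paper also identifies $\mathcal{C}_X^{(m)}$ with $F_q^{m!}\cO_X$ (using that $F_q^m$ is finite flat), observes that this is dualizing because $\cO_X$ is, and deduces invertibility from regularity. Your stalkwise canonical-module argument is a concrete version of the same idea.

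For~(2), your second route differs from the paper's. The paper carries out the approach you set aside: it applies Lemma~\ref{lem: general Y to Z} to reduce to fibers over points $x\in X$. It then argues by induction that $\eta_x^{m-1}$ sends a nonzero submodule of the fiber of $F_q^m$ to a nonzero submodule, which $\eta_x$ cannot annihilate.

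You instead prove the transitivity $\mathcal{C}_X^{(m)}=\eta\circ\mathcal{C}_X^{(m-1)}$ directly:
\begin{enumerate}
\item By hom--tensor adjunction along $\cO_X\to F_{q\ast}\cO_X$, an element $\phi\in\mathcal{C}_X^{(m)}$ corresponds to the $F_{q\ast}\cO_X$-linear map $F^m_{q\ast}\cO_X\to\mathcal{C}_X^{(1)}$, $n\mapsto[\,b\mapsto\phi(bn)\,]$.
\item Since $b\mapsto\eta(b\,-)$ is an isomorphism $F_{q\ast}\cO_X\to\mathcal{C}_X^{(1)}$, this map has the form $n\mapsto\eta(\psi(n)\,-)$ with $\psi\in\mathcal{C}_X^{(m-1)}$.
\item Evaluating at $b=1$ gives $\phi=\eta\circ\psi$.
\item Induction together with $b\,\eta(c)=\eta(b^qc)$ then yields $\phi=\eta^m(a\,-)$ for a single $a$.
\end{enumerate}

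This is more elementary and more explicit than the paper's argument. It needs no Nakayama, no base change to fibers, and no dimension count as in Lemma~\ref{lem: general Y to Z}. It also works verbatim for any tower of ring maps $A\to B\to C$ in which $\operatorname{Hom}_A(B,A)$ and $\operatorname{Hom}_B(C,B)$ are free of rank one on the given elements. In exchange, the paper's route gives a pointwise nondegeneracy criterion for an element to be a generator.

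Two small points in your write-up should be corrected:
\begin{enumerate}
\item The fact that every element of $\mathcal{C}_X^{(1)}$ has the form $\eta(a\,-)$ is the generator hypothesis, not~(1). What~(1) contributes is injectivity of $b\mapsto\eta(b\,-)$, which you need in order to replace $\mathcal{C}_X^{(1)}$ by $F_{q\ast}\cO_X$ inside the Hom.
\item The adjunction you use is plain hom--tensor adjunction for a ring map and does not use flatness of $F_q^{m-1}$.
\end{enumerate}
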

\begin{proof}
(1) Since $X$ is regular and $F$-finite, $\cO_X$ is a dualizing sheaf. Hence, $F_q^{m!}\cO_X$ is a dualizing complex on $X$. In particular, it is an inverible sheaf up to a shift in degree. Moreover, since $F_q^m$ is finite flat, $F_q^{m!}\cO_X$ is isomorphic to $\mathcal{C}_X^{(m)}$.  The assertion follows. 

\medskip

\noindent
(2) We argue by induction on $m$. The case $m=1$ is given by the assumption. Now assume that $m>1$. 

By Lemma~\ref{lem: general Y to Z} and part~(1), it suffices to show that, for every point $x\in X$ with residue field $k(x)$ and for every nonzero coherent submodule $M\subset \cO_{X_{x,m}}$, where we set $X_{x,m}\coloneqq X\times_{F_q^m,X}x$, we have 
\[
\eta^m_x((F^m_{q,x})_\ast M)\neq0. 
\]
Here, $\eta_x^m$ and $F_{q,x}^m$ denote the base changes of $\eta^m$ and $F_q^m$ over the point $x$, respectively. Let 
\[f\colon X_{x,m}\to X_{x,m}\]
denote the base change of $F_q^{m-1}$ to the fibers over $x\in X$. By the induction hypothesis and  Lemma~\ref{lem: general Y to Z}, the image 
\[
N\coloneqq \eta^{m-1}_x((F_{q,x}^m)_\ast M)\subset (F_{q,x})_\ast\cO_{X_{x,1}}
\]
is nonzero. Since $\eta$ is a generator, again by Lemma~\ref{lem: general Y to Z}, the image $\eta_x(N)$ is nonzero. Hence 
\[
\eta_x^m((F^m_{q,x})_\ast M) = \eta_x(N) \neq 0.
\]
The assertion then follows from Lemma~\ref{lem: general Y to Z}. 
\end{proof}

By Lemma~\ref{lem: generator}(1), the $\cO_X$-module $\mathcal{C}^{(1)}_X$ is invertible. We assume that it is free, and choose a generator $\eta\in \mathcal{C}^{(1)}_X$. Then, by Lemma~\ref{lem: generator}(2), the composite $\eta^m\in\mathcal{C}_X^{(m)}$ is a generator for all $m\ge1$.

   \begin{prop}\label{prop: C to theta}
Fix a generator $\eta\in\mathcal{C}_X^{(1)}$. Then, for any $\cO_X$-modules $M$ and $N$, there exists a natural bijection 
\[
{\rm Hom}_{\cO_X}((F_q^m)_{\ast}M,N)\to {\rm Hom}_{\cO_X}(M,F_{q}^{m\ast} N). 
\]     
This bijection is characterized as follows. 
Let $C \in {\rm Hom}_{\cO_X}((F_q^m)_{\ast}M,N)$, and let $\theta$ denote its image. Then $\theta$ is the unique $\cO_X$-linear morphism $M\to F_q^{m\ast}N$ making the diagram 
\[
\xymatrix{
M\ar[r]^-\theta\ar[rd]_-C&F_q^{m\ast}N\ar[d]^-{\eta^m\otimes{\rm id}_N}\\
&N
}
\]
 commutative. 
   \end{prop}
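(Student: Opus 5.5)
The plan is to obtain the bijection by composing the bijection of Proposition~\ref{prop: C to theta first version} with the trivialization of $\mathcal{C}_X^{(m)}$ given by $\eta^m$. By Lemma~\ref{lem: generator}(2), $\eta^m$ generates the invertible $\cO_X$-module $\mathcal{C}_X^{(m)}$, with the module structure \eqref{equation: module structure of CXm}. Hence the map $\cO_X\to\mathcal{C}_X^{(m)}$, $a\mapsto a\cdot\eta^m=\eta^m(a-)$, is an isomorphism. Tensoring with $F_q^{m\ast}N$ gives an isomorphism $F_q^{m\ast}N\xrightarrow{\cong}\mathcal{C}_X^{(m)}\otimes_{\cO_X}F_q^{m\ast}N$. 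Here the $\cO_X$-structure on $F_q^{m\ast}N=\cO_X\otimes_{F_q^m,\cO_X}N$ is through the left factor, which matches the tensor product used in Proposition~\ref{prop: C to theta first version}. Composing the inverse of this isomorphism with $\theta'_C$ gives an $\cO_X$-linear map $\theta\colon M\to F_q^{m\ast}N$. Since both steps are bijections natural in $M$ and $N$, the assignment $C\mapsto\theta$ is a natural bijection.

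Next I will verify the characterization. Under the identification $\mathcal{C}_X^{(m)}\otimes F_q^{m\ast}N\cong\mathcal{H}om_{\cO_X}((F_q^m)_\ast\cO_X,N)$, an element $a\otimes n\in F_q^{m\ast}N$ corresponds to $\eta^m(a\,\cdot)\otimes(1\otimes n)$, and hence to the homomorphism $b\mapsto\eta^m(ab)n$. The composite $\eta^m\otimes\mathrm{id}_N\colon F_q^{m\ast}N\to N$, $a\otimes n\mapsto\eta^m(a)n$, is therefore the same as evaluating the corresponding homomorphism at $b=1$. By construction, $\theta(x)$ corresponds to $\tilde C(x)=[b\mapsto C(bx)]$, and evaluating at $1$ gives $C(x)$. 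Thus $(\eta^m\otimes\mathrm{id}_N)\circ\theta=C$, so the diagram commutes.

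It remains to prove uniqueness. Suppose $\theta_1,\theta_2\colon M\to F_q^{m\ast}N$ are $\cO_X$-linear and both make the diagram commute, and let $\delta=\theta_1-\theta_2$. Then for local sections $b\in\cO_X$ and $x\in M$, $\cO_X$-linearity gives $(\eta^m\otimes\mathrm{id})(b\,\delta(x))=(\eta^m\otimes\mathrm{id})(\delta(bx))=0$. So the element $y=\delta(x)$ satisfies $(\eta^m\otimes\mathrm{id})(by)=0$ for all $b$. Under the isomorphism above, $y$ corresponds to the homomorphism $b\mapsto(\eta^m\otimes\mathrm{id})(by)$, which is zero. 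Hence $y=0$, and $\theta_1=\theta_2$.

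The main point of care is the bookkeeping of the two $\cO_X$-module structures: on $\mathcal{C}_X^{(m)}$ via \eqref{equation: module structure of CXm}, and on $F_q^{m\ast}N$ via the left factor. With these fixed, $a\mapsto a\cdot\eta^m$ is the correct trivialization, and the evaluation-at-$1$ identity is exactly what is needed.
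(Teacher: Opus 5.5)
Your proposal is correct and takes the paper's approach: compose the bijection of Proposition~\ref{prop: C to theta first version} with the trivialization $\cO_X\cong\mathcal{C}_X^{(m)}$, $1\mapsto\eta^m$, supplied by Lemma~\ref{lem: generator}(2). Your evaluation-at-$1$ check of the commutative triangle and your uniqueness argument spell out what the paper dismisses as ``immediate from the construction.''
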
 
\begin{proof}
By Lemma~\ref{lem: generator}(2), the composite $C^m$ induces an isomorphism 
\[
\cO_X\cong\mathcal{C}_X^{(m)}, \quad 1\mapsto \eta^m. 
\]
Hence, the composite map 
\[
{\rm Hom}_{\cO_X}((F_q^m)_\ast M,N)\xrightarrow{\text{Proposition~\ref{prop: C to theta first version}}} {\rm Hom}_{\cO_X}( M,\mathcal{C}_X^{(m)}\otimes_{\cO_X}F_q^{m\ast}N)\cong {\rm Hom}_{\cO_X}( M,F_q^{m\ast}N)
\]
gives the desired bijection. The remaining assertion is immediate from the construction. 
\end{proof}

\begin{remark}
    In Proposition~\ref{prop: C to theta}, we choose a generator of $\mathcal{C}_X^{(1)}$. This choice can in fact be avoided by twisting $M$ and $N$ by a suitable invertible sheaf, so that the resulting  bijection becomes canonical, in the sense that it no longer depends on any auxiliary choice. We describe this construction in the appendix. Although this may be well known to experts, it answers a question raised in \cite[Remark~2.22]{BB11}. When $X$ is of finite type over an $F$-finite field, the construction is classical. 
\end{remark}

\begin{lem}\label{lem: C theta eta}
Let $(M,C)$ be a $q$-Cartier module on $X$, and let $\theta\colon M\to F_q^\ast M$ denote the corresponding $\cO_X$-linear morphism constructed in Proposition~\ref{prop: C to theta}. Let $m$ be a positive integer, and let $\theta_m$ denote the composite morphism 
\[
M\xrightarrow{\theta} F_q^\ast M\xrightarrow{F_q^\ast\theta}F_q^{2\ast}M\xrightarrow{F_q^{2\ast}\theta}\cdots \xrightarrow{(F_q^{m-1 })^\ast\theta} F_q^{m\ast}M. 
\]
\begin{enumerate}
    \item The diagram 
\[
\xymatrix{
M\ar[r]^-{\theta_m}\ar[rd]_-{C^m}&F_q^{m\ast}M\ar[d]^-{\eta^m\otimes{\rm id}_M}\\
&M
}
\]
is commutative. 
\item Consider the $\cD_X^{(em)}$-module structure on $F_q^{m\ast} M$ defined in \S\ref{subsubsection: Frobenius descent}. Then, for any $\cO_X$-submodule $N\subset M$, we have 
\[
\cD_X^{(em)}\cdot \theta_m(N)=F_q^{m\ast}(C^m(N)). 
\]
\end{enumerate}
\end{lem}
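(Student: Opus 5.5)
The proof has two parts: part~(1) by induction on $m$ from Proposition~\ref{prop: C to theta}, and part~(2) by reducing the $\cD_X^{(em)}$-span of a subset of $F_q^{m\ast}M$ to a computation with one explicit splitting operator.

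For part~(1), induct on $m$. The case $m=1$ is exactly the characterization of $\theta$ in Proposition~\ref{prop: C to theta}, namely $(\eta\otimes{\rm id}_M)\circ\theta=C$. For the inductive step, write $\theta_{m+1}=(F_q^{m})^\ast\theta\circ\theta_m$ and note that $\eta^{m+1}=\eta^m\circ (F_q^m)_\ast\eta$ as maps $(F_q^{m+1})_\ast\cO_X\to\cO_X$. One then has to check that
\[
(\eta^{m+1}\otimes{\rm id}_M)\circ (F_q^{m})^\ast\theta=C\circ(\eta^m\otimes {\rm id}_M)
\]
as maps $F_q^{m\ast}M\to M$. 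This is checked on local sections $a\otimes x$ with $a\in\cO_X$ and $x\in M$. The left side is $\eta^m\bigl(a\cdot(\eta\otimes{\rm id})(\theta(x))\bigr)$, suitably interpreted, and the case $m=1$ rewrites it as $\eta^m$ applied to $a\,C(x)$. Unwinding the $\cO_X$-module structure \eqref{equation: module structure of CXm} then gives the identity. Composing with the induction hypothesis $(\eta^m\otimes{\rm id})\circ\theta_m=C^m$ gives the claim. Everything here is bookkeeping with the identifications $(F_q^m)_\ast\cO_X\cong\cO_X$.

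For part~(2), first prove the inclusion $\cD_X^{(em)}\theta_m(N)\subset F_q^{m\ast}(C^m(N))$. Locally choose a basis $\{b_j\}$ of $\cO_X$ over $\cO_X^{q^m}$. Since $\eta^m$ generates $\mathcal{C}_X^{(m)}$ by Lemma~\ref{lem: generator}, there is a dual basis $\{c_j\}$ with $\eta^m(c_j b_k)=\delta_{jk}$. Then every $y\in F_q^{m\ast}M$ satisfies
\[
y=\sum_j b_j\otimes (\eta^m\otimes{\rm id})(c_jy).
\]
For $y=\theta_m(x)$ with $x\in N$, part~(1) gives $(\eta^m\otimes{\rm id})(c_j\theta_m(x))=C^m(c_jx)\in C^m(N)$, using the $\cO_X$-linearity of $\theta_m$. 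So $\theta_m(N)\subset F_q^{m\ast}C^m(N)$, and the latter is a $\cD_X^{(em)}$-submodule because $\cD_X^{(em)}$ acts on the first tensor factor.

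For the reverse inclusion it suffices to show that $1\otimes C^m(x)\in\cD_X^{(em)}\theta_m(N)$ for $x\in N$, since $\cD_X^{(em)}\supset\cO_X$. Let $P\in\cD_X^{(em)}$ be the operator $a\mapsto \eta^m(a)^{q^m}$, that is, $\eta^m$ followed by the inclusion $\cO_X^{q^m}\hookrightarrow\cO_X$ under the identifications. This operator is $\cO_X^{q^m}$-linear. Applying $P\otimes{\rm id}$ to $\theta_m(x)=\sum_k a_k\otimes x_k$ gives
\[
\sum_k \eta^m(a_k)^{q^m}\otimes x_k=\sum_k 1\otimes\eta^m(a_k)x_k=1\otimes C^m(x),
\]
by part~(1). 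The main obstacle is this last step: one must make sure that $P$ really lies in $\cD_X^{(em)}=\mathcal{E}nd_{\cO_X^{p^{em}}}(\cO_X)$ and that it is compatible with the Frobenius-descent action $P(a\otimes x)=P(a)\otimes x$. Both reduce to the fact that $\eta^m$ is linear over $\cO_X$ acting through $q^m$-th powers.
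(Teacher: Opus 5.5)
Part~(1) follows the paper's route, induction from the case $m=1$. The identity you reduce to, $(\eta^{m+1}\otimes{\rm id}_M)\circ(F_q^m)^\ast\theta=C\circ(\eta^m\otimes{\rm id}_M)$, is true, but your verification of it is wrong as written.

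The element $a\otimes\theta(x)$ lives in $F_q^{m\ast}F_q^\ast M\cong F_q^{(m+1)\ast}M$, via $a\otimes(c\otimes x)\mapsto ac^{q^m}\otimes x$. You cannot apply $\eta$ first while keeping $a$ outside, because $a$ is not a $q$-th power. Writing $\theta(x)=\sum_k c_k\otimes x_k$, one has $\eta^{m+1}(ac_k^{q^m})=\eta\bigl(c_k\,\eta^m(a)\bigr)$, not $\eta^m\bigl(a\,\eta(c_k)\bigr)$. Read literally, your ``$\eta^m$ applied to $a\,C(x)$'' is $\eta^m(a)C(x)$. That differs from $C(\eta^m(a)x)$ in general, since $C$ is not $\cO_X$-linear, and no unwinding of the module structure on $\mathcal{C}_X^{(m)}$ repairs this.

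The correct computation applies $\eta^m$ first and $\eta$ last. Using the $\cO_X$-linearity of $\theta$ and the case $m=1$, one gets $(\eta^{m+1}\otimes{\rm id})(a\otimes\theta(x))=\sum_k\eta\bigl(c_k\,\eta^m(a)\bigr)x_k=(\eta\otimes{\rm id})\bigl(\theta(\eta^m(a)x)\bigr)=C(\eta^m(a)x)$. This is a local slip, not a flaw in the strategy; the paper itself only says ``by induction''.

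Part~(2) is correct and takes a different route from the paper. The paper first uses Frobenius descent (Theorem~\ref{thm: Frobenius descent}) to write $\cD_X^{(em)}\theta_m(N)=F_q^{m\ast}(N')$. Surjectivity of $\eta^m$ then gives $N'=(\eta^m\otimes1)\,\cD_X^{(em)}\theta_m(N)$. Finally, the generator property of $\eta^m$ gives $\eta^m\cdot\cD_X^{(em)}=\eta^m\cdot\cO_X$, so $N'=(\eta^m\otimes1)\theta_m(N)=C^m(N)$ by~(1).

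You instead prove the two inclusions by hand. The dual basis, which is where you use the generator property, gives the explicit formula $\theta_m(x)=\sum_j b_j\otimes C^m(c_jx)$, hence $\subset$. The explicit operator $a\mapsto\eta^m(a)^{q^m}$ in $\cD_X^{(em)}$ sends $\theta_m(x)$ to $1\otimes C^m(x)$, hence $\supset$; this direction does not even need the generator property.

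Your version avoids the descent equivalence, needing only local freeness of $\cO_X$ over $\cO_X^{q^m}$, and produces explicit formulas. The paper's version is shorter and basis-free, packaging both inclusions into the single identity $\eta^m\cD_X^{(em)}=\eta^m\cO_X$.
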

\begin{proof}
(1) The case $m=1$ follows from the construction of $\theta$. The general case follows by induction on $m$. 
 
(2) By Theorem~\ref{thm: Frobenius descent}, one can write 
\[
\cD_X^{(em)}\cdot\theta_m(N)=F_q^{m\ast}(N')
\]
for some submodule $N'\subset M$. Since $\eta^m\colon (F_q^m)_{\ast}\cO_X\to\cO_X$ is surjective, we have 
\[
N'=(\eta^m\otimes1)\cdot F_q^{m\ast}(N')=(\eta^m\otimes1)\cdot \cD_X^{(em)}\cdot\theta_m(N). 
\]
Since $\eta^m$ is a generator of $\mathcal{C}_X^{(m)}$ as an $\cO_X$-module, it is also a generator as a right $\cD_X^{(em)}$-module. Hence, 
\[
\eta^m\cdot \cD_X^{(em)}=\mathcal{C}_X^{(m)}=\eta^m\cdot  \cO_X. 
\]
Therefore, we obtain 
\[
(\eta^m\otimes1)\cdot \cD_X^{(em)}\cdot\theta_m(N)=(\eta^m\otimes1)\cdot\theta_m(N). 
\]
By~(1), the right-hand side equals $C^m(N)$. The assertion follows.  
\end{proof}

Let $\sM$ be a unit $F^e$-module on $X$. Via Proposition~\ref{prop: C to theta}, the isomorphism $\theta_{\sM}$ corresponds to an $\cO_X$-linear  morphism 
\begin{equation}\label{equation: CM}
    C_\sM\colon F_{q\ast}\sM\to \sM. 
\end{equation}
We refer to $(\sM,C_\sM)$ as the \emph{associated $q$-Cartier module}.

\begin{lem}\label{lem: minimal root is Fpure}
Let $\sM$ be a unit $F^e$-module on $X$, and let $M\subset\sM$ be an $\cO_X$-submodule. Then the following hold: 
\begin{enumerate}
    \item The submodule $M$ satisfies $\theta_\sM(M)\subset F_q^\ast M$ if and only if $C_\sM(M)\subset M$. 
    \item Suppose that $\sM$ is lfgu and that $M$ is a root of $\sM$. Then $M$ is the minimal root if and only if $(M,C_\sM|_M)$ is an $F$-pure $q$-Cartier module.
\end{enumerate}
\end{lem}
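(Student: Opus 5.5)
Part~(1) follows from Proposition~\ref{prop: C to theta} with $m=1$, where $C_\sM$ and $\theta_\sM$ are related by $C_\sM=(\eta\otimes{\rm id}_\sM)\circ\theta_\sM$. If $\theta_\sM(M)\subset F_q^\ast M$, applying $\eta\otimes{\rm id}$ gives $C_\sM(M)\subset (\eta\otimes{\rm id})(F_q^\ast M)=M$, since $\eta$ is $\cO_X$-valued. For the converse, assume $C_\sM(M)\subset M$. Lemma~\ref{lem: C theta eta}(2) with $m=1$ (applied with $\sM$ as the ambient module and $N=M$) gives
\[
\cD_X^{(e)}\cdot\theta_\sM(M)=F_q^\ast(C_\sM(M))\subset F_q^\ast M,
\]
and hence $\theta_\sM(M)\subset F_q^\ast M$. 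Here I would check that Lemma~\ref{lem: C theta eta} applies to a non-coherent $\sM$; its proof only uses Frobenius descent, so it should.

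For part~(2), let $M$ be a root; by~(1) it is a coherent $q$-Cartier submodule. I would use Lemma~\ref{lem: C theta eta}(2) in the following form: for any coherent $N\subset\sM$, we have $\theta_\sM(C(N))$ \ldots. A cleaner statement is that $\cD_X^{(e)}\theta_\sM(N)=F_q^\ast C(N)$ inside $F_q^\ast\sM\cong\sM$. In other words, $C(N)$ is the smallest $\cO_X$-submodule $N'$ with $\theta_\sM(N)\subset F_q^\ast N'$, because $F_q^\ast N'$ is a $\cD^{(e)}$-submodule.

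\emph{If $M$ is minimal, then it is $F$-pure.} Set $M'=C(M)\subset M$. Then $C(M')\subset M'$, so by~(1) we have $\theta(M')\subset F_q^\ast M'$. Also $\theta(M)\subset F_q^\ast C(M)=F_q^\ast M'$, so $F_q^{m\ast}M\subset F_q^{m+1\ast}M'$, and therefore $\bigcup F_q^{m\ast}M'=\sM$. Thus $M'$ is a root contained in $M$, so minimality gives $M'=M$.

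\emph{If $M$ is $F$-pure, then it is minimal.} Let $M_0$ be the minimal root (Proposition~\ref{prop: existence of minimal root}); then $M_0\subset M$. Since $M$ is coherent and $\sM=\bigcup F_q^{m\ast}M_0$, we have $M\subset F_q^{m\ast}M_0$ for some $m$, i.e.\ $\theta_m(M)\subset F_q^{m\ast}M_0$. Iterating the characterization above (Lemma~\ref{lem: C theta eta}(2) for $\theta_m$) gives $F_q^{m\ast}C^m(M)=\cD^{(em)}\theta_m(M)\subset F_q^{m\ast}M_0$. By faithful flatness of Frobenius, $C^m(M)\subset M_0$, and $F$-purity gives $M=C^m(M)\subset M_0$. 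Hence $M=M_0$.

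The main obstacle is bookkeeping with the identification $\sM\cong F_q^{m\ast}\sM$ via $\theta$. One must check that ``$M\subset F_q^{m\ast}M_0$'' in the colimit means exactly $\theta_m(M)\subset F_q^{m\ast}M_0$ inside $F_q^{m\ast}\sM$, and that $F_q^{m\ast}$ of a submodule is a submodule (flatness of Frobenius on regular $X$).
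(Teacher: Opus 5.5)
Your proposal is correct. Part~(1) and the implication ``$F$-pure $\Rightarrow$ minimal'' are essentially the paper's argument. For~(1), the paper proves both directions through Lemma~\ref{lem: C theta eta}(2) and faithful flatness. Your forward direction instead uses $C_\sM=(\eta\otimes{\rm id})\circ\theta_\sM$ directly, which is equally valid; only the inclusion $(\eta\otimes{\rm id})(F_q^\ast M)\subset M$ is needed, not equality. Your worry about applying Lemma~\ref{lem: C theta eta} to a non-coherent $\sM$ is unfounded: the lemma is stated for arbitrary $q$-Cartier modules, and the paper applies it to $\sM$ in exactly this way.

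The genuine difference is in ``minimal $\Rightarrow$ $F$-pure''. The paper passes to $\underline{M}=C^m(M)$ for $m\gg0$, which relies on the stabilization result Proposition~\ref{F-pure submodule}. It then notes that $C$ is nilpotent on $M/\underline{M}$, so the unitalization of this quotient vanishes. By exactness of unitalization, $\underline{M}$ is a root, and minimality forces $M=\underline{M}$.

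You instead show directly that $C(M)$ is again a root. The key input is your characterization that $C(N)$ is the smallest $N'$ with $\theta_\sM(N)\subset F_q^\ast N'$. This gives $M\subset F_q^\ast C(M)$, so the union of the $F_q^{m\ast}C(M)$ is $\sM$. Moreover $C(C(M))\subset C(M)$, which yields the root condition via~(1).

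Your route is more elementary: it avoids the Blickle--B\"ockle stabilization and the exactness of unitalization, and it shows transparently that $C$ carries roots to roots. The paper's route stays within the Cartier-module language of Section~\ref{section: Theory for Cartier Modules}, where nilpotent quotients become invisible after unitalization, and that viewpoint is reused elsewhere.

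Two small points to tidy up. First, say explicitly that $C(M)$ is coherent, being the image of the coherent module $F_{q\ast}M$ since $X$ is $F$-finite; a root must be coherent. Second, delete the unfinished sentence about ``$\theta_\sM(C(N))\ldots$''.
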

\begin{proof}
(1) Since $F_q^\ast M$ is a $\cD_X^{(e)}$-submodule, the condition $\theta_\sM(M)\subset F_q^\ast M$ is equivalent to 
\[
\cD_X^{(e)}\cdot\theta_\sM(M)\subset F_q^\ast M. 
\]
By Lemma~\ref{lem: C theta eta}(2) and the faithful flatness of $F_q$, this is equivalent to $C_\sM(M)\subset M$. 

\medskip

\noindent
(2) Suppose that $M$ is a root. Let $\underline{M}\subset M$ be the $F$-pure submodule defined in~\eqref{Mbar}. Consider the exact sequence of $q$-Cartier modules 
\[
0\to \underline{M}\to M\to N\to0. 
\]
Note that $C$ is nilpotent on $N$. By Proposition~\ref{prop: C to theta} together with Lemma~\ref{lem: C theta eta}(1), it follows that,  for some integer $m\ge 1$, the induced morphism  $N\to F_q^{m\ast}N$ is zero. This implies that the unitalization of $N$ is zero. 
Since the unitalization functor is exact, the unitalization of $\underline{M}$ is $\sM$, so that  $\underline{M}$ is a root of $\sM$. Consequently, if $M$ is minimal, then $M=\underline{M}$, and hence $M$ is $F$-pure. 

Conversely, assume that $M$ is $F$-pure. Let $M_{\rm min}\subset M$ denote the minimal root. Since $M$ is coherent, the equality 
\[
\sM=\bigcup_{m\ge0}F_q^{m\ast}M_{\rm min}
\]
implies that there exists $m\ge0$ such that $M\subset F_q^{m\ast}M_{\rm min}$. By Lemma~\ref{lem: C theta eta}(2) and the faithful flatness of $F_q$, we then have  
\[
C^m_\sM(M)\subset M_{\rm min}. 
\]
On the other hand, since $M$ is $F$-pure, we have $C^m_\sM(M)=M$. It follows that $M=M_{\rm min}$, and hence $M$ is minimal. 
\end{proof}

The following proposition is a refinement of  \cite[Corollary~4.4]{AMBL05}. 
\begin{prop}\label{prop: refinement of generator}
Let $\sM$ be an lfgu $F^e$-module on $X$, and let $M\subset\sM$ denote the minimal root. Then for every integer $m\ge1$, 
\[
\cD_X^{(em)}M=F_q^{m\ast}M. 
\]
Consequently, for any root $N$ of $\sM$, we have $\cD_XN=\sM$. 
\end{prop}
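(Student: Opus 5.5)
The plan is to reduce the first equality to a statement about the $q$-Cartier operator, using Lemma~\ref{lem: C theta eta}(2), and then to invoke $F$-purity of the minimal root from Lemma~\ref{lem: minimal root is Fpure}(2). The question is local, so we may assume $\mathcal{C}_X^{(1)}$ is free and fix a generator $\eta$. Then $C\coloneqq C_\sM$ is defined, and $\theta_\sM$ corresponds to $C$ via Proposition~\ref{prop: C to theta}. This requires checking that the minimal root is compatible with restriction to opens, which is Lemma~\ref{lem: minimal root restrict to minimal root}.

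First, identify $\sM$ with $F_q^{m\ast}\sM$ through the iterate $\theta_m$ of $\theta_\sM$, as in the paper's convention. Under this identification the submodule $M\subset\sM$ corresponds to $\theta_m(M)\subset F_q^{m\ast}\sM$. Next, check that the $\cD_X^{(em)}$-module structure on $\sM$ from Construction~\ref{construction: Dmodule str} agrees, via $\theta_m$, with the Frobenius-descent structure on $F_q^{m\ast}\sM$ from \S\ref{subsubsection: Frobenius descent}. This is immediate from the construction, since $\tilde P=\varphi\circ(P\otimes\mathrm{id})\circ\varphi^{-1}$ with $\varphi=\theta_m^{-1}$.

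Hence $\cD_X^{(em)}M$ corresponds to $\cD_X^{(em)}\cdot\theta_m(M)$. Lemma~\ref{lem: C theta eta}(2), applied to the $q$-Cartier module $(\sM,C)$, gives
\[
\cD_X^{(em)}\cdot\theta_m(M)=F_q^{m\ast}(C^m(M)).
\]
The lemma is stated for coherent $M$, but its proof only uses Frobenius descent, so it applies here. By Lemma~\ref{lem: minimal root is Fpure}(2), $M$ is $F$-pure, so $C^m(M)=M$. It follows that $\cD_X^{(em)}M=F_q^{m\ast}M$ as submodules of $\sM\cong F_q^{m\ast}\sM$.

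For the consequence, let $N$ be any root. The minimal root satisfies $M\subset N$, so $\cD_XN\supset\bigcup_m\cD_X^{(em)}M=\bigcup_m F_q^{m\ast}M=\sM$, where the last equality is the defining property of the root $M$. Thus $\cD_XN=\sM$.

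The main obstacle is the bookkeeping between the identification $\sM\cong F_q^{m\ast}\sM$ via $\theta_m$ and the induced $\cD$-structure. One must ensure that the $\cD_X^{(em)}$-action transported by $\theta_m$ is the descent action, and that the Cartier operator obtained from $\theta_\sM$ iterates correctly, using Lemma~\ref{lem: C theta eta}(1). The remaining steps are formal.
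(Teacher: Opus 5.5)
Your proposal is correct and is essentially the paper's proof: reduce locally to trivial $\mathcal{C}_X^{(1)}$, apply Lemma~\ref{lem: C theta eta}(2) to get $\cD_X^{(em)}M=F_q^{m\ast}(C^m(M))$, use the $F$-purity of the minimal root from Lemma~\ref{lem: minimal root is Fpure}(2), and deduce the consequence from $M\subset N$ and $\sM=\bigcup_m F_q^{m\ast}M$. One minor point: Lemma~\ref{lem: C theta eta} has no coherence hypothesis, so it applies directly to $(\sM,C)$ without any extension of its statement.
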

\begin{proof}
    Since the statement is local, we may assume that $\sC_X^{(1)}$ is trivial. Fix a generator and let $C$ denote the associated $q$-Cartier operator on $\sM$. Then, by Lemma~\ref{lem: C theta eta}(2), 
    \[
    \cD_X^{(em)}M=F_q^{m\ast}(C^m(M)). 
    \]
    On the other hand, by 
    Lemma~\ref{lem: minimal root is Fpure}(2), $M$ is $F$-pure. Therefore, the right-hand side equals $F_q^{m\ast}M$, which proves the displayed equality. 

To prove the last assertion, it suffices to treat  the case where $N$ is the minimal root $M$.  Since $M$ is a root, we have $\sM=\bigcup_{m\ge0}F_q^{m\ast}M$. The claim then follows from the displayed equality.  
\end{proof}

\begin{thm}\label{thm: nu invariants vs BSR Dmodule}
Let $\sM$ be a unit $F^e$-module, and let $M\subset \sM$ be an $\cO_X$-submodule. Let $f\in \Gamma(X,\cO_X)$. We  regard $\sM$ as a $q$-Cartier module by \eqref{equation: CM} and a $\cD_X$-module by Construction~\ref{construction: Dmodule str}. 
Then the following hold: 
\begin{align*}
    &\nu_f(q^m;M)=\BSR^{(em)}(M,f)\qquad(\forall m\ge1),\\
    &\nu_f(q^\infty;M)\supset \BSR(M,f). 
\end{align*}
 Moreover, if $M$ is locally finitely generated as an $\cO_X$-module, then  
 \[
  \nu_f(q^\infty;M)=\BSR(M,f).
 \]
\end{thm}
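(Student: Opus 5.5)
The plan is to prove the finite-level equality first and then pass to infinite level with the two projective-limit descriptions, Lemma~\ref{BSRnu}(2) and Proposition~\ref{prop: infinite BSR vs finite}(2).

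\emph{Step 1: reduction to a trivialized Cartier operator.} Both sides of the finite-level equality are local. For $\nu$ this is \eqref{nu=cup}. For $\BSR^{(em)}$ it holds because strict inclusion of sheaves can be tested on an open cover. We may therefore assume that $\mathcal{C}_X^{(1)}$ is free, fix a generator $\eta$, and let $C=C_\sM$ be the associated $q$-Cartier operator. Changing the generator multiplies $C$ by a unit. This does not affect the images $C^m(f^nM)$, so the $\nu$-invariants are independent of the choice.

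\emph{Step 2: comparison of the $\cD$-module and Cartier images.} Let $\theta_m\colon\sM\to F_q^{m\ast}\sM$ be the iterated structure isomorphism. By Construction~\ref{construction: Dmodule str}, the $\cD_X^{(em)}$-action on $\sM$ is transported through $\theta_m$ from the Frobenius-descent action on $F_q^{m\ast}\sM$. Hence
\[
\theta_m\bigl(\cD_X^{(em)}f^nM\bigr)=\cD_X^{(em)}\theta_m(f^nM)=F_q^{m\ast}\bigl(C^m(f^nM)\bigr),
\]
where the second equality is Lemma~\ref{lem: C theta eta}(2) applied to $N=f^nM$. The functor $F_q^{m\ast}$ is faithfully flat, so it preserves and reflects strict inclusions of submodules. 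Therefore
\[
\cD_X^{(em)}f^{n+1}M\subsetneq\cD_X^{(em)}f^nM \quad\Longleftrightarrow\quad C^m(f^{n+1}M)\subsetneq C^m(f^nM).
\]
This gives $\nu_f(q^m;M)=\BSR^{(em)}(M,f)$. The one point needing care is that Lemma~\ref{lem: C theta eta} is stated for a Cartier module $(M,C)$. Here I apply it to the ambient Cartier module $\sM$ with the submodule $f^nM$, which is legitimate because its proof never uses $C$-stability of the submodule.

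\emph{Step 3: passage to infinite level.} Proposition~\ref{prop: infinite BSR vs finite}(2) embeds $\BSR(M,f)$ injectively into
\[
\varprojlim_{m'}\Bigl(\BSR^{(m')}(M,f)\cap[0,p^{m'})\Bigr),
\]
and this embedding is bijective when $M$ is locally finitely generated. We restrict to the cofinal subsystem of indices $m'=em$. The transition maps are then the iterates of the maps in Proposition~\ref{prop: infinite BSR vs finite}(1), namely $a\mapsto a_{em}$. By Step~1 the terms are $\nu_f(q^m;M)\cap[0,q^m)$ with exactly the transition maps of Lemma~\ref{BSRnu}(1). Using Lemma~\ref{BSRnu}(2), the limit of this subsystem is identified with $\nu_f(q^\infty;M)$.

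Passing to a cofinal subsystem preserves injectivity, since an element of $\Z_p$ is determined by its residues modulo $q^m$. This gives $\BSR(M,f)\subset\nu_f(q^\infty;M)$. When $M$ is locally finitely generated, $\BSR(M,f)$ is the full limit over all $m'$. I must still check that every compatible system indexed by multiples of $e$ extends to all $m'$. Given $(n_m)\in\varprojlim\nu_f(q^m;M)\cap[0,q^m)$ with limit $\alpha$, define residues at intermediate levels by reduction modulo $p^{m'}$. These lie in $\BSR^{(m')}$ by Proposition~\ref{prop: infinite BSR vs finite}(1), applied downward from level $e\lceil m'/e\rceil$. Hence $\alpha\in\BSR(M,f)$, which gives equality.

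I expect Step~2 to be the main obstacle, specifically checking that the induced $\cD_X$-structure of Construction~\ref{construction: Dmodule str} matches, under $\theta_m$, the Frobenius-descent $\cD_X^{(em)}$-structure on $F_q^{m\ast}\sM$ used in Lemma~\ref{lem: C theta eta}.
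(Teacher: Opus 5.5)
Your proposal is correct and follows the paper's proof. The finite-level equality comes from Lemma~\ref{lem: C theta eta}(2) applied to $N=f^nM$ together with the faithful flatness of $F_q^m$, and the infinite-level statements come from Lemma~\ref{BSRnu}(2) and Proposition~\ref{prop: infinite BSR vs finite}(2). Your use of the cofinal subsystem $m'=em$, together with downward application of Proposition~\ref{prop: infinite BSR vs finite}(1), makes explicit the mismatch between the $p$-power levels indexing $\BSR^{(m')}(M,f)$ and the $q$-power levels indexing $\nu_f(q^m;M)$, a step the paper leaves implicit.
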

\begin{proof}
The first equality  follows from Lemma~\ref{lem: C theta eta}(2) together with the faithful flatness of $F_q^m$. The remaining assertions follow from the first equality together with Lemma~\ref{BSRnu}(2) and Proposition~\ref{prop: infinite BSR vs finite}(2). 
\end{proof}

Let $\sM$ be an lfgu $F^e$-module on $X$, and let $M\subset\sM$ be a coherent $\cO_X$-submodule. Motivated by the above theorem, we consider the following assumption on $(\sM,M)$ as an analogue of Assumption~\ref{assumption: nu invariant is bound}. 
\begin{assumption}\label{assumption: BSR is bound}
There exists a constant $L\geq0$ such that 
    \[
    \#\Big(\BSR^{(em)}(M,f)\cap [0,q^m)\Big)\leq L
    \]
    for all $m\geq1$. 
\end{assumption}

The following theorem follows from the combination of results presented above. 
\begin{thm}\label{thm: finiteness and rationality of BSR}
Let $X$ be a  Noetherian regular $F$-finite  $\F_p$-scheme. 
Let $\sM$ be an lfgu $F^e$-module on $X$, and let $M\subset\sM$ be  a coherent $\cO_X$-submodule. 
Then the following statements hold: 
\begin{enumerate}
\item The pair $(\sM,M)$ satisfies Assumption~\ref{assumption: BSR is bound} if $X$ admits an open covering $X=\bigcup_iU_i$ where each $U_i$ is of finite type over an $F$-finite field. 
    \item Assume that $(\sM,M)$ satisfies Assumption~\ref{assumption: BSR is bound}. 
    Then the set  
$\BSR(M,f)$ is finite. Moreover, if $M$ is a root, then it is contained in 
 $\Z_{(p)}=\Z_p\cap\Q$. Finally, 
 if $M$ is the minimal root of $\sM$, then we have  
\[
\BSR(M,f)\subset\Z_{(p)}\cap[-1,0]. 
\]
\end{enumerate}
\end{thm}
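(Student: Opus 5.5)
The plan is to transfer everything to the $q$-Cartier side via Theorem~\ref{thm: nu invariants vs BSR Dmodule}, which gives $\nu_f(q^m;M)=\BSR^{(em)}(M,f)$ for all $m\ge1$ and, since $M$ is coherent, $\nu_f(q^\infty;M)=\BSR(M,f)$. The first step is to reduce to the case where $\mathcal{C}_X^{(1)}$ is free, so that a generator $\eta$ and the associated Cartier operator $C=C_\sM$ exist. Both $\BSR^{(em)}$ and $\nu$ are local: for $\nu$ this is Lemma~\ref{lem: BSR is local}, and $\BSR^{(em)}(M,f)$ is defined by a sheaf-theoretic strict inclusion. Since $X$ is quasi-compact and $\mathcal{C}_X^{(1)}$ is invertible by Lemma~\ref{lem: generator}(1), we can cover $X$ by finitely many opens on which it is trivial. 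Finiteness of the union and the union property \eqref{BSR=cup} preserve all the claims, and a uniform bound $L$ for Assumption~\ref{assumption: BSR is bound} is obtained by summing the local bounds. The minimal root restricts to the minimal root by Lemma~\ref{lem: minimal root restrict to minimal root}, and a root clearly restricts to a root.

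For~(1), I refine the covering so that each piece is of finite type over an $F$-finite field and has $\mathcal{C}^{(1)}$ trivial. Affine opens of a finite type scheme are again of finite type, so this is possible. On each piece, $(\sM,C)$ is a $q$-Cartier module, but $\sM$ is not coherent. To handle this, I replace $\sM$ by a coherent Cartier submodule containing all the relevant $C^m(f^aM)$. Concretely, $\mathcal{C}_X(M)=\sum_{m}C^m(M)$ should be coherent: $M\subset F_q^{k\ast}M_0$ for a root $M_0$, so $C^k(M)\subset M_0$, and $M_0$ is $C$-stable by Lemma~\ref{lem: minimal root is Fpure}(1). Hence $M':=M+C(M)+\dots+C^{k-1}(M)+M_0$ is a coherent $q$-Cartier submodule containing $M$. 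Proposition~\ref{consBl} applied to $(M',C)$ with $N=M$ then gives the bound on $\nu_f(q^m;M)\cap[0,q^m)$, and Theorem~\ref{thm: nu invariants vs BSR Dmodule} turns this into Assumption~\ref{assumption: BSR is bound}.

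For~(2), I work locally with $M'$ as above, which is a coherent Cartier module containing $M$ and is needed for Theorem~\ref{BSRrat}. The hypothesis transfers to Assumption~\ref{assumption: nu invariant is bound} for $(M',M)$. Proposition~\ref{BSRfin} then gives finiteness of $\nu_f(q^\infty;M)=\BSR(M,f)$. If $M$ is a root, then $C(M)\subset M$ by Lemma~\ref{lem: minimal root is Fpure}(1), so $M$ is $C$-preperiodic by Lemma~\ref{lem: when N isperiodic}(2), and Theorem~\ref{BSRrat}(1) gives rationality; in this case one can take $M'=M$. If $M$ is the minimal root, it is $F$-pure by Lemma~\ref{lem: minimal root is Fpure}(2), so $M=C(M)$, and Theorem~\ref{BSRrat}(2) gives the containment in $[-1,0]$.

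The main obstacle is the bookkeeping in passing from the non-coherent $\sM$ to a coherent Cartier module. This means checking that the finite-level sets $C^m(f^nM)$ computed inside $M'$ agree with those computed in $\sM$, which is immediate since $M'\subset\sM$ is $C$-stable. It also means checking that the local reductions respect the uniform constant $L$.
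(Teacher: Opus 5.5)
Your proposal is correct and follows the paper's proof: reduce locally to the case where $\mathcal{C}_X^{(1)}$ is trivial, replace the non-coherent $\sM$ by a coherent $C$-stable submodule containing $M$, transfer via Theorem~\ref{thm: nu invariants vs BSR Dmodule}, and conclude with Proposition~\ref{consBl}, Proposition~\ref{BSRfin} and Theorem~\ref{BSRrat}. The only cosmetic difference is the choice of ambient Cartier module. The paper simply takes a root $F_q^{m\ast}N\supset M$, which is automatically $C$-stable by Lemma~\ref{lem: minimal root is Fpure}(1), whereas you build $M+C(M)+\cdots+C^{k-1}(M)+M_0$; both work.
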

\begin{proof}
Let $N$ be a root  of $\sM$. Since $\sM=\bigcup_{m\ge0}F^{m\ast}_qN$ and $M$ is coherent, it follows that $M\subset F_q^{m\ast}N$ for a sufficiently large $m$. As $F_q^{m\ast}N$ is a root for all $m$, after replacing $N$ by $F_q^{m\ast}N$, we may assume that $M\subset N$. 

Since the statements can be checked locally, we may assume that $\mathcal{C}_X^{(1)}$ is trivial. Fix a generator, and let $C\colon F_{q\ast}\sM\to \sM$ denote the corresponding $\cO_X$-linear morphism.

By Lemma~\ref{lem: minimal root is Fpure}(1), 
$N$ is a coherent $q$-Cartier submodule of $\sM$. 
By Theorem~\ref{thm: nu invariants vs BSR Dmodule}, Assumption~\ref{assumption: BSR is bound} for the pair $(\sM,M)$ is equivalent to  Assumption~\ref{assumption: nu invariant is bound} for the pair $((N,C),M)$. Moreover, by Lemma~\ref{lem: minimal root is Fpure}(2), the condition that $M$ is the minimal root implies that $M$ is an $F$-pure $q$-Cartier submodule. 

Therefore, the assertions follow from the corresponding statements for $q$-Cartier modules, namely, Proposition~\ref{consBl}, Proposition~\ref{BSRfin}, and  Theorem~\ref{BSRrat}. 
\end{proof}

\subsection{\texorpdfstring{The Module $N_f(M)_\alpha$}{The Module Nf(M)alpha}}\label{subsection: The Nearby Cycle}

Let $f\in \Gamma(X,\cO_X)$. Set $U={\rm Spec}(\cO_X[f^{-1}])$, and let $j\colon U\hookrightarrow X$ denote the open immersion. 

Let $\sM$ be an lfgu $F^e$-module on $X$, and let $M$ denote its minimal root. 
For such a pair $(\sM,M)$, the 
$\cD_X$-module $N_f(M)_\alpha$ studied in Section~\ref{section: Nearby Cycles via the Graph Embedding} has a simple description, which we explain in this subsection. We will freely use the notation in \S\ref{subsection: The functors Nfalpha}. 

%(the morphism $t\colon N_f(M)_{\alpha+1}\to N_f(M)_\alpha$ has  already been determined in Section~\ref{section: Nearby Cycles via the Graph Embedding}) 

 We begin with the case $\alpha\notin \Z_{(p)}$. In this case, 
 we require Assumption~\ref{assumption: BSR is bound}. 
\begin{prop}
Let $\sM$ be an lfgu $F^e$-module on $X$, and let $M$ be a root of $\sM$. Assume that the pair $(\sM,M)$ satisfies Assumption~\ref{assumption: BSR is bound}. 
Let $\alpha\in \Z_p\setminus\Z_{(p)}$. Then 
\[
N_f(M)_\alpha\cong \sM\otimes_{\cO_X}\sL_{f,\alpha}. 
\]
\end{prop}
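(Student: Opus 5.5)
We will show that, under the identification of Proposition~\ref{prop: non-integral case}, the submodule $N_f(M)_\alpha\subset \sM_f\cdot f^\alpha=\sM\otimes_{\cO_X}\sL_{f,\alpha}$ is in fact everything. More precisely, $\alpha\notin\Z$, so Proposition~\ref{prop: non-integral case} together with Lemma~\ref{lem: Lfalpha}(2) gives an isomorphism
\[
N_f(M)_\alpha\cong \cD_X(\xi(M)\cdot f^\alpha)\subset \sM_f\cdot f^\alpha .
\]
It therefore suffices to prove $\cD_X(\xi(M)\cdot f^\alpha)=\sM_f\cdot f^\alpha$.

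\emph{Step 1: invariance under powers of $f$.} Since $M$ is a root and Assumption~\ref{assumption: BSR is bound} holds, Theorem~\ref{thm: finiteness and rationality of BSR}(2) gives $\BSR(M,f)\subset\Z_{(p)}$. As $\alpha\notin\Z_{(p)}$, no translate $\alpha+n$ with $n\in\Z$ is a Bernstein--Sato root.

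First take $n\ge0$. By Proposition~\ref{prop: t sends a to a-1}(3) and Proposition~\ref{prop: non-integral case}(2), applied at $\alpha+n$ and transported along the isomorphisms \eqref{equation: a a+n}, we get
\[
\cD_X(f^{n+1}\xi(M)\cdot f^\alpha)=\cD_X(f^{n}\xi(M)\cdot f^\alpha).
\]
For negative exponents we apply the same argument at $\alpha-k-1$ for $k\ge0$. Transported via $\sL_{f,\alpha-k-1}\cong\sL_{f,\alpha}$, it yields
\[
\cD_X(f^{-k}\xi(M)\cdot f^\alpha)=\cD_X(f^{-k-1}\xi(M)\cdot f^\alpha).
\]
Hence $\cD_X(f^{c}\xi(M)\cdot f^\alpha)$ is independent of $c\in\Z$. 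Consequently
\[
\cD_X(\xi(M)\cdot f^\alpha)=\cD_X\Bigl(\bigcup_{c\ge0}f^{-c}\xi(M)\cdot f^\alpha\Bigr).
\]
One point to double-check here is that \eqref{equation: a a+n} identifies the submodule $\cD_X(\xi(M)\cdot f^{\beta+n})$ with $\cD_X(f^{-n}\xi(M)\cdot f^{\beta})$; this is a direct check from the definition.

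\emph{Step 2: generation.} Let $y\cdot f^\alpha\in\sM_f\cdot f^\alpha$. By Proposition~\ref{prop: refinement of generator}, $\cD_XM=\sM$, so $\sM_f=\bigcup_c\cD_X(f^{-c}\xi(M))$. Indeed, $\xi$ is $\cD_X$-linear, and each element of $\sM_f$ has the form $f^{-c}\xi(z)$; writing $f^{-c}=f^{-c'p^{em}}f^{c'p^{em}-c}$ and using $\cO_X^{q^m}$-linearity of $\cD_X^{(em)}$ shows it lies in such a union.

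So pick $m$ and $c$ with $y\in\cD_X^{(em)}(f^{-c}\xi(M))$, and choose $n\in\Z$ with $\alpha-n\in q^m\Z_p$. By Lemma~\ref{lem: Lfalpha}(1), the map $x\mapsto f^{-n}x\cdot f^\alpha$ is $\cD_X^{(em)}$-linear. Hence $f^{-n}y\cdot f^\alpha\in\cD_X(f^{-n-c}\xi(M)\cdot f^\alpha)$. Since $n$ may be shifted by multiples of $q^m$, I would rather apply this argument to $f^{n}y$ in place of $y$. This gives $y\cdot f^\alpha\in\cD_X(f^{-c'}\xi(M)\cdot f^\alpha)$ for a suitable $c'$, which lies in $\cD_X(\xi(M)\cdot f^\alpha)$ by Step~1.

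\emph{Main obstacle.} The delicate point is the bookkeeping in Step~2. One has to make sure the exponent shifts by multiples of $q^m$ stay compatible with the $\cD_X^{(em)}$-linearity of Lemma~\ref{lem: Lfalpha}(1); concretely, $f^{q^m}$ commutes with $\cD_X^{(em)}$, which is what lets us absorb the shift. Also, Step~1 needs \emph{all} integral translates of $\alpha$ to avoid $\BSR(M,f)$, and this is exactly where irrationality, rather than merely $\alpha\notin\Z$, is used.
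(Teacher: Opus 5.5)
Your proof is correct. Step~1 is essentially the paper's first step. The paper uses Proposition~\ref{prop: t sends a to a-1} and $\BSR(M,f)\subset\Z_{(p)}$ to get $N_f(M)_\alpha\cong N_f(M)_{\alpha-n}\cong\cD_X(f^{-n}\xi(M)\cdot f^\alpha)$, then takes the union over $n\ge0$. That is exactly your negative-exponent half; the $n\ge0$ half is harmless but not needed.

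The routes differ in the generation step.

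\emph{The paper's route.} It notes that $\cD_X(M\otimes_{\cO_X}\sL_{f,\alpha})$ and $\sM\otimes_{\cO_X}\sL_{f,\alpha}$ are both quasi-coherent $\cO_X[f^{-1}]$-modules, so it suffices to compare them on $U$. There $\sL_{f,\alpha}$ is invertible, and $-\otimes\sL_{f,\alpha}$ is an autoequivalence of $\cD_U$-modules. Hence the $\cD_U$-submodule generated by $M|_U\otimes\sL_{f,\alpha}$ corresponds to a $\cD_U$-submodule of $\sM|_U$ containing $M|_U$. By Proposition~\ref{prop: refinement of generator}, that submodule is all of $\sM|_U$.

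\emph{Your route.} You stay on $X$ and chase elements, using $\cD_XM=\sM$ together with the finite-level twisting maps of Lemma~\ref{lem: Lfalpha}(1). This is the explicit content of the autoequivalence. It avoids the restriction to $U$ and the quasi-coherence argument, at the cost of the bookkeeping you flag, which does go through:
\begin{enumerate}
\item Choose $n\ge0$ with $\alpha-n\in q^m\Z_p$.
\item Since $\cO_X\subset\cD_X^{(em)}$, the element $f^ny$ stays in $\cD_X^{(em)}(f^{-c}\xi(M))$.
\item Apply the $\cD_X^{(em)}$-linear map $x\mapsto f^{-n}x\cdot f^\alpha$ to get $y\cdot f^\alpha\in\cD_X(f^{-n-c}\xi(M)\cdot f^\alpha)$, which equals $\cD_X(\xi(M)\cdot f^\alpha)$ by Step~1.
\end{enumerate}

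One small slip: in your ``double-check'' remark, the isomorphism \eqref{equation: a a+n} identifies $\cD_X(\xi(M)\cdot f^{\beta+n})$ with $\cD_X(f^{n}\xi(M)\cdot f^{\beta})$, not with $\cD_X(f^{-n}\xi(M)\cdot f^{\beta})$. Your displayed equalities in Step~1 already use the correct exponent, so nothing downstream is affected.
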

\begin{proof}
By Proposition~\ref{prop: t sends a to a-1} and Theorem~\ref{thm: finiteness and rationality of BSR}, we have 
\[
N_f(M)_\alpha\cong N_f(M)_{\alpha-n}\qquad\text{for all $n\in\Z_{\ge0}$. }
\]
On the other hand, by Proposition~\ref{prop: non-integral case}(2), we have  
\[N_f(M)_{\alpha-n}\cong\cD_X(\xi(M)\cdot f^{\alpha-n})=\cD_X(f^{-n}\xi(M)\cdot f^{\alpha}),\] 
where $\xi$ denotes the canonical morphism $\sM\to \sM_f$. Passing to the colimit over $n$, we obtain 
\[
N_f(M)_\alpha=\bigcup_{n\ge0}\cD_X(f^{-n}\xi(M)\cdot f^{\alpha})=\cD_X(M\otimes_{\cO_X}\sL_{f,\alpha}). \]
It thus suffices to show that 
\begin{equation}\label{equation: MLfa=DML} \sM\otimes_{\cO_X}\sL_{f,\alpha}=\cD_X(M\otimes_{\cO_X}\sL_{f,\alpha}). 
\end{equation}
Since both sides are quasi-coherent $\cO_X[f^{-1}]$-modules, it suffices to verify the equality after restricting to $U$. Thus, we may assume that $X=U$. 

In this case,  $\sL_{f,\alpha}$ is an invertible $\cO_X$-module. Consequently, the functor 
\[\sN\mapsto \sN\otimes_{\cO_X}\sL_{f,\alpha}\]
defines an autoequivalence of the category of $\cD_X$-modules (see Construction~\ref{construction: pullback pushforward tensor product of Dmodules}(3)). A quasi-inverse is given by $\sN\mapsto\sN\otimes_{\cO_X}\sL_{f,-\alpha}$. 

Under this equivalence, the submodule 
\[
\cD_X(M\otimes_{\cO_X}\sL_{f,\alpha})\subset \sM\otimes_{\cO_X}\sL_{f,\alpha}
\]
corresponds to a $\cD_X$-submodule of $\sM$ containing $M$. By Proposition~\ref{prop: refinement of generator}, $\sM$ is the only submodule with this property. This proves \eqref{equation: MLfa=DML}. 

\end{proof}

\medskip

From now on, we consider the case $\alpha\in\Z_{(p)}$. 
We assume that the invertible $\cO_X$-module $\mathcal{C}_X^{(1)}$ is trivial, and fix a generator. We impose this assumption only to simplify the exposition; it can be removed by using 
\eqref{equation: FMN to MFN} instead. 

Let $C\colon F_{q\ast}\sM\to\sM $ 
denote the $\cO_X$-linear morphism corresponding to $\theta_\sM$ via Proposition~\ref{prop: C to theta}. 
The $F^e$-module $\sM_f\coloneqq j_+ j^\ast \sM$
 is an lfgu $F^e$-module on $X$ by Lemma~\ref{lem: functoriality for Fmodule}. Let $\xi\colon \sM\to \sM_f$ denote the canonical morphism.

\subsubsection{The Integral Case}
Here, we compute $N_f(f^nM)_m$ for $n\in\Z_{\ge0}$ and $m\in\Z$. 
By Proposition~\ref{prop: computation of t integral case}, we have the following. 
\begin{lem}\label{lem: NffiM=}
    We have 
    \[
N_f(f^iM)_n\cong\begin{cases}\cD_Xf^{i+n}M&\text{if $n\ge0$,}\\
    \cD_Xf^{i+n}\xi(M)&\text{if $n<0$.}  
    \end{cases}
    \]
\end{lem}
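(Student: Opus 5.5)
The lemma is a direct specialization of Proposition~\ref{prop: computation of t integral case}, or more precisely of the description of $N_f(M)_n$ obtained in its proof. The plan is to apply that description to the pair $(\sM,f^iM)$ in place of $(\sM,M)$. This is an object of $\cP$, since $f^iM$ is an $\cO_X$-submodule of $\sM$. Nothing about the unit $F^e$-structure or the minimality of the root is needed; the statement is purely about the graph construction.

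\emph{Case $n\ge0$.} The proof of Proposition~\ref{prop: computation of t integral case} shows that, for $p^m>n$, Proposition~\ref{prop: image of L} identifies $N_f^{(m)}(f^iM)_{\bar n}$ with $\cD_X^{(m)}f^n(f^iM)=\cD_X^{(m)}f^{i+n}M$. Proposition~\ref{prop: how L varies} with index $0$ shows that the transition maps are the natural inclusions. Passing to the colimit over $m$, and using $\cD_X=\bigcup_m\cD_X^{(m)}$, gives $N_f(f^iM)_n\cong\cD_Xf^{i+n}M$.

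\emph{Case $n<0$.} Write $n=-a$ with $a\ge1$. For $p^m>a-1$, the same proof identifies $N_f^{(m)}(f^iM)_{\bar n}$ with $\cD_X^{(m)}f^{p^m-a}f^iM$. The transition maps are multiplication by $f^{p^{m+1}-p^m}$, so the colimit is $f$-torsion free and may be computed inside $\sM_f$. There, multiplication by $f^{-p^m}$ is $\cD_X^{(m)}$-linear. It identifies the system with the increasing union of the modules $\cD_X^{(m)}f^{-a}f^i\xi(M)$, which is $\cD_Xf^{i+n}\xi(M)$.

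The only point requiring care is bookkeeping. In the negative case, the exponent $p^m-a$ must be combined with the extra factor $f^i$ coming from the submodule. The identification after inverting $f$ should involve $\xi(f^iM)=f^i\xi(M)$, and this step commutes with the $\cD_X^{(m)}$-action because $f^{p^m}$ is a $p^m$-th power. Beyond that, the argument is exactly the one already carried out in Proposition~\ref{prop: computation of t integral case}.
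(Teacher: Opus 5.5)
Your proposal is correct and follows the paper's argument. The paper obtains this lemma directly from Proposition~\ref{prop: computation of t integral case}, that is, from the description of $N_f(M)_n$ in its proof applied to the object $(\sM,f^iM)$ of $\cP$, and your bookkeeping ($\xi(f^iM)=f^i\xi(M)$, the $f$-torsion-freeness of the colimit, and the $\cD_X^{(m)}$-linearity of multiplication by $f^{-p^m}$) is exactly what that proof carries out.
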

Therefore, it suffices to compute $\cD_Xf^iM$ for $i\ge0$ and $\cD_Xf^i\xi(M)$ for all $i\in \Z$. 
To this end, we introduce the following subobject of $\sM$. 
\begin{prop}\label{prop: minimal subobject for F module}
There exists a smallest subobject 
\[
\sM_{f!}\subset\sM
\]
    in the category of lfgu $F^e$-modules on $X$ such that $j^\ast \sM_{f!}=j^\ast\sM$. 
\end{prop}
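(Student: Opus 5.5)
The plan is to construct $\sM_{f!}$ explicitly as the unitalization of the submodule $M_{f!}\subset M$ from Definition~\ref{defn: zero extension of Cartier module}, where $M$ is the minimal root. Minimality will then come from the minimality statement of Lemma~\ref{f-min}. Since the characterization is local, I would first work locally where $\mathcal{C}_X^{(1)}$ is trivial and fix a generator, so that $C=C_\sM$ is defined. Uniqueness of a smallest subobject means local constructions glue automatically. To be safe, I would also check that the construction does not depend on the generator: changing the generator multiplies $C$ by a unit of $F_{q\ast}\cO_X$, which does not change which $\cO_X$-submodules are $C$-stable or $C$-images.

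By Lemma~\ref{lem: minimal root is Fpure}(2), $M$ is an $F$-pure $q$-Cartier module. By Lemma~\ref{lem: right reduced}, $M_{f!}=C^m(fM)$ is an $F$-pure coherent Cartier submodule with $M_{f!}|_U=M|_U$. By Lemma~\ref{lem: minimal root is Fpure}(1), $\theta_\sM(M_{f!})\subset F_q^\ast M_{f!}$. Let $\sM_{f!}:=\bigcup_m F_q^{m\ast}M_{f!}\subset\sM$. This is a sub-$F^e$-module, and it is lfgu by Proposition~\ref{prop: a root generates lfgu Fmodule}; here I use that unitalization is exact and $F_q$ is flat, so it embeds in $\sM$. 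On $U$ it agrees with $\sM$, because $M_{f!}|_U=M|_U$.

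For minimality, let $\sN\subset\sM$ be an lfgu subobject with $j^\ast\sN=j^\ast\sM$. I set $N':=M\cap\sN$. Since $\sN$ is a sub-$F^e$-module of $\sM$, it is stable under $\theta_\sM^{-1}$ and $\theta_\sM$ in the identification sense. Hence $\theta_\sM(N')\subset F_q^\ast M\cap F_q^\ast\sN=F_q^\ast N'$, using flatness of $F_q$ to commute $F_q^\ast$ with the intersection. So by Lemma~\ref{lem: minimal root is Fpure}(1), $N'$ is a coherent $q$-Cartier submodule of $M$. Also $N'|_U=M|_U$, because $\sN|_U=\sM|_U$. Lemma~\ref{f-min}(2) then gives $M_{f!}\subset N'\subset\sN$, and since $\sN$ is a sub-$F^e$-module, $\sM_{f!}\subset\sN$.

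The main obstacle is the compatibility step in the previous paragraph: justifying that $F_q^\ast$ of the intersection is the intersection of the pullbacks inside $F_q^\ast\sM\cong\sM$, and that $\sN$ corresponds to itself under $\theta_\sM$. This needs $\sN$ to be unit, so that $F_q^\ast\sN\to\sN$ is an isomorphism compatible with the one for $\sM$. A secondary point is the gluing and independence of the generator, which I expect to be routine given the uniqueness.
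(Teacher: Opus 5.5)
Your proof is correct, but it does not follow the paper's route. The paper proves the proposition in one line by citing \cite[Theorem~5.13]{BB11}: lfgu $F^e$-modules have finite length. The lfgu subobjects $\sN\subset\sM$ with $j^\ast\sN=j^\ast\sM$ form a family closed under finite intersections, so the descending chain condition gives a minimal member, and that member is automatically the smallest.

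You instead build $\sM_{f!}$ explicitly as the unitalization of $M_{f!}=C^m(fM)$, where $M$ is the minimal root. You get minimality from Lemma~\ref{f-min}(2) by intersecting a competitor $\sN$ with $M$. The checks you flag all go through:
\begin{itemize}
\item unit-ness of $\sN$ gives $\theta_\sM(\sN)=F_q^\ast\sN$;
\item flatness of $F_q$ gives $F_q^\ast M\cap F_q^\ast\sN=F_q^\ast(M\cap\sN)$, and also that the unitalization of $M_{f!}$ injects into $\sM$;
\item $M\cap\sN$ is coherent because $\sN$ is quasi-coherent.
\end{itemize}

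What your route buys: it does not invoke the finite-length theorem. It relies instead on the existence of minimal roots and on the radicality of annihilators of $F$-pure Cartier modules, which enters through Lemma~\ref{f-min}. It also proves in the same stroke what the paper establishes only afterwards in Lemma~\ref{lem: roots for Mf}(2): $M_{f!}$ is the minimal root of $\sM_{f!}$, by Lemma~\ref{lem: minimal root is Fpure}(2), since $M_{f!}$ is $F$-pure. Your argument for the inclusion of the unitalization of $M_{f!}$ into $\sM_{f!}$ is more direct than the paper's, which goes through the vanishing of morphisms to lfgu modules supported on $X\setminus U$. The paper's route is shorter and needs neither minimal roots nor a local trivialization of $\mathcal{C}_X^{(1)}$.

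One small point concerns gluing. The claim that uniqueness of a smallest subobject makes local constructions glue automatically needs the extra fact that the restriction of a smallest subobject is still smallest on a smaller open. It is cleaner to use what you already observe: $C^m(fM)$ does not depend on the chosen generator and commutes with restriction to opens. So $M_{f!}$, and hence $\sM_{f!}$, are defined globally, and the inclusion $\sM_{f!}\subset\sN$ can then be checked locally.
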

\begin{proof}
    By \cite[Theorem~5.13]{BB11}, an lfgu $F^e$-module has finite length. This proves the assertion. 
\end{proof}

Note that $\sM_{f!}$ is the \emph{largest} subobject admitting no nonzero morphism 
\[
\sM_{f!}\to \sN
\]
to an lfgu $F^e$-module supported on $X\setminus U$. This motivates the notation $\sM_{f!}$ (extension by zero).

By Lemma~\ref{lem: minimal root is Fpure}(2), the $q$-Cartier module $(M,C)$ is $F$-pure. Let $M_{f!}\subset M$ denote the submodule defined in Definition~\ref{defn: zero extension of Cartier module}. By Lemma~\ref{lem: right reduced}, this is an $F$-pure $q$-Cartier module, and we have 
\[
M_{f!}=C^m(f^iM), 
\]
where $i$ is any positive integer, and $m$ is a sufficiently large positive integer (depending on $i$). 

\begin{lem}\label{lem: roots for Mf}
Let $\sM$ be an lfgu $F^e$-module, and let $M$ denote its minimal root. 
The following statements hold:  
\begin{enumerate}
    \item The submodule $f^{-1}\cdot \xi(M)\subset \sM_f$ 
    is a root. 
    \item The module $M_{f!}$ is the minimal root of $\sM_{f!}$.   
\end{enumerate}
\end{lem}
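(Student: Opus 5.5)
We verify the two defining conditions of a root directly, using the Cartier translation of Lemma~\ref{lem: minimal root is Fpure}(1). Throughout, $\mathcal{C}_X^{(1)}$ is trivialized and $C$ denotes the Cartier operator on $\sM_f$; we also write $C$ for the one on $\sM$. Since $\xi\colon\sM\to\sM_f$ is a morphism of unit $F^e$-modules, $\xi$ intertwines the Cartier operators.

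\emph{Part~(1).} Coherence of $f^{-1}\xi(M)$ is clear. For the first root condition it suffices to check $C(f^{-1}\xi(M))\subset f^{-1}\xi(M)$. Writing $f^{-1}=f^{q-1}\cdot f^{-q}$, we get
\[
C(f^{-1}\xi(M))=f^{-1}C(f^{q-1}\xi(M))\subset f^{-1}\xi(C(M))=f^{-1}\xi(M),
\]
using $C(M)=M$ from the $F$-purity of $M$ (Lemma~\ref{lem: minimal root is Fpure}(2)). For the exhaustion condition, note that $F_q^{m\ast}(f^{-1}\xi(M))$ is identified with $f^{-q^m}\cdot F_q^{m\ast}\xi(M)$. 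Since $\xi(M)$ generates $\xi(\sM)$ under the $F_q^{m\ast}$, every section of $\sM_f=\sM[f^{-1}]$ has the form $f^{-k}\xi(x)$, and it therefore lies in some $F_q^{m\ast}(f^{-1}\xi(M))$ once $q^m\ge k$ and $m$ is large enough that $x\in F_q^{m\ast}M$. The only point requiring care is the identification of $F_q^{m\ast}$ of a submodule with its $\cO_X$-span under $\theta^{-1}$, where $f^{-1}$ pulls back to $f^{-q^m}$. This is routine.

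\emph{Part~(2).} Since $M_{f!}=C^m(fM)\subset M$ is coherent and $F$-pure (Lemma~\ref{lem: right reduced}), it is a $C$-stable coherent submodule, and hence satisfies the first root condition by Lemma~\ref{lem: minimal root is Fpure}(1). Its unitalization $\sN\coloneqq\bigcup_m F_q^{m\ast}M_{f!}\subset\sM$ is an lfgu $F^e$-submodule of $\sM$, by Proposition~\ref{prop: a root generates lfgu Fmodule} together with exactness of unitalization. Because $M_{f!}|_U=M|_U$, we get $j^\ast\sN=j^\ast\sM$, and so $\sM_{f!}\subset\sN$ by Proposition~\ref{prop: minimal subobject for F module}. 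For the reverse inclusion, let $M'$ be the minimal root of $\sM_{f!}$. Then $M'' \coloneqq M\cap M'$ is a coherent $C$-stable submodule of $M$. On $U$ we have $M''|_U=M|_U$: indeed $M'|_U$ is the minimal root of $\sM_{f!}|_U=\sM|_U$ by Lemma~\ref{lem: minimal root restrict to minimal root}, and this minimal root is $M|_U$. The minimality property of $M_{f!}$ in Lemma~\ref{f-min} then gives $M_{f!}\subset M''\subset M'$.

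It follows that $\sN\subset\sM_{f!}$, so $\sN=\sM_{f!}$ and $M_{f!}$ is a root of $\sM_{f!}$. Since $M_{f!}\subset M'$ and $M'$ is minimal, we conclude $M_{f!}=M'$. Alternatively, $F$-purity of $M_{f!}$ together with Lemma~\ref{lem: minimal root is Fpure}(2) gives minimality directly.

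The main obstacle is part~(2), specifically the step $M_{f!}\subset M'$. This step requires the intersection $M\cap M'$ to be a $q$-Cartier submodule (both pieces are $C$-stable, so it is) and to agree with $M$ on $U$. The latter depends on the restriction Lemma~\ref{lem: minimal root restrict to minimal root} being applied to $\sM_{f!}$ and $\sM$ simultaneously.
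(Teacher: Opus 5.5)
Your proof is correct. Part~(1) is the paper's argument; the paper only writes out the exhaustion computation $F_q^{m\ast}(f^{-1}\xi(M))=f^{-q^m}F_q^{m\ast}(\xi(M))$, and your $C$-stability check supplies the other root condition (it in fact only needs $C(M)\subset M$). In part~(2), the first half, $\sM_{f!}\subset\sN$, also coincides with the paper.

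The reverse inclusion is where you take a genuinely different route.

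\emph{The paper's argument.} It shows that the unitalization $\sN$ admits no nonzero morphism $\phi$ to an lfgu $F^e$-module supported on $X\setminus U$. Such a $\phi$ kills $f^nM_{f!}$ for some $n$. By $\cD_X$-linearity it then kills $\cD_X^{(em')}f^nM_{f!}=F_q^{m'\ast}C^{m'}(f^nM_{f!})=F_q^{m'\ast}M_{f!}$, using Lemma~\ref{lem: C theta eta}(2) and Lemma~\ref{lem: right reduced}(1) applied to $M_{f!}$ and $f^n\cO_X$. Applying this to $\sN\to\sN/\sM_{f!}$ gives $\sN=\sM_{f!}$. Minimality then follows from $F$-purity via Lemma~\ref{lem: minimal root is Fpure}(2).

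\emph{Your argument.} You compare two minimal objects. The minimal root $M'$ of the lfgu module $\sM_{f!}$ agrees with $M$ on $U$ by Lemma~\ref{lem: minimal root restrict to minimal root}. Hence $M\cap M'$ is a coherent $q$-Cartier submodule of $M$ agreeing with $M$ on $U$, and Lemma~\ref{f-min} forces $M_{f!}\subset M'$.

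\emph{What each buys.} Your route stays on the Cartier side; the substantive input is the radical-annihilator fact inside Lemma~\ref{f-min}. It yields both $\sN\subset\sM_{f!}$ and the minimality of $M_{f!}$ at once, without the $\cD_X$-action or passing to a quotient in the lfgu category. The paper's route avoids invoking the existence of a minimal root of $\sM_{f!}$ and its behavior under restriction. It also directly establishes the ``no nonzero maps to objects supported on $X\setminus U$'' property that motivates the notation $\sM_{f!}$.
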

\begin{proof}
(1) The claim follows from the computation 
\[
F_q^{m\ast}(f^{-1}\cdot \xi(M))=f^{-q^m}F^{m\ast}_q(\xi(M)), 
\]
and the fact $\sM=\bigcup_{m\ge1}F^{m\ast}_qM$. 

(2) Since $M_{f!}$ is a $q$-Cartier submodule of $\sM$, the morphism $\theta_\sM$ induces an $\cO_X$-linear morphism $M_{f!}\to F_q^\ast M_{f!}$. We denote by $\sM'$ the unitalization of $M_{f!}$. By construction, $\sM'$ is an lfgu $F^e$-submodule of $\sM$. Moreover, 
by Lemma~\ref{lem: minimal root is Fpure}(2), $M_{f!}$ is the minimal root of $\sM'$. 

    It remains to show $\sM'=\sM_{f!}$. Since taking the restriction $j^\ast$ commutes with taking the image of $C^m$, we have 
    \[
    j^\ast M_{f!}=C^m(j^\ast M)=j^\ast M. 
    \]
Hence, $j^\ast \sM'=j^\ast\sM$, which shows that $\sM_{f!}\subset \sM'$. 

We prove the reverse inclusion. Let $\phi\colon \sM'\to \sN$ be a morphism to 
 an lfgu $F^e$-module $\sN$ on $X$ supported on $X\setminus U$. Since $\sN$ is $\cO_X$-quasi-coherent and $M_{f!}$ is coherent, there exists a positive integer $n\ge1$ such that 
 \[
 f^nM_{f!}\subset\ker\phi. 
 \]
Since $\phi$ is a morphism of unit $F^e$-modules, it commutes with the $\cD_X$-action. Thus we have
\[
\cD_X^{(em')}f^nM_{f!}\subset\ker\phi. 
\]
 On the other hand, 
applying Lemma~\ref{lem: right reduced}(1) to $M=M_{f!}$ and $\cI=f^n\cO_X$, we obtain 
\[
M_{f!} =C^{m'}(f^nM_{f!})
\]
for all sufficiently large $m'$. Hence, by Lemma~\ref{lem: C theta eta}(2), 
\[
F^{m'\ast}_q M_{f!}=\cD_X^{(em')}f^nM_{f!}. 
\]
Consequently, 
\[
F^{m'\ast}_q M_{f!}\subset \ker\phi. 
\]
    Since this holds for all sufficiently large $m'$, and since $\sM'=\bigcup F_q^{m'\ast}M_{f!}$, we obtain $\sM'\subset\ker \phi$. This proves the inclusion $\sM'\subset \sM_{f!}$. The proof is completed. 
\end{proof}
\begin{prop}\label{prop: description of DXM}
We have 
\[
\cD_Xf^iM=\begin{cases}
    \sM&\text{if $i=0$,}\\
     \sM_{f!}&\text{if $i>0$,}
\end{cases}
\qquad
\cD_Xf^i\xi(M)=\begin{cases}
\sM_f&\text{if $i<0$,}\\
\xi(\sM)&\text{if $i=0$,}\\
\xi(\sM_{f!})&\text{if $i>0$.}
\end{cases}
\]
\end{prop}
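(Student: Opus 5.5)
The plan is to compute every case through the identity $\cD_X N=\bigcup_m \cD_X^{(em)}N$ together with Lemma~\ref{lem: C theta eta}(2). That lemma gives $\cD_X^{(em)}N=F_q^{m\ast}(C^m(N))$ for any $\cO_X$-submodule $N$ of a unit $F^e$-module (locally, after trivializing $\mathcal{C}_X^{(1)}$, which is harmless since the statement is local). Each case then reduces to identifying the images $C^m(N)$ for $m\gg0$ and the unitalization they generate.

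\emph{The module $\sM$.} For $i=0$ this is exactly Proposition~\ref{prop: refinement of generator}. For $i>0$, Lemma~\ref{lem: right reduced}(1) gives $C^m(f^iM)=M_{f!}$ for all $m\gg0$ (applied with $\cI=f^i\cO_X$, noting that $(M_{\cI!})$ does not depend on the power by Lemma~\ref{f-min}). Hence $\cD_Xf^iM=\bigcup_{m\gg0}F_q^{m\ast}M_{f!}$, which is the unitalization of $M_{f!}$. By Lemma~\ref{lem: roots for Mf}(2) this unitalization is $\sM_{f!}$.

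\emph{The module $\sM_f$.} I would work in the lfgu module $\sM_f=j_+j^\ast\sM$, with Cartier operator $C$ compatible with $\xi$, so that $\xi\circ C=C\circ\xi$.
\begin{itemize}
\item For $i\ge0$: $\cD_X$-linearity of $\xi$ gives $\cD_Xf^i\xi(M)=\xi(\cD_Xf^iM)$, and the previous paragraph finishes.
\item For $i<0$: $f^{i}\xi(M)\supset f^{-1}\xi(M)$ up to the obvious containment. Since $f^{-1}\xi(M)$ is a root of $\sM_f$ by Lemma~\ref{lem: roots for Mf}(1), the final assertion of Proposition~\ref{prop: refinement of generator} gives $\cD_X f^{-1}\xi(M)=\sM_f$. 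Therefore $\cD_Xf^i\xi(M)\supset\sM_f$, and equality holds.
\end{itemize}

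The main obstacle is the case $i=-1$ for $\sM_f$, namely verifying that $f^{-1}\xi(M)\subset f^{i}\xi(M)$ is correctly oriented and that a root, not just the minimal root, generates. The latter is handled by Proposition~\ref{prop: refinement of generator}, which states it for any root. I would double-check Lemma~\ref{lem: roots for Mf}(1) in the form $\theta(f^{-1}\xi M)\subset F_q^\ast(f^{-1}\xi M)$, using $f^{-1}=f^{q-1}f^{-q}$ and $\theta(\xi M)\subset F_q^\ast\xi M$.
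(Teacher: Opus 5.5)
Your proposal is correct and follows essentially the same route as the paper. Like the paper, you reduce to three facts:
\begin{itemize}
\item $\cD_XM=\sM$ and $\cD_Xf^{-1}\xi(M)=\sM_f$, both from Proposition~\ref{prop: refinement of generator} applied to the roots $M$ and $f^{-1}\xi(M)$;
\item $\cD_Xf^iM=\sM_{f!}$ for $i\ge1$, via Lemma~\ref{lem: C theta eta}(2), the fact that $C^m(f^iM)=M_{f!}$ for $m\gg0$, and Lemma~\ref{lem: roots for Mf}(2).
\end{itemize}
The remaining cases then follow from the $\cD_X$-linearity of $\xi$ and from $f^{-1}\xi(M)=f^{i}\cdot f^{-1-i}\xi(M)\subset f^i\xi(M)$ for $i<0$. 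The containment you flagged as the main obstacle is correctly oriented, since $-1-i\ge0$ in that case.
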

\begin{proof}
It suffices to show that 
\[
\cD_Xf^{-1}\cdot\xi(M)=\sM_f,\qquad \cD_XM=\sM,\qquad \cD_Xf^iM=\sM_{f!}\quad(\forall i\ge1). 
\]
Since $M$ is a root of $\sM$ and $f^{-1}\cdot\xi(M)$ is a root of $\sM_f$, the equalities for $\sM$ and $\sM_f$ follow from Proposition~\ref{prop: refinement of generator}. 

    We prove $\cD_Xf^iM=\sM_{f!}$. 
By Lemma~\ref{lem: C theta eta}(2), for every $m\ge1$, we have 
\[
\cD_X^{(em)}f^iM=F_q^{m\ast}(C^m(f^iM)). 
\]
Since $i\ge1$, the module $C^m(f^iM)$ is equal to $M_{f!}$ for all sufficiently large $m$. Thus, 
\[
\cD_Xf^iM=\bigcup_{m\gg0}F_q^{m\ast}M_{f!}=\sM_{f!}. 
\]
The proof is completed. 
\end{proof}

\subsubsection{The Rational Case}
Let $\alpha\in\Z_{(p)}\cap(-1,0)$. In this subsection, we compute $N_f(f^iM)_{\alpha+n}$ for $i\in\Z_{\ge0}$ and $n\in\Z$. By Propositions~\ref{prop: t sends a to a-1}(2) and~\ref{prop: non-integral case}(2), we have the following. 
\begin{lem}
    We have 
    \[
    N_f(f^iM)_{\alpha+n}\cong \cD_X(f^{i+n}\xi(M)\cdot f^\alpha). 
    \]
\end{lem}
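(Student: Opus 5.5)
The lemma to prove is
\[
N_f(f^iM)_{\alpha+n}\cong \cD_X(f^{i+n}\xi(M)\cdot f^\alpha)
\]
for $\alpha\in\Z_{(p)}\cap(-1,0)$, $i\ge0$ and $n\in\Z$. I would prove it in two steps.

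\textbf{Step 1: apply the non-integral case.} Since $\alpha\in(-1,0)$ is not an integer, neither is $\alpha+n$. I would apply Proposition~\ref{prop: non-integral case} to the pair $(\sM,f^iM)$ with exponent $\beta\coloneqq\alpha+n$. Its proof (part~(1) combined with Lemma~\ref{lem: Lfalpha}(2)) identifies $N_f(f^iM)_{\beta}$ with $\cD_X(\xi(f^iM)\cdot f^{\beta})$. Since $\xi(f^iM)=f^i\xi(M)$, this gives
\[
N_f(f^iM)_{\alpha+n}\cong\cD_X(f^i\xi(M)\cdot f^{\alpha+n})\subset \sM_f\cdot f^{\alpha+n}.
\]

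\textbf{Step 2: shift the exponent.} I would use the $\cD_X$-isomorphism \eqref{equation: a a+n} with $-n$ in place of $n$, applied to $\alpha+n$. This is the map $\sL_{f,\alpha+n}\xrightarrow{\cong}\sL_{f,\alpha}$, $a\cdot f^{\alpha+n}\mapsto af^{n}\cdot f^{\alpha}$. Tensoring with $\sM$ gives a $\cD_X$-isomorphism $\sM_f\cdot f^{\alpha+n}\cong\sM_f\cdot f^\alpha$ sending $x\cdot f^{\alpha+n}$ to $f^nx\cdot f^\alpha$. A $\cD_X$-isomorphism carries the $\cD_X$-submodule generated by an $\cO_X$-submodule onto the $\cD_X$-submodule generated by its image. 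Hence $\cD_X(f^i\xi(M)\cdot f^{\alpha+n})$ is carried onto $\cD_X(f^{i+n}\xi(M)\cdot f^\alpha)$, which is the claim.

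\textbf{Expected obstacle.} The only point needing care is that these identifications are compatible. In particular, the isomorphism \eqref{equation: a a+n} has to be compatible with the tensor-product $\cD_X$-structure of Construction~\ref{construction: pullback pushforward tensor product of Dmodules}(3). This holds by functoriality of $\sM\otimes_{\cO_X}(-)$. Alternatively, one can note that $t\colon N_f(f^iM)_{\alpha+n+1}\to N_f(f^iM)_{\alpha+n}$ is injective, since $\alpha+n\neq-1$, and then use Proposition~\ref{prop: t sends a to a-1}(2) to match these identifications along the chain of $t$-maps.
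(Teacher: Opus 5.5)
Your proof is correct and is essentially the paper's argument. The paper deduces the lemma from Proposition~\ref{prop: non-integral case}(2), whose proof is exactly your Step~1 identification $N_f(M)_\beta\cong\cD_X(\xi(M)\cdot f^\beta)$, together with Proposition~\ref{prop: t sends a to a-1}(2); that is, it moves between exponents along the injective $t$-maps, while you move along the twist isomorphism~\eqref{equation: a a+n}, which encodes the same shift and handles all $n\in\Z$, including the case $i+n<0$ needed later, uniformly.
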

Therefore, it suffices to compute $\cD_X(f^{i}\xi(M)\cdot f^\alpha)$ for all $i\in\Z$. 

\medskip

Since the denominator of $\alpha$ is prime to $p$, there  exists a positive power $q'$ of $q$  with $(q'-1)\alpha\in\Z$. Fix such a $q'$ and write $q'=p^{e'}$.  Set $a=(1-q')\alpha$. Then $a\in\{1,\dots,q'-2\}$ and 
\[
\alpha=\frac{a}{1-q'}=\sum_{i\ge0}aq'^{i}\qquad\text{in $\Z_p$.}
\]
We also set $\alpha_m\coloneqq \sum_{i=0}^{m-1}aq'^i$, with the convention that $\alpha_0=0$. 

The $\cO_X$-module $\sL_{f,\alpha}$ considered in \S\ref{subsubsection: Non-Integral Case} admits an $F^{e'}$-module structure by letting $F^{e'}$ act via $F^{e'}(f^\alpha)=f^{-a}f^\alpha$. The $F^{e'}$-module $\mathscr{L}_{f,\alpha}$ is lfgu \cite[Lemma~2.3 and Remark~2.10]{BMS09}. 
Moreover, the induced $\cD_X$-module structure coincides with the $\cD_X$-module structure considered in \S\ref{subsubsection: Non-Integral Case}.  
We also note that $ \cO_X\cdot f^\alpha\subset \mathscr{L}_{f,\alpha}$ is a root.

We will freely use the following fact. 
\begin{lem}
    Let $\sM$ be an lfgu $F^e$-module on $X$.Via the canonical injection $\cO_X[F^{e'}]\hookrightarrow \cO_X[F^e]$, we regard $\sM$ as an $\cO_X[F^{e'}]$-module. 
    Then the resulting module is again lfgu. 
\end{lem}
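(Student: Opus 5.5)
The two conditions in the definition of lfgu are being unit and being locally finitely generated, and I would check them separately. Write $q'=p^{e'}$, where $e'$ is a positive multiple of $e$ (this is implicit, since $q'$ was chosen as a power of $q$), say $e'=ke$. Unitness is formal. Let $\varphi\colon F_q^\ast\sM\to\sM$ be the structure isomorphism. The $F^{e'}$-structure obtained through $\cO_X[F^{e'}]\hookrightarrow\cO_X[F^e]$ corresponds to the composite
\[
F_{q'}^\ast\sM=F_q^{k\ast}\sM\xrightarrow{F_q^{(k-1)\ast}\varphi}F_q^{(k-1)\ast}\sM\to\cdots\xrightarrow{\varphi}\sM .
\]
Each map in this chain is an isomorphism, because $F_q^\ast$ preserves isomorphisms. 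Hence the composite is an isomorphism, and $\sM$ is a unit $F^{e'}$-module.

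For local finite generation I would use roots rather than generators. This is necessary because $\cO_X[F^{e'}]$ is strictly smaller than $\cO_X[F^e]$, so generators over the bigger ring need not generate over the smaller one. By Proposition~\ref{prop: existence of minimal root}, $\sM$ has a root $M$: it is coherent, it satisfies $\theta_\sM(M)\subset F_q^\ast M$, and $\sM=\bigcup_m F_q^{m\ast}M$. For the $F^{e'}$-structure the map $\theta'\colon\sM\to F_{q'}^\ast\sM$ is the $k$-fold iterate $\theta_k$ of $\theta_\sM$, exactly as in Lemma~\ref{lem: C theta eta}. Applying $F_q^{i\ast}$ to the inclusion $\theta_\sM(M)\subset F_q^\ast M$ and composing shows $\theta'(M)\subset F_{q'}^\ast M$. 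The union condition for $e'$ is also immediate. The chain $F_q^{m\ast}M$ is increasing, since it consists of successive root-inclusions, and $F_{q'}^{m\ast}M=F_q^{km\ast}M$. The union over $m$ of a cofinal subsequence of an increasing chain equals the union of the whole chain, so $\sM=\bigcup_m F_{q'}^{m\ast}M$.

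Therefore $\sM$ is the unitalization of $(M,\theta'|_M)$ as an $F^{e'}$-module, and Proposition~\ref{prop: a root generates lfgu Fmodule}, applied with $q'$ in place of $q$, shows that it is lfgu. The only point needing care, which I expect to be the main (mild) obstacle, is identifying the colimit with $\sM$. Under the identification of $\sM$ with $\varinjlim F_q^{m\ast}M$, the colimit along $\theta'$ is the colimit over the cofinal subsystem indexed by multiples of $k$, so the two colimits agree compatibly with the unit structures.
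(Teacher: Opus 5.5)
Your proof is correct, but it reaches local finite generation by a different and heavier route than the paper. The unitness part is the same as the paper's argument.

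For finite generation, the paper simply observes that $\cO_X[F^e]$ is finitely generated as a left $\cO_X[F^{e'}]$-module. It is generated by $1,F^e,\dots,F^{(k-1)e}$, since $aF^{e'j}\cdot F^{ei}=aF^{e(kj+i)}$. Hence if local sections $x_1,\dots,x_r$ generate $\sM$ over $\cO_X[F^e]$, then the finitely many sections $F^{ei}x_l$ with $0\le i<k$ generate it over $\cO_X[F^{e'}]$. So your remark that passing to roots is \emph{necessary} is not accurate. Generators over the larger ring need not generate over the smaller one, but they can be enlarged to a finite generating set.

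Your argument instead depends on two nontrivial inputs: the existence of roots (Proposition~\ref{prop: existence of minimal root}) and the finiteness result behind Proposition~\ref{prop: a root generates lfgu Fmodule}. The paper's argument is purely formal by comparison.

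What your route buys is an extra fact: every root of $\sM$ for the $F^e$-structure is also a root for the $F^{e'}$-structure, with the same exhausting union $\bigcup_m F_{q'}^{m\ast}M=\bigcup_m F_q^{m\ast}M$. This is the kind of compatibility used implicitly when the paper later replaces $q$ by $q'$ in Proposition~\ref{prop: tame case for Fmodule}. So the effort is not wasted, but it is more than this lemma itself requires.
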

\begin{proof}
    Since $\cO_X[F^e]$ is finitely generated as a left $\cO_X[F^{e'}]$-module, $\sM$ is also finitely generated as a left $\cO_X[F^{e'}]$-module. 

Note that the $\cO_X$-linear morphism $F^{e'\ast}\sM\to \sM$ induced by the $\cO_X[F^{e'}]$-module structure agrees with the composite of the iterates of the structural map $F^{e\ast}\sM\to \sM$. Since the latter is an isomorphism, it follows that $\sM$ is unit as an $\cO_X[F^{e'}]$-module. 
\end{proof}

 \begin{defn}\label{defn: Tame nearby cycles for Frobenius modules}
Let $\sM$ be an lfgu $F^e$-module on $X$. 
\begin{enumerate}
    \item Set $\sM_{f,\alpha}\coloneqq \sM\otimes_{\cO_X}\mathscr{L}_{f,\alpha}$, which is an lfgu $F^{e'}$-module on $X$. 
    \item Let  $\sM_{f,\alpha,!}\subset\sM_{f,\alpha}$ denote the smallest subobject in the category of lfgu $F^{e'}$-modules such that $j^\ast\sM_{f,\alpha,!}= j^\ast\sM_{f,\alpha}$. 
\end{enumerate}
 \end{defn}

\begin{prop}\label{prop: tame case for Fmodule}
Let $\sM$ be an lfgu $F^e$-module on $X$, and let $M\subset\sM$ denote its minimal root. Let $\xi$ denote the canonical morphism $\sM\to \sM_f$.   Then 
\[
\cD_X(f^i\xi(M)\cdot f^\alpha)=\begin{cases}
    \sM_{f,\alpha}&\text{if $i\le0$,}\\
    \sM_{f,\alpha,!}&\text{if $i>0$.}
\end{cases}
\]
\end{prop}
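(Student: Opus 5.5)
The plan is to apply the results of \S\ref{subsection: The Nearby Cycle} (integral case) to the lfgu $F^{e'}$-module $\sM_{f,\alpha}$ instead of $\sM$, with $q'$ playing the role of $q$. Set $R_i\coloneqq f^i\xi(M)\cdot f^\alpha\subset\sM_{f,\alpha}$. Since $\mathcal{C}^{(1)}_X$ is trivial, $\sM_{f,\alpha}$ carries a $q'$-Cartier operator $C'$ via Proposition~\ref{prop: C to theta}. Because $F^{e'}(f^\alpha)=f^{-a}f^\alpha$, we have $\theta(f^\alpha)=f^{a}\otimes f^\alpha$ up to the canonical identification. Combined with $\theta_\sM(M)\subset F_q^\ast M$, hence $\theta_{q'}(M)\subset F_{q'}^\ast M$, this gives
\[
\theta_{\sM_{f,\alpha}}(R_i)\subset f^{i+a}F_{q'}^\ast(\xi(M)\cdot f^\alpha).
\]
The inclusion $f^{i+a}F_{q'}^\ast(\xi(M)f^\alpha)\subset F_{q'}^\ast R_i=f^{q'i}F_{q'}^\ast(\xi(M)f^\alpha)$ holds as soon as $i+a\ge q'i$, i.e.\ for all $i\le0$ (since $a\ge1$).

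\emph{Case $i\le 0$.} First I check that $R_i$ is a root of $\sM_{f,\alpha}$. Coherence is clear, and the stability condition is the computation above. For exhaustion, $\sM_{f,\alpha}$ is an $\cO_X[f^{-1}]$-module and $F_{q'}^{m\ast}R_i\supset f^{q'^m i-\alpha_m}F^{m\ast}_{q'}(\xi(M))\cdot f^\alpha$ up to the twist. Also $\sM=\bigcup_m F_{q'}^{m\ast}M$, so every section, after multiplication by a power of $f$, lies in some $F_{q'}^{m\ast}R_0$. To absorb the negative powers of $f$, I will use $F_{q'}^{m\ast}(f^{-1}R_0)=f^{-q'^m}F_{q'}^{m\ast}R_0$ as in Lemma~\ref{lem: roots for Mf}(1), and then compare $R_0$ with $f^{-1}R_0$, using that $R_0\subset f^{-1}R_0$ and that the latter is also stable. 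Once $R_i$ is known to be a root, Proposition~\ref{prop: refinement of generator} (its last assertion, applied to the $F^{e'}$-module $\sM_{f,\alpha}$) gives $\cD_XR_i=\sM_{f,\alpha}$.

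\emph{Case $i>0$.} Let $R\coloneqq R_0$, which is a coherent $C'$-stable root by the previous case and Lemma~\ref{lem: minimal root is Fpure}(1). Let $\underline{R}=C'^k(R)$ for $k\gg0$. As in the proof of Lemma~\ref{lem: minimal root is Fpure}(2), $\underline R$ is a root, and being $F$-pure it is the minimal root of $\sM_{f,\alpha}$. Proposition~\ref{prop: description of DXM}, applied to $(\sM_{f,\alpha},\underline R)$, gives $\cD_Xf^i\underline R=(\sM_{f,\alpha})_{f!}=\sM_{f,\alpha,!}$. The latter equality holds because both are defined as the smallest lfgu $F^{e'}$-subobject agreeing with $\sM_{f,\alpha}$ on $U$. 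Since $\underline R\subset R$, this yields the lower bound $\cD_Xf^iR\supset\sM_{f,\alpha,!}$.

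For the upper bound, I will pass to the quotient $Q\coloneqq\sM_{f,\alpha}/\sM_{f,\alpha,!}$. This is an lfgu module with $j^\ast Q=0$, and it suffices to show that the image $\bar R$ of $R$ satisfies $\cD_X f^i\bar R=0$. By Lemma~\ref{lem: C theta eta}(2) this amounts to $C'^m(f^i\bar R)=0$ for $m\gg0$. The image of $\underline R$ in $Q$ is $F$-pure, coherent and $f$-power torsion, so by \cite[Lemma~3.13]{BB11} (as in Lemma~\ref{f-min}) it is killed by $f$. Moreover $\bar R/\operatorname{im}\underline R$ is nilpotent.

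This upper bound is the step I expect to be the main obstacle. One has to turn ``$F$-pure part killed by $f$, plus a nilpotent part'' into the vanishing $C'^m(f\bar R)=0$. The first thing I would try is to apply $C'^{k}$ and use the identity $C'^k(f^{q'^k}x)=fC'^k(x)$ together with $\bar R\supset C'(\bar R)$ to move the factor $f$ past the nilpotent layer. Failing that, I would argue through the $q'$-Cartier submodule $\mathcal{C}_{X,+}(f\bar R)$, imitating Lemma~\ref{lem: right reduced}, to see that its $F$-pure part vanishes.
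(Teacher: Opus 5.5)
\textbf{Verdict: there is a genuine gap.} The upper bound $\cD_X(fR_0)\subset\sM_{f,\alpha,!}$ for $i>0$ is left open, and neither of your suggested routes can supply it.

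\textbf{What is fine.} Your case $i\le 0$ is correct. So is your lower bound $\cD_X(f^iR_0)\supset\cD_X(f^i\underline R)=\sM_{f,\alpha,!}$ for $i>0$, obtained from the minimal root $\underline R$ of $\sM_{f,\alpha}$ and Proposition~\ref{prop: description of DXM}. One small correction in the exhaustion step: the comparison you need is $f^{-1}R_0\subset F_{q'}^{\ast}R_0$, which holds because $a\ge 1$ and $M\subset F_{q'}^\ast M$. The inclusion $R_0\subset f^{-1}R_0$ that you cite points the wrong way.

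\textbf{Why your upper-bound routes cannot work.} Both routes use only two facts: $\bar R$ is the image of a $C'$-stable root of $\sM_{f,\alpha}$, and the $F$-pure part of that root becomes $f$-torsion in $Q$. Every root of $\sM_{f,\alpha}$ has these properties. In particular $R_{-1}=f^{-1}R_0$ does: it is a root by your own case $i\le 0$, and its $F$-pure part is the same $\underline R$. Yet $\cD_X(fR_{-1})=\cD_XR_0=\sM_{f,\alpha}$, which is in general strictly larger than $\sM_{f,\alpha,!}$. For instance, take $X=\A^1_{\F_p}$, $f=t$, $e=e'$ and $\sM=\sL_{t,-1-\alpha}$. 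Then $\sM_{f,\alpha}\cong\cO_X[t^{-1}]$, $R_0=t^{-1}\cO_X$, and $\sM_{f,\alpha,!}=\cO_X$. So no manipulation of ``$F$-pure part killed by $f$ plus a nilpotent part'' can yield $C'^m(f\bar R)=0$.

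\textbf{The missing input.} You need the $F$-purity of the minimal root $M$ of $\sM$ itself, for the $q'$-Cartier operator $C$ of $\sM$, together with $\alpha>-1$. Your argument uses neither in the case $i>0$.

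\textbf{How the paper proceeds.} It first proves the equality for $i\ge i_0$, using $f^{i_0}R_0\subset M'\subset R_0$ with $M'$ the minimal root of $\sM_{f,\alpha,!}$. It then shows $\alpha+i\notin\BSR(M,f)$ for all $i\ge 1$. Otherwise Lemma~\ref{lem: nu invariants for Fpure} (which needs $M=C(M)$) and Theorem~\ref{thm: nu invariants vs BSR Dmodule} would produce elements $\alpha+m'\in\BSR(M,f)$ with $m'\to\infty$, contradicting the stabilization for $i\ge i_0$.

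\textbf{A direct repair in your notation.}
\begin{enumerate}
\item Since $C'(yf^\alpha)=C(f^ay)f^\alpha$, you get $C'^m(fR_0)=\xi\bigl(C^m(f^{1+\alpha_m}M)\bigr)f^\alpha$.
\item Writing $M=C^n(M)$ gives $C'^m(fR_0)=C'^{m+n}(f^{q'^n-\alpha_n}R_0)$, and $q'^n-\alpha_n=q'^n(1+\alpha)-\alpha\to\infty$ because $\alpha>-1$.
\item By coherence and agreement on $U$, $f^{i_0}R_0\subset\sM_{f,\alpha,!}$ for some $i_0$. Since $\sM_{f,\alpha,!}$ is $C'$-stable, it follows that $C'^m(fR_0)\subset\sM_{f,\alpha,!}$ for every $m$.
\item Lemma~\ref{lem: C theta eta}(2) then gives $\cD_X(fR_0)\subset\sM_{f,\alpha,!}$.
\end{enumerate}
Combined with your lower bound, this completes the proof without passing through $\BSR$.
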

\begin{proof}
Replacing $q$ by $q'$, we may assume that $q'=q$ and $e'=e$. 

  The equality 
\[\cD_X(\xi(M)\cdot f^\alpha)=\sM_{f,\alpha} 
\]
 follows from Proposition~\ref{prop: refinement of generator}, since $\xi(M)\cdot f^\alpha$ is a root of $\sM_{f,\alpha}$. This also proves the case $i<0$. 

\medskip 

It remains to show that 
\[
\cD_X(f^i\xi(M)\cdot f^\alpha)=\sM_{f,\alpha,!} \qquad\text{for all $i\ge1$. }
\]
We first prove that this equality holds for all sufficiently large $i$. 
Let $M'$ denote the minimal root of $\sM_{f,\alpha,!}$. Since $\xi(M)\cdot f^\alpha$ is a root of $\sM_{f,\alpha}$, we have 
\[
M'\subset \xi(M)\cdot f^\alpha. 
\]
Since $j^\ast\sM_{f,\alpha}=j^\ast\sM_{f,\alpha,!}$, it follows from Lemma~\ref{lem: minimal root restrict to minimal root} that $j^\ast M'$ is the minimal root of $j^\ast \sM_{f,\alpha}$. On the other hand, since $j^\ast\mathscr{L}_{f,\alpha}$ is an invertible $\cO_U$-module, the assignment $N\mapsto N\otimes_{\cO_U}j^\ast\mathscr{L}_{f,\alpha}$ induces a bijection between the set of roots of $j^\ast \sM_{f}$ and that of $j^\ast\sM_{f,\alpha}$. 
This implies that $j^\ast(\xi(M)\cdot f^\alpha)$ 
is also the minimal root of $j^\ast\sM_{f,\alpha}$. 
Therefore, 
\[
j^\ast M'=j^\ast(\xi(M)\cdot f^\alpha). 
\]
Since $\xi(M)\cdot f^\alpha$ is $\cO_X$-coherent, it follows that for some $i_0\ge1$, we have 
\[
f^{i_0}\xi(M)\cdot f^\alpha\subset M'. 
\]
Thus, for every $m\ge0$, we obtain  
\[
\cD_X(f^{i_0+m}M')\subset \cD_X(f^{i_0+m}\xi(M)\cdot f^\alpha)\subset \cD_XM'. 
\]
Applying Proposition~\ref{prop: description of DXM} to $\sM=\sM_{f,\alpha,!}$ yields 
\[
\cD_X(f^{i_0+m}M')=\cD_XM'=\sM_{f,\alpha,!}. 
\]
Hence, 
\[\cD_X(f^i\xi(M)\cdot f^\alpha)=\sM_{f,\alpha,!}\qquad\text{for all $i\ge i_0$. }
\]

We are now reduced to showing that  $N_f(fM)_{\alpha}=N_f(f^iM)_\alpha$ for all $i\ge1$.  By Proposition~\ref{prop: t sends a to a-1}, it suffices to prove that 
\[
\alpha+i\notin\BSR(M,f) \qquad\text{for all $i\ge1$. }
\]
Suppose, for contradiction, that there exists $i\ge1$ with $\alpha+i\in\BSR(M,f)$. By  Lemma~\ref{lem: nu invariants for Fpure} and Theorem~\ref{thm: nu invariants vs BSR Dmodule}, for every integer $n\ge1$, there exists an integer $m_n\in[0,q^n-1]$ such that 
\[
q^n(\alpha+i)+m_n\in\BSR(M,f). 
\]
Since $\alpha+i>0$, the rational number $q^n(\alpha+i)+m_n$ tends to $\infty$ as $n\to\infty$. On the other hand, using the relation $q\alpha=\alpha-a$, one checks by induction that $q^n(\alpha+i)+m_n-\alpha
$ is an integer. 
Hence, it follows that there exists a strictly increasing sequence of integers 
\[
m_1'<m_2'<\cdots
\]
such that $\alpha+m_i'\in\BSR(M,f)$ for all $i\ge1$. 
This contradicts the fact that $\alpha+j\notin \BSR(M,f)$ for all sufficiently large $j\ge1$. 
\end{proof}

\appendix
\section{Global Interpretation of Cartier Operators}\label{section: Global Interpretation of Cartier Operators}
Let $X$ be a  Noetherian regular $F$-finite  $\F_p$-scheme. 
Let $e$ be a positive integer and let $q=p^e$. 
In this appendix, for any $\cO_X$-modules $M$ and $N$, we construct a functorial bijection 
\begin{equation}\label{equation: FMN to MFN}
{\rm Hom}_{\cO_X}(F_{q\ast}(M\otimes_{\cO_X}\omega_X),N\otimes_{\cO_X}\omega_X)\xrightarrow{\cong}{\rm Hom}_{\cO_X}(M,F_q^\ast N),   
\end{equation}
where $\omega_X$ is a suitably chosen dualizing sheaf. We begin by defining $\omega_X$. 
\begin{lem}\label{lem: OmegaX is free}
    The sheaf of K\"ahler differentials $\Omega_X^1=\Omega^1_{X/\Z}$ 
    is a locally free $\cO_X$-module of finite rank. 
\end{lem}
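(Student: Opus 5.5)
The statement is local on $X$, and the problem reduces to the stalks of $\Omega^1_X$. Since $X$ is Noetherian, it suffices to show that $\Omega^1_X$ is coherent and that each stalk $\Omega^1_{X,x}=\Omega^1_{\cO_{X,x}/\Z}$ is a free $\cO_{X,x}$-module of finite rank. A coherent sheaf with free stalks is locally free of finite rank.

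For the first point, I will use that over $\F_p$ we have $\Omega^1_{X/\Z}=\Omega^1_{X/\F_p}$. Also, $d(a^p)=pa^{p-1}da=0$, so $\Omega^1_{X/\F_p}=\Omega^1_{X/\cO_X^p}$. Since $X$ is $F$-finite, $\cO_X$ is a finite $\cO_X^p$-algebra. Hence $\Omega^1_{X/\cO_X^p}$ is a finitely generated $\cO_X$-module, and it is quasi-coherent, so it is coherent. Kähler differentials commute with localization, so its stalk at $x$ is $\Omega^1_{R/R^p}$ with $R=\cO_{X,x}$. Here $R$ is a regular local $F$-finite ring, and $R$ is finite over $R^p$.

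The main step is freeness of $\Omega^1_{R/R^p}$. I would use $p$-bases. By Kunz's theorem, $R$ is regular if and only if $R$ is flat over $R^p$. Since $R$ is local and finite over $R^p$, it is then free over $R^p$. Next, choose $x_1,\dots,x_n\in R$ whose differentials form a minimal generating set of $\Omega^1_{R/R^p}$; equivalently, their images in the residue field $k$ form a $p$-basis of $k$ over $k^p$ together with a regular system of parameters. I then aim to show that the monomials $x^{\mathbf{i}}$ with $0\le i_j<p$ form a basis of $R$ over $R^p$. They generate modulo $\mathfrak{m}R$ by Nakayama applied to $R/\mathfrak{m}_{R^p}R$, which is $k[x_1,\dots,x_n]/(x_j^p)$-like by regularity. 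A rank count using freeness, namely rank $=[R/\mathfrak{m}^{[p]}:k^p]=p^{\dim R}[k:k^p]$, gives linear independence.

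Once $R=R^p[x_1,\dots,x_n]$ with the only relations $x_j^p\in R^p$, the module $\Omega^1_{R/R^p}$ is free on $dx_1,\dots,dx_n$. This follows from the presentation $R\cong R^p[T_1,\dots,T_n]/(T_j^p-x_j^p)$, since the relations have zero differential.

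The hardest part is the rank computation identifying $R/\mathfrak{m}^{[p]}R$ over $k^p$. If that step becomes awkward, I would instead cite the standard fact that regular $F$-finite local rings admit a finite $p$-basis (e.g.\ Kunz, or EGA $0_{\rm IV}$ 21.2.7 on formal smoothness over $R^p$). Then $R$ is formally smooth over $R^p$, which gives that $\Omega^1_{R/R^p}$ is projective, and a finitely generated projective module over a local ring is free.
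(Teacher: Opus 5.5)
Your proof is correct, but it takes a different route from the paper's. Both arguments begin the same way: $\Omega^1_{X/\Z}=\Omega^1_{X/\cO_X^p}$ is coherent because $\cO_X$ is finite over $\cO_X^p$.

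For the remaining freeness, the paper computes nothing. By Popescu's theorem the regular ring $R$ is a filtered colimit of smooth $\F_p$-algebras $R_i$. Hence $\Omega^1_R=\varinjlim_i\Omega^1_{R_i}\otimes_{R_i}R$ is flat, and a finite flat module over a Noetherian ring is locally free.

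You instead pass to a stalk and argue as follows:
\begin{itemize}
\item Kunz's theorem makes $R$ free over $R^p$.
\item A regular system of parameters $t_1,\dots,t_d$, together with lifts $u_1,\dots,u_r$ of a $p$-basis of $k$, is then a $p$-basis of $R$. This uses spanning of $R/\mathfrak{m}^{[p]}$ over the image of $R^p$, Nakayama, and the count $\dim_{k^p}R/\mathfrak{m}^{[p]}=p^d[k:k^p]$.
\item Consequently $R\cong R^p[T_1,\dots,T_n]/(T_j^p-x_j^p)$, and $\Omega^1_{R/R^p}$ is free on the $dx_j$.
\end{itemize}

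The paper's route is two lines but rests on N\'eron--Popescu desingularization; its flatness half does not use $F$-finiteness at all. Yours needs only Kunz's theorem and a length computation. It gives the rank $\dim\cO_{X,x}+\log_p[k(x):k(x)^p]$ explicitly. After spreading out from the stalk to a neighborhood, it also establishes the local $p$-bases that the paper simply asserts in the appendix when identifying $S(J_q)$ with $\omega_X^{\otimes(q-1)}$.

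Two things to clean up.

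First, take the $x_j$ to be $t_1,\dots,t_d,u_1,\dots,u_r$ by definition. The asserted equivalence with ``differentials forming a minimal generating set of $\Omega^1_{R/R^p}$'' is only known after the fact, and your spanning step uses the explicit description. That spanning step is best done by filtering $R/\mathfrak{m}^{[p]}$ by the images of $\mathfrak{m}^j$. A class $c\,t^{a}$ with $|a|=j$ is rewritten using $c\equiv\sum_b c_b^p u^b \pmod{\mathfrak{m}}$, and monomials with some $a_i\ge p$ vanish in $R/\mathfrak{m}^{[p]}$.

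Second, your fallback is misstated: $R$ is never formally smooth over $R^p$ unless $R$ is perfect. Already $k^p\subset k$ is purely inseparable, and for the presentation above, $H_1$ of the naive cotangent complex is $I/I^2\neq0$. What a $p$-basis (for reduced $R$) gives is formal smoothness over $\F_p$, which does yield projectivity of $\Omega^1_{R/\F_p}=\Omega^1_{R/R^p}$. The presentation you already have makes that detour unnecessary.
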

\begin{proof}
Since the statement is local, we may assume that $X$ is affine. Let $R=\Gamma(X,\cO_X)$. 
Note that 
\[
\Omega_R^1=\Omega_{R/R^p}^1. 
\]
 This is a finite $R$-module, since $R$ is finite over $R^p$. 
    
    It remains to show that $\Omega^1_R$ is flat. 
By Popescu's theorem, $R$ can be written as a filtered colimit of smooth $\F_p$-algebras $R_i$. For each $i$, the $R_i$-module $\Omega^1_{R_i}$ is finite projective, and hence flat. It follows that  the filtered colimit $\Omega^1_R$ is flat. The assertion follows. 
\end{proof}

\begin{defn}
    We define 
    \[
    \omega_X\coloneqq \bigwedge^{\mathop{\rm rk}\Omega_X^1}\Omega_X^1, 
    \]
   where $\mathop{\rm rk}\Omega_X^1$ denotes the locally constant function on $X$ that assigns to each point the rank of $\Omega_X^1$. 
\end{defn}
The construction in \eqref{equation: FMN to MFN} is reduced to the following result.  
\begin{prop}\label{prop: CX and omegaX}
    There exists a canonical isomorphism of $\cO_X$-modules 
    \[
    \omega_X^{\otimes(1-q)}\cong \mathcal{C}_X^{(1)}. 
    \]
\end{prop}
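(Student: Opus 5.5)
The plan is to build a canonical trace map $F_{\ast}\omega_X\to\omega_X$ (the Cartier operator on top forms), use it to identify $\mathcal{C}_X^{(1)}$ with a Hom sheaf, and then rewrite that Hom sheaf using invertibility.

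\textbf{Step 1: the Cartier operator on top forms.} Since $\Omega^1_X=\Omega^1_{X/X^p}$ and $X$ is $F$-finite regular, I will use the de Rham complex and the inverse Cartier isomorphism
\[
C^{-1}\colon \Omega^i_X\xrightarrow{\cong}\mathcal{H}^i(F_\ast\Omega^\bullet_X).
\]
This is classical for smooth $\F_p$-algebras. For general $X$, I expect to deduce it by Popescu's theorem as in Lemma~\ref{lem: OmegaX is free}, since both sides commute with filtered colimits. In top degree $r=\mathop{\rm rk}\Omega^1_X$, composing $F_\ast\omega_X\twoheadrightarrow\mathcal{H}^r(F_\ast\Omega^\bullet_X)$ with the inverse of $C^{-1}$ gives a canonical $\cO_X$-linear map
\[
\kappa\colon F_\ast\omega_X\to\omega_X.
\]
Iterating $e$ times gives $\kappa_e\colon F_{q\ast}\omega_X\to\omega_X$.

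\textbf{Step 2: a candidate isomorphism.} Consider
\[
\Phi\colon F_{q\ast}\mathcal{H}om(\omega_X,\omega_X^{\otimes q})\cong F_{q\ast}\omega_X^{\otimes(q-1)}\to\mathcal{H}om(F_{q\ast}\cO_X,\cO_X)\otimes\omega_X^{\otimes(q-1)}.
\]
It sends a section $s$ to the map $a\mapsto \kappa_e(a\,s\,\omega^{\otimes?})$, suitably arranged. Equivalently, I want to show that the canonical map
\[
F_{q\ast}\omega_X\otimes\mathcal{C}_X^{(1)}\text{-pairing},\qquad \mathcal{H}om(F_{q\ast}\cO_X,\cO_X)\to \mathcal{H}om(F_{q\ast}\omega_X, F_q^{\ast}\ldots)
\]
is an isomorphism. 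Concretely, define
\[
\mathcal{C}_X^{(1)}\otimes\omega_X\to\mathcal{H}om(F_{q\ast}\omega_X^{\otimes q},\omega_X),
\]
using $\omega_X^{\otimes q}=F_q^\ast\omega_X$ and $\phi\otimes\omega\mapsto[a\,\omega^{q}\mapsto\phi(a)\,\omega]$. This map is canonical and an isomorphism, because $F_q^\ast$ of an invertible sheaf is its $q$-th power. I will then show that $\kappa_e$ generates $\mathcal{H}om(F_{q\ast}\omega_X,\omega_X)$ as an $F_{q\ast}\cO_X$-module. Given that, we get
\[
\mathcal{H}om(F_{q\ast}\omega_X,\omega_X)\cong F_{q\ast}\cO_X,
\]
and hence, twisting appropriately,
\[
\mathcal{C}_X^{(1)}\cong\mathcal{H}om(F_{q\ast}\omega_X^{\otimes q},\omega_X)\otimes\omega_X^{-1}\cong\omega_X^{\otimes(1-q)}.
\]
The middle isomorphism comes from $\omega_X^{\otimes q}=\omega_X\otimes\omega_X^{\otimes(q-1)}$ and the projection formula, which pulls $\omega_X^{\otimes(q-1)}$ out as $F_q^\ast$-twisting.

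\textbf{Step 3: the main obstacle.} The hard step is proving that $\kappa_e$ is a generator of the invertible $F_{q\ast}\cO_X$-module $\mathcal{H}om(F_{q\ast}\omega_X,\omega_X)$ (invertible by Lemma~\ref{lem: generator}(1)). By Lemma~\ref{lem: generator}(2) it suffices to treat $e=1$. By Lemma~\ref{lem: general Y to Z} it then reduces to a fiberwise nondegeneracy statement: the pairing $(a,b)\mapsto\kappa(ab)$ must be perfect on $F_\ast\cO_X\otimes k(x)$. I would check this locally using a $p$-basis $x_1,\dots,x_r$, which exists since $\Omega^1$ is free on $dx_i$ by Lemma~\ref{lem: OmegaX is free} and $F$-finite regularity. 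Then $F_\ast\cO_X$ is free on the monomials $x^{\underline i}$ with $0\le i_j<p$, and
\[
\kappa\bigl(x^{\underline i}\,dx_1\wedge\cdots\wedge dx_r\bigr)=
\begin{cases}
dx_1\wedge\cdots\wedge dx_r & \text{if all } i_j=p-1,\\
0 & \text{otherwise}.
\end{cases}
\]
This is the standard dual-basis computation, which makes the pairing perfect. Finally, canonicity holds because $\kappa$ is independent of the choice of $p$-basis, and functoriality in $M,N$ then yields \eqref{equation: FMN to MFN} via Proposition~\ref{prop: C to theta first version}.
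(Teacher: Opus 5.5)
Your strategy is sound and takes a genuinely different route from the paper: you trivialize $\mathcal{H}om_{\cO_X}(F_{q\ast}\omega_X,\omega_X)$ using the iterated Cartier operator $\kappa_e$ and then untwist. However, the twisting in Step~2 is wrong as written and needs to be redone.

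The rule $\phi\otimes\omega\mapsto[a\,\omega^{\otimes q}\mapsto\phi(a)\,\omega]$ is not $\cO_X$-linear in $\omega$. If you replace $\omega$ by $u\omega$ with $u$ a unit, then $b\,\omega^{\otimes q}=bu^{-q}(u\omega)^{\otimes q}$ is sent to $\phi(bu^{-q})\,u\omega=\phi(b)\,\omega$. So the resulting homomorphism does not depend on $\omega$ at all. What you actually have is the untwisted canonical isomorphism $\mathcal{C}_X^{(1)}\cong\mathcal{H}om_{\cO_X}(F_{q\ast}\omega_X^{\otimes q},\omega_X)$, coming from $F_{q\ast}F_q^\ast\omega_X\cong F_{q\ast}\cO_X\otimes\omega_X$.

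Also, $\omega_X^{\otimes(q-1)}$ is not a Frobenius pullback, so the projection formula does not pull it out. It comes out on the source side, as an invertible $F_{q\ast}\cO_X$-module:
\[
\mathcal{H}om(F_{q\ast}\omega_X^{\otimes q},\omega_X)\cong F_{q\ast}\omega_X^{\otimes(1-q)}\otimes_{F_{q\ast}\cO_X}\mathcal{H}om(F_{q\ast}\omega_X,\omega_X)\cong F_{q\ast}\omega_X^{\otimes(1-q)}.
\]
With your extra factors $\otimes\omega_X$ and $\otimes\omega_X^{-1}$ the exponents do not come out right, and the source of $\Phi$ in your first display has the same problem.

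The clean version is the explicit map
\[
\omega_X^{\otimes(1-q)}\to\mathcal{C}_X^{(1)},\qquad s\mapsto\bigl[a\mapsto\kappa_e(a\,s\,\omega^{\otimes q})\otimes\omega^{-1}\bigr],
\]
where $\omega$ is any local generator of $\omega_X$. Using $\kappa_e(c^q x)=c\,\kappa_e(x)$, you can check that this map is independent of $\omega$ and linear for the structure $a\cdot\phi=\phi(a-)$. Your Step~3 (the top-monomial computation in a $p$-basis, together with Lemma~\ref{lem: general Y to Z}) then shows it is bijective.

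Steps~1 and~3 are fine. In top degree you do not even need Popescu: with a $p$-basis, the exact top forms are spanned over $\cO_X^p$ by the forms $x^{\underline i}\,dx_1\wedge\cdots\wedge dx_r$ with some $i_j\le p-2$. This directly shows that $C^{-1}$ is an isomorphism in degree $r$.

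The paper avoids the de Rham complex entirely. It identifies $\omega_X^{\otimes(q-1)}$ with the socle $S(J_q)$ of the diagonal ideal in $\cO_X\otimes_{\cO_X^q}\cO_X$, via $(df_1\wedge\cdots\wedge df_n)^{\otimes(q-1)}\mapsto(\xi(f_1)\cdots\xi(f_n))^{q-1}$. It proves independence of the $p$-basis by a separate determinant computation: a change of basis with matrix $A$ multiplies the socle generator by $\det(A)^{q-1}$. It then dualizes, restricting $x\otimes y\mapsto x\phi(y)$ to $S(J_q)$ to get $\mathcal{C}_X^{(1)}\to S(J_q)^\vee\cong\omega_X^{\otimes(1-q)}$, and checks bijectivity with the same top-monomial trace you use in Step~3.

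In your route, canonicity is automatic because $\kappa$ is intrinsically defined, so the change-of-$p$-basis argument disappears entirely. The cost is importing the Cartier isomorphism in top degree for a possibly non-smooth regular $X$. You also need more care with the two $\cO_X$-module structures on the Hom sheaves, which is exactly where Step~2 went astray. Finally, the paper works directly at level $q$, whereas you reduce to $e=1$ and iterate via Lemma~\ref{lem: generator}(2); this is harmless.
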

\begin{cor}
For any $\cO_X$-modules $M$ and $N$, there exists a functorial bijection as in \eqref{equation: FMN to MFN}. 
\end{cor}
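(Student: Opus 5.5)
We deduce the bijection \eqref{equation: FMN to MFN} from Proposition~\ref{prop: CX and omegaX} together with the choice-free version of Grothendieck duality, Proposition~\ref{prop: C to theta first version} with $m=1$. First rewrite the source using the projection formula. Since $F_q$ is the identity on underlying spaces and $F_q^\ast\omega_X\cong\omega_X^{\otimes q}$ canonically (the line bundle pulls back to its $q$-th tensor power), we have
\[
F_{q\ast}(M\otimes_{\cO_X}\omega_X)\cong F_{q\ast}\bigl(M\otimes_{\cO_X}F_q^\ast\omega_X\otimes_{\cO_X}\omega_X^{\otimes(1-q)}\bigr)\cong F_{q\ast}\bigl(M\otimes_{\cO_X}\omega_X^{\otimes(1-q)}\bigr)\otimes_{\cO_X}\omega_X .
\]
Tensoring with the invertible sheaf $\omega_X^{-1}$ is an autoequivalence. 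Hence the source of \eqref{equation: FMN to MFN} is canonically identified with ${\rm Hom}_{\cO_X}(F_{q\ast}(M\otimes\omega_X^{\otimes(1-q)}),N)$.

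Next apply Proposition~\ref{prop: C to theta first version} with $m=1$ to the pair $(M\otimes\omega_X^{\otimes(1-q)},N)$. This gives a natural bijection with
\[
{\rm Hom}_{\cO_X}\bigl(M\otimes\omega_X^{\otimes(1-q)},\ \mathcal{C}_X^{(1)}\otimes_{\cO_X}F_q^\ast N\bigr).
\]
Substituting the canonical isomorphism $\mathcal{C}_X^{(1)}\cong\omega_X^{\otimes(1-q)}$ of Proposition~\ref{prop: CX and omegaX} and cancelling the invertible factor $\omega_X^{\otimes(1-q)}$ yields ${\rm Hom}_{\cO_X}(M,F_q^\ast N)$. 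Each step is natural in $M$ and $N$, and no generator is chosen anywhere, so the composite is functorial.

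The main point to verify is a module-structure bookkeeping issue. The $\cO_X$-structure on $\mathcal{C}_X^{(1)}$ is the twisted one \eqref{equation: module structure of CXm}, i.e.\ precomposition through $F_{q\ast}\cO_X$. Proposition~\ref{prop: C to theta first version} uses exactly this structure, so the isomorphism in Proposition~\ref{prop: CX and omegaX} must be taken for that structure. We also need to check that the projection-formula identification above is $\cO_X$-linear for the correct (untwisted) structure on $F_{q\ast}$. As a sanity check, when $\mathcal{C}_X^{(1)}$ is trivialized by $\eta$, the construction should recover Proposition~\ref{prop: C to theta} after trivializing $\omega_X$ compatibly.
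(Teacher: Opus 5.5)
Your proposal is correct and is essentially the paper's argument. It uses the same ingredients: Proposition~\ref{prop: C to theta first version} with $m=1$, the isomorphism $\mathcal{C}_X^{(1)}\cong\omega_X^{\otimes(1-q)}$ of Proposition~\ref{prop: CX and omegaX} (for the twisted module structure, as you note), $F_q^\ast\omega_X\cong\omega_X^{\otimes q}$, and cancellation of invertible factors. The only cosmetic difference is where the twist is absorbed: you move it through $F_{q\ast}$ on the source side via the projection formula and then apply duality to $(M\otimes\omega_X^{\otimes(1-q)},N)$, whereas the paper applies duality directly to $(M\otimes\omega_X,N\otimes\omega_X)$ and absorbs the twist on the target side via $F_q^\ast(N\otimes\omega_X)\cong F_q^\ast N\otimes\omega_X^{\otimes q}$.
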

\begin{proof}
By Propositions~\ref{prop: C to theta first version} and~\ref{prop: CX and omegaX}, we obtain canonical identifications  
\begin{align*}
  {\rm Hom}_{\cO_X}(F_{q\ast}(M\otimes_{\cO_X}\omega_X),N\otimes_{\cO_X}\omega_X)&\cong{\rm Hom}_{\cO_X}(M\otimes_{\cO_X}\omega_X, \omega_X^{\otimes(1-q)}\otimes_{\cO_X}F_q^\ast(N\otimes_{\cO_X}\omega_X))\\
  &\cong {\rm Hom}_{\cO_X}(M, \omega_X^{\otimes(-q)}\otimes_{\cO_X}F_q^\ast(N\otimes_{\cO_X}\omega_X)).   
\end{align*}
Since $\omega_X$ is invertible, $F_q^\ast\omega_X$ is canonically isomorphic to $\omega_X^{\otimes q}$. Hence, 
\[{\rm Hom}_{\cO_X}(M, \omega_X^{\otimes(-q)}\otimes_{\cO_X}F_q^\ast(N\otimes_{\cO_X}\omega_X))\cong{\rm Hom}_{\cO_X}(M, F_q^\ast N), 
\]
and therefore, 
\[
{\rm Hom}_{\cO_X}(F_{q\ast}(M\otimes_{\cO_X}\omega_X),N\otimes_{\cO_X}\omega_X)\cong {\rm Hom}_{\cO_X}(M, F_q^\ast N). 
\]
This proves the assertion. 
\end{proof}

The remainder of this appendix is devoted to the proof of Proposition~\ref{prop: CX and omegaX}. 
Let $J_q$ denote the kernel of the multiplication morphism   
\[
\cO_X\otimes_{\cO_X^q}\cO_X\to \cO_X,\quad a\otimes b\mapsto ab. 
\]
Define 
\[
S(J_q)\coloneqq\{x\in J_q\mid J_q\cdot x=0\}. 
\]
This is a coherent $\cO_X$-module via the identification $(\cO_X \otimes_{\cO_X^q} \cO_X)/J_q \cong \cO_X$. 

\begin{lem}\label{lem: S(J)=omega}
    There exists a canonical isomorphism of $\cO_X$-modules 
    \[
    \omega_X^{\otimes(q-1)}\xrightarrow{\cong} S(J_q). 
    \]
\end{lem}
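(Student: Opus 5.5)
We first construct a canonical map $\omega_X^{\otimes(q-1)}\to S(J_q)$ locally and then show it is an isomorphism and independent of choices. Since $X$ is regular and $F$-finite, by Lemma~\ref{lem: OmegaX is free} we may work locally where $\Omega^1_X$ is free. Here we choose a $p$-basis $x_1,\dots,x_n$ of $\cO_X$ over $\cO_X^p$, so that $dx_1,\dots,dx_n$ is a basis of $\Omega^1_X$. Then $\cO_X$ is free over $\cO_X^q$ with basis the monomials $x^{\mathbf i}$, $0\le i_j\le q-1$. Writing $\xi_j\coloneqq x_j\otimes1-1\otimes x_j\in J_q$, we get
\[
\cO_X\otimes_{\cO_X^q}\cO_X\cong\cO_X[\xi_1,\dots,\xi_n]/(\xi_1^q,\dots,\xi_n^q),
\]
using $(x_j\otimes 1-1\otimes x_j)^q=x_j^q\otimes1-1\otimes x_j^q=0$, with $J_q=(\xi_1,\dots,\xi_n)$. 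In this truncated polynomial ring the socle $S(J_q)$ is free of rank one over $\cO_X$, generated by $\xi_1^{q-1}\cdots\xi_n^{q-1}$. This is a standard computation: an element killed by every $\xi_j$ must have all its monomials of maximal degree in each variable.

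We then define the map locally by
\[
(dx_1\wedge\cdots\wedge dx_n)^{\otimes(q-1)}\mapsto \xi_1^{q-1}\cdots\xi_n^{q-1},
\]
which is an isomorphism by the computation above. The main obstacle is canonicity: the image must not depend on the choice of $p$-basis, so that the local isomorphisms glue.

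To handle this, we compare two $p$-bases $x_j$ and $y_k$ and write $\eta_k=y_k\otimes1-1\otimes y_k$. In $J_q/J_q^2\cong\Omega^1_X$ we have $\eta_k\equiv\sum_j(\partial y_k/\partial x_j)\xi_j$. The element $\eta_1^{q-1}\cdots\eta_n^{q-1}$ lies in $J_q^{n(q-1)}$, which is the top nonzero graded piece. Hence only the linear terms of each $\eta_k$ contribute to this product modulo the relations $\xi_j^q=0$.

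It therefore suffices to prove the identity
\[
\prod_k\Big(\sum_j a_{kj}\xi_j\Big)^{q-1}=\det(a_{kj})^{q-1}\prod_j\xi_j^{q-1}
\]
in $\cO_X[\xi]/(\xi^q)$. This is the case $q=p^e$ of the statement that the top-degree component of the divided-power-type algebra transforms by $\det^{q-1}$. We plan to verify it by reducing to elementary matrices and diagonal matrices, since $GL_n$ over a local ring is generated by these. The diagonal case is immediate.

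For the elementary substitution $\xi_1\mapsto\xi_1+c\xi_2$, we expand
\[
(\xi_1+c\xi_2)^{q-1}\xi_2^{q-1}.
\]
Every term containing $\xi_2$ to a positive power is killed, since it multiplies $\xi_2^{q-1}$ and $\xi_2^q=0$. This leaves $\xi_1^{q-1}\xi_2^{q-1}$, so the product is unchanged, matching $\det=1$.

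This gives
\[
(dy_1\wedge\cdots\wedge dy_n)^{\otimes(q-1)}=\det^{q-1}(dx_1\wedge\cdots\wedge dx_n)^{\otimes(q-1)},
\]
so the two local maps agree. The isomorphisms therefore glue to a canonical global isomorphism $\omega_X^{\otimes(q-1)}\cong S(J_q)$, with the locally constant rank handled componentwise.
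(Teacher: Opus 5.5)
Your proof is correct and follows the paper's skeleton. You take a local $p$-basis and use the presentation $\cO_X\otimes_{\cO_X^q}\cO_X\cong\cO_X[\xi_1,\dots,\xi_n]/(\xi_1^q,\dots,\xi_n^q)$; you only sketch this step, whereas the paper justifies it by a surjectivity-plus-rank count. The socle is generated by $\prod_j\xi_j^{q-1}$. You reduce independence of the $p$-basis to the identity $\prod_k(\sum_j a_{kj}\xi_j)^{q-1}=\det(A)^{q-1}\prod_j\xi_j^{q-1}$ using $J_q^{n(q-1)+1}=0$, and then glue.

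The real difference is how that identity is proved. The paper shows that the coefficient $c(A)$ is multiplicative, so it is a homomorphism $GL_n(R)\to R^\times$. It then kills the commutator subgroup, which the paper identifies with $SL_n(R)$, so $c$ factors through $\det$. Functoriality in $R$ makes it an endomorphism of $\G_{m}$, hence $c=\det^m$, and $m=q-1$ is read off from a diagonal matrix. You instead check diagonal and elementary matrices by hand and use that these generate $GL_n$ of a local ring.

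Your route is more elementary. It avoids the equality $[GL_n(R),GL_n(R)]=SL_n(R)$, which is not automatic for an arbitrary $\F_p$-algebra, and it avoids the rigidity of endomorphisms of $\G_m$. The paper's route is more conceptual: it recognizes $c$ as a character of $GL_n$.

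Your reduction to generators silently uses the same multiplicativity step the paper proves explicitly. If the identity holds for $A$ and for $B$, it holds for $BA$: apply the ring map $R[z]/(z^q)\to R[\xi]/(\xi^q)$, $z_k\mapsto\sum_j a_{kj}\xi_j$, to the identity for $B$. This map is well defined because $(\sum_j a_{kj}\xi_j)^q=\sum_j a_{kj}^q\xi_j^q=0$. Also, to invoke generation over a local ring, you should say that the identity is an equality in the free $R$-module $R[\xi]/(\xi^q)$, so it may be checked after localizing at maximal ideals. With these two sentences added, your argument is complete.
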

\begin{proof}
We first treat the case where $X$ is affine and the ring $R\coloneqq\Gamma(X,\cO_X)$ admits a $p$-basis $f_1,\dots,f_n$ over $R^p$. Set $S\coloneqq R\otimes_{R^q}R$, and regard it as an $R$-algebra via the first factor. For an element $x\in R$, we write $\xi(x)\coloneqq1\otimes x-x\otimes1$.

Since the elements $f_1,\dots,f_n$ form a $p$-basis, $R$ is a free $R^q$-module with basis $f_1^{e_1}\cdots f_n^{e_n}\quad (0\le e_i\le q-1)$. Therefore, the $R$-module $S$ is  free with basis $1\otimes f_1^{e_1}\cdots f_n^{e_n}$. 
Hence, the $R$-algebra homomorphism 
\[
\rho\colon R[x_1,\dots,x_n]\to S,\quad x_i\mapsto \xi(f_i)
\]
is surjective. Since $\rho$ annihilates the ideal  $(x_1^q,\dots,x_n^q)$, it induces a surjective $R$-algebra homomorphism 
\[
\overline{\rho}\colon R[x_1,\cdots,x_n]/(x_1^q,\dots,x_n^q)\to S. 
\]
By comparing ranks as $R$-modules, it follows that $\overline{\rho}$ is an isomorphism.

Under this isomorphism, the ideal $J_q$ corresponds to the ideal $(x_1,\dots,x_n)$, and $S(J_q)$ corresponds to $(x_1^{q-1}\cdots x_n^{q-1})$. Therefore, $\overline{\rho}$ induces an isomorphism 
\[
J_q/J_q^2\cong (x_1,\dots,x_n)/(x_1,\dots,x_n)^2. 
\]
On the other hand, there is a canonical isomorphism $
J_q/J_q^2\xrightarrow{\cong}\Omega_{R/R^q}^1=\Omega_R^1$ that sends $\xi(x)$ to $dx$. It follows that  $\Omega_R^1$ is a free $R$-module with basis $df_1,\dots,df_n$. Therefore, we obtain an isomorphism of invertible $R$-modules 
\[
\eta_{f_\bullet}\colon\omega_R^{\otimes(q-1)}\to S(J_q),\qquad (df_1\wedge\cdots\wedge df_n)^{\otimes(q-1)}\mapsto \bigl(\xi(f_1)\cdots \xi(f_n)\bigr)^{q-1}. 
\]
We claim that $\eta_{f_\bullet}$ does not depend on the choice of $p$-basis. Indeed, suppose that we are given another $p$-basis $g_1,\dots,g_m\in R$. Since $dg_1,\dots,dg_m$ also form a basis of $\Omega_R^1$, it follows that $m=n$ and that there exists a  invertible matrix $A=(a_{ij})\in GL_n(R)$ such that 
\begin{equation}\label{equation: dg=Adf}
    dg_i=\sum_{j=1}^na_{ij}\, df_j\qquad(\forall i=1,\dots,n).
\end{equation}
Hence, 
\[
dg_1\wedge\cdots\wedge dg_n=\det(A)\cdot df_1\wedge\cdots\wedge df_n. 
\]
  Therefore, to prove that $\eta_{f_\bullet} = \eta_{g_\bullet}$, it suffices to show that 
  \begin{equation}\label{equation: g=det Af}
      \big(\xi(g_1)\cdots \xi(g_n)\big)^{q-1}=\det(A)^{q-1}\big(\xi(f_1)\cdots \xi(f_n)\big)^{q-1}
  \end{equation}
  in $S$. 

  We now prove  \eqref{equation: g=det Af}. The relation \eqref{equation: dg=Adf} implies that 
  \[
  \xi(g_i)\equiv \sum_ja_{ij}\otimes1\cdot\xi(f_j)\mod J_q^2. 
  \]
  Since $J_q^{n(q-1)+1}=0$, it follows that 
  \[
  \big(\xi(g_1)\cdots \xi(g_n)\big)^{q-1}=\prod_i\big(\sum_ja_{ij}\otimes1\cdot\xi(f_j)\big)^{q-1}. 
  \]
  Then \eqref{equation: g=det Af} follows from the lemma below. This proves that the isomorphism $\eta_{f_\bullet}$ is independent of the choice of $p$-basis. 

  \medskip

  \noindent
  We now treat the general case. Since $X$ is regular and $F$-finite, it admits a $p$-basis locally. By the independence of $\eta_{f_\bullet}$ from the choice of $p$-basis, these local isomorphisms glue to yield a global isomorphism. 
\end{proof}

\begin{lem}
Let $R$ be an $\F_p$-algebra, and let $A=(a_{ij})\in GL_n(R)$. Then we have 
\[
\prod_{i=1}^n\big(\sum_{j=1}^na_{ij}\cdot x_j\big)^{q-1}=\det(A)^{q-1}\prod_{i=1}^nx_i^{q-1}
\]
in $R[x_1,\dots,x_n]/(x_1^q,\dots,x_n^q)$.   
\end{lem}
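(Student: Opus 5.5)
The plan is to show that the left-hand side is a scalar multiple of the top monomial, that this scalar is multiplicative in $A$, and then to check it on generators of $GL_n$. Set $S_R\coloneqq R[x_1,\dots,x_n]/(x_1^q,\dots,x_n^q)$. This ring is graded, and its component of degree $n(q-1)$ is free of rank one over $R$ with basis $x_1^{q-1}\cdots x_n^{q-1}$. The left-hand side is homogeneous of degree $n(q-1)$. Hence, for any $n\times n$ matrix $A$ over $R$ (invertible or not), we can write
\[
\prod_{i=1}^n\Big(\sum_{j=1}^na_{ij}x_j\Big)^{q-1}=c(A)\prod_{i=1}^nx_i^{q-1}
\]
with $c(A)\in R$. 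Moreover, $c(A)$ is obtained by evaluating, at the entries of $A$, a universal polynomial $c\in\F_p[a_{ij}]$ that does not depend on $R$.

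The identity to prove is therefore the polynomial identity $c=\det^{q-1}$ in $\F_p[a_{ij}]$. This can be checked after extending scalars to an algebraically closed field $K\supset\F_p$. Since $GL_n(K)$ is Zariski dense in $K^{n^2}$, it suffices to check $c(A)=\det(A)^{q-1}$ for $A\in GL_n(K)$.

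Next, I show that $c$ is multiplicative. Because $q$ is a power of $p$, we have $\big(\sum_j a_{ij}x_j\big)^q=\sum_j a_{ij}^qx_j^q=0$ in $S_K$. Consequently, $x_i\mapsto\sum_j a_{ij}x_j$ defines a $K$-algebra endomorphism $\varphi_A$ of $S_K$. By the definition of $c$, this endomorphism multiplies the top monomial by $c(A)$. Composing gives $\varphi_A\circ\varphi_B=\varphi_{BA}$, and applying both sides to the top monomial yields $c(BA)=c(A)c(B)$. The scalars commute, so the order of composition is irrelevant. Since $\det^{q-1}$ is also multiplicative, it now suffices to verify the identity on a generating set of $GL_n(K)$.

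For the generators I use diagonal matrices, permutation matrices, and elementary transvections. For a diagonal matrix $\mathrm{diag}(d_1,\dots,d_n)$ one gets $c=\prod_i d_i^{q-1}=\det^{q-1}$ directly. A permutation matrix only permutes the factors, so $c=1$; on the other side $\det^{q-1}=(\pm1)^{q-1}=1$, because $q-1$ is even when $p$ is odd and $-1=1$ when $p=2$.

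The transvection $x_1\mapsto x_1+bx_2$ (other variables fixed) is the one case that needs a short computation, and I expect it to be the only point requiring care. Here
\[
(x_1+bx_2)^{q-1}x_2^{q-1}=\sum_k\binom{q-1}{k}b^kx_1^{q-1-k}x_2^{q-1+k}=x_1^{q-1}x_2^{q-1},
\]
because every term with $k\ge1$ contains $x_2^{\ge q}=0$. Hence $c=1=\det^{q-1}$, and the same argument applies to any transvection after relabeling the variables. This completes the proof.
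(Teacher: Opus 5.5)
Your proof is correct, and after the shared first step it takes a different route from the paper. Both arguments write the left-hand side as $c(A)\prod_i x_i^{q-1}$. Both prove $c(BA)=c(A)c(B)$ using the substitution endomorphism $x_i\mapsto\sum_j a_{ij}x_j$ of $R[x_1,\dots,x_n]/(x_1^q,\dots,x_n^q)$, which is well defined because $q$-th powers are additive.

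The paper then argues structurally. It treats $c$ as a group homomorphism $GL_n(R)\to R^\times$, so $c$ kills the commutator subgroup, which it identifies with $SL_n(R)$; hence $c$ factors through $\det$. Functoriality in $R$ then makes the induced map $R^\times\to R^\times$ an endomorphism of $\G_{m,\F_p}$, that is, $u\mapsto u^m$. The exponent $m=q-1$ is read off from $\mathrm{diag}(a,1,\dots,1)$.

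You instead note that $c$ is a universal polynomial in $\F_p[a_{ij}]$. You reduce to an infinite field by Zariski density and finish with explicit checks on diagonal matrices, permutations and transvections. Algebraic closure is not needed, and permutations are redundant, but both are harmless.

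What your route buys:
\begin{itemize}
\item It is more elementary, and it proves slightly more: the identity holds for all $n\times n$ matrices, invertible or not.
\item It is also more robust. The paper's identification $[GL_n(R),GL_n(R)]=SL_n(R)$ is not true for arbitrary $R$. It already fails for $R=\F_2$, $n=2$: there $GL_2(\F_2)=SL_2(\F_2)\cong S_3$, whose commutator subgroup has index $2$.
\item The Yoneda step in the paper needs $c$ to be trivial on $SL_n(R)$ for every $\F_p$-algebra $R$ before the map $R^\times\to R^\times$ is even defined. The cleanest justification of that is exactly your reduction to the universal polynomial over an infinite field.
\end{itemize}

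What the paper's route buys is a conceptual explanation: any such multiplicative, functorial invariant must be a power of $\det$, and a single diagonal matrix detects the exponent.
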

\begin{proof}
    Clearly, the left-hand side lies in $(x_1^{q-1}\cdots x_n^{q-1})$. Thus we may write 
    \[
    \prod_i\big(\sum_ja_{ij}\cdot x_j\big)^{q-1}=c(A)\prod_ix_i^{q-1} 
    \]
    for some $c(A)\in R$. We claim that the assignment $A\mapsto c(A)$ is multiplicative. 
    
    Set $y_i\coloneqq\sum_ja_{ij}x_j$. We show that for any $B=(b_{ij})\in GL_n(R)$, we have 
    \begin{equation}\label{equation: AB=C}
        c(A)c(B)\prod_ix_i^{q-1}=c(B)\prod_iy_i^{q-1}=\prod_i\big(\sum_jb_{ij}y_j\big)^{q-1}=\prod_i\big(\sum_jc_{ij}x_j\big)^{q-1},
    \end{equation}
     where $(c_{ij})=BA$. Since the rightmost term is equal to $c(BA)\prod_ix_i^{q-1}$, it follows that $c(B)c(A)=c(BA)$. 
     
     We prove \eqref{equation: AB=C}. 
     The first and third equalities are clear from the definitions, and it remains to prove the second. The assignment $y_i\mapsto\sum_{j=1}^na_{ij}x_j$ induces an isomorphism of $R$-algebras 
    \[f\colon R[y_1,\dots,y_n]\to R[x_1,\dots,x_n], \]
     which sends the ideal $(y_1,\dots,y_n)$ onto $(x_1,\dots,x_n)$. On the other hand, the ideal $(y_1^q,\dots,y_n^q)$ coincides with 
\[F_q((y_1,\dots,y_n))\cdot R[y_1,\dots,y_n],\]
    where $F_q$ denote the $q$-th Frobenius endomorphism on $R[y_1,\dots,y_n]$. Since the Frobenius endomorphism is functorial with respect to $\F_p$-algebra homomorphisms, it follows that 
    \[
f((y_1^q,\dots,y_n^q))=(x_1^q,\dots, x_n^q). 
    \]
Hence, $f$ induces $R[y_1,\dots,y_n]/(y_1^q,\dots,y_n^q)\xrightarrow{\cong}R[x_1,\dots,x_n]/(x_1^q,\dots,x_n^q)$. Therefore, we obtain the equality 
    \[
    c(B)\prod_iy_i^{q-1}=\prod_i\big(\sum_jb_{ij}y_j\big)^{q-1}
    \]
    in the ring $R[x_1,\dots,x_n]/(x_1^q,\dots,x_n^q)$. 
    
\medskip

\noindent
Since $c(A)$ is multiplicative and $c(1)=1$, the assignment $A \mapsto c(A)$ defines a group homomorphism 
\[
c\colon GL_n(R)\to R^\times. 
\]
Since $R^\times$ is commutative, $c$ is trivial on the commutator subgroup $[GL_n(R),GL_n(R)]=SL_n(R)$. Hence, the  homomorphism $c$ factors as 
\[
GL_n(R)\xrightarrow{\det}R^\times\xrightarrow{\overline{c}}R^\times. 
\]
Since the map $\overline{c}$ is functorial in the $\F_p$-algebra $R$, it is induced by a homomorphism of algebraic groups 
\[
\G_{m,\F_p}\to \G_{m,\F_p}. 
\]
This shows that there exists an integer $m$ such that 
\[
c(A)=\det(A)^m. 
\]
Evaluating at $A=\mathrm{diag}(a,1,\dots,1)$, we obtain $m=q-1$. 
\end{proof}

\begin{proof}[Proof of Proposition~\ref{prop: CX and omegaX}]
By scalar extension, we have a morphism 
\[
\mathcal{C}_X^{(1)}\to\mathcal{H}om_{\cO_X}(\cO_X\otimes_{\cO_X^q}\cO_X,\cO_X),\quad C\mapsto [x\otimes y\mapsto xC(y)]. 
\]
The inclusion $S(J_q)\hookrightarrow \cO_X\otimes_{\cO_X^q}\cO_X$ then induces an $\cO_X$-linear morphism  
\[
\mathcal{C}_X^{(1)}\xrightarrow{\varphi} S(J_q)^\vee\xrightarrow{\cong} \omega_X^{\otimes(1-q)}, 
\]
where the last isomorphism is given by  Lemma~\ref{lem: S(J)=omega}. 

To prove Proposition~\ref{prop: CX and omegaX}, it suffices to show that $\varphi$ is an isomorphism. 
Since the assertion is local, we may assume that $X$ is affine. Set $R\coloneqq\Gamma(X,\cO_X)$. Moreover, we may assume that $R$ admits a $p$-basis $f_1,\dots,f_n$ over $R^p$. 
Let 
\[
C\colon F_{q\ast}\cO_X\to \cO_X
\]
be the $\cO_X$-linear morphism defined by 
\[
C(\prod_if_i^{e_i})=\begin{cases}0&\text{if $0\le e_i\le q-1$ and  $(e_1,\dots,e_n)\neq(q-1,\dots,q-1)$},\\
1&\text{if }(e_1,\dots,e_n)=(q-1,\dots,q-1). 
\end{cases}
\]
Since $\bigl(\prod_if_i\bigr)^{q-1}$ generates  $S(J_q)$, the image $\varphi(C)$ is a generator of $S(J_q)^\vee$. Therefore, $\varphi$ is a surjection between invertible $\cO_X$-modules, and hence an isomorphism. This proves the claim. 
\end{proof}

\newpage
\bibliographystyle{alpha}
\bibliography{sample}

\end{document}

%% file: header.tex
\usepackage[english]{babel}
\usepackage[letterpaper,top=2cm,bottom=2cm,left=3cm,right=3cm,marginparwidth=1.75cm]{geometry}
\usepackage{graphicx}
\usepackage[colorlinks=true, allcolors=blue]{hyperref}
\usepackage{amsmath, amssymb, amsfonts, amsthm,mathrsfs,comment,color,cleveref,mathtools}
\usepackage{hyperref}
\usepackage[mathcal]{euscript}
\usepackage[all]{xy}
\usepackage{tikz-cd}
\usepackage{enumitem}
\usepackage{colonequals}
\usepackage[new]{old-arrows}

\newcommand{\A}{{\bf A}}
\newcommand{\C}{{\bf C}}
\newcommand{\F}{{\bf F}}
\newcommand{\G}{{\bf G}}

\newcommand{\R}{{\bf R}}
\newcommand{\Q}{{\bf Q}}
\newcommand{\Z}{{\bf Z}}

\newcommand{\cO}{\mathcal{O}}
\newcommand{\cP}{\mathcal{P}}
\newcommand{\cD}{\mathcal{D}}

\newcommand{\sM}{\mathscr{M}}
\newcommand{\sN}{\mathscr{N}}
\newcommand{\sC}{\mathscr{C}}

\newcommand{\sS}{\mathscr{S}}
\newcommand{\sL}{\mathscr{L}}

\DeclareMathOperator{\BSR}{BSR}

\renewcommand{\to}{\longrightarrow}
\renewcommand{\hookrightarrow}{\longhookrightarrow}

\newcommand*{\Surjrightarrow}{%
    \mathrel{\ooalign{$\longrightarrow$\cr$\mkern 8.5mu\rightarrow$}}%
}
\renewcommand{\twoheadrightarrow}{\Surjrightarrow}

\newcommand{\cI}{\mathcal{I}}

\newcommand{\ev}{\mr{ev}}

\newcommand{\FJN}{{\rm FJN}}

\newcommand{\mr}{\mathrm}

\newcommand{\cal}{\mathcal}

\newtheorem{thm}{Theorem}[section]

\newtheorem{cor}[thm]{Corollary}
\newtheorem{lem}[thm]{Lemma}

\newtheorem{prop}[thm]{Proposition}
\newtheorem{ques}[thm]{Question}

\theoremstyle{definition}
\newtheorem{assumption}[thm]{Assumption}
\newtheorem{const}[thm]{Construction}

\newtheorem*{convention*}{Convention and Notation}
\newtheorem*{acknowledgments*}{Acknowledgments}
\newtheorem{defn}[thm]{Definition}

\newtheorem{example}[thm]{Example}

\newtheorem{rem}[thm]{Remark}
\newtheorem{remark}[thm]{Remark}

\makeatletter
  
  \@addtoreset{equation}{section}
\makeatother

\newcommand{\be}{\begin{eqnarray*}}
\newcommand{\ee}{\end{eqnarray*}}
\newcommand{\ben}{\begin{eqnarray}}
\newcommand{\een}{\end{eqnarray}}
\newcommand{\bi}{\begin{itemize}}
\newcommand{\ei}{\end{itemize}}
\newcommand{\bn}{\begin{enumerate}}
\newcommand{\en}{\end{enumerate}}
\newcommand{\bx}{\begin{eqnarray*}\begin{xy}\xymatrix}
\newcommand{\ex}{\end{xy}\end{eqnarray*}}
\newcommand{\bxy}{\begin{xy}\xymatrix}
\newcommand{\exy}{\end{xy}}
\newcommand{\bq}{\begin{ques}}
\newcommand{\eq}{\end{ques}}

\newcommand{\cC}{\mathcal{C}}

\makeatletter
\renewcommand*\env@matrix[1][*\c@MaxMatrixCols c]{%
	\hskip -\arraycolsep
	\let\@ifnextchar\new@ifnextchar
	\array{#1}}
\makeatother